\newcommand{\pb}[1]{{\color{magenta} #1}}
\newcommand{\ysc}[1]{{\color{blue} #1}}
\pgfplotsset{compat=1.16}
\Crefname{assumption}{Assumption}{Assumptions}
\renewenvironment{proof}[1][]
{
    \begin{newproof}
    \ifthenelse{\equal{#1}{}}{ }{\textbf{(#1) }}%
}{
    \end{newproof}
}
\def\cleartheorem#1{%
    \expandafter\let\csname#1\endcsname\relax
    \expandafter\let\csname c@#1\endcsname\relax
}
\declaretheorem[name=Proposition,sibling=theorem]{proposition}
\declaretheorem[name=Assumption,numberwithin=section]{assumption}
\DeclareMathOperator{\Rem}{Rem}
\def\eps{\varepsilon}
\numberwithin{equation}{section}
\title
[Derivatives and residual distribution of regularized M-estimators]
{Derivatives and residual distribution of regularized M-estimators 
with application to adaptive tuning}
\begin{document}

\maketitle

\begin{abstract}
    This paper studies M-estimators with gradient-Lipschitz loss function
    regularized with convex penalty
    in linear models with Gaussian design matrix
    and arbitrary noise distribution.
    A practical example is the robust M-estimator constructed with
    the Huber loss and the Elastic-Net penalty 
    and the noise distribution has heavy-tails.
    Our main contributions are three-fold.
    (i)
    We provide general formulae for the derivatives of regularized
    M-estimators $\hbbeta(\by,\bX)$ where differentiation is
    taken with respect to both $\by$ and $\bX$; this reveals
    a simple differentiability structure shared by all convex regularized
    M-estimators.
    (ii)
    Using these derivatives, we characterize the distribution
    of the residual $r_i  = y_i-\bx_i^\top\hbbeta$
    in the intermediate high-dimensional regime where dimension and sample
    size are of the same order.
    (iii)
    Motivated by the distribution of the residuals, we propose
    a novel adaptive criterion to select tuning parameters
    of regularized M-estimators. The criterion approximates
    the out-of-sample error up to an additive constant independent 
    of the estimator,
    so that minimizing the criterion provides a proxy for minimizing the
    out-of-sample error.
    The proposed adaptive criterion does not require the knowledge
    of the noise distribution or of
    the covariance of the design.
    Simulated data confirms the theoretical findings, regarding
    both the distribution of the residuals and the success
    of the criterion as a proxy of the out-of-sample error.
    Finally our results reveal new relationships between the derivatives
    of $\hbbeta(\by,\bX)$ and the effective degrees of freedom
    of the M-estimator, which are of independent interest.


\end{abstract}

\begin{keywords}
    Robust estimation,
    M-estimator,
    Adaptive tuning,
    High-dimensional statistics,
    Residual distribution.
\end{keywords}

\section{Introduction}

This paper studies properties
of robust estimators in linear models
$
\by = \bX \bbeta^* + \bep
$
with response $\by\in\R^n$, unknown regression vector $\bbeta^*$ and
$\bX$ is a design matrix with $n$ rows $\bx_1,...,\bx_n$.
Each row $\bx_i$ being a high-dimensional feature vector in $\R^p$, centered and normally distributed 
with covariance $\bSigma$, and each $\varepsilon_i$ is independent of $\bX$ with continuous distribution.
Throughout, let $\hbbeta=\hbbeta(\by,\bX)$ be a regularized $M$-estimator given 
as a solution of the convex minimization problem
\begin{equation}
    \hbbeta(\by,\bX) = \argmin_{\bb \in \R^p}
    \frac 1 n\sum_{i=1}^n\rho(y_i - \bx_i^\top \bb) + g(\bb)
    \label{hbeta}
\end{equation}
where $\rho:\R\to\R$ is a convex data-fitting loss function
and $g:\R^p\to\R$ a convex penalty. 
We may write $\hbbeta_{\rho,g}(\by,\bX)$ for \eqref{hbeta}
to emphasize the dependence on the loss-penalty pair $(\rho, g)$;
if the argument $(\by,\bX)$ is dropped then $\hbbeta$ is implicitly understood
at the observed that $(\by,\bX)$.
Typical examples of losses include the square loss $\rho(u)=u^2/2$,
the Huber loss $H(u)=\int_0^{|u|}\min(1,t)dt$
or its scaled version $\rho = \Lambda^{2} H( u/ \Lambda)$ 
for some tuning parameter $\Lambda>0$,
while typical examples of penalty functions include
the Elastic-Net $g(\bb) = \lambda \|\bb\|_1 + \mu\|\bb\|^2/2$
for tuning parameters $\lambda,\mu \ge 0$.

The paper introduces the following criterion to select
a loss-penalty pair $(\rho,g)$ with small out-of-sample error
$\|\bSigma^{1/2}(\hbbeta-\bbeta^*)\|^2$: for a given set of candidate
loss-penalty pairs $\{(\rho, g)\}$ and the corresponding $M$-estimator
$\hbbeta_{\rho,g}$ in \eqref{hbeta}, select the pair $(\rho,g)$ 
that minimizes the criterion
\begin{equation}
\text{Crit}(\rho,g)
= \Big\| \br + 
\frac{\df}{\trace[\bV]}
\psi\bigl(\br\bigr)\Big\|^2
~
\text{ with }
\begin{cases}
    \br  = \by-\bX\hbbeta_{\rho,g} &\in\R^n,\\
    \df = \trace[\bX(\partial/\partial \by)\hbbeta_{\rho,g}] &\in\R, \\
    \bV = \diag\{\psi'(\br)\}(\bI_n-\bX(\partial/\partial \by)\hbbeta_{\rho,g})
      &\in\R^{n\times n}
\end{cases}
\label{crit}
\end{equation}
where $\trace[\cdot]$ is the trace,
$\psi:\R\to\R$ is the derivative of $\rho$,
    $\psi'$ the derivative of $\psi$ and
we extend $\psi$ and $\psi'$ to functions $\R^n\to\R^n$ by
componentwise application
of the univariate function of the same symbol.
Above,
$(\partial/\partial \by) \hbbeta_{\rho,g}\in\R^{p\times n}$
denotes the Jacobian of
\eqref{hbeta} with respect to $\by$ for $\bX$ fixed,
at the observed data $(\by,\bX)$.
As we will see while studying particular examples, for pairs $(\rho,g)$  
commonly used in robust high-dimensional
statistics such as the square loss, Huber loss with the $\ell_1$-penalty
or Elastic-Net penalty, the ratio $\df/\trace[\bV]$ in
\eqref{crit} admits simple, closed-form expressions.
The criterion \eqref{crit} has an appealing adaptivity property:
it does not require any knowledge of the noise $\bep$ or its distribution,
nor any knowledge of the covariance $\bSigma$ of the design.

\begin{figure}[ht]
    \centering
    \includegraphics[width=46mm]{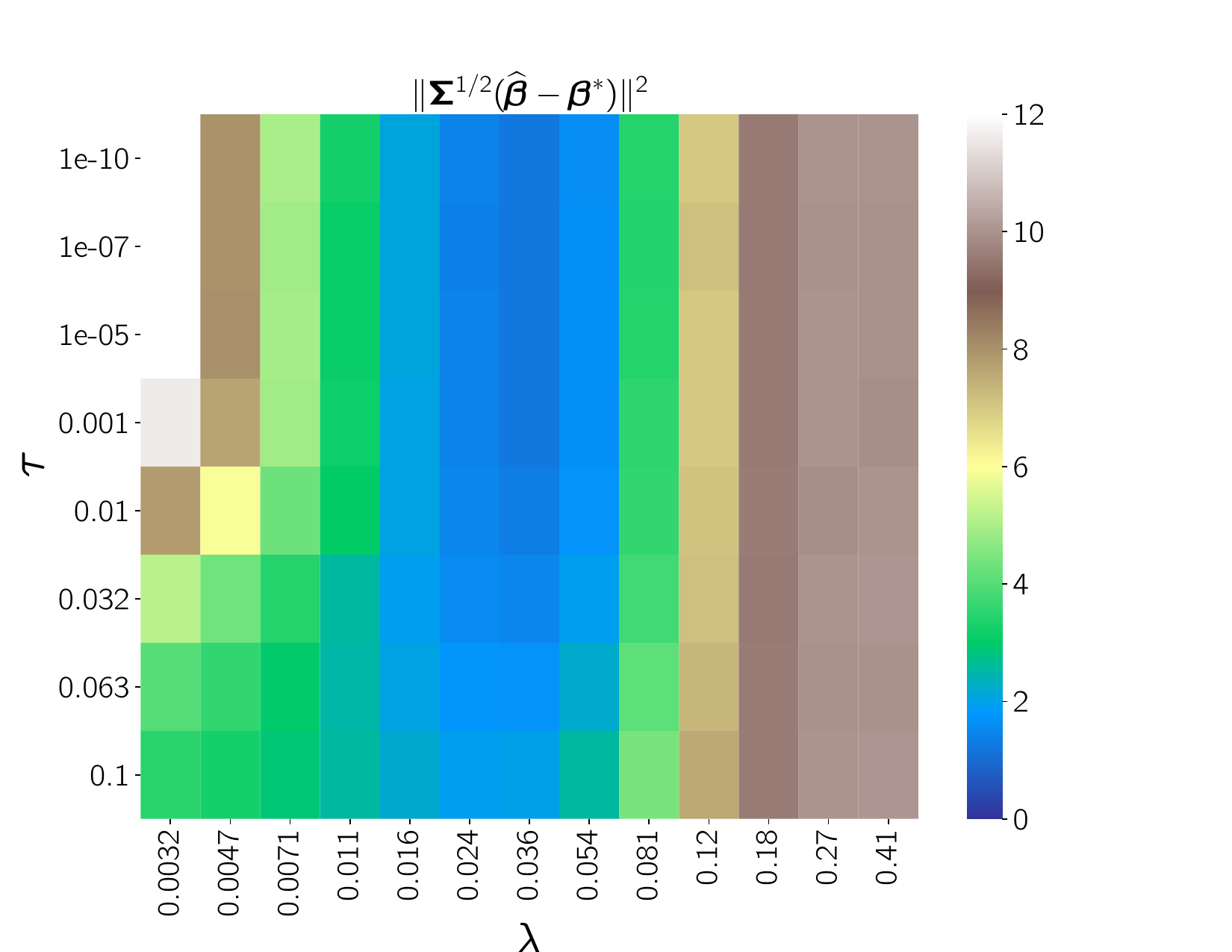}   
    \includegraphics[width=46mm]{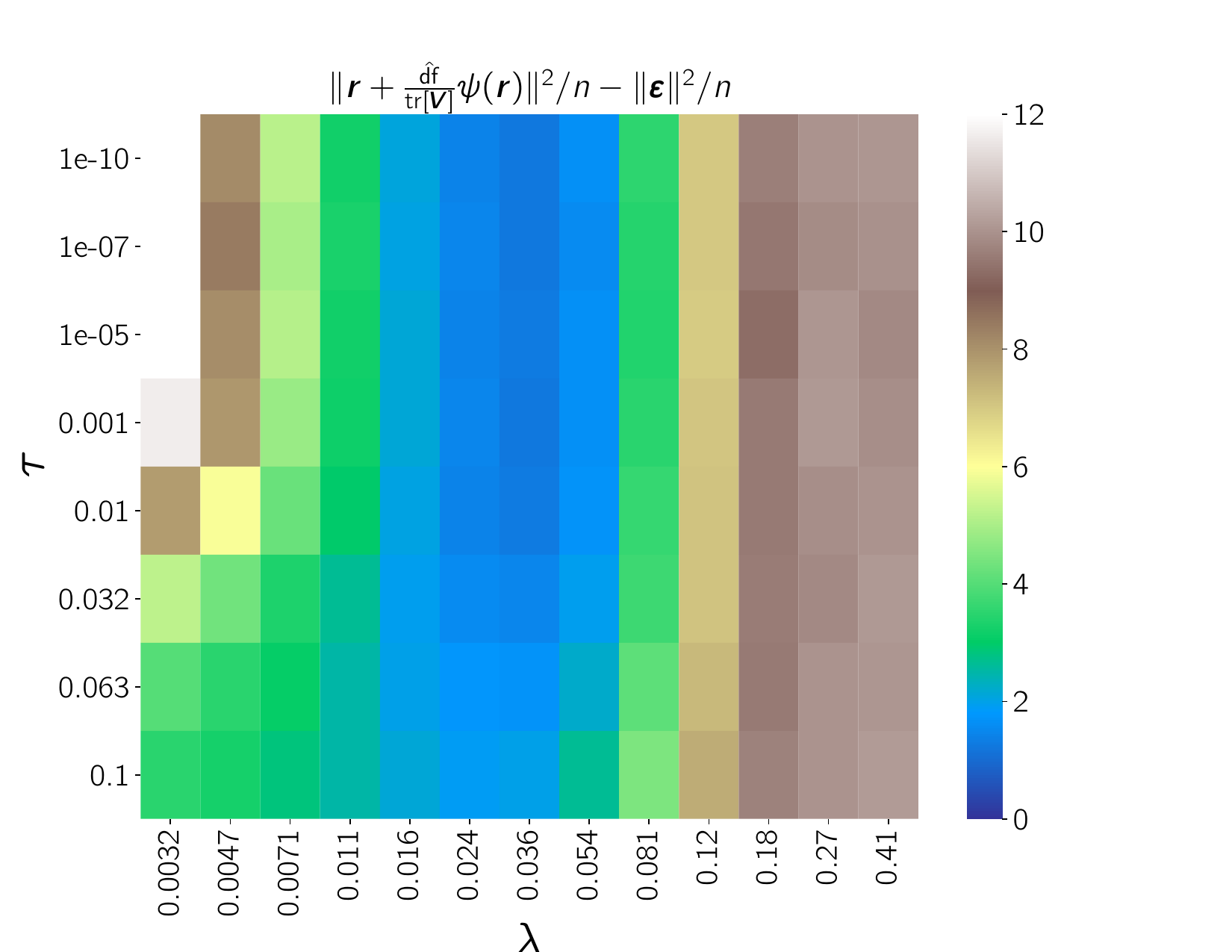}   
    \includegraphics[width=46mm]{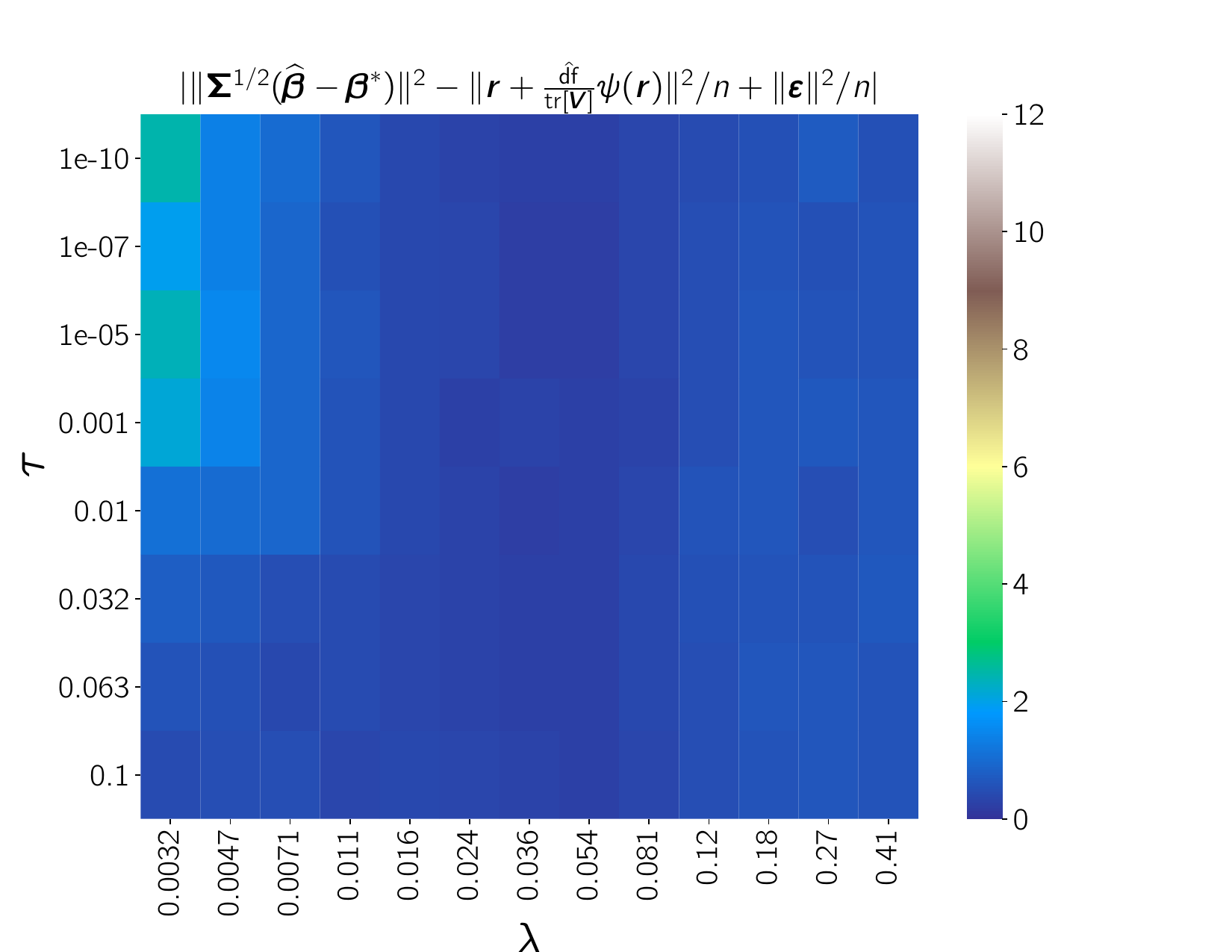}   
    \caption{
    Heatmaps for $\|\bSigma^{1/2} (\hat\bbeta - \bbeta^*)\|^{2}$, 
    its approximation 
        $\|\br+({\df}/{\trace[\bV]})\psi(\br)\|^{2}/n-\|\bep\|^{2}/n$ 
    and the approximation error 
        $|\|\bSigma^{1/2} (\hat\bbeta - \bbeta^*)\|^{2} - \| \br 
        + ({\df}/{\trace[\bV]}) \psi (\br) \|^{2} / n - \| \bep \|^{2} / n|$ 
        for the Huber loss and Elastic-Net penalty 
        on a grid of tuning parameters  $(\lambda, \tau)$
    where 
    $\lambda \in [0.0032, 0.41]$
    and 
    $\tau \in [10^{-10}, 0.1]$.
    Each cell is the average over 100 repetitions.
    See \Cref{sec:simulations} for more details.
    }
    \label{fig:out-of-sample}
\end{figure}

\subsection{Contributions}
\begin{enumerate}[leftmargin=0.5cm]
    \item
The end goal of this paper is to provide theoretical justification and 
theoretical guarantees for the criterion
\eqref{crit} in the high-dimensional regime 
    where the ratio $p/n$ has a finite limit 
    and $\bX$ has anisotropic Gaussian distribution.
The theoretical results will justify the approximation
\begin{equation}
\big\| \br + 
\bigl(\df/\trace[\bV]\bigr)
\psi\bigl(\br\bigr)\big\|^2/n
\approx
\|\bep\|^2/n
+
\|\bSigma^{1/2}(\hbbeta-\bbeta^*)\|^2.
\label{eq:approx-intro}
\end{equation}
\end{enumerate}
\Cref{fig:out-of-sample} illustrates the accuracy
of \eqref{eq:approx-intro} on simulated data.
To study the criterion \eqref{crit} and derive the 
approximation \eqref{eq:approx-intro},
we develop novel results of independent interest regarding
$M$-estimators in \eqref{hbeta}:
\begin{enumerate}[leftmargin=0.5cm]
    \setcounter{enumi}{1}
    \item 
    The paper derives general formula for the derivatives
    $(\partial/\partial y_i) \hbbeta$ and $(\partial/\partial x_{ij}) \hbbeta$.
    This sheds light on the differentiability structure of $M$-estimators
    for general loss-penalty pairs: for any $\rho,g$ with $g$ strongly convex,
    there exists $\hbA\in\R^{p\times p}$ depending on $(\by,\bX)$ such that
    for almost every $(\by,\bX)$,
    \begin{equation*}
        (\partial/\partial y_i)
        \hbbeta(\by,\bX)
        =  \hbA \bX^\top \be_i \psi'(r_i), 
        \quad
        (\partial/\partial x_{ij})
        \hbbeta(\by,\bX)
        =   \hbA \be_j \psi(r_i)
        - \hbA \bX^\top \be_i \psi'(r_i)  \hbeta_j,
    \end{equation*}
    for $r_i = y_{i} - \bx_{i}^{\top} \hat \bbeta$,
    $\forall i\in[n]$,$j\in[p]$ where $\be_j\in\R^p$ and $\be_i\in\R^n$ are canonical basis vectors.
    \item
    The paper obtains a stochastic representation for the
    residual $y_i - \bx_i^\top\hbbeta$ for some fixed $i=1,...,n$,
    extending some results of \cite{karoui2013robust} on unregularized $M$-estimators to penalized ones as in \eqref{hbeta}.
    In short, for each $i=1,...,n$ the $i$-th residual satisfies $r_i = y_i - \bx_i^\top\hbbeta$
    \begin{equation}
    r_i + \tfrac{\df}{\trace[\bV]} \psi(r_i) \approx 
    \eps_i + Z_i \|\bSigma^{1/2}(\hbbeta-\bbeta^*)\|
    \end{equation}
    where $Z_i\sim N(0,1)$ is independent of $\eps_i$.
    This stochastic representation is the motivation for the criterion
    \eqref{crit} as the amplitude of the normal part
    in the right-hand side
    is proportional to the out-of-sample error 
    $\|\bSigma^{1/2}(\hbbeta-\bbeta^*)\|$ that we wish to minimize,
    while the variance of the noise $\eps_i$ does not depend
    on the choice of $(\rho,g)$.
\end{enumerate}

Simulated data in \Cref{fig:qq} confirms that the stochastic representation
for the $i$-th residual $r_i = y_i-\bx_i^\top\hbbeta$ is accurate.
Our working assumption throughout the paper is the following.

\begin{assumption}
    \label{assumMain}
    For constants $\gamma,\mu>0$ 
    independent of $n,p$ we have $p/n\le \gamma$,
    the loss $\rho:\R\to\R$ is convex with a unique minimizer at 0, 
    continuously differentiable
    and its derivative $\psi=\rho'$ is 1-Lipschitz.
    The design matrix $\bX$ has iid $N(\bzero,\bSigma)$ rows for some
    invertible covariance $\bSigma$ and the noise $\bep$ is independent
    of $\bX$ with continuous distribution.
    The penalty $g:\R^p\to\R$ is {$\mu$-strongly} convex w.r.t. $\bSigma$ in the sense that
    $\bb \mapsto g(\bb) - (\mu/2) \bb^\top\bSigma \bb$ is convex in $\bb\in\R^p$.
\end{assumption}

Throughout the paper, we consider a sequence (say, indexed by $n$)
of regression problems with $p$, $\bbeta^*$, $\bSigma$ and the loss-penalty
pair $(\rho,g)$ depending implicitly on $n$.
For some deterministic sequence $(a_n)$,
the stochastically bounded notation $O_P(a_n)$ in this context
may hide constants depending on $\gamma,\mu$ only, that is,
$O_P(a_n)$ denotes a sequence of random variables $W_n$ such that
for any $\epsilon>0$ there exists $K$ depending on $(\epsilon,\gamma,\mu)$
satisfying $\P(|W_n|\ge K a_n ) \le \epsilon$.

Since \Cref{assumMain} requires $p/n\le \gamma$, the Bolzano-Weierstrass theorem lets us extract a subsequence of regression problems 
such that $p/n\to\gamma'$
along this subsequence, for some constant $\gamma'$.
This is the asymptotic regime  we have in mind throughout the paper,
although our results do not require a specific limit for the ratio $p/n$.
For some results, we will require the following additional assumption
which is satisfied by
robust loss functions and penalty that shrink towards 0.

\begin{assumption}
    \label{assumAdditional}
    The penalty is minimized at $\mathbf 0$,
    that is, $g(\mathbf{0}) = \min_{\bb\in\R^p} g(\bb)$;
    the loss is Lipschitz as in
    $|\psi|\le M$ for some constant $M$ independent of $n,p$;
    the signal is bounded as in
    $\|\bSigma^{1/2}\bbeta^*\|^2\le M$.
\end{assumption}

\subsection{Related works}
\label{sec:related}

The context of the present work is the study of $M$-estimators
in the regime $\frac pn$ has a finite limit. This literature pioneered
in \cite{bayati2012lasso,karoui2013robust,donoho2016high,stojnic2013framework}
typically describes the subtle behavior of $\hbbeta$ in this regime by solving a system of nonlinear equations.
This system 
depends
on a prior distribution for the components of $\bbeta^*$,
and either depends on the covariance $\bSigma$ 
\citep{celentano2020lasso,dobriban2018high}
or assume $\bSigma=\bI_p$
\cite[among many others]{bayati2012lasso,thrampoulidis2018precise,celentano2019fundamental}.
Solutions to the nonlinear system are a powerful tool to understand
$\hbbeta$ in theory, e.g., to characterize the deterministic
limit of $\|\bSigma^{1/2}(\hbbeta-\bbeta^*)\|$, see e.g.,
the general results in 
\cite{celentano2019fundamental} for the square loss
and \cite{thrampoulidis2018precise} for general loss-penalty pairs.
However, since the system
and its solution depend on unobservable quantities
($\bSigma$ and prior on $\bbeta^*$), the system solution is not directly usable
for practical purposes such as parameter tuning.

The present work distinguishes itself from most of this literature
as the goal is to describe the behavior of $\hbbeta$
using observable quantities that only depend on the data $(\by,\bX)$
(and not unobservable ones such as $\bSigma$ or a prior 
distribution on $\bbeta^*$ that appear in the aforementioned nonlinear system
of equations). As we will see this view lets us perform
adaptive tuning of parameters in a fully adaptive manner
using the criterion \eqref{crit}.
The criterion \eqref{crit} appeared in previous works for the square loss
only: \cite{bayati2013estimating,miolane2018distribution} studied
\eqref{crit} for the Lasso with $\bSigma=\bI_p$
and \cite[Section 3]{bellec2020out_of_sample} for the square loss and general
penalty (note that for the square loss $\rho(u)=u^2/2$,
\eqref{crit} reduces to $n^{2}\|\br\|^2/(n-\df)^2$
due to $\psi(u)=u$ and $\trace[\bV]=n-\df$.
The property $\psi(u)=u$ of the square loss
hides the subtle interplay between $\br,\psi(\br),\df$ and $\trace[\bV]$
in \eqref{crit} for $\rho$ different than the square loss).

A criterion
different from \eqref{crit} is studied
in \cite{bayati2013estimating,miolane2018distribution} (for the Lasso and
$\bSigma=\bI_p$)
and \cite{bellec2020out_of_sample} (for general loss-penalty pairs),
with the purpose of estimating the out-of-sample error
$\|\bSigma^{1/2}(\hbbeta - \bbeta^*)\|^2$. In the case of an M-estimator
and with the notation in \eqref{crit}, this criterion is the right-hand side
of the approximation
$$
\|\bSigma^{1/2}(\hbbeta-\bbeta^*)\|^2
\approx
\trace[\bV]^{-2}\bigl(
\|\psi(\br)\|^2(2\df - p)
+ \|\bSigma^{-1/2}\bX^\top \psi(\br)\|^2
\bigr).
$$
That criterion from 
\cite{bayati2013estimating,miolane2018distribution,bellec2020out_of_sample}
has the drawback to require the knowledge of the covariance $\bSigma$,
and is thus not readily usable unless $\bSigma$ is known or can be consistently estimated. On the other hand, the criterion \eqref{crit} is fully adaptive:
it does not depend on $\bSigma$, and can thus be used even if $\bSigma$ cannot
itself be consistently estimated.
Another line of work \citep{rad2020error,xu2021consistent,rad2020scalable}
proposed the ALO criterion
\begin{equation}
    \sum_{i=1}^n \Bigl(r_i + \frac{H_{ii}}{V_{ii}} \psi(r_i)\Bigr)^2
\label{ALO}
\end{equation}
(when specialized to linear models),
where $\bV$ is the matrix defined in \eqref{crit} and
$H_{ii} = \bx_i^\top \frac{\partial}{\partial y_i} \hbbeta(\by,\bX)$
in the notation of the present paper.
This criterion differs from \eqref{crit} proposed in the present paper,
in that \eqref{crit}
replaces $H_{ii}$ and $V_{ii}$ by their respective averages, $\df/n=\frac1n \sum_{i=1}^n H_{ii}$ and $\trace[\bV]/n = \frac1n\sum_{i=1}^n V_{ii}$.
This extra averaging step lets us prove, for non-smooth penalty functions,
theoretical guarantees for selecting a loss-penalty pair by minimizing
the criterion \eqref{crit} (cf. \Cref{sec:proxy,sec:edf} below). On the other hand, we are not aware
of similar theoretical guarantees for selecting a loss-penalty pair
based on \eqref{ALO}, since the theoretical analysis of \eqref{ALO}
is so far restricted to twice continuously differentiable loss and penalty
functions, with a uniform upper bound on the Lipschitz constant of the
second derivatives \cite[Assumption 6 required in Theorem 3 and Corollary 1]{rad2020scalable}. This rules
out the Elastic-Net and other non-smooth penalty functions typically used
for high-dimensional data, as well
as the Huber loss which is not twice continuously differentiable.
The criterion \eqref{crit} of the present paper thus improves
upon \eqref{ALO} since it enjoys theoretical guarantees for non-smooth
penalty functions and the Huber loss.

This work leverages probabilistic results on
functions of standard normal random variables
\cite{bellec_zhang2019second_poincare}\cite[\S6, \S7]{bellec2020out_of_sample}
which are consequences of Stein's formula \cite{stein1981estimation}.
Consequently, the main limitation of our work is that it currently requires
Gaussian design for the probabilistic results,
although simulations in \Cref{sec:rademacher-figures}
    suggest that the results hold for more general distributions,
    including design with Rademacher entries.
On the other hand,
the differentiability result \eqref{eq:differentiability-formulae}
is deterministic and does not rely on any probabilistic assumption.

\section{Differentiability of regularized M-estimators}
\label{sec:Differentiability}

The first step towards the study of the criterion \eqref{crit}
is to justify the almost sure existence of the derivatives of $\hbbeta$
that appear in \eqref{crit} through the scalar
scalar $\df$ and the matrix $\bV$ in \eqref{crit}.
Although the criterion \eqref{crit} only involves the derivatives 
of $\hbbeta(\by,\bX)$ with respect to $\by$ for a fixed $\bX$, the proof
of our results rely on the interplay between the derivatives
with respect to $\by$ and with respect to $\bX$:
this \emph{differentiability structure} of $M$-estimators is
the content of the following result.

\begin{restatable}{theorem}{theoremDifferentiability}
    \label{thm:differentiability}
    Let \Cref{assumMain} be fulfilled.
    For almost every $(\by,\bX)$ the map $(\by,\bX)\mapsto \hbbeta(\by,\bX)$
    is differentiable at $(\by,\bX)$ and there exists a matrix $\hbA\in\R^{p\times p}$
    depending on $(\bX, \by)$
    with $\|\bSigma^{1/2}\hbA\bSigma^{1/2}\|_{op} \le (n\mu)^{-1}$
    s.t.
    \begin{equation}
        \begin{split}
        (\partial/\partial y_i)
        \hbbeta(\by,\bX)
        &=  \hbA \bX^\top \be_i\psi'(r_i), 
        \\
        (\partial/\partial x_{ij})
        \hbbeta(\by,\bX)
        &=   \hbA \be_j \psi(r_i)
        - \hbA \bX^\top \be_i \psi'(r_i)  \hbeta_j,
        \end{split}
        \qquad
        \text{ where }
        r_i = y_{i} - \bx_{i}^{\top} \hat \bbeta,
        \label{eq:differentiability-formulae}
    \end{equation}
    $\be_i\in\R^n, \be_j\in\R^p$ are canonical basis vectors
    , $\psi := \rho'$ and $\psi'$ denote the derivatives.
    Furthermore,
\begin{align}
    \df = \trace [ \bX ( \partial / \partial \by ) \hat\bbeta ]
        &= \trace[\bX\hbA\bX^{\top}\diag\{\psi'(\br)\}],
        \label{df}
    \\
    \bV =  \diag\{\psi'(\br)\} (\bI_n - \bX(\partial/\partial \by)\hbbeta)
        &=
        \diag\{\psi'(\br)\} - \diag\{\psi'(\br)\}\bX\hbA\bX^{\top}\diag\{\psi'(\br)\}
        \label{V}
\end{align}
    satisfy $0\le\df\le n$
    and $0\le \trace[\bV] \le n$.

\end{restatable}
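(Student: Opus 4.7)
The plan is to derive the formulas by implicit differentiation of the first-order optimality condition
\[
\bX^\top\psi(\by-\bX\hbbeta) \in n\,\partial g(\hbbeta),
\]
treating the possibility of a non-smooth penalty $g$ (e.g., the Elastic-Net with an $\ell_1$ component) through a smoothing argument. First I would show that $(\by,\bX)\mapsto\hbbeta$ is locally Lipschitz, which gives differentiability at Lebesgue-a.e.\ $(\by,\bX)$ via Rademacher's theorem. The standard comparison argument using $\mu$-strong convexity of $g$ w.r.t.\ $\bSigma$ and convexity of the data-fit term yields $\|\bSigma^{1/2}(\hbbeta(\by_1,\bX_1)-\hbbeta(\by_2,\bX_2))\|$ bounded by a locally bounded constant times $\|\by_1-\by_2\|+\|\bX_1-\bX_2\|_F$, which suffices.

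To identify the derivatives and the matrix $\hbA$, I would smooth the penalty by taking $g_\delta = g*\phi_\delta + (\delta/2)\bb^\top\bSigma\bb$, where $\phi_\delta$ is a Gaussian mollifier; then $g_\delta$ is $C^\infty$, $\mu$-strongly convex w.r.t.\ $\bSigma$, and converges uniformly on bounded sets to $g$. Let $\hbbeta_\delta$ be the corresponding minimizer and $\br_\delta=\by-\bX\hbbeta_\delta$. Differentiating the smooth KKT equation $\bX^\top\psi(\br_\delta)=n\nabla g_\delta(\hbbeta_\delta)$ in $y_i$ gives
\[
\bigl[n\nabla^2 g_\delta(\hbbeta_\delta)+\bX^\top\diag\{\psi'(\br_\delta)\}\bX\bigr]\partial_{y_i}\hbbeta_\delta = \bX^\top\be_i\psi'(r_{i,\delta}),
\]
so $\partial_{y_i}\hbbeta_\delta = \hbA_\delta\bX^\top\be_i\psi'(r_{i,\delta})$ with $\hbA_\delta$ the inverse of the bracketed matrix. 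The $x_{ij}$-derivative differs by the two extra terms $\be_j\psi(r_{i,\delta})$ coming from $\partial_{x_{ij}}\bX^\top$ and $-\be_i\hbeta_{j,\delta}$ inside the residual, yielding the second formula in \eqref{eq:differentiability-formulae}. Since $\nabla^2 g_\delta\succeq\mu\bSigma$ from $\mu$-strong convexity, the bracketed matrix is at least $n\mu\bSigma$, which gives $\|\bSigma^{1/2}\hbA_\delta\bSigma^{1/2}\|_{op}\le(n\mu)^{-1}$ uniformly in $\delta$. A compactness argument extracts a subsequential limit $\hbA$ of $\hbA_\delta$; combining with Rademacher-differentiability of the limit $\hbbeta$ identifies this limit with $\partial_{y_i}\hbbeta$ and $\partial_{x_{ij}}\hbbeta$, giving \eqref{eq:differentiability-formulae} with the stated operator-norm bound.

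For the trace bounds, set $\mathbf{D} := \diag\{\psi'(\br)\}$, which is PSD with entries in $[0,1]$ by convexity and 1-Lipschitzness of $\rho$. The Sherman-Morrison-Woodbury identity
\[
\bI_n - \mathbf{D}^{1/2}\bX\hbA\bX^\top\mathbf{D}^{1/2} = \bigl(\bI_n + \mathbf{D}^{1/2}\bX(n\nabla^2 g)^{-1}\bX^\top\mathbf{D}^{1/2}\bigr)^{-1}
\]
(applied in the smoothed problem, then passed to the limit) shows that $\mathbf{D}^{1/2}\bX\hbA\bX^\top\mathbf{D}^{1/2}$ is PSD with spectrum in $[0,1]$. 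Consequently $\df = \trace[\mathbf{D}^{1/2}\bX\hbA\bX^\top\mathbf{D}^{1/2}] \in [0,n]$. For $\trace[\bV]$, expanding using cyclicity of the trace yields $\sum_i\psi'(r_i)\bigl[1-\psi'(r_i)(\bX\hbA\bX^\top)_{ii}\bigr]$; the diagonal entries of $\mathbf{D}^{1/2}\bX\hbA\bX^\top\mathbf{D}^{1/2}\preceq\bI_n$ are at most $1$, so each summand lies in $[0,\psi'(r_i)]\subseteq[0,1]$, giving $0\le\trace[\bV]\le n$.

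The main obstacle is the limit $\delta\to 0$: when $g$ is not $C^2$, $\nabla^2 g_\delta$ need not converge, so $\hbA$ cannot be described as $(n\nabla^2 g+\bX^\top\diag\{\psi'(\br)\}\bX)^{-1}$ directly. Only the whole matrix $\hbA_\delta$ must stabilize, which is achieved via the uniform operator-norm bound together with the Lipschitz/Rademacher differentiability of $\hbbeta$ that pins down any subsequential limit uniquely at a.e.\ $(\by,\bX)$.
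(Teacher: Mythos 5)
Your first step (local Lipschitzness of $(\by,\bX)\mapsto\hbbeta$ from strong convexity of $g$ and monotonicity of $\psi$, then Rademacher's theorem) is exactly the paper's, and your trace computation at the end is sound \emph{given} the claimed structure of $\hbA$. The genuine gap is in the mollification step that is supposed to produce $\hbA$. Knowing that $\hbbeta_\delta\to\hbbeta$ pointwise (or even locally uniformly) and that $\hbA_\delta$ is uniformly bounded in operator norm gives you subsequential limits of $\hbA_\delta$ and of $\partial_{y_i}\hbbeta_\delta$ at a fixed $(\by,\bX)$, but it does \emph{not} identify those limits with $\partial_{y_i}\hbbeta(\by,\bX)$: uniform convergence of Lipschitz functions with uniformly bounded gradients does not force pointwise convergence of the gradients at a point of differentiability of the limit (consider $f_\delta(x)=\delta\sin(x/\delta)\to 0$ with $f_\delta'(x_0)$ having arbitrary subsequential limits in $[-1,1]$). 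A weak-$*$ argument in $L^\infty_{\rm loc}$ would identify the limit of the Jacobians with $D\hbbeta$ a.e., but weak-$*$ limits do not commute with the product structure $\hbA_\delta\bX^\top\be_i\psi'(r_{i,\delta})$, so the factored form \eqref{eq:differentiability-formulae} with a \emph{single} matrix $\hbA$ is not recovered. A second, related obstruction: $\psi'$ is only a bounded measurable function (for the Huber loss it is an indicator), so $r_{i,\delta}\to r_i$ does not give $\psi'(r_{i,\delta})\to\psi'(r_i)$, and hence $\diag\{\psi'(\br_\delta)\}$ and the inverse defining $\hbA_\delta$ need not converge to the objects appearing in the statement. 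This also undermines the limiting Sherman--Morrison--Woodbury argument for the trace bounds.

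The paper avoids mollification entirely. At any point where $\hbbeta$ is differentiable, it passes to the limit $t\to0^+$ in the strong-convexity/monotonicity inequality for the perturbation $(\by+t\bv,\bX+t\bU)$ and obtains a quadratic inequality in the directional derivative $\frac{\partial\hbbeta}{\partial\by}\bv+\frac{\partial\hbbeta}{\partial\bX}(\bU)$ involving the linear map $B(\bU,\bv)=\bU^\top\hbpsi+\bX^\top\diag\{\psi'(\br)\}(-\bU\hbbeta+\bv)$. From this it reads off that the directional derivative vanishes whenever $B(\bU,\bv)=\bzero$ and that its $\bSigma^{1/2}$-norm is controlled by $(n\mu)^{-1}\|\bSigma^{-1/2}B(\bU,\bv)\|$; a linear-algebra lemma (factorization through the Moore--Penrose pseudoinverse, \Cref{prop:6890}) then yields a single matrix $\hbA$ with $\frac{\partial\hbbeta}{\partial\by}\bv+\frac{\partial\hbbeta}{\partial\bX}(\bU)=\hbA B(\bU,\bv)$ and the operator-norm bound, and the choices $(\bU,\bv)=(\be_i\be_j^\top,\bzero)$ and $(\bzero,\be_i)$ give \eqref{eq:differentiability-formulae}. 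The same quadratic inequality with $\bU=\bzero$ directly shows that the symmetric part of $\bD\bX\hbA\bX^\top\bD$ is positive semidefinite and contractive on the appropriate subspace, which is how the paper gets $0\le\df\le n$ and $0\le\trace[\bV]\le n$ without Sherman--Morrison--Woodbury. If you want to salvage your route, you would need to supply the missing identification (e.g., by proving convergence of the Jacobians in a sense strong enough to preserve the factored structure), which is precisely the hard part; as written, the proposal does not prove the theorem.
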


    Since the same matrix $\hbA$ appears in both the derivatives with respect
    to $y_i$ and to $x_{ij}$,
    \eqref{eq:differentiability-formulae} provides 
    relationship between $(\partial/\partial y_i)\hbbeta$
    and $(\partial/\partial x_{ij})\hbbeta$, for instance
    $(\partial/\partial x_{ij})
    \hbbeta = \hbA \be_j \psi(r_i) - \hbeta_j (\partial/\partial y_i)\hbbeta$.
    Although the matrix $\hbA$ is not explicit for arbitrary loss-penalty
    pair, closed-form expressions are available for particular
    examples such as the Elastic-Net penalty as discussed 
    in \Cref{sec:simulations}.
\begin{remark}
    For the square loss $\rho(u)=u^2/2$,
    the differentiability formulae \eqref{eq:differentiability-formulae}
    reduce to
    \begin{equation}
        \begin{split}
        (\partial/\partial y_l) \hbbeta (\by,\bX)
        &= \hbA\bX^\top \be_l,
        \\
        (\partial/\partial x_{ij}) \hbbeta (\by,\bX)
        &= \hbA \be_j (y_i - \bx_i^\top\hbbeta) - \hbA\bX^\top \be_i \hbeta_j
        \end{split}
    \end{equation}
    for almost every $(\by,\bX)$ and some matrix $\hbA\in\R^{p\times p}$
    depending on $(\by,\bX)$, since in this case
    $\psi' = 1$.
\end{remark}

    In the simple case where $g$ is twice continuously differentiable,
    \eqref{eq:differentiability-formulae} follows 
    \citep{bellec_zhang2019second_poincare}
    with 
    \begin{equation}
    \hbA=
    \bigl(\bX^\top\diag\{\psi'(\br)\}\bX + n \nabla^2 g(\hbbeta)\bigr)^{-1}
    \label{A-twice-diff}
    \end{equation}
    by differentiating the KKT conditions
    $\bX^\top\psi(\by-\bX\hbbeta) = n \nabla g(\hbbeta)$.
    To illustrate why this is true, provided that $\hbbeta(\by,\bX)$ is differentiable,
    if $(\by(t),\bX(t))$ are smooth perturbations of $(\by,\bX)$
    with $(\by(0),\bX(0))=(\by,\bX)$ and $\frac{d}{dt}(\by(t),\bX(t))|_{t=0}
    =(\dot\by,\dot\bX)$, differentiation of
    $\bX(t){}^\top\psi(\by(t)-\bX(t)\hbbeta(\by(t),\bX(t))) = n \nabla g(\hbbeta(\by(t),\bX(t)))$ at $t=0$ and the chain rule yields
    $$\dot\bX{}^\top \psi(\br) - \bX^\top\diag\{ \psi'(\br) \}
    (\dot\by - \dot\bX\hbbeta(\by,\bX))
    =\hbA{}^{-1}
    \tfrac{d}{dt}\hbbeta(\by(t),\bX(t))\big|_{t=0}
    $$
    with $\hbA$ in \eqref{A-twice-diff}.
    This gives \eqref{eq:differentiability-formulae}
    if the penalty $g$ is twice-differentiable.
    \Cref{thm:differentiability} reveals that for \emph{arbitrary}
    convex penalty functions including non-differentiable ones,
    the differentiability structure
    \eqref{eq:differentiability-formulae} always holds, as in 
    the case of twice differentiable penalty $g$,
    even for penalty functions such as
    $g(\bb) = \mu \|\bb\|^2/2 + \lambda\|\text{mat}(\bb)\|_{\rm nuc}$
    where $\text{mat}:\R^p\to\R^{d_1\times d_2}$ is a linear
    isomorphism to the space of $d_1\times d_2$ matrices
    and $\|\cdot\|_{\rm nuc}$ is the nuclear norm:
    in this case by \Cref{thm:differentiability}
    there exists a matrix $\hbA\in\R^{p\times p}$ such that
    \eqref{eq:differentiability-formulae} holds 
    although no closed-form expression for $\hbA$ is known.

    The representation \eqref{eq:differentiability-formulae}
    is a powerful tool as it provides explicit derivatives
    of quantities of interest such as
    $\br=\by-\bX\hbbeta$, $\|\psi(\br)\|^2$ or
    $\|\bSigma^{1/2}(\hbbeta-\bbeta^*)\|^2$.
    These explicit derivatives
    can then be used in probabilistic identities and inequalities
    that involve derivatives, for instance Stein's formulae
    \citep{stein1981estimation}, the
    Gaussian Poincar\'e inequalty \cite[Theorem 3.20]{boucheron2013concentration},
    or normal approximations \citep{chatterjee2009fluctuations,bellec_zhang2019second_poincare}.


\begin{remark}
    \label{rem:intercept}
    Similar derivative formulae hold if an intercept is included in the minimization, as in
    \begin{equation}
        \bigl(\hbeta_0(\by,\bX), ~\hbbeta(\by,\bX)\bigr) 
        = \argmin_{b_0\in\R, \bb\in\R^p}
        \frac 1 n\sum_{i=1}^n\rho(y_i - b_0 - \bx_i^\top \bb) + g(\bb)
    \end{equation}
    Let \Cref{assumMain} be fulfilled,
    and assume further $ 
        \| \psi' (\br)\|_{2}
        > 0 
    $ with 
    $\br := \by - \bone_{n} \hat\beta_0 - \bx_i^\top \hbbeta$ 
    where  $\bone_n=(1,...,1)^\top\in\R^n$.
    For almost every $(\by,\bX)$ the map $(\by,\bX)\mapsto \hbbeta(\by,\bX)$
    is differentiable at $(\by,\bX)$,
    and there exists $\hbA \in \R^{p\times p}$
    depending on $(\by,\bX)$
    with $\|\bSigma^{1/2} \hbA \bSigma^{1/2}\|_{op} \le (n\mu)^{-1}$
    such that
    \begin{equation}
        (\partial/\partial y_i)
        \hbbeta(\by,\bX)
        = \hbA \bX^\top \bPsi' \be_i,
        \quad
        (\partial/\partial x_{ij})
        \hbbeta(\by,\bX)
        = \hbA \be_j \psi(r_i)
        - \hbA \bX^\top \bPsi' \be_i \hbeta_j
        ,
    \end{equation}
    where $\be_i \in\R^n, \be_j\in\R^p$ are canonical basis vectors,
    $\psi = \rho'$ and $ \bPsi' := \diag\{\psi' (\br)\} - \psi'(\br) \psi'(\br)^{\top} / \sum_{i \in [n]} \psi'(r_i)$.
\end{remark}

\section{Distribution of individual residuals}
\label{sec:residual}

We now turn to the distribution of a single residual $r_i = y_i - \bx_i^\top\hbbeta$ for some fixed observation $i\in\{1,...,n\}$ (for instance, fix $i=1$).
By leveraging the differentiability structure \eqref{eq:differentiability-formulae} and the normal approximation from \cite{bellec_zhang2019second_poincare},
the following result provides a clear picture of the distribution
of $r_i$.

\begin{theorem}
    \label{thm:normality}
    Let \Cref{assumMain} be fulfilled
    and let $\hbA\in\R^{p\times p}$ be given by \Cref{thm:differentiability}.
    Then for every $i=1,...,n$ there exists $Z_i\sim N(0,1)$ 
    independent of $\varepsilon_i$ such that
    \begin{equation}
        \Big|
        \Bigl(r_{i} + \trace[\bSigma \hbA] \psi(r_{i})\Bigr)
        - \Bigl(\eps_i + \|\bSigma^{1/2}(\hbbeta-\bbeta^*)\| Z_i\Bigr)
        \Big|
        \le O_{P}(n^{-1/4})
        (
        |\psi(\eps_i)| + \|\bSigma^{1/2}(\hbbeta-\bbeta^*)\|
        )
        \label{eq:Rem_i}
    \end{equation}
    Furthermore, if $\eps_i$ has a fixed distribution
    $F$, there exists a bivariate variable $(\tilde\eps_i^n,\tilde Z_i^n)$
    converging in distribution to the product measure
    $F\otimes N(0,1)$ such that
    \begin{equation}
        r_{i} + \trace[\bSigma \hbA] \psi(r_{i})
        = \tilde \eps_i^n + \|\bSigma^{1/2}(\hbbeta-\bbeta^*)\| \tilde Z_i^n.
        \label{eq:representation-tilde}
    \end{equation}
    If $\eps_i$ has a fixed distribution
    $F$ and \Cref{assumAdditional} holds
    then 
    $|\psi(\eps_i)| + \|\bSigma^{1/2}(\hbbeta-\bbeta^*)\|
    = O_P(1)$.
\end{theorem}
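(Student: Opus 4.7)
The plan is to condition on $(\bX_{-i},\bep)$ so that the only remaining randomness is $\bx_i\sim N(\bzero,\bSigma)$ (recalling that $y_i = \bx_i^\top\bbeta^* + \eps_i$ is itself a function of $\bx_i$), and to use the identity $r_i = \eps_i + \bx_i^\top(\bbeta^* - \hbbeta)$ to rewrite \eqref{eq:Rem_i} as an approximate Gaussian limit for the scalar
$$
W_i := \bx_i^\top(\bbeta^*-\hbbeta) + \trace[\bSigma\hbA]\,\psi(r_i).
$$
Showing that $W_i$ is approximately $\|\bSigma^{1/2}(\hbbeta - \bbeta^*)\|\, Z_i$ with $Z_i \sim N(0,1)$ reduces the theorem to a normal approximation for a smooth functional of the Gaussian vector $\bx_i$, which I would carry out using the second-order Poincar\'e-type results of \cite{bellec_zhang2019second_poincare} together with the techniques in \cite[\S6--\S7]{bellec2020out_of_sample}.

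The core computation is the Jacobian of $\hbbeta$ with respect to $\bx_i$ under the conditioning above. Combining \eqref{eq:differentiability-formulae} with $\partial y_i/\partial x_{ij} = \bbeta^*_j$ via the chain rule yields
$$
(\partial/\partial x_{ij})\hbbeta = \psi(r_i)\,\hbA\be_j + \psi'(r_i)(\bbeta^*_j - \hbeta_j)\,\hbA\bX^\top\be_i.
$$
Plugging this into $W_i$ and applying Stein's lemma to the linear-in-$\bx_i$ term $\bx_i^\top(\bbeta^*-\hbbeta)$ shows that the correction $\trace[\bSigma\hbA]\psi(r_i)$ is precisely what is needed to cancel the leading conditional mean of $\bx_i^\top(\bbeta^*-\hbbeta)$ arising from the dependence of $\hbbeta$ on $\bx_i$. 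After cancellation, the gradient $\nabla_{\bx_i}W_i$ has $\bSigma$-weighted squared norm of order $\|\bSigma^{1/2}(\hbbeta-\bbeta^*)\|^2$, matching the target variance. The error rate $O_P(n^{-1/4})$ multiplying $|\psi(\eps_i)| + \|\bSigma^{1/2}(\hbbeta-\bbeta^*)\|$ comes from the Hessian-type bounds in the second-order Poincar\'e inequality, controlled via $\|\bSigma^{1/2}\hbA\bSigma^{1/2}\|_{op} \le (n\mu)^{-1}$ from Theorem~\ref{thm:differentiability} together with the Lipschitz bound $|\psi'|\le 1$ from \Cref{assumMain}.

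For the asymptotic representation \eqref{eq:representation-tilde}, the quantitative bound \eqref{eq:Rem_i} together with the final boundedness claim provides a tight family from which a Skorokhod / coupling argument produces $(\tilde\eps_i^n,\tilde Z_i^n)$ satisfying the equality in \eqref{eq:representation-tilde} exactly and with joint limit $F\otimes N(0,1)$; independence in the limit is automatic because $\eps_i$ is independent of $\bX$ under \Cref{assumMain} and $Z_i$ is extracted from the Gaussian $\bx_i$. For the boundedness claim itself, $|\psi(\eps_i)|\le M$ is immediate from \Cref{assumAdditional}; the bound $\|\bSigma^{1/2}(\hbbeta - \bbeta^*)\| = O_P(1)$ follows from the strong-convexity comparison
$$
\mu\,\|\bSigma^{1/2}(\hbbeta - \bbeta^*)\|^2 \le \tfrac{1}{n}\psi(\bep)^\top \bX(\hbbeta - \bbeta^*) + g(\bbeta^*) - g(\hbbeta),
$$
obtained by contrasting the objective in \eqref{hbeta} at $\hbbeta$ and at $\bbeta^*$; the right-hand side is $O_P(1)$ under \Cref{assumAdditional} via $|\psi|\le M$, $\|\bSigma^{1/2}\bbeta^*\|^2 \le M$, and a standard concentration bound for the bilinear Gaussian form $\psi(\bep)^\top\bX(\hbbeta-\bbeta^*)$.

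The principal obstacle is the normal-approximation step: achieving rate $n^{-1/4}$ requires controlling Hessian-like quantities in the second-order Poincar\'e inequality, yet $\hbbeta$ is in general only first-order differentiable in $\bx_i$. I anticipate handling this either through a smoothing approximation of $g$ followed by passage to the limit, or by a direct variational control of the relevant quadratic forms, in each case exploiting the uniform operator-norm bound $\|\bSigma^{1/2}\hbA\bSigma^{1/2}\|_{op}\le (n\mu)^{-1}$ from Theorem~\ref{thm:differentiability}.
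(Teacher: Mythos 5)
Your skeleton is right: condition on everything except $\bx_i$, use the total derivative of $\hbbeta$ in $\bx_i$ (your Jacobian formula matches the paper's \eqref{dgij_h} after the change of variables $\bh=\bSigma^{1/2}(\hbbeta-\bbeta^*)$, $\bG=\bX\bSigma^{-1/2}$), and recognize $\trace[\bSigma\hbA]\psi(r_i)$ as the divergence correction from \eqref{contraction-1}. But the normal-approximation step is where your plan diverges from what actually works, and the obstacle you flag at the end is an artifact of choosing the wrong tool. You reach for a second-order Poincar\'e inequality applied to the scalar $W_i$, which requires Hessian control of $\hbbeta$ in $\bx_i$ --- information that Theorem~\ref{thm:differentiability} does not provide and that your proposed smoothing of $g$ would have to manufacture. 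The paper instead uses \Cref{prop:a-lemma}, a first-order statement for \emph{vector fields}: for locally Lipschitz $\bff$, the quantity $(\bff^\top\bz-\sum_k\partial f_k/\partial z_k)/\|\bff\|$ is close to $N(0,1)$ with error controlled by $\E[\|\bff\|^{-2}\sum_k\|\partial\bff/\partial z_k\|^2]$. Taking $\bff$ to be (essentially) $\bh$ itself, only the first derivatives of $\hbbeta$ are needed, and these are exactly what \eqref{eq:differentiability-formulae} supplies. No second-order differentiability of $\hbbeta$ is required anywhere.

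The second genuine gap is that you have no mechanism to handle small $\|\bh\|$, and your attribution of the $n^{-1/4}$ rate to ``Hessian-type bounds'' is incorrect. If one applies the normal approximation with $\bff=\bh$ alone, the error bound contains $\|\hbA\|_F^2\,\psi(r_i)^2/\|\bh\|^2$, which is uncontrollable when $\|\bh\|$ is small relative to $\psi(r_i)$. The paper's fix is a padding device: it adjoins an auxiliary independent coordinate, taking $\bz=(\bg_i,U)$ and $\bff=(\bh,\,n^{-1/4}\psi(\eps_i))$ with the last component constant under the conditioning, so that $\|\bff\|^2=\|\bh\|^2+n^{-1/2}\psi(\eps_i)^2$ and $\psi(r_i)^2/\|\bff\|^2\le 2n^{1/2}+2|\bg_i^\top\bh|^2/\|\bh\|^2$. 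This padding is simultaneously the source of the $n^{-1/4}$ rate (via $n^{1/4}\sqrt p\,\|\hbA\|_{op}\lesssim n^{-1/4}$ and $|\,\|\bff\|-\|\bh\|\,|\le n^{-1/4}|\psi(\eps_i)|$) and of the factor $|\psi(\eps_i)|$ on the right of \eqref{eq:Rem_i}; without it you would at best get an error proportional to $\|\bh\|$ only, with an uncontrolled constant. Finally, a smaller issue: your strong-convexity comparison for the boundedness claim leaves $g(\bbeta^*)-g(\hbbeta)$ in the bound, which is not controlled by \Cref{assumAdditional} (only $\|\bSigma^{1/2}\bbeta^*\|$ is bounded, not $g(\bbeta^*)$); the paper instead combines the KKT condition with monotonicity of $\partial g$ between $\hbbeta$ and $\mathbf 0$ to get $n\mu\|\bSigma^{1/2}\hbbeta\|^2\le\hbbeta^\top\bX^\top\psi(\br)$, which avoids $g(\bbeta^*)$ entirely.
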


\begin{figure}[ht]
\centering
\includegraphics[width=34mm]{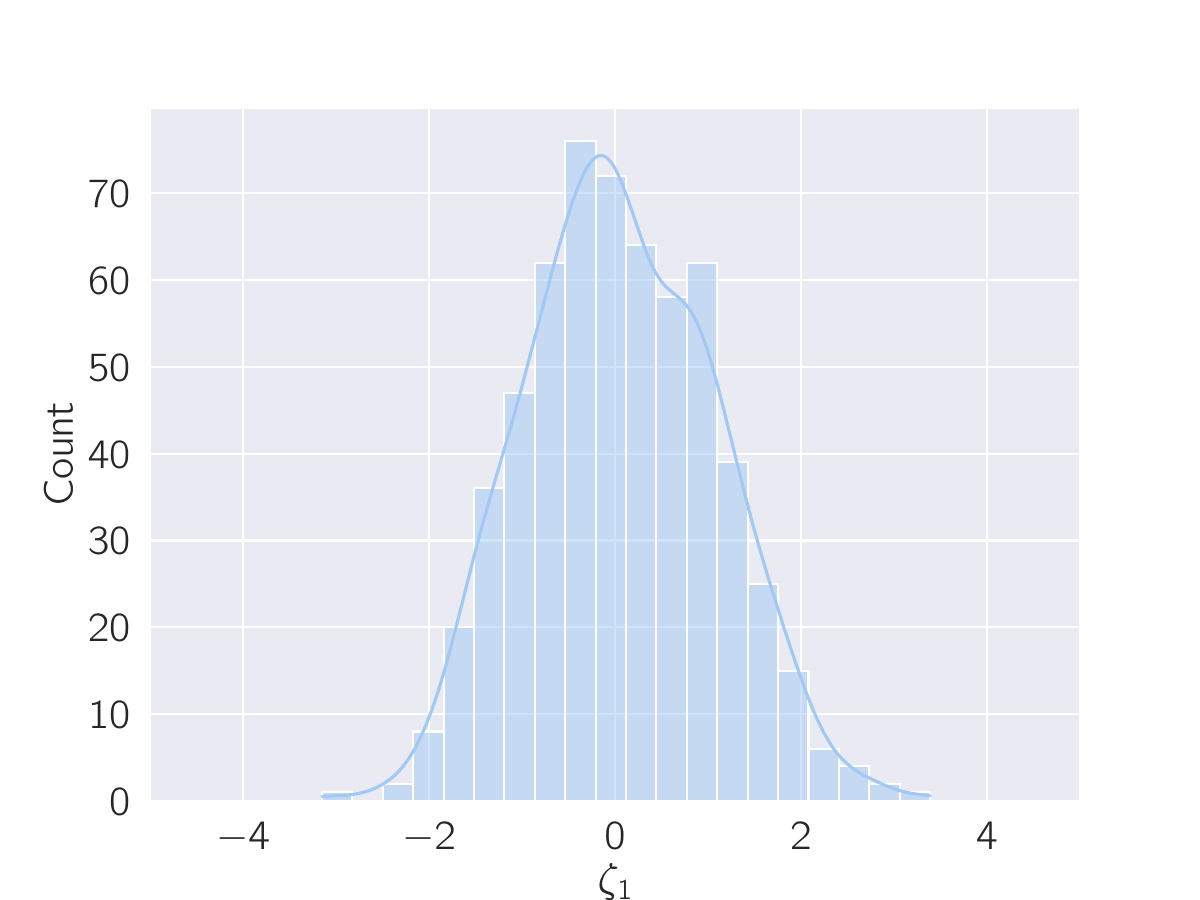}   
\includegraphics[width=34mm]{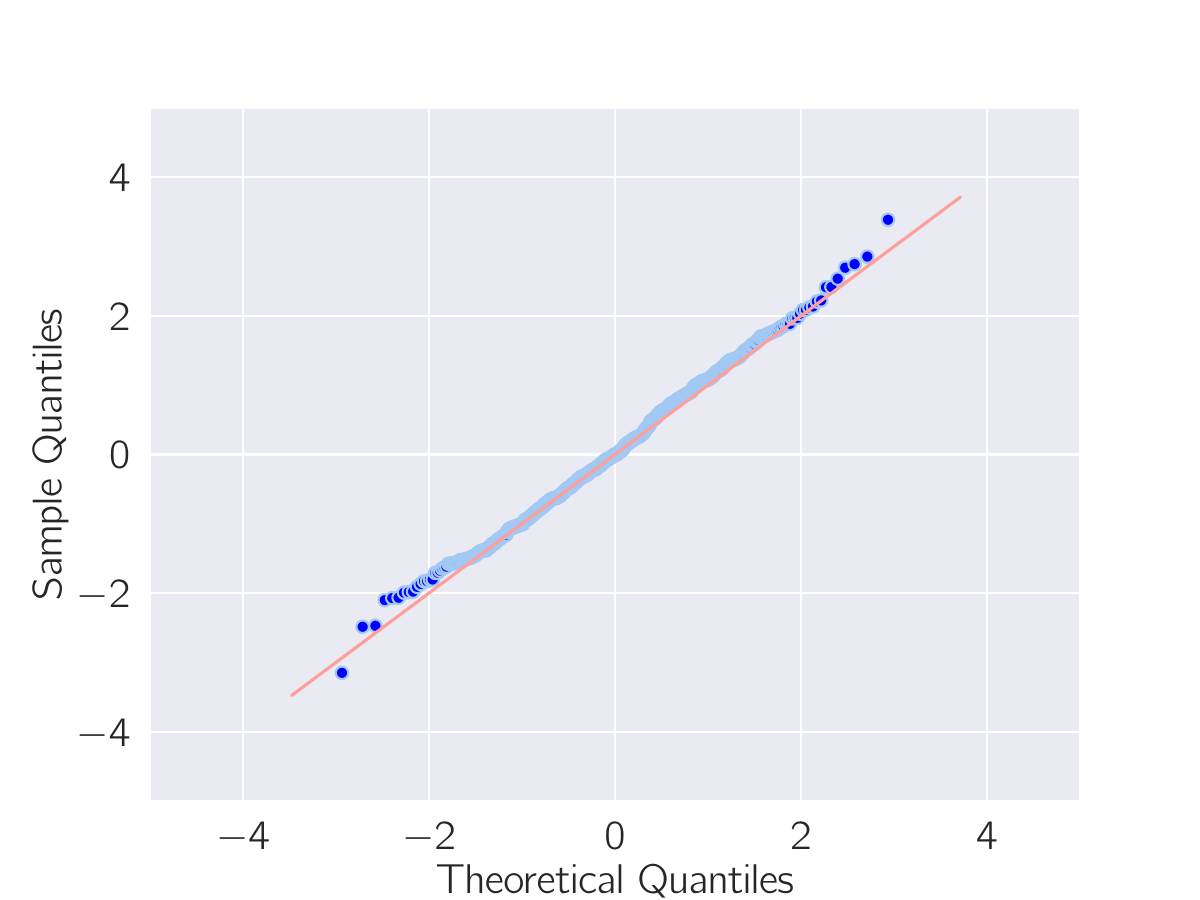}   
\includegraphics[width=34mm]{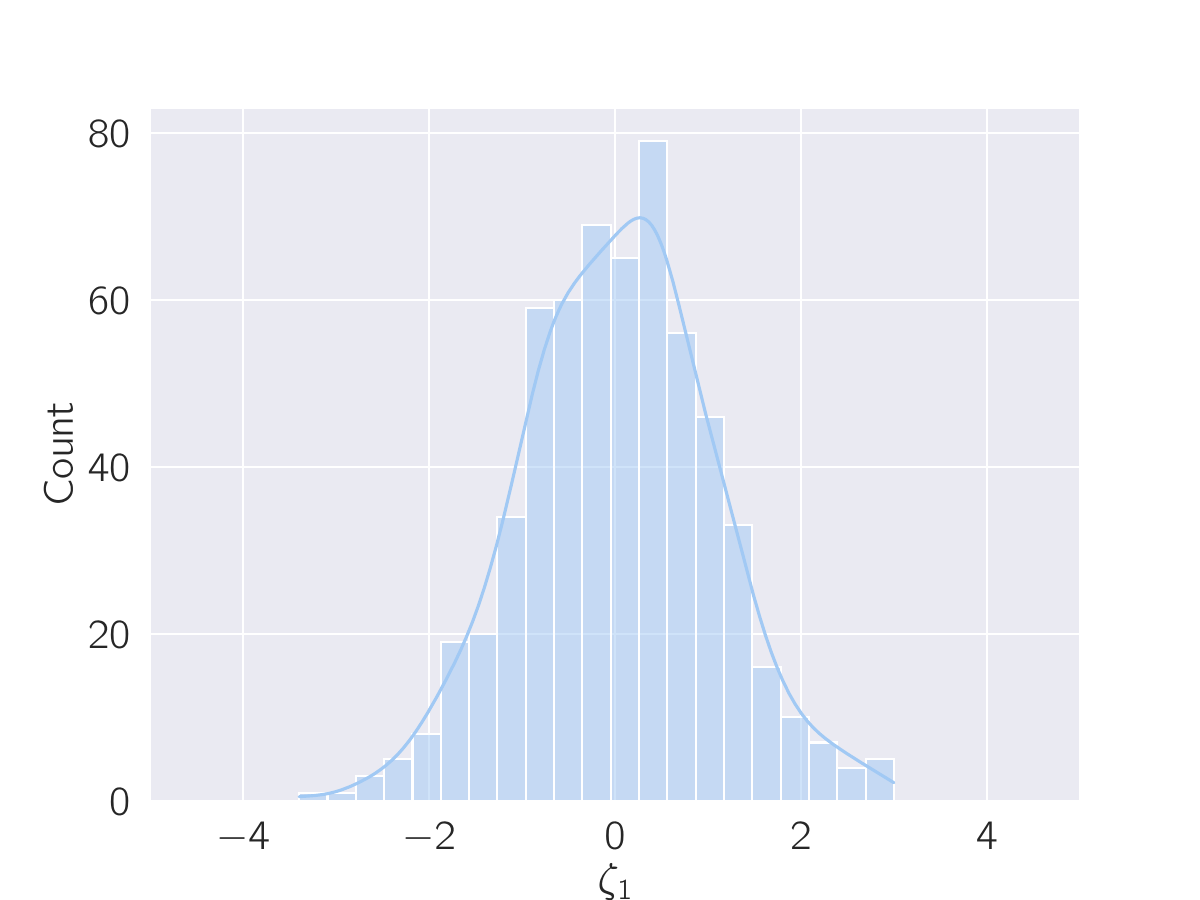}   
\includegraphics[width=34mm]{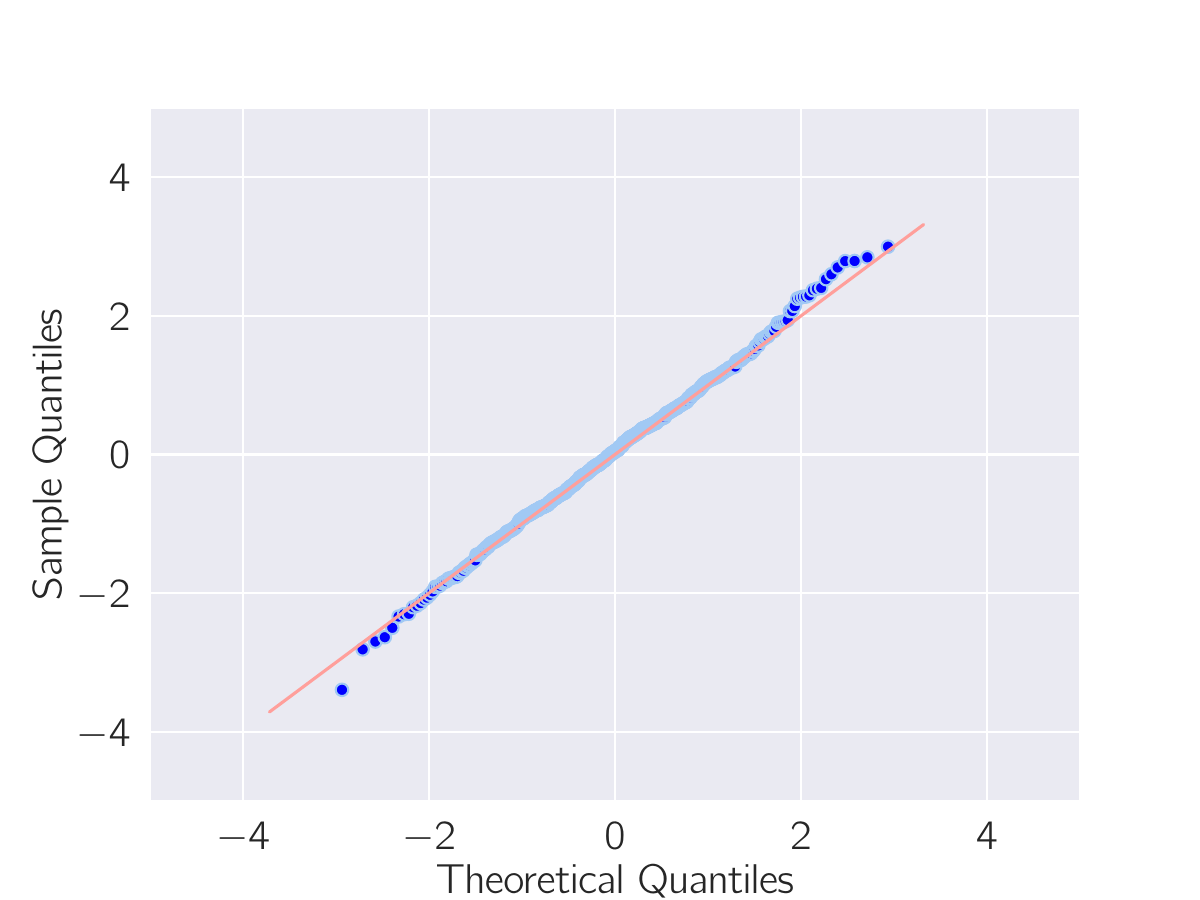}   
\includegraphics[width=34mm]{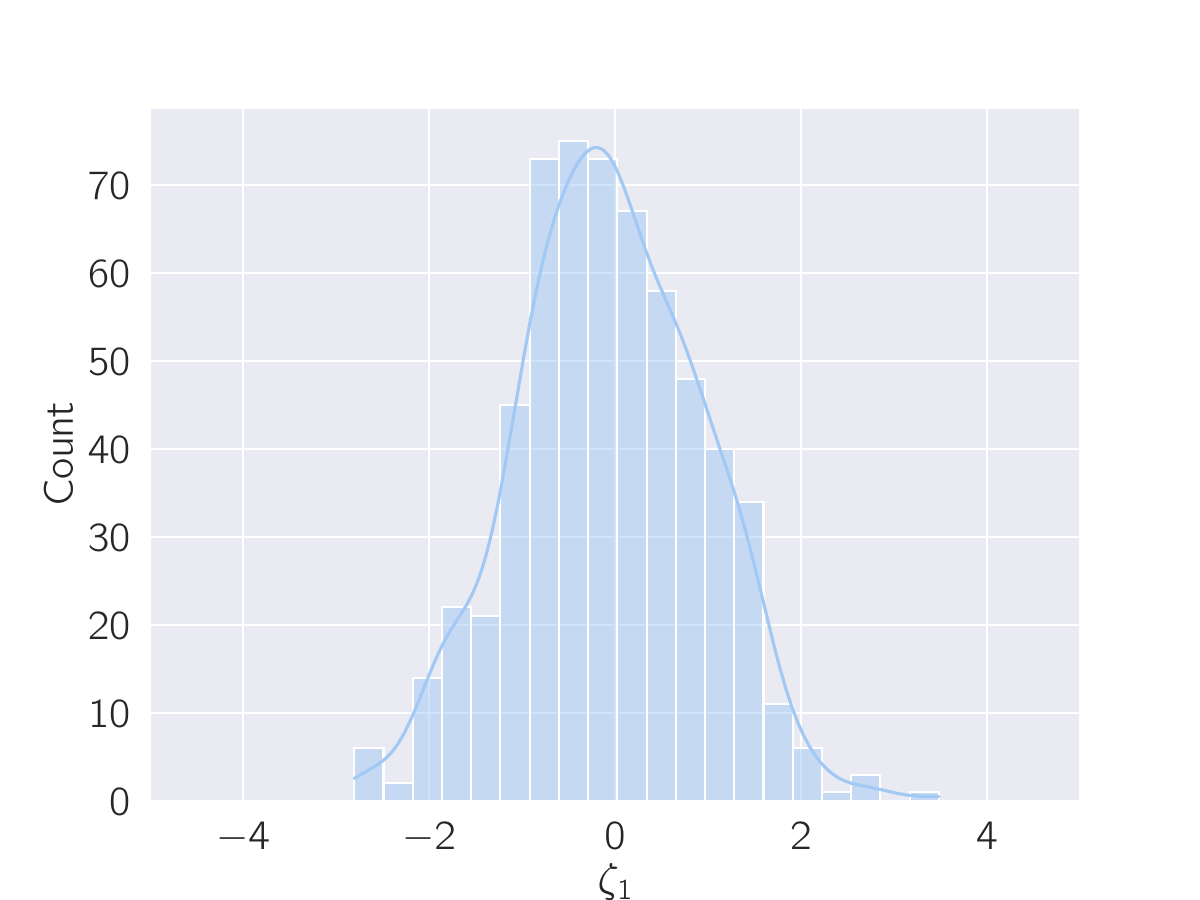}   
\includegraphics[width=34mm]{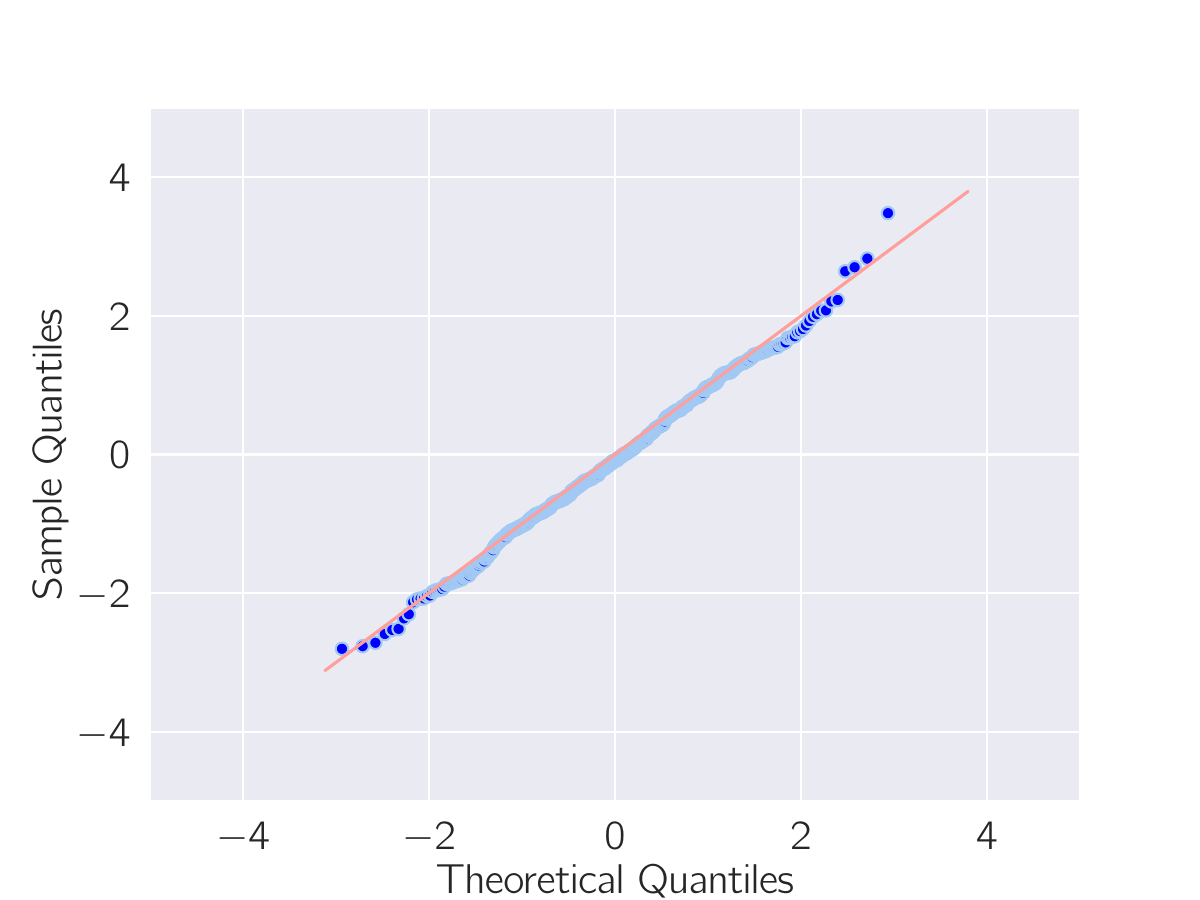}   
\includegraphics[width=34mm]{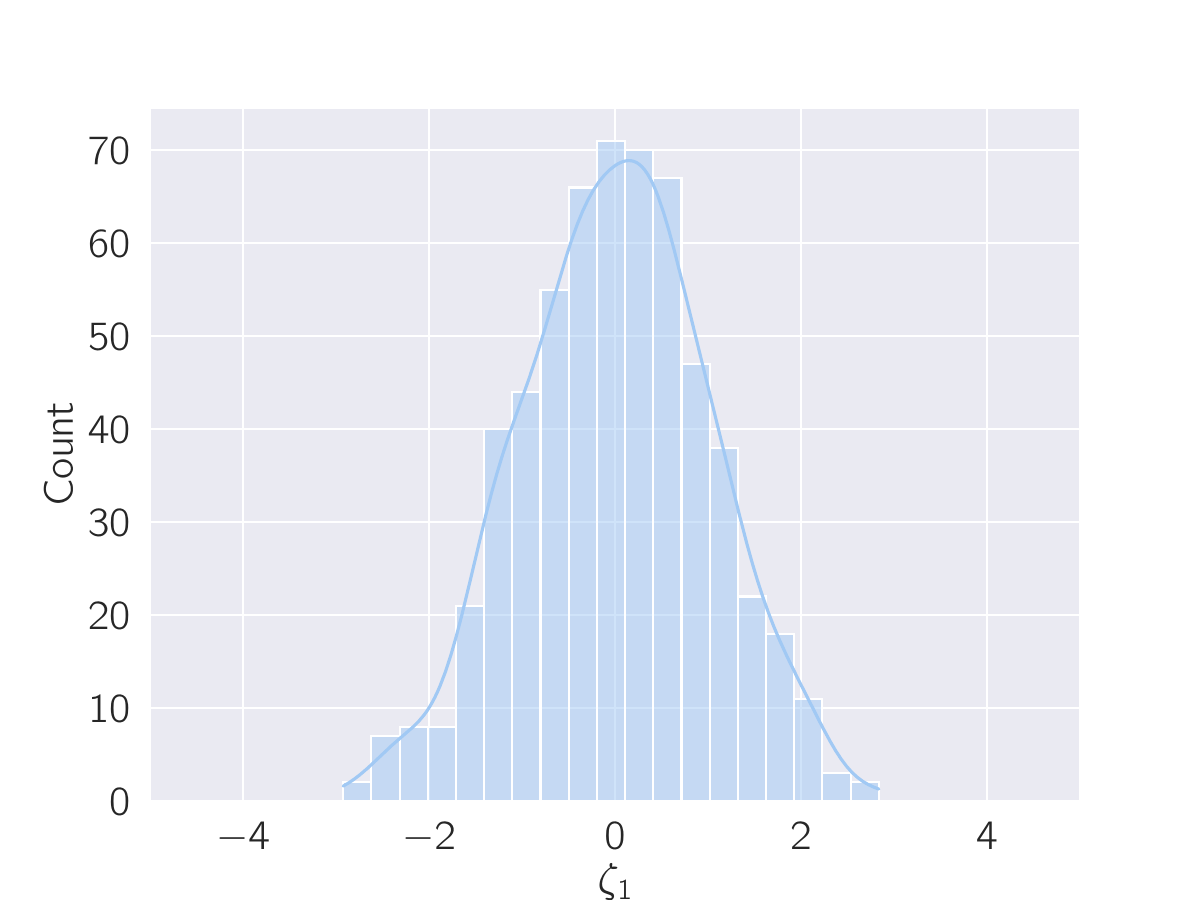}   
\includegraphics[width=34mm]{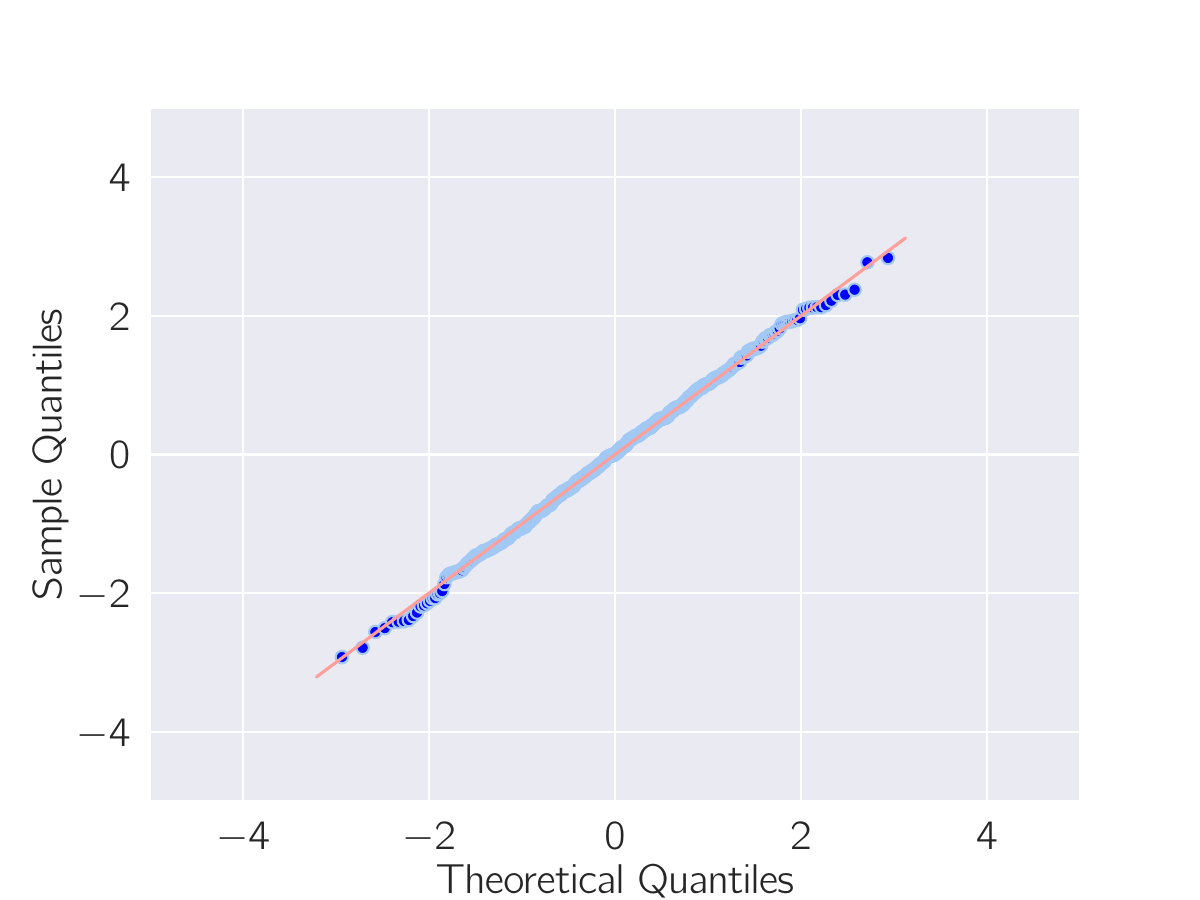}   
\caption{Histogram and QQ-plot for $\zeta_{1}$
     in \eqref{zeta_i}
under Huber Elastic-Net regression for different choices of
tuning parameters $(\lambda, \tau)$. 
Left Top: $(0.036, 10^{-10})$, 
Right Top: $(0.054,0.01)$, 
Left Bottom: $(0.036, 0.01)$, 
Right Bottom: $(0.024, 0.1)$.
Each figure contains 600 data points generated with anisotropic design matrix
and iid $\eps_i$ from the $t$-distribution with 2 degrees of freedom.
A detailed setup is provided in \Cref{sec:simulations}.
}
\label{fig:qq}
\end{figure}

\Cref{thm:normality} is a formal statement regarding the
informal normal approximation
\begin{equation}
    \zeta_{i}
    :=
    \frac{
        r_{i} + \trace [ \bSigma \hbA ] \psi (r_{i}) - \eps_{i}
    }{
        \| \bSigma^{1/2} (\hat \bbeta - \bbeta^*) \|
    }
    \approx N(0,1)
    .
    \label{zeta_i}
\end{equation}
Simulations in \Cref{fig:qq} confirm the normality
of $\zeta_i$ for the Huber loss with Elastic-Net penalty
and four combinations of tuning parameters.
For the square loss $\rho(u)=u^2/2$, because $\psi(u)=u$,
asymptotic normality of the residuals hold in the following form.
\begin{theorem}
    \label{thm:residual-distribution}
    Let \Cref{assumMain} hold with
    $\rho(u)=u^2/2$ and $\bep\sim N(\mathbf{0},\sigma^2\bI_n)$. Then
    for $i=1$,
    \begin{equation}
        \label{eqSquareLossAsymptoticNormality}
         \frac{ 
         (1+\trace [ \bSigma \hbA ])(y_i - \bx_i^\top\hbbeta)
         }{
         (\sigma^2 + \|\bSigma^{1/2}(\hbbeta-\bbeta^*)\|^2)^{1/2}
         }
         \to^d N(0,1)
        \qquad
        \text{ as } n  \to+\infty.
    \end{equation}
\end{theorem}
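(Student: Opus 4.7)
The plan is to deduce \Cref{thm:residual-distribution} from \Cref{thm:normality} by specializing to the square loss and then handling the random scaling $\|\bSigma^{1/2}(\hbbeta-\bbeta^*)\|$. For $\rho(u)=u^2/2$ we have $\psi=\mathrm{id}$, so the exact representation \eqref{eq:representation-tilde} collapses to
\[
    (1+\trace[\bSigma\hbA])\,r_i \;=\; \tilde\eps_i^n + \alpha_n\,\tilde Z_i^n,
    \qquad \alpha_n:=\|\bSigma^{1/2}(\hbbeta-\bbeta^*)\|,
\]
where, since $\eps_i\sim N(0,\sigma^2)$ supplies the fixed marginal $F$, the pair $(\tilde\eps_i^n,\tilde Z_i^n)$ converges in distribution to $N(0,\sigma^2)\otimes N(0,1)$. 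Dividing through by $\sqrt{\sigma^2+\alpha_n^2}$ rewrites the left-hand side of \eqref{eqSquareLossAsymptoticNormality} as
\[
    W_n \;=\; s_n\,(\tilde\eps_i^n/\sigma) + c_n\,\tilde Z_i^n,
    \qquad s_n:=\tfrac{\sigma}{\sqrt{\sigma^2+\alpha_n^2}},\;
    c_n:=\tfrac{\alpha_n}{\sqrt{\sigma^2+\alpha_n^2}},
\]
with $s_n,c_n\in[0,1]$ and $s_n^2+c_n^2=1$, so it suffices to prove $W_n\to^d N(0,1)$.

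To deal with the random coefficients I would argue by tightness/subsequences. The vector $(s_n,c_n)$ takes values in the compact arc $\{(s,c)\in[0,1]^2:s^2+c^2=1\}$, so every subsequence of $(\tilde\eps_i^n/\sigma,\tilde Z_i^n,s_n,c_n)$ has a further subsequence converging jointly in distribution to some $(U,V,S,C)$ by Prokhorov's theorem. \Cref{thm:normality} forces the marginal of $(U,V)$ to be $N(0,1)\otimes N(0,1)$ with $U\perp V$, while $S^2+C^2=1$ almost surely. The continuous mapping theorem then yields $W_n\to^d SU+CV$ along the subsequence, and by the subsequence characterization of weak convergence the full sequence converges to $N(0,1)$ provided every subsequential limit $SU+CV$ is $N(0,1)$.

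The main obstacle is establishing this last property. The cleanest route is to show that $\alpha_n$ concentrates on a deterministic limit $\alpha_*\ge 0$ in probability, which for the square loss with a $\mu$-strongly convex (w.r.t. $\bSigma$) penalty and Gaussian design in the proportional regime follows from standard concentration for penalized least squares; then $(s_n,c_n)$ converges in probability to a deterministic pair $(s_*,c_*)$ with $s_*^2+c_*^2=1$, and Slutsky's lemma gives $W_n\to^d s_*U+c_*V\sim N(0,1)$ directly. An alternative is to revisit the proof of \Cref{thm:normality} and upgrade the joint convergence of $(\tilde\eps_i^n,\tilde Z_i^n)$ to include $\alpha_n$: because the construction of $(\tilde\eps_i^n,\tilde Z_i^n)$ ties them to the single-observation data $(\eps_i,\bx_i)$, whose influence on $\alpha_n$ is only of order $1/n$, the subsequential limit $(S,C)$ should be independent of $(U,V)$, and conditioning on $(S,C)$ together with $U\perp V$ then gives $SU+CV\mid(S,C)\sim N(0,S^2+C^2)=N(0,1)$, hence $SU+CV\sim N(0,1)$ unconditionally. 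Either route closes the proof.
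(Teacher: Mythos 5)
There is a genuine gap, and it sits exactly where you flagged it: the random normalizer. Reducing to \Cref{thm:normality} and writing the statistic as $s_n(\tilde\eps_i^n/\sigma)+c_n\tilde Z_i^n$ with $s_n^2+c_n^2=1$ is fine, but neither of your two routes for handling the random pair $(s_n,c_n)$ is actually available. Route (1) asserts that $\|\bSigma^{1/2}(\hbbeta-\bbeta^*)\|$ concentrates on a deterministic limit ``by standard concentration for penalized least squares.'' Nothing of the sort is proved or assumed in the paper: under \Cref{assumMain} alone the penalty $g$, the covariance $\bSigma$ and $\bbeta^*$ vary arbitrarily with $n$, the ratio $p/n$ need not converge, and the paper deliberately avoids the nonlinear-system machinery (which additionally needs spectral/empirical-distribution convergence hypotheses) that would deliver such a deterministic limit. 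Even concentration around a non-convergent deterministic sequence is not free here, since the map $(\bep,\bG)\mapsto\bh$ is only \emph{locally} Lipschitz with a constant depending on $\|\bpsi\|$ and $\|\bSigma^{1/2}\hbbeta\|$. Route (2) needs asymptotic independence of $(s_n,c_n)$ from $(\tilde\eps_i^n,\tilde Z_i^n)$; the variable $Z$ underlying $\tilde Z_i^n$ is standard normal conditionally on $(\bep,(\bg_l)_{l\ne i})$, hence independent of that $\sigma$-field, but $\|\bh\|$ is \emph{not} measurable with respect to it (it depends on $(\eps_i,\bg_i)$ too), so the claimed independence requires a leave-one-out stability argument that you have not supplied.

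The paper sidesteps all of this. Its proof does not pass through \Cref{thm:normality} at all: it applies \Cref{prop:a-lemma} with $q=p+1$, $\bz=(\bg_i,\eps_i/\sigma)\sim N(\bzero,\bI_{p+1})$ conditionally on $(\bg_l,\eps_l)_{l\ne i}$, and $\bff=(\bh,-\sigma)$, whose last coordinate is constant. Then $\bff^\top\bz=-r_i$, $\|\bff\|^2=\|\bh\|^2+\sigma^2$ is \emph{exactly} the normalizer appearing in \eqref{eqSquareLossAsymptoticNormality}, and the divergence term equals $\trace[\bA]r_i$ up to a negligible correction, so the proposition couples the normalized statistic directly to an exact $N(0,1)$ variable with $L^2$ error $O(n^{-1/2})$. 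The random denominator is absorbed into $\|\bff\|$ inside the normal-approximation lemma, and no concentration or deterministic limit for the out-of-sample error is ever needed. If you want to salvage your reduction, you would have to either prove the concentration statement you invoke (which is false to hope for at this level of generality) or carry out the leave-one-out independence argument; the cleaner fix is to adopt the paper's choice of $\bz$ and $\bff$.
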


It is informative to provide a sketch of the proof
of \Cref{thm:normality} to explain
the appearance of $\psi(r_i)$ and $\trace[\bSigma\hbA]$
in the normal approximation results
\eqref{eq:Rem_i} and \eqref{zeta_i}.
A variant of the normal approximation of \cite{bellec_zhang2019second_poincare}
proved in the supplement states that for a differentiable function
$\mathbf{f}:\R^q\to\R^q\setminus\{\mathbf0\}$ and $\bz\sim N(\bzero,\bI_q)$,
there exists $Z\sim N(0,1)$ such hat
\begin{equation}
\E\Bigl[
\Bigl|
\frac{\mathbf{f}(\bz)^\top\bz - \sum_{k=1}^{q}(\partial/\partial z_{k}) f_{k} (\bz)}{\|\mathbf{f}(\bz)\|}
    - Z
    \Bigr|^2
\Bigr]
\le
\C \E\Bigl[\frac{\sum_{k=1}^q\|(\partial/\partial z_k)\mathbf{f}(\bz)\|^2}{\|\mathbf{f}(\bz)\|^{2}}\Bigr]
.
\end{equation}
Some technical hurdles aside,
the proof sketch is the following:
Apply the previous display to $q=p$,
$\bz = \bSigma^{-1/2}\bx_i$ conditionally on 
$(\bep,(\bx_l)_{l\in[n]\setminus\{i\}})$
and to $\mathbf{f}(\bz) = \bSigma^{1/2}(\hbbeta-\bbeta^*)$ in the simple case where $\bbeta^*=0$
(this amounts to performing a change of variable
by translation of $\hbbeta$ to $\hbbeta-\bbeta^*$).
Then the right-hand side of the previous display is negligible
in probability compared to $Z$, and in the left-hand side
$\mathbf{f}(\bz)^\top\bz = \bx_i^\top(\hbbeta-\bbeta^*)$
and 
$\sum_{k=1}^{q}(\partial/\partial z_{k}) f_{k} (\bz)
\approx \trace[\bSigma\hbA]\psi(r_i)$
as the second term in \eqref{eq:differentiability-formulae}
is negligible. This completes the sketch of the proof
of \eqref{zeta_i}.

\paragraph{Proximal operator representation.}
From the above asymptotic normality results,
a stochastic representation for the $i$-th residual
$r_i=y_i - \bx_i^\top\smash{\hbbeta}$ can be obtained as follows:
With $\prox[t\rho](u)$ the proximal operator of $x\mapsto t\rho(x)$
defined as the unique solution $z\in\R$ of equation
$z + t\psi(z) = u$,
$$
r_i = y_i - \bx_i^\top\hbbeta
= \prox[\hat t \rho]\bigl(
    \tilde \eps_i^n + \|\bSigma^{1/2}(\hbbeta-\bbeta^*)\| \tilde Z_i^n
\bigr) 
\qquad \text{ with } \hat t = \trace[\bSigma\hbA]
$$
where
$(\tilde \eps_i^n,\tilde Z_i^n)$ converges in distribution to 
the product measure $F\otimes N(0,1)$ where $F$ is the law of $\eps_i$.

\section{A proxy of the out-of-sample error if 
\texorpdfstring{$\mathbf{\Sigma}$}{bSigma} is known}
\label{sec:proxy}
The approximations of the previous sections
for $r_i + \trace[\bSigma\hbA]\psi(r_i)$
and the fact that $\eps_i$ is independent of $Z_i\sim N(0,1)$
in \eqref{eq:Rem_i} suggest that
$(r_i + \trace[\bSigma\hbA]\psi(r_i))^2
\approx \eps_i^2 + \|\bSigma^{1/2}(\hbbeta-\bbeta^*)\|^2Z_i^2$;
and averaging over $\{1,...,n\}$ one can hope for the approximation
$ \|\br + \trace[\bSigma\hbA]\psi(\br)\|^2/n
\approx \|\bep\|^2/n + \|\bSigma^{1/2}(\hbbeta-\bbeta^*)\|^2$.
The following result makes this heuristic precise.

\begin{theorem}
    \label{thm:out-of-sample}
    Let \Cref{assumMain} be fulfilled
    and $\hbA$ be given by \Cref{thm:differentiability}.
    Then 
    \begin{align*}
        \|\bSigma^{1/2}(\hbbeta-\bbeta^*)\|^2 + \|\bep\|^2/n
        = \big\| \br + \trace[\bSigma\hbA]\psi(\br)\big\|^2/n
        + O_P(n^{-1/2}) \Rem,
    \end{align*}
    where 
    $
    \Rem := 
     \| \bSigma^{1/2} (\hat \bbeta - \bbeta^*) \|^{2} 
    + \frac1n \| \psi (\br) \|^{2} 
    + (\| \bSigma^{1/2} (\hat \bbeta - \bbeta^*) \|^{2} 
    + \frac1n\| \psi (\br) \|^{2} )^{1/2} \| \frac{1}{\sqrt n}\bep\|
    $.
    Thus
    $$
        \|\bSigma^{1/2}(\hbbeta-\bbeta^*)\|^2 + \|\bep\|^2/n
        = (1+ O_P(n^{-1/2}))
        \big\| \br + \trace[\bSigma\hbA]\psi(\br)\big\|^2/n.
    $$
\end{theorem}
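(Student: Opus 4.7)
The plan is to decompose both sides algebraically and then invoke a global Stein/Poincar\'e argument. Setting $t := \trace[\bSigma\hbA]$, $s := \|\bSigma^{1/2}(\hbbeta-\bbeta^*)\|$, $h := \hbbeta - \bbeta^*$ and $\bu\in\R^n$ with entries $u_i := \bx_i^\top h - t\psi(r_i)$, the identity $r_i = \eps_i - \bx_i^\top h$ gives $r_i + t\psi(r_i) = \eps_i - u_i$ and hence
\begin{equation*}
    \|\br + t\psi(\br)\|^2 - \|\bep\|^2 - ns^2 \;=\; \bigl(\|\bu\|^2 - ns^2\bigr) - 2\bep^\top \bu.
\end{equation*}
After dividing by $n$, each of the two pieces on the right must be shown to be $O_P(n^{-1/2})\Rem$.

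One might first hope that \Cref{thm:normality} alone suffices: it provides, for each $i$, a standard normal $Z_i$ with $|u_i + sZ_i| \le O_P(n^{-1/4})(|\psi(\eps_i)|+s)$ (since $u_i = -(r_i + t\psi(r_i) - \eps_i)$), with $O_P$ constants depending only on $\gamma,\mu$, hence uniform in $i$. Squaring and summing gives $\sum_i(u_i+sZ_i)^2 = O_P(n^{-1/2})(\|\psi(\bep)\|^2+ns^2)$, which after converting $\|\psi(\bep)\|$ to $\|\psi(\br)\|$ via the $1$-Lipschitz property of $\psi$ yields exactly the $s^2 + \|\psi(\br)\|^2/n$ contribution of $\Rem$. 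However, controlling the remaining pieces $\sum_i sZ_i(u_i+sZ_i)$ (from the expansion $u_i^2 = s^2 Z_i^2 + (u_i+sZ_i)^2 - 2sZ_i(u_i+sZ_i)$) and the concentration of $s^2\sum_i Z_i^2/n$ around $s^2$ via Cauchy--Schwarz loses a factor of $n^{1/4}$ and falls short of the required $O_P(n^{-1/2})$ rate; the cross term $\bep^\top\bu/n$ suffers from the same issue, as naive Cauchy--Schwarz gives only $O_P(1)(s+\|\psi(\br)\|/\sqrt n)\|\bep\|/\sqrt n$.

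\emph{The main obstacle is therefore to obtain the extra $n^{1/4}$ gain beyond the per-$i$ statement of \Cref{thm:normality}.} The route is to apply the second-order Stein/Poincar\'e bound of \cite{bellec_zhang2019second_poincare}\cite[\S6]{bellec2020out_of_sample} directly to the global quantities $\|\bu\|^2/n$ and $\bep^\top\bu/n$, viewed as functions of $\mathrm{vec}(\bX)$ (conditionally on $\bep$ for the cross term). The differentiability formulae of \Cref{thm:differentiability} give closed-form derivatives of $\bu$ through $\hbA$, so all ingredients of this global bound are computable in terms of $\hbA$, $\psi(\br)$, $\psi'(\br)$ and the data. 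The final non-trivial check is that the resulting second-order remainder telescopes into precisely the $\Rem$ expression in the theorem; the bound $\|\bSigma^{1/2}\hbA\bSigma^{1/2}\|_{op}\le (n\mu)^{-1}$ from \Cref{thm:differentiability} is crucial for extracting the cross-term summand $(s^2+\|\psi(\br)\|^2/n)^{1/2}\|\bep\|/\sqrt n$ of $\Rem$ with the correct $n^{-1/2}$ prefactor.
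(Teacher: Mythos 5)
Your proposal is correct and takes essentially the same route as the paper: your vector $\bu$ with $u_i=\bx_i^\top h - t\psi(r_i)$ is exactly the paper's $\bG\bh-\trace[\bA]\bpsi$, so your two pieces $\|\bu\|^2-ns^2$ and $\bep^\top\bu$ coincide with the paper's $-\xi_V$ and $-\xi_{VI}$. The paper controls them just as you outline, by applying global second-order Stein/chi-square bounds from \cite{bellec2020out_of_sample} to these quantities as functions of the Gaussian design (with $\bep$ entering as a constant for the cross term), using the derivative formulae of \Cref{thm:differentiability} and the bound $\|\bSigma^{1/2}\hbA\bSigma^{1/2}\|_{op}\le(n\mu)^{-1}$ to absorb the remainder into $\Rem$.
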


\Cref{thm:out-of-sample} provides a first candidate,
$\big\| \br + \trace[\bSigma\hbA]\psi(\br)\big\|^2/n$
to estimate
\begin{equation}
    \|\bSigma^{1/2}(\hbbeta-\bbeta^*)\|^2 + \|\bep\|^2/n.
    \label{target}
\end{equation}
Estimation of \eqref{target} is useful
as $\|\bep\|^2/n$ is independent of the choice of the estimator
$\hbbeta$ and in particular independent of the chosen loss-penalty
pair in \eqref{hbeta}. 
Given two or more estimators \eqref{hbeta},
choosing the one with smallest 
$\big\| \br + \trace[\bSigma\hbA]\psi(\br)\big\|^2$ is thus a
good proxy for minimizing the out-of-sample error.

\begin{corollary}
    \label{corSelection}
    Let $\hbbeta, \tbbeta$ be two $M$-estimators \eqref{hbeta}
    \Cref{assumMain}
    with loss-penalty pair $(\rho,g)$
    and $(\tilde \rho,\tilde g)$ respectively.
    Assume that both satisfy \Cref{assumMain}
    and let $\psi=\rho'$ and $\tilde \psi=\tilde\rho'$.
    Let $\br=\by-\bX\hbbeta, \tbr=\by-\bX\tbbeta$ be the 
    residuals, $\hbA,\tbA$ be the corresponding matrices
    of size $p\times p$ given by \Cref{thm:differentiability}.
    Further assume that both estimators satisfy
    \Cref{assumAdditional}
    and 
    that $\bep$ has iid coordinates
    independent with
    $\E[|\eps_i|^{1+q}] \le M$
    for constants $q\in(0,1),M>0$ independent of $n,p$.
    Let
    $\Omega = \{
        \|\bX\bSigma^{-1/2}\|_{op}\le 2\sqrt n + \sqrt p
        \} \cap
        \{
            \|\bep\|^2 \le n^{2/(1+q)}
        \}$.
    Then for any $\eta>0$ independent of $n,p$
    there exists $C(\gamma,\mu,\eta,q,M)>0$
    depending only on  $\{\gamma,\mu,\eta,q,M\}$ such that
    \begin{multline*}
    \P\Bigl(
    \|\bSigma^{1/2}(\hbbeta-\bbeta^*)\|^2
    -
    \|\bSigma^{1/2}(\tbbeta-\bbeta^*)\|^2 > \eta,
    ~
    ~
    \|\br + \trace[\bSigma\hbA] \psi( \br )\|^2
    \le
    \|\tbr + \trace[\bSigma\tbA] \tilde\psi( \tbr )\|^2
    \Bigr)
    \\\le
    C(\gamma,\mu,\eta,q,M)n^{-q/(1+q)}
    +
    \P(\Omega^c)
    \to 0.
    \end{multline*}
\end{corollary}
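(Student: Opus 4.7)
Apply \Cref{thm:out-of-sample} to each of $\hbbeta$ and $\tbbeta$, subtract the two identities to cancel the unobservable noise term $\|\bep\|^2/n$, control the remainders deterministically on $\Omega$ via \Cref{assumAdditional}, and close with Markov's inequality.

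Quantitatively, \Cref{thm:out-of-sample} produces random scalars $W_1,W_2=O_P(n^{-1/2})$ satisfying
\[
\|\bSigma^{1/2}(\hbbeta-\bbeta^*)\|^2+\tfrac 1 n\|\bep\|^2
= \tfrac 1 n\bigl\|\br+\trace[\bSigma\hbA]\psi(\br)\bigr\|^2 + W_1\Rem_1,
\]
and analogously for $(\tbbeta,\tbr,\tbA,W_2,\Rem_2)$. Subtracting cancels $\|\bep\|^2/n$; on the event in the statement the left-hand side of the resulting identity exceeds $\eta$ while the difference of criteria is $\le 0$, so $|W_1|\Rem_1+|W_2|\Rem_2>\eta$. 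Under \Cref{assumAdditional}, $|\psi|\le M$ gives $\|\psi(\br)\|^2/n\le M^2$, and the optimality of $\hbbeta$ tested against $\mathbf 0$ combined with the $\mu$-strong convexity of $g$ w.r.t.\ $\bSigma$ and the $M$-Lipschitz property of $\rho$ yields
\[
(\mu/2)\|\bSigma^{1/2}\hbbeta\|^2
\le g(\hbbeta)-g(\mathbf 0)
\le \tfrac 1 n\sum_{i=1}^n\bigl(\rho(y_i)-\rho(y_i-\bx_i^\top\hbbeta)\bigr)
\le \tfrac{M}{\sqrt n}\|\bX\hbbeta\|.
\]
On $\Omega$, $\|\bX\hbbeta\|\le(2\sqrt n+\sqrt p)\|\bSigma^{1/2}\hbbeta\|$, hence $\|\bSigma^{1/2}\hbbeta\|=O(1)$, and combined with $\|\bSigma^{1/2}\bbeta^*\|^2\le M$ also $\|\bSigma^{1/2}(\hbbeta-\bbeta^*)\|^2=O(1)$ (the same bound holds for $\tbbeta$). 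Since $\|\bep\|/\sqrt n\le n^{(1-q)/(2(1+q))}$ on $\Omega$, substituting into the definition of $\Rem$ yields $\Rem_i\le C_*\,n^{(1-q)/(2(1+q))}$ on $\Omega$, with $C_*=C_*(\gamma,\mu,M)$.

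Finally, on $\Omega$ the event forces $|W_1|+|W_2|>\eta/(C_*n^{(1-q)/(2(1+q))})$. Since the proof of \Cref{thm:out-of-sample} rests on Gaussian Poincar\'e/Stein estimates that deliver explicit $L^2$ (hence $L^1$) control, $\E|W_i|\le C_\dagger(\gamma,\mu)/\sqrt n$, and Markov's inequality combined with the arithmetic identity $\tfrac 1 2-\tfrac{1-q}{2(1+q)}=\tfrac{q}{1+q}$ bounds this probability by $(2C_\dagger C_*/\eta)\,n^{-q/(1+q)}$. Adding $\P(\Omega^c)$ yields the stated inequality; the whole bound tends to $0$ by standard Gaussian operator-norm concentration together with the moment assumption on $\bep$. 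The main obstacle is upgrading the qualitative $O_P(n^{-1/2})$ statement in \Cref{thm:out-of-sample} to a quantitative first-moment estimate that permits Markov; this requires tracking the constants produced by the Gaussian Poincar\'e/Stein apparatus underlying the theorem.
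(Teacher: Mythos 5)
Your proposal is correct and follows essentially the same route as the paper: bound $\|\bSigma^{1/2}\hbbeta\|$ and $\|\psi(\br)\|/\sqrt n$ deterministically on $\Omega$ via \Cref{assumAdditional} and strong convexity (the paper uses the KKT conditions and monotonicity of the subdifferential where you use the basic inequality against $\mathbf 0$, but both yield the same $O(1)$ bound), subtract the two instances of \Cref{thm:out-of-sample} to cancel $\|\bep\|^2/n$, and close with Markov after absorbing the factor $n^{(1-q)/(2(1+q))}$ from $\|\bep\|/\sqrt n$ on $\Omega$. The ``main obstacle'' you flag is already resolved in the paper, since the error term behind \Cref{thm:out-of-sample} is established there as an explicit first-moment bound $\E[I_\Omega\,|W_i|]\le \C(\gamma,\mu)n^{-1/2}$ (inequality \eqref{eq:bound-xi_VI}), which is exactly the quantitative control your Markov step requires.
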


Provided that the noise random variables $\eps_i$
have at least $1+q$ moments,
\Cref{corSelection} implies that with probability approaching one
given two $M$-estimators $\hbbeta$ and $\tbbeta$,
choosing the estimator corresponding to the
smallest criteria among 
$\|\br + \trace[\bSigma\hbA]\br\|^2$
and 
$\|\tbr + \trace[\bSigma\tbA]\tbr\|^2$
leads to the smallest out-of-sample error, up to any small constant $\eta>0$.
This allows noise random variables $\eps_i$ with infinite variance.
A similar result can be obtained to select among $K$ different
$M$-estimators \eqref{hbeta}.

\begin{corollary}
    \label{corSelection2}
    As in \Cref{corSelection}, assume $\E[|\eps_i|^{1+q}]\le M$
    and let $\hbbeta_1,...,\hbbeta_K$ be $M$-estimators of the form
    \eqref{hbeta} with loss-penalty pair $(\rho_k,g_k)$
    satisfying \Cref{assumMain,assumAdditional}.
    For each $k=1,...,K$, let $\br_k = \by - \bX\hbbeta_k$
    be the residuals and $\hbA_k$ be the corresponding matrix of size $p\times p$
    from \Cref{thm:differentiability}.
    Let $\hat k \in \argmin_{k=1,...,K}\|\br_k +\trace[\bSigma\hbA_k]\psi_k(\br_k)\|$ where $\psi_k=\rho_k'$.
    Then if $(\gamma,\mu,\eta,q,M)$ are constants independent of $n,p$
    \begin{equation*}
    \P\bigl(
        \|\bSigma^{1/2}(\hbbeta_{\hat k}-\bbeta^*)\|^2
        >
        \min_{k=1,...,K}
        \|\bSigma^{1/2}(\hbbeta_k-\bbeta^*)\|^2 
        + \eta
    \bigr)
    \to 0 
    \qquad \text{ if }
    K=o(n^{q/(1+q)}).
    \end{equation*}
    In other words,
    if $K=o(n^{q/(1+q)})$, the selector $\hat k$ picks an optimal
    M-estimator in the sense
    $$\|\bSigma^{1/2}(\hbbeta_{\hat k}-\bbeta^*)\|^2
    -
    \min_{k=1,...,K}
    \|\bSigma^{1/2}(\hbbeta_k-\bbeta^*)\|^2 
    \to^P 0.
    $$
\end{corollary}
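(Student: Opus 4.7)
The plan is to reduce \Cref{corSelection2} to a single union bound over the $K$ candidate estimators of a per-estimator approximation guarantee, rather than the per-pair argument underlying \Cref{corSelection}. The key observation is that only a factor of $K$ (not $K^2$) is needed because the shared quantity $\|\bep\|^2/n$ appears in the approximation of \Cref{thm:out-of-sample} and cancels when comparing the criterion of $\hbbeta_{\hat k}$ against any other candidate.

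Introduce for each $k\in\{1,\dots,K\}$ the centered criterion $L_k := \tfrac1n\|\br_k + \trace[\bSigma\hbA_k]\psi_k(\br_k)\|^2 - \tfrac1n\|\bep\|^2$ and the event $E_k := \{|L_k - \|\bSigma^{1/2}(\hbbeta_k-\bbeta^*)\|^2|\le \eta/4\}$. The first step is to show, by revisiting the proof of \Cref{corSelection}, that for every $k$
\[
\P(E_k^c) \le C_1\, n^{-q/(1+q)} + \P(\Omega^c),
\]
where $C_1$ depends only on $(\gamma,\mu,\eta,q,M)$ and $\Omega$ is the event from \Cref{corSelection}. The rate $n^{-q/(1+q)}$ arises from \Cref{thm:out-of-sample} combined with bounds valid under \Cref{assumAdditional} and on $\Omega$: by \Cref{thm:normality} $\|\bSigma^{1/2}(\hbbeta_k-\bbeta^*)\|^2 = O_P(1)$, $\|\psi_k(\br_k)\|^2/n\le M^2$ by $|\psi_k|\le M$, and $\|\bep/\sqrt n\| \le n^{(1-q)/(2(1+q))}$ on $\Omega$, so $\Rem_k = O_P(n^{(1-q)/(2(1+q))})$ and $n^{-1/2}\Rem_k = O_P(n^{-q/(1+q)})$. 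A polynomial tail for this $O_P$ bound comes from the moment estimates that already appear inside the proof of \Cref{corSelection}, combined with Markov's inequality.

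Second, set $G := \bigcap_{k=1}^K E_k$, so $\P(G^c) \le K\, C_1 n^{-q/(1+q)} + \P(\Omega^c) \to 0$ under $K = o(n^{q/(1+q)})$, using the $(1+q)$-moment assumption on $\eps_i$ (via Markov on $\E[\|\bep\|^{1+q}]\le nM$) and the standard Gaussian operator norm concentration for $\bX\bSigma^{-1/2}$ to bound $\P(\Omega^c)$. Third, on $G$, note that $\hat k \in \arg\min_k L_k$ because squaring and subtracting the common constant $\|\bep\|^2/n$ preserves the minimizing index, and chain the inequalities
\[
\|\bSigma^{1/2}(\hbbeta_{\hat k}-\bbeta^*)\|^2 \le L_{\hat k} + \tfrac{\eta}{4} \le L_{k^*} + \tfrac{\eta}{4} \le \|\bSigma^{1/2}(\hbbeta_{k^*}-\bbeta^*)\|^2 + \tfrac{\eta}{2}
\]
for any $k^* \in \arg\min_k\|\bSigma^{1/2}(\hbbeta_k-\bbeta^*)\|^2$. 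This places the bad event of \Cref{corSelection2} inside $G^c$, whose probability vanishes.

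The main obstacle is Step 1: the statement of \Cref{thm:out-of-sample} provides only a qualitative $O_P$ bound, whereas the union bound over $K = o(n^{q/(1+q)})$ demands an explicit polynomial tail with rate $n^{-q/(1+q)}$. Obtaining this requires inspecting the proof of \Cref{thm:out-of-sample} (or equivalently of \Cref{corSelection}), extracting a moment bound on $n^{1/2}\Rem_k^{-1}$ times the approximation error, and combining it with the deterministic bound $\|\bep\|^2\le n^{2/(1+q)}$ valid on $\Omega$ together with Markov's inequality. Once this quantitative version is in hand, the remaining union bound and monotone comparison above are elementary.
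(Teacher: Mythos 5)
Your proposal is correct and follows essentially the same route as the paper: the paper likewise works on the event $\Omega$ where $\|\bSigma^{1/2}(\hbbeta_k-\bbeta^*)\|^2+\|\psi_k(\br_k)\|^2/n$ is deterministically bounded (via the KKT/strong-convexity argument under \Cref{assumAdditional}, which is the quantitative substitute for your appeal to \Cref{thm:normality}), sums the $L^1$ approximation bound \eqref{eq:bound-xi_VI} over $k=1,\dots,K$ to pick up a single factor of $K$, and applies Markov plus the triangle inequality to the suboptimality event — which is just your union bound over the events $E_k$ phrased as Markov on the sum. No substantive difference.
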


Given $K$ different loss-penalty pairs
and the corresponding $M$-estimators in \eqref{hbeta},
minimizing the criterion $\|\br + \trace[\bSigma \hbA]\br\|$ thus provably
selects a loss-penalty pair that leads to an optimal out-of-sample
error, up to an arbitrary small constant $\eta>0$ independent of $n,p$.
The requirement $K=o(n^{q/(1+q)})$ means that the cardinality
of the collection of $M$-estimators to select from should grow
more slowly than a power of $n$. This is typically satisfied
for default tuning parameter grids in popular libraries
(e.g., \verb|sklearn.linear_model.Lasso| from \cite{pedregosa2011scikit})
with tuning parameters evenly spaced in a log-scale that
consequently have cardinality logarithmic in the parameter range.
The major drawback of the criterion $\|\br + \trace[\bSigma \hbA]\br\|$
is the dependence through $\trace[\bSigma \hbA]$
on the covariance $\bSigma$ of the design,
which is typically unknown. The next section introduces an
estimator of $\trace[\bSigma \hbA]$ that does not require the knowledge of
$\bSigma$.

\section{Degrees of freedom and estimating 
\texorpdfstring{$\trace[\bSigma\hbA]$}{tr[bSigma hbA]}
without the knowledge of 
\texorpdfstring{$\bSigma$}{bSigma}}
\label{sec:edf}

This section focuses on estimating $\trace[\bSigma\hbA]$.
The matrix $\hbA$ from \Cref{thm:differentiability}
can be estimated from the data $(\by,\bX)$ in the sense
that $\hbA$ is a measurable function of $(\by,\bX)$
(thanks to the observation that derivatives are limits,
and limits of measurable functions are again measurable).
The difficulty is thus to estimate $\trace[\bSigma\hbA]$
without the knowledge of $\bSigma$. To illustrate this difficulty,
consider Ridge regression with square loss
$\rho(u)=u^2/2$ and penalty $g(\bb)=\tau\|\bb\|^2/2$.
Then 
$\hbbeta(\by,\bX) = (\bX^\top\bX + \tau n \bI_p)^{-1}\bX^\top\by$
and $\hbA$ in \Cref{thm:differentiability}
is given explicitly by $\hbA= (\bX^\top\bX + \tau n \bI_p)^{-1}$ and
$$\trace[\bSigma\hbA] = \trace[(\bG^\top\bG + n\tau \bSigma^{-1})^{-1}],
\qquad
\text{ where }
\bG=\bX\bSigma^{-1/2}.$$
Above, $\bG$ is a random matrix with iid $N(0,1)$ entries
the value of $\trace[\bSigma\hbA]$ is
highly dependent on the spectrum of $\bSigma^{-1}$.
In this particular case, the limit
of $\trace[(\bG^\top\bG + n\tau \bSigma^{-1})^{-1}]$
can be obtained using random matrix theory \citep{marvcenko1967distribution}
as the limiting behavior of the Stieltjes transform of 
$\bG^\top\bG/n + \tau\bSigma^{-1}$ and its spectral distribution is known;
however the limit of the spetral distribution 
depends on the spectrum of $\tau\bSigma^{-1}$.
This is not desirable here as we wish to construct estimators that 
require no knowledge on $\bSigma$.
For more involved loss-penalty pairs such as the
Elastic-Net in \Cref{prop:huber-A-V},
such random matrix theory results do not apply 
as $\trace[\bSigma\hbA]$ depends on the random support of
$\hbbeta$. 

Instead, we do not rely on known
random matrix theory results.
With the matrix $\hbA\in\R^{p\times p}$
given by \Cref{thm:differentiability},
our proposal to estimate
$\trace[\bSigma\hbA]$ is the ratio $\df/\trace[\bV]$
with $\df$ and $\bV$ in \eqref{df}-\eqref{V}.
Both the scalar $\df$ and the matrix $\bV\in\R^{n\times n}$
are observable; in particular they do not depend on $\bSigma$.

\begin{theorem}
    \label{thm:df-trAtrV}
    Let \Cref{assumMain} be fulfilled
    and $\hbA$ be given by \Cref{thm:differentiability}.
    Then
    \begin{equation}
        \label{eq:df-trAtrV}
    \E[|\trace[\bSigma \hbA]\trace[\bV]/n
        -\df / n|] \le \C(\gamma,\mu) n^{-1/2}
    .
    \end{equation}
\end{theorem}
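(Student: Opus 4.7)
The plan is to reduce the $L^1$ bound to a pointwise Sherman--Morrison-type approximation via the algebraic identity
\begin{equation*}
\df - \trace[\bSigma\hbA]\trace[\bV]
= \sum_{i=1}^n \psi'(r_i)\Bigl[(1 + t\,\psi'(r_i))\,\bx_i^\top\hbA\bx_i - t\Bigr],
\qquad t := \trace[\bSigma\hbA],
\end{equation*}
which follows from \eqref{df}--\eqref{V} upon noting that $(\bX\hbA\bX^\top)_{ii} = \bx_i^\top\hbA\bx_i$ (so $\df = \sum_i \psi'(r_i)\bx_i^\top\hbA\bx_i$ and $\trace[\bV] = \sum_i \psi'(r_i) - \sum_i \psi'(r_i)^2 \bx_i^\top\hbA\bx_i$) and then expanding and regrouping. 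Since $|\psi'|\le 1$ and the operator-norm bound of \Cref{thm:differentiability} gives $t\le p/(n\mu)\le \gamma/\mu$, it suffices to show that $\E\,|(1+t\psi'(r_i))\bx_i^\top\hbA\bx_i - t| = O(n^{-1/2})$ uniformly in $i$.

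The heuristic behind the pointwise bound is Sherman--Morrison: treating $\hbA$ as a rank-one update of an informal leave-one-out analogue $\hbA_{-i}$, one gets $\bx_i^\top\hbA\bx_i\approx \bx_i^\top\hbA_{-i}\bx_i/(1+\psi'(r_i)\bx_i^\top\hbA_{-i}\bx_i)$, and since $\bx_i$ is independent of $\hbA_{-i}$, Gaussian quadratic-form concentration gives $\bx_i^\top\hbA_{-i}\bx_i\approx \trace[\bSigma\hbA_{-i}]\approx t$; rearranging yields exactly $(1+t\psi'(r_i))\bx_i^\top\hbA\bx_i\approx t$ with error $O(n^{-1/2})$. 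Since a direct leave-one-out is delicate for non-smooth $g$, I would implement the heuristic through Gaussian integration by parts on $\bx_i\sim N(\bzero,\bSigma)$, using only the explicit formulas of \Cref{thm:differentiability}. A direct substitution into $r_i = y_i - \bx_i^\top\hbbeta$ gives
$
(\partial/\partial x_{ij})\,r_i = -\hbeta_j\bigl(1 - \psi'(r_i)\bx_i^\top\hbA\bx_i\bigr) - \psi(r_i)(\hbA\bx_i)_j,
$
and the combination $(1+t\psi'(r_i))\bx_i^\top\hbA\bx_i - t$ emerges precisely when these derivatives are paired against $\bSigma$-weights in Stein's formula and summed over $j$. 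The $L^1$ control is then obtained by applying the Gaussian Poincar\'e inequality to $\bx_i\mapsto \bx_i^\top\hbA\bx_i$ conditional on the other rows: the operator bound $\|\bSigma^{1/2}\hbA\bSigma^{1/2}\|_{op}\le(n\mu)^{-1}$ forces the relevant Lipschitz constant to be $O(n^{-1})$, so each summand has standard deviation $O(n^{-1})$ and summing $n$ summands yields the advertised $O(n^{-1/2})$ after dividing by $n$.

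The main obstacle will be that under \Cref{assumMain} the function $\psi'$ need not be classically differentiable (for the Huber loss it is merely a step function), so Stein integration-by-parts cannot be applied directly to $\psi'(r_i)$. The remedy is a mollification argument: approximate $\rho$ by smooth $\rho_\delta$ for which \eqref{A-twice-diff} supplies an explicit matrix, carry out the Stein computation for $\rho_\delta$, and pass $\delta\to 0$ using the universal bound $\|\bSigma^{1/2}\hbA\bSigma^{1/2}\|_{op}\le(n\mu)^{-1}$ to justify a dominated-convergence-style limit. A secondary subtlety is that \Cref{thm:differentiability} provides derivatives of $\hbbeta$ but not of $\hbA$ itself; I would avoid ever differentiating $\hbA$ by phrasing every Stein computation in terms of $\hbbeta$ and $\br$, whose derivatives are given, and only invoking $\hbA$ through the identities for $(\partial/\partial x_{ij})\hbbeta$.
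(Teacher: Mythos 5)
Your opening algebraic reduction is correct and is a genuinely different starting point from the paper's: writing $\df=\sum_{i=1}^n\psi'(r_i)\,\bx_i^\top\hbA\bx_i$ and $\trace[\bV]=\sum_{i=1}^n\psi'(r_i)-\sum_{i=1}^n\psi'(r_i)^2\,\bx_i^\top\hbA\bx_i$ does give
\begin{equation*}
\df-\trace[\bSigma\hbA]\trace[\bV]
=\sum_{i=1}^n\psi'(r_i)\bigl[(1+t\,\psi'(r_i))\,\bx_i^\top\hbA\bx_i-t\bigr],
\qquad t:=\trace[\bSigma\hbA],
\end{equation*}
so \Cref{thm:df-trAtrV} would indeed follow from the per-observation local law $\E\bigl|(1+t\psi'(r_i))\bx_i^\top\hbA\bx_i-t\bigr|=O(n^{-1/2})$, which is the standard leave-one-out prediction and is very plausibly true.

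The gap is that the tools you invoke cannot prove that local law. Every Stein or Poincar\'e bound available here controls $\bz^\top\bff(\bz)-\sum_k\partial f_k/\partial z_k$ in terms of the \emph{first} derivatives of $\bff$. To compare $\bx_i^\top\hbA\bx_i$ with $\trace[\bSigma\hbA]$ you must take $\bff(\bz)=\bSigma^{1/2}\hbA^\top\bSigma^{1/2}\bz$ with $\bz=\bSigma^{-1/2}\bx_i$, whose divergence is $\trace[\bSigma\hbA]$ \emph{plus} terms containing $\partial\hbA/\partial\bx_i$, i.e., second derivatives of $\hbbeta$. \Cref{thm:differentiability} supplies only first derivatives, and "phrasing everything in terms of $\hbbeta$ and $\br$" cannot circumvent this: $\psi'(r_i)\bx_i^\top\hbA\bx_i$ already \emph{is} a first derivative of $\hbbeta$ (it equals $\bx_i^\top(\partial/\partial y_i)\hbbeta$), so applying Stein to it necessarily consumes one more derivative. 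The computations you can actually carry out with the formulas of \Cref{thm:differentiability}---pairing $(\partial/\partial x_{ij})\hbbeta$ against $\bSigma$-weights and summing over $j$---produce $\trace[\bSigma\hbA]\,\psi(r_i)$ minus a negligible term (this is exactly the contraction \eqref{contraction-1} used in the proof of \Cref{thm:normality}), not a comparison of $\bx_i^\top\hbA\bx_i$ with $t$. The mollification fallback does not repair this: the obstruction is the non-smoothness of $g$ (the $\ell_1$ or nuclear norm penalties), not of $\rho$, and \eqref{A-twice-diff} requires $\nabla^2 g$; even after smoothing both, $\partial\hbA_\delta/\partial\bx_i$ involves $\nabla^3 g_\delta$ and $\psi_\delta''$, which are not uniformly bounded as $\delta\to0$, so the operator bound $(n\mu)^{-1}$ alone gives no dominated-convergence control. (A smaller quantitative slip: the relevant Lipschitz constant of $\bz\mapsto\bx_i^\top\hbA\bx_i$ is $O(n^{-1/2})$, not $O(n^{-1})$, since $\|\bSigma^{1/2}\hbA\bx_i\|\le(n\mu)^{-1}\|\bSigma^{-1/2}\bx_i\|=O(n^{-1/2})$; your arithmetic still closes, but with no room to spare.) The paper avoids localizing to a single observation altogether: it proves five \emph{global} first-order Stein identities for $\bpsi^\top\bG\bh$, $\|\bG\bh\|^2$, $\|\bG^\top\bpsi\|^2$ and two chi-square-type quantities (\Cref{prop:first-three-equations,prop:next-two-equations}), and extracts $\df-\trace[\bSigma\hbA]\trace[\bV]$ from the linear combination $(\trace[\bV]/n-\trace[\bSigma\hbA])\xi_I+\xi_{II}+\xi_{III}+\xi_{IV}+\xi_V$. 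To salvage your route you would need either a genuine leave-one-out analysis (with all the perturbation bounds that entails for non-smooth $g$) or a second-order differentiability theory for $\hbbeta$; neither is available here.
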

\begin{figure}[ht]
    \centering
    \includegraphics[width=68mm]{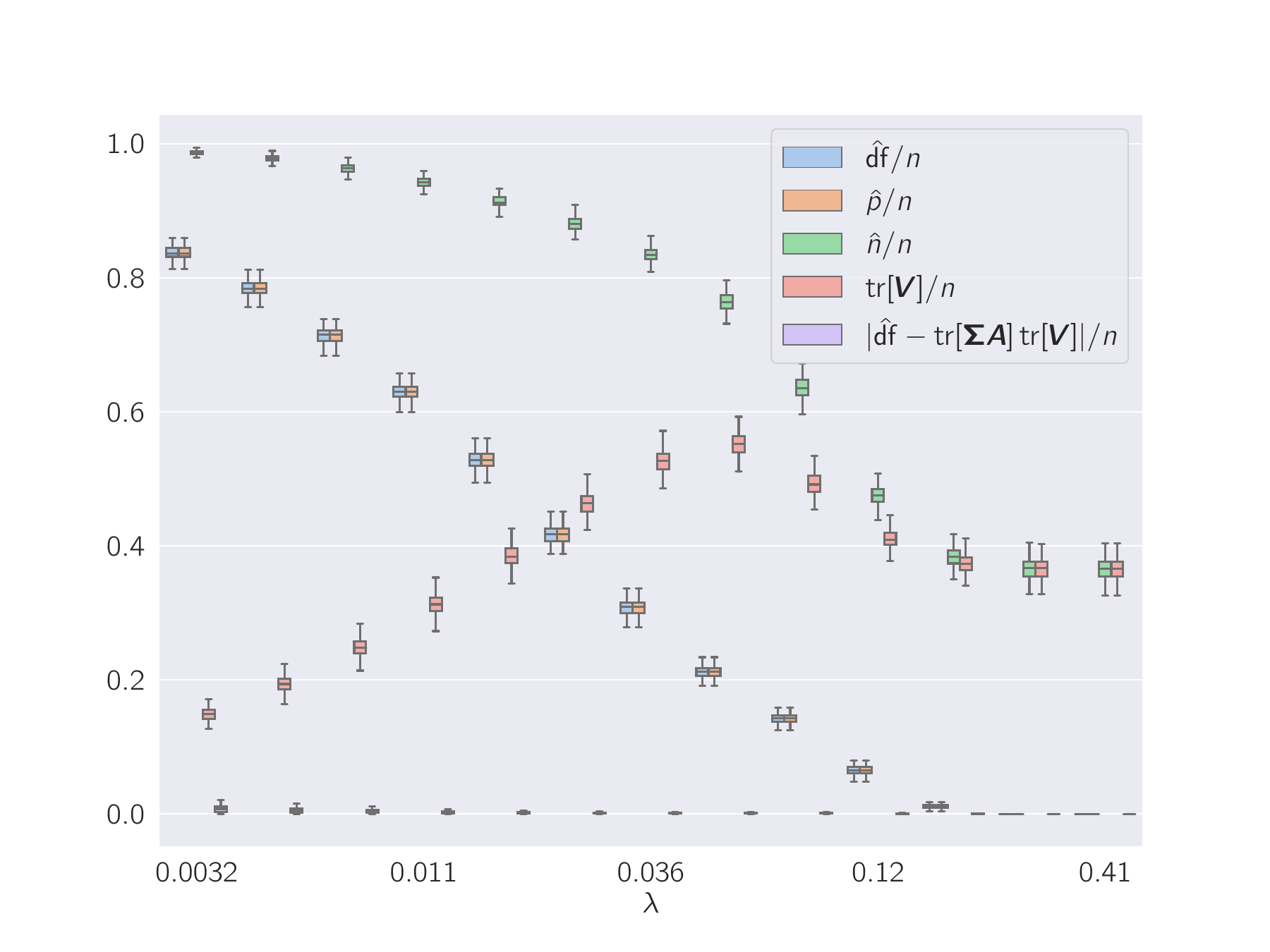}   
    \includegraphics[width=68mm]{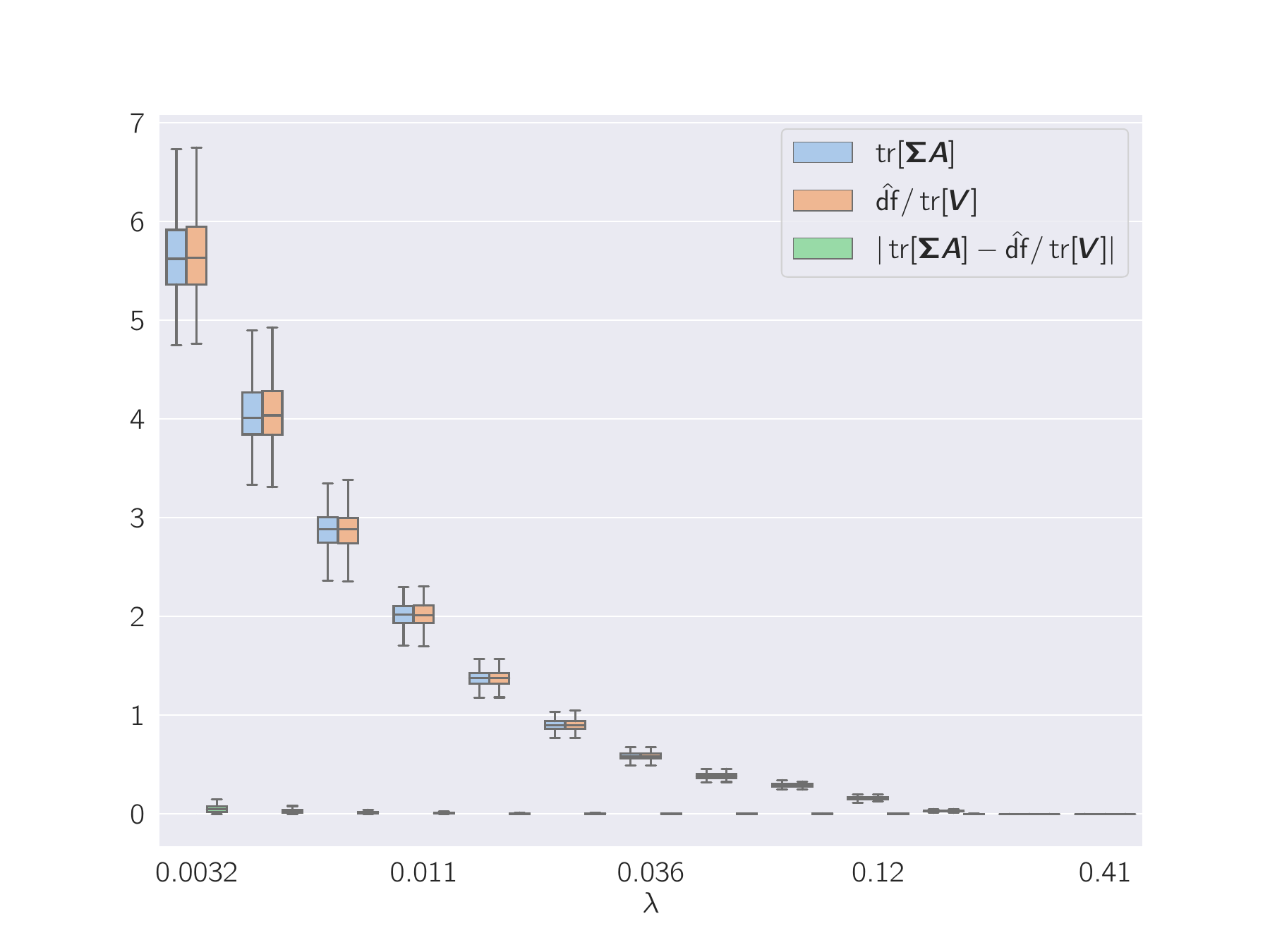}   
    \caption{
    Above: Boxplots for $\df, \hat p, \hat n, \trace[\bV], \trace[\bSigma \hbA]$ and $|\trace [ \bSigma \hbA ] - \df / \trace [ \bV]|$ 
    in Huber Elastic-Net regression with $\tau = 10 ^{-10}$ and $\lambda \in [0.0032, 0.41].$
    Each box contains 200 data points.
    Below: heatmaps
    for $\df/n$, $\trace[\bV]/n$
    and $\hat n/n =\sum_{i=1}^n\psi'(r_i)/n$ under the simulation setup in \Cref{fig:out-of-sample}.
The detailed simulation setup is given in \Cref{sec:simulations}.
    }
    \label{fig:boxplots-trA}
    \includegraphics[width=50mm]{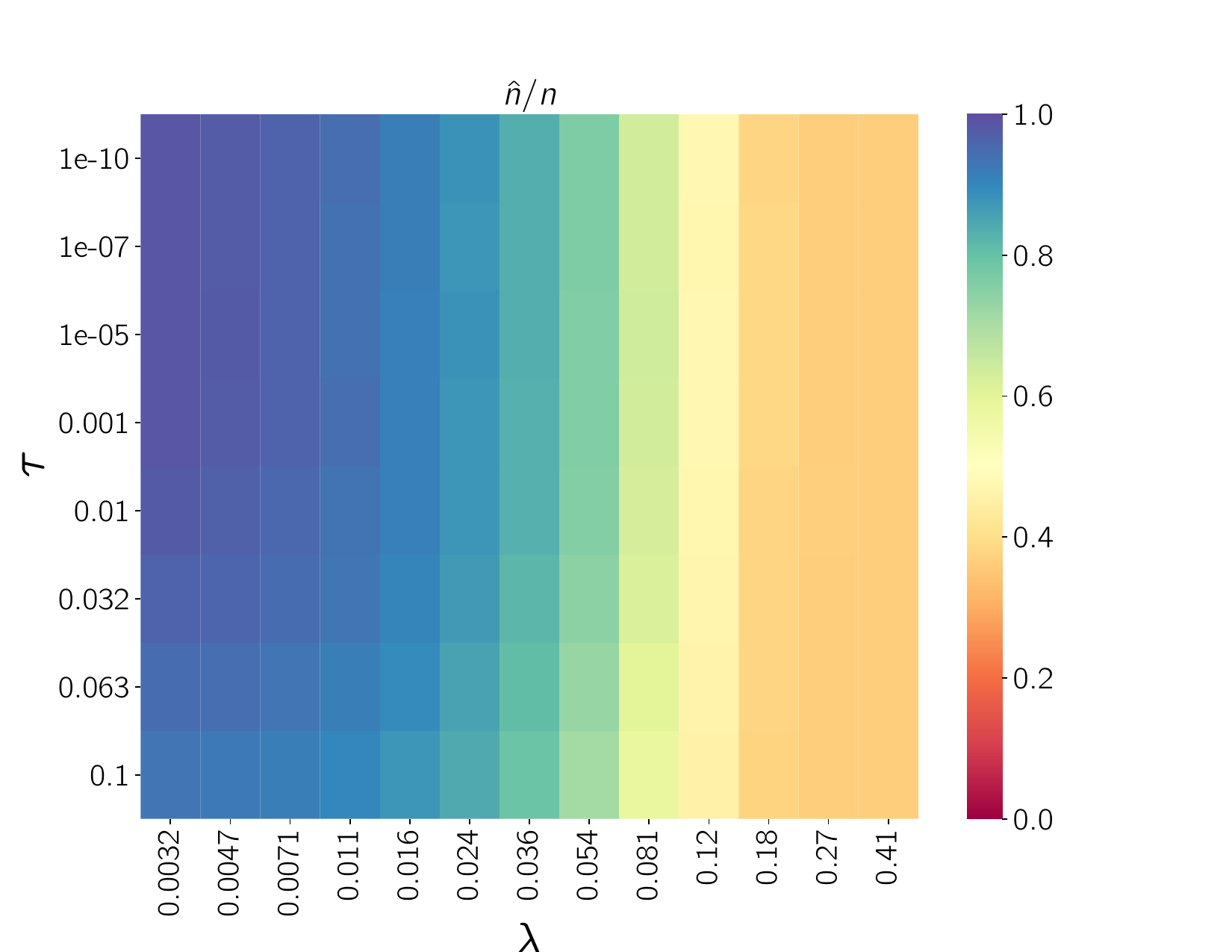}   
    \hspace{-0.3in}
    \includegraphics[width=50mm]{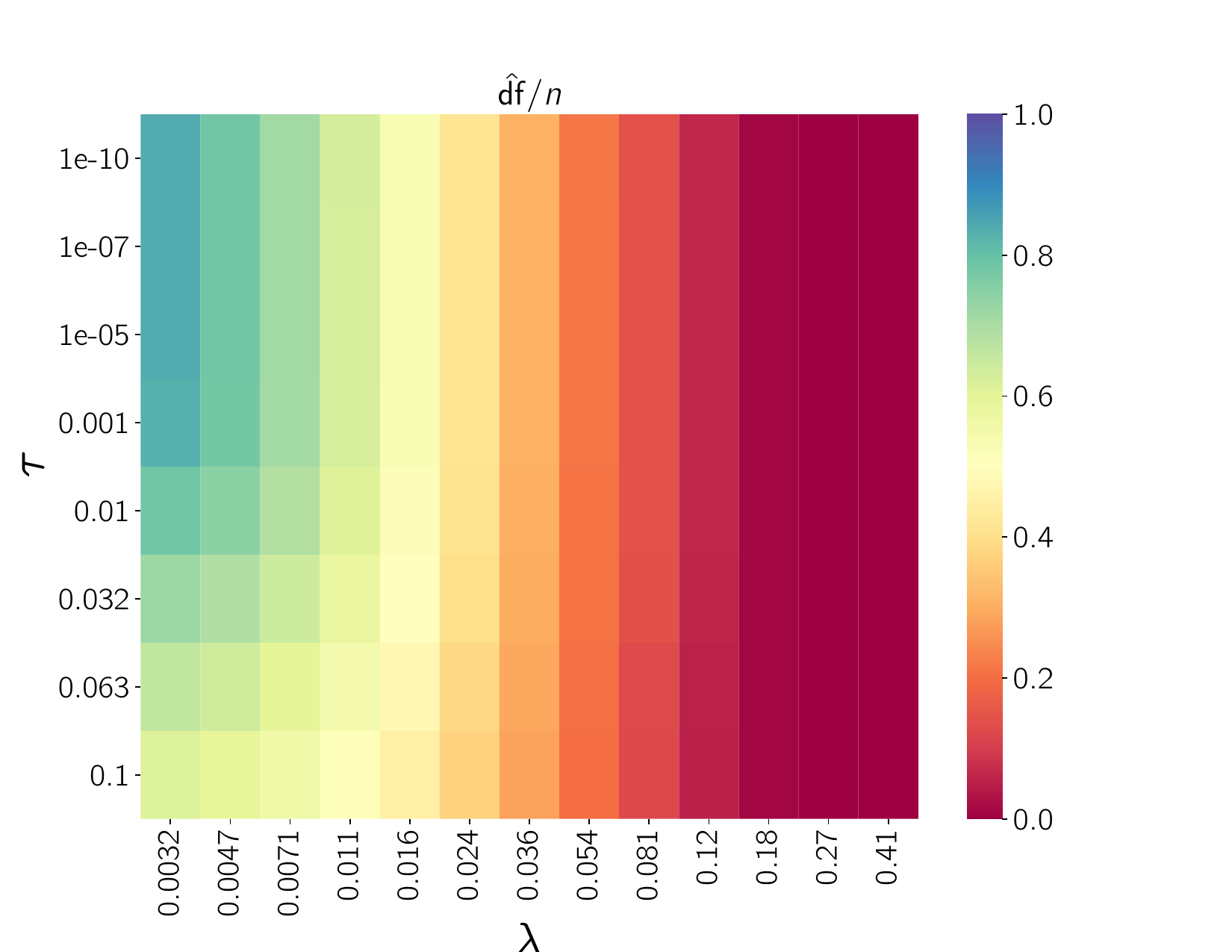}   
    \hspace{-0.3in}
    \includegraphics[width=50mm]{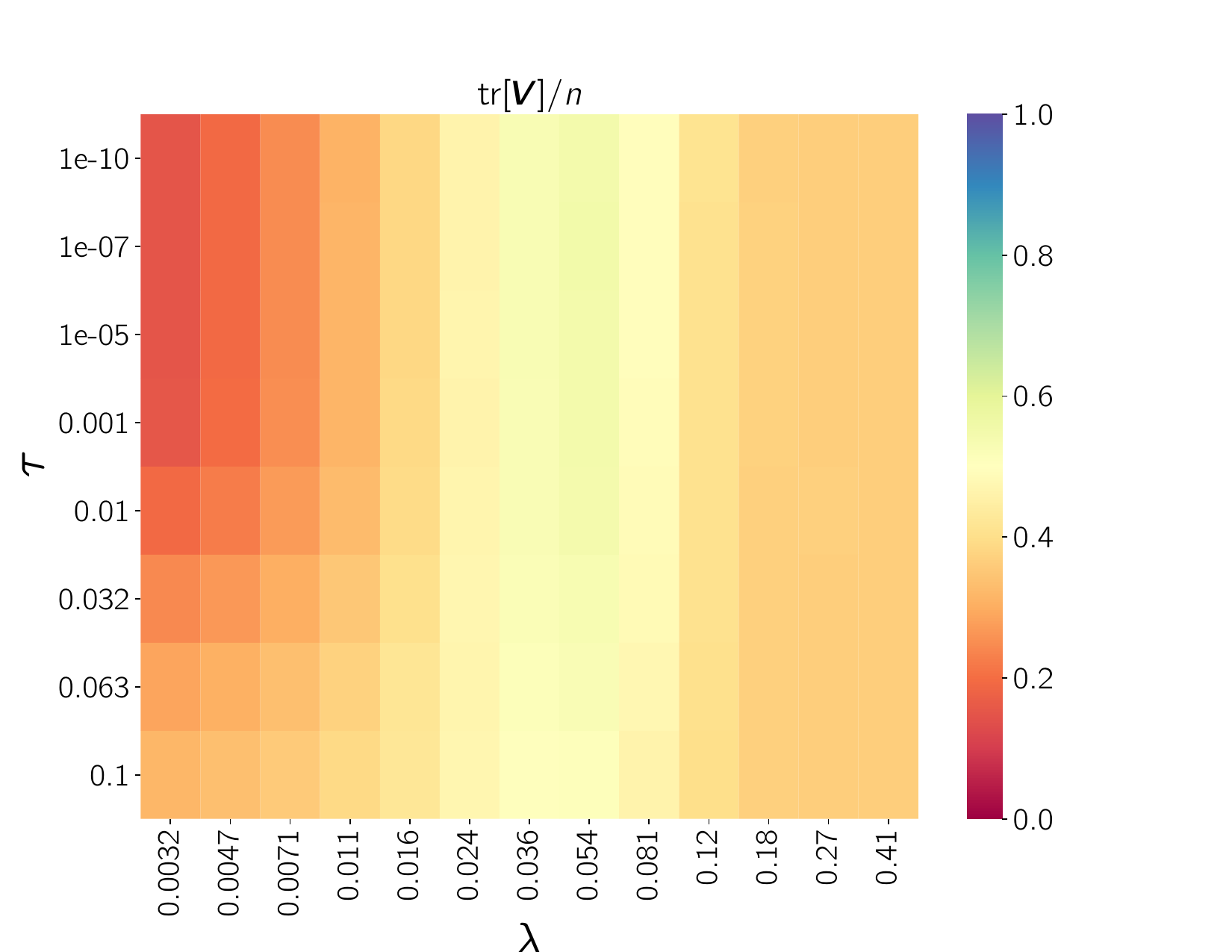}   
\end{figure}
Simulations in \Cref{fig:boxplots-trA,tab:table1}
confirm 
that the approximation $\trace[\bSigma\hbA] \approx \df/\trace[\bV]$
is accurate for the Huber loss with Elastic-Net penalty.
For the square loss, $\psi'=1$ and $\trace[\bV] = n - \df$
so that \eqref{eq:df-trAtrV}
becomes
$\E|(1-\df/n)(1+\trace[\bSigma\hbA]) - 1| \le \C(\gamma,\mu) n^{-1/2}$ 
and the following result holds.
\begin{corollary}
    \label{col:residual-distribution}
    Let \Cref{assumMain} be fulfilled with
    $\rho(u)=u^2/2$ and $\bep\sim N(\mathbf{0},\sigma^2\bI_n)$. Then
    $(1-\df/n)(1+\trace[\bSigma\hbA])\to^\P 1$ and 
    the normality \eqref{eqSquareLossAsymptoticNormality} holds
    with $1+\trace[\bSigma\hbA]$ replaced by
    $(1-\df/n)^{-1}$.
\end{corollary}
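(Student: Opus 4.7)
The plan is to specialize \Cref{thm:df-trAtrV} to the square loss and then apply Slutsky's theorem to the conclusion of \Cref{thm:residual-distribution}. For $\rho(u)=u^2/2$ one has $\psi'\equiv 1$, so $\diag\{\psi'(\br)\}=\bI_n$ and the formula \eqref{V} gives $\trace[\bV] = n-\df$. Substituting into \eqref{eq:df-trAtrV} yields
\begin{equation*}
    \E\Bigl[\bigl|(1-\df/n)\,\trace[\bSigma\hbA] - \df/n\bigr|\Bigr] \le \C(\gamma,\mu)\,n^{-1/2}.
\end{equation*}
The elementary algebraic identity $(1-\df/n)(1+\trace[\bSigma\hbA]) - 1 = (1-\df/n)\,\trace[\bSigma\hbA] - \df/n$ combined with Markov's inequality then gives $(1-\df/n)(1+\trace[\bSigma\hbA])\to^\P 1$, proving the first claim at rate $O_P(n^{-1/2})$.

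For the second claim, I first check that $(1-\df/n)^{-1}$ is well-defined with probability tending to $1$. The matrix $\hbA$ is positive semidefinite (transparent in the smooth case where $\hbA = (\bX^\top\diag\{\psi'(\br)\}\bX + n\nabla^2 g(\hbbeta))^{-1}$, with the general case inherited from \Cref{thm:differentiability}), and $\|\bSigma^{1/2}\hbA\bSigma^{1/2}\|_{op}\le(n\mu)^{-1}$, so $0 \le \trace[\bSigma\hbA] = \trace[\bSigma^{1/2}\hbA\bSigma^{1/2}] \le p/(n\mu)\le\gamma/\mu$. Hence $1+\trace[\bSigma\hbA]\in[1,\,1+\gamma/\mu]$, and together with the first claim this forces $1-\df/n$ to be bounded below by a positive constant with probability tending to $1$; on that event $(1-\df/n)^{-1}$ is well-defined and bounded. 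Writing
\begin{equation*}
    \frac{(1-\df/n)^{-1}}{1+\trace[\bSigma\hbA]}
    = \frac{1}{(1-\df/n)(1+\trace[\bSigma\hbA])}
    \to^\P 1,
\end{equation*}
Slutsky's theorem applied to \eqref{eqSquareLossAsymptoticNormality} then yields the desired $N(0,1)$ limit with $1+\trace[\bSigma\hbA]$ replaced by $(1-\df/n)^{-1}$. The only non-routine point is the verification that $(1-\df/n)^{-1}$ is finite on an event of probability tending to $1$, and this is handled by the operator-norm bound on $\bSigma^{1/2}\hbA\bSigma^{1/2}$ from \Cref{thm:differentiability}.
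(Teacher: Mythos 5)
Your proposal is correct and matches the paper's own (very brief) argument: specialize \Cref{thm:df-trAtrV} using $\trace[\bV]=n-\df$ for the square loss, rewrite $(1-\df/n)\trace[\bSigma\hbA]-\df/n$ as $(1-\df/n)(1+\trace[\bSigma\hbA])-1$, apply Markov, and conclude by Slutsky applied to \eqref{eqSquareLossAsymptoticNormality}. Your extra verification that $(1-\df/n)^{-1}$ is well-defined with probability tending to one is a welcome addition the paper leaves implicit; note only that positive semi-definiteness of $\hbA$ is not formally established for general non-smooth $g$, but your argument does not actually need it, since $|\trace[\bSigma\hbA]|\le p\,\|\bSigma^{1/2}\hbA\bSigma^{1/2}\|_{op}\le\gamma/\mu$ together with $1-\df/n\in[0,1]$ and the product converging to $1$ already forces both factors to be bounded away from $0$ with high probability.
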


For general loss $\rho$, the criterion \eqref{crit} replaces $\trace[\bSigma\hbA]$
by $\df/\trace[\bV]$
in the proxy of the out-of-sample error $\|\br + \trace[\bSigma\hbA]\psi(\br)\|^2$
studied in the previous section.
Thanks to \eqref{eq:df-trAtrV}, this replacement preserves
the good properties of $\|\br + \trace[\bSigma\hbA]\psi(\br)\|^2$
proved in \Cref{corSelection,corSelection2}.

\begin{theorem}
    \label{thm:selection-crit}
    For $k=1,...,K$,
    let $(\rho_k,g_k)$ be a loss-penalty pair
    satisfying \Cref{assumMain,assumAdditional}
    with $\psi_k=\rho_k'$,
    let $\hbbeta_k,\br_k,\hbA_k$ be the corresponding $M$-estimator
    residual vector and matrix of size $p\times p$ given by
    \Cref{thm:differentiability} as in \Cref{corSelection2}
    and let 
    $\df_k = \trace[\bX\hbA_k \bX^\top\diag\{\psi_k'(\br_k)\}]$
    and 
    $\bV_k =\diag\{\psi_k'(\br_k)\}(\bI_n -  \bX\hbA_k \bX^\top\diag\{\psi_k'(\br_k)\})$.
    For a small constant $\eta>0$ independent of $n,p$, say $\eta=0.05$,
    define
    $$\hat k \in \argmin_{k=1,...,K} 
    \Big\|\br_k + \frac{\df_k}{\trace[\bV_k]}\psi_k(\br_k)\Big\|^2
    \qquad \text{subject to}
    \qquad
    \frac 1 n \sum_{i=1}^n \psi_k'(r_{ki}) \ge \eta.$$
    If $\eps_i$ has $1+q$ moments in the sense
    that $\E[|\eps_i|^{1+q}]\le M$ for constants $q\in (0,1),M>0$.
    If $(M,q,\eta,\mu,\gamma)$ and $\tilde\eta>0$ are independent of $n,p$ then
    \begin{equation*}
    \P\Bigl(
        \|\bSigma^{1/2}(\hbbeta_{\hat k}-\bbeta^*)\|
        >
        \min_{k=1,...,K:
        \frac 1 n \sum_{i=1}^n \psi_k'(r_{ki}) \ge \eta
        }
        \|\bSigma^{1/2}(\hbbeta_k-\bbeta^*)\| 
        + \tilde \eta
    \Bigr)
    \to 0 
    \qquad \text{ if }
    K=o(n^{q/(1+q)}).
    \end{equation*}
\end{theorem}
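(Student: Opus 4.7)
The plan is to reduce \Cref{thm:selection-crit} to \Cref{corSelection2} by proving that, uniformly over the $K$ constrained candidates, the observable criterion $\|\br_k + (\df_k/\trace[\bV_k])\psi_k(\br_k)\|^2$ differs from the oracle criterion $\|\br_k + \trace[\bSigma\hbA_k]\psi_k(\br_k)\|^2$ analyzed in \Cref{corSelection2} by an $o_P(n)$ additive term, so that the selection guarantee transfers. Writing $a_k^{\ast} := \trace[\bSigma\hbA_k]$ and $\hat a_k := \df_k/\trace[\bV_k]$, the only genuinely new ingredient is the bound $|\hat a_k - a_k^{\ast}| = o_P(1)$, uniformly in $k$.

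For this, I would start from \Cref{thm:df-trAtrV}, giving $\E|a_k^{\ast}\trace[\bV_k]/n - \df_k/n| = O(n^{-1/2})$, and combine it with Markov's inequality and a union bound over $k=1,\dots,K$ (here $K = o(n^{q/(1+q)})$ plays the same role as in \Cref{corSelection2}). The ratio form
\begin{equation*}
|\hat a_k - a_k^{\ast}| = \frac{|a_k^{\ast}\trace[\bV_k]/n - \df_k/n|}{\trace[\bV_k]/n}
\end{equation*}
reduces the task to a deterministic lower bound $\trace[\bV_k]/n \ge c(\eta,\gamma,\mu) > 0$. This is where the constraint $n^{-1}\sum_i \psi_k'(r_{ki}) \ge \eta$ enters: starting from $\trace[\bV_k] = \sum_i \psi_k'(r_{ki}) - \trace[\diag\{\psi_k'(\br_k)\}\bX\hbA_k\bX^\top\diag\{\psi_k'(\br_k)\}]$, the operator-norm bound $\|\bSigma^{1/2}\hbA_k\bSigma^{1/2}\|_{op} \le (n\mu)^{-1}$ from \Cref{thm:differentiability} together with Gaussian concentration of $\|\bX\bSigma^{-1/2}\|_{op} \le 2\sqrt n + \sqrt p$ are used to bound the subtracted trace by a controlled fraction of $\sum_i \psi_k'(r_{ki})$ on a high-probability event, yielding $\trace[\bV_k] \ge c \sum_i \psi_k'(r_{ki}) \ge c\eta n$.

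Next, expanding the squared norms gives
\begin{equation*}
\big|\|\br_k + \hat a_k \psi_k(\br_k)\|^2 - \|\br_k + a_k^{\ast}\psi_k(\br_k)\|^2\big|
\le |\hat a_k - a_k^{\ast}|\bigl(2\|\br_k\|\,\|\psi_k(\br_k)\| + (|\hat a_k|+|a_k^{\ast}|)\|\psi_k(\br_k)\|^2\bigr).
\end{equation*}
Under \Cref{assumMain,assumAdditional} and $\E[|\eps_i|^{1+q}]\le M$, $\|\br_k\|^2 = O_P(n)$, $\|\psi_k(\br_k)\|^2 \le nM^2$, $a_k^{\ast} \le p/(n\mu) \le \gamma/\mu$, and $\hat a_k$ is $O_P(1)$ by the previous step, so the right-hand side is $o_P(n)$ uniformly in $k$. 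Combined with \Cref{thm:out-of-sample} applied to each $k$, this yields
\begin{equation*}
n^{-1}\|\br_k + \hat a_k\psi_k(\br_k)\|^2 = \|\bSigma^{1/2}(\hbbeta_k-\bbeta^{*})\|^2 + \|\bep\|^2/n + o_P(1)
\end{equation*}
simultaneously over all constrained $k$; since $\|\bep\|^2/n$ is $k$-independent, minimizing the left-hand side is equivalent up to an $o_P(1) \le \tilde\eta$ error to minimizing $\|\bSigma^{1/2}(\hbbeta_k-\bbeta^{*})\|^2$, which delivers the stated conclusion.

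The hardest step will be the deterministic lower bound on $\trace[\bV_k]$: the constraint only controls the diagonal $\diag\{\psi_k'(\br_k)\}$, whereas the subtracted trace couples this diagonal with the random matrix $\bX\hbA_k\bX^\top$, and a naive use of strong convexity only yields $\trace[\bV_k] \ge \sum_i \psi_k'(r_{ki}) - p$, which is too weak when $p$ and $\sum_i \psi_k'(r_{ki})$ are of comparable order. A sharper argument exploiting the full spectral bound on $\bSigma^{1/2}\hbA_k\bSigma^{1/2}$ jointly with Gaussian concentration of $\bX\bSigma^{-1/2}$ — or equivalently, a Woodbury-based expansion of $\bX\hbA_k\bX^\top$ — is what makes the bound of order $\eta n$ accessible.
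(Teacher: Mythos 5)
Your overall architecture is the same as the paper's: replace $\df_k/\trace[\bV_k]$ by $\trace[\bSigma\hbA_k]$ via \Cref{thm:df-trAtrV}, control the resulting perturbation of the criterion, and then run the union-bound selection argument of \Cref{corSelection2}. You also correctly identify the crux: a lower bound $\trace[\bV_k]\ge c(\eta,\gamma,\mu)\,n$ on the constrained set. But the tools you propose for that step do not suffice, and this is a genuine gap. The combination of $\|\bSigma^{1/2}\hbA_k\bSigma^{1/2}\|_{op}\le(n\mu)^{-1}$ with $\|\bX\bSigma^{-1/2}\|_{op}\le 2\sqrt n+\sqrt p$ only yields $\|\bD^{1/2}\bX\hbA_k\bX^\top\bD^{1/2}\|_{op}\le\|\bG\|_{op}^2/(n\mu)\le(2+\sqrt\gamma)^2/\mu$, a constant that is in general larger than $1$; it therefore cannot show that the subtracted trace $\trace[\bD\bX\hbA_k\bX^\top\bD]$ is a fraction strictly less than one of $\trace[\bD]$, which is exactly what you need. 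The Woodbury expansion you mention as an alternative does give $\bI_n-\bD^{1/2}\bX\hbA\bX^\top\bD^{1/2}=(\bI_n+\bD^{1/2}\bX(n\nabla^2 g)^{-1}\bX^\top\bD^{1/2})^{-1}\succeq(1+\|\bG\|_{op}^2/(n\mu))^{-1}\bI_n$, but only when $g$ is twice differentiable so that $\hbA$ has the explicit inverse form; for general convex penalties no such closed form exists.

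The paper closes this gap with a different ingredient: the self-bounding variational inequality obtained by differentiating the KKT conditions (inequality \eqref{eq:forallv} in the proof of \Cref{thm:differentiability}), namely
\begin{equation*}
n\mu\,\|\hbA\bX^\top\bD\bv\|^2+\|\bD^{1/2}\bX\hbA\bX^\top\bD\bv\|^2\le\bv^\top\bD\bX\hbA\bX^\top\bD\bv
\quad\text{for all }\bv,
\end{equation*}
which forces $\|\bD^{1/2}\bX\tbA\bX^\top\bD^{1/2}\|_{op}\le(1+n\mu/\|\bD^{1/2}\bG\|_{op}^2)^{-1}<1$ with $\tbA$ the symmetric part of $\hbA$, and hence \eqref{eq:lower-bond-trace-V}: $\trace[\bV]\ge\trace[\bD]/(1+\|\bG\|_{op}^2/(n\mu))$. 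On the event $\|\bG\|_{op}\le 2\sqrt n+\sqrt p$ this gives $\trace[\bV_k]\ge\eta n/\C(\gamma,\mu)$ under your constraint, which is precisely the bound your outline presupposes. Two secondary points: your claim $\|\br_k\|^2=O_P(n)$ is false under only $1+q$ moments on $\eps_i$ (the paper instead restricts to the event $\|\bep\|^2\le n^{2/(1+q)}$, which is where the rate $n^{-q/(1+q)}$ and the condition $K=o(n^{q/(1+q)})$ come from); and the per-candidate error must be tracked at that rate rather than the generic $o_P(n)$ you state, since the union bound over $K$ candidates consumes exactly this margin.
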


\Cref{fig:out-of-sample} illustrates on simulations the success
of the criterion \eqref{crit} over a grid of tuning parameters
for $M$-estimators with the Huber loss and Elastic-Net penalty.
The criterion \eqref{crit} is thus successful
at selecting a $M$-estimator with smallest out-of-sample error
up to an additive constant $\tilde\eta$,
among those $M$-estimators indexed in $\{1,...,K\}$
that are such that $\frac 1 n \sum_{i=1}^n \psi_k'(r_{ki}) \ge \eta$.
On the one hand it is unclear to us whether the restriction
$\frac 1 n \sum_{i=1}^n \psi_k'(r_{ki}) \ge \eta$ can be omitted.
On the other hand there is a practical meaning in excluding
$M$-estimators with small $\frac 1 n \sum_{i=1}^n \psi_k'(r_{ki})$:
For the Huber loss
$H(u):= u^{2} / 2$ for $ |u| \leq 1$ and $|u| -1/2$ for $|u|\geq1$
the quantity $\frac 1 n \sum_{i=1}^n \psi_k'(r_{ki})$ is the number of 
of data points in $\{1,...,n\}$ such that the residual $y_i-\bx_i^\top\hbbeta_k$ fall within the quadratic regime of the loss function.
Observations $i\in\{1,...,n\}$ that fall in the linear regime
of the loss are excluded from the fit, in the sense 
that for some $i$ with $r_{ki} = y_i-\bx_i^\top\hbbeta_k > 1$, replacing
$y_i$ by $\tilde y_i = y_i + 1000$ (or any positive value)
does not change the $M$-estimator solution $\hbbeta_k$ 
(this can be seen from the KKT conditions directly, or by integration the
derivative with respect to $y_i$ in \eqref{eq:differentiability-formulae}).
Thus the constraint
$\frac 1 n \sum_{i=1}^n \psi_k'(r_{ki}) \ge \eta$ requires 
that at most a constant fraction of the observations 
are excluded from the fit
(or equivalently, at least a constant fraction of the $n$ observations
participate in the fit).
For scaled versions of the Huber loss,
$\rho_k(u)=a^2 H(a^{-1}u)$ for some $a>0$,
the value $\hat n = \frac 1 n \sum_{i=1}^n \psi_k'(r_{ki})$ again counts the number
of residuals falling in the quadratic regime of the loss, i.e.,
the number of observations participating in the fit.
The heatmaps of \Cref{fig:boxplots-trA} illustrate
$\hat n$ in a simulation for a wide range of parameters.
Similarly, for smooth robust loss functions such as $\rho_k(u)=\sqrt{1+u^2}$,
the constraint $\frac 1 n \sum_{i=1}^n \psi_k'(r_{ki}) \ge \eta$
requires that at most a constant fraction of the $n$ observations are such that
$\psi_k'(r_{ki}) < \eta/2$, i.e., such that the second derivative $\psi_k'$ is too small (and the loss $\rho_k$ too flat).

\Cref{thm:differentiability,thm:residual-distribution,thm:out-of-sample,thm:df-trAtrV}
provide our general results applicable to a single regularized $M$-estimator
\eqref{hbeta}
while corollaries such as
\Cref{thm:selection-crit} are obtained using the union bound.
The next section specializes our results and notation to 
the Huber loss with Elastic-Net penalty and details
the simulation setup used in the figures.

\section{Example and simulation setting: Huber loss with Elastic-Net penalty}
\label{sec:simulations}

In simulations and in the example below, we focus on the loss-penalty pair
\begin{equation}
    \rho(u; \Lambda) = \Lambda^{2} H ( \Lambda^{-1} u ),
    \qquad
    g(\bb; \lambda, \tau) = \lambda \| \bb \|_{1}
                           + (\tau / 2) \| \bb \|_{2}^{2} 
                           \label{eq:choice-loss-penalty}
\end{equation}
for tuning parameters $\Lambda,\lambda,\tau\ge 0$ where $H(u):= u^{2} / 2$ for $ |u| \leq 1$ and $|u| -1/2$ for $|u|\geq1$.

\begin{example}
    \label{prop:huber-A-V}
    With $(\rho,g)$ in \eqref{eq:choice-loss-penalty}, matrix
    $\hbA$ in \eqref{eq:differentiability-formulae}
    matrix $\bV$ in \eqref{V} and
    $\df$ in \eqref{df} 
    we have 
    \begin{equation}
        \label{eq:huber-AVD}
    \begin{aligned}
        &\hbA_{\hat S, \hat S} = ( \bX_{\hat S}^{\top} \diag\{\psi'(\br)\} \bX_{\hat S} + n \tau \bI_{\hat p})^{-1},\quad
        A_{i,j} = 0 \text{ if $i \not \in \hat S$ or $j \not \in \hat S$,}
        \\
        &\bV = \diag\{\psi'(\br)\}
        -
            \diag\{\psi'(\br)\}
            \bX_{\hat S}
            \hbA_{\hat S, \hat S}
            \bX_{\hat S}^{\top}
            \diag\{\psi'(\br)\}
            ,
        \\
        &\df = \trace  [ \bX_{\hat S} 
            \hbA_{\hat S, \hat S}
        \bX_{\hat S}^{\top} \diag\{\psi'(\br)\} ],
    \end{aligned}
    \end{equation}
    where $\hat S$ is the active set $\{j \in [p] : \hat \beta _{j} \neq 0 \}$ and $\hat p $ is the size of $\hat S$;
    $\bX_{\hat S}$ is the submatrix of $\bX$ selecting columns with index in $\hat S$
    and 
    $\hbA_{\hat S, \hat S}$ is the submatrix of $\hbA$
    with entries indexed in $\hat S\times \hat S$.
\end{example}

\begin{table}[ht]
    \centering
    \setlength{\tabcolsep}{3pt}
    \begin{tabular}{llllllllll}
\toprule
 $(\lambda, \tau)$                         & $(0.036, 10^{-10})$ & $(0.054,0.01)$      & $(0.036,0.01)$      & $(0.024, 0.1)$      \\
 $\df/n$                                   & $0.31 \pm 0.012$    & $0.21 \pm 0.0095$   & $0.3 \pm 0.011$     & $0.37 \pm 0.0093$   \\
 $\hat{p}/n$                               & $0.31 \pm 0.012$    & $0.22 \pm 0.0098$   & $0.31 \pm 0.012$    & $0.47 \pm 0.014$    \\
 $\hat{n}/n$                               & $0.83 \pm 0.011$    & $0.76 \pm 0.014$    & $0.83 \pm 0.012$    & $0.84 \pm 0.012$    \\
 $\trace [\bSigma \bA]$                        & $0.58 \pm 0.039$    & $0.39 \pm 0.027$    & $0.58 \pm 0.038$    & $0.8 \pm 0.038$     \\
 $|\trace [\bSigma \bA] - \df/\trace[\bV]|$        & $0.0019 \pm 0.0015$ & $0.0015 \pm 0.0012$ & $0.0021 \pm 0.0016$ & $0.0023 \pm 0.0017$ \\
 $\|\bSigma^{1/2}(\hbbeta-\bbeta^*)\|^2$ & $1.3 \pm 0.18$      & $1.7 \pm 0.25$      & $1.3 \pm 0.19$      & $1.9 \pm 0.21$      \\
 $\zeta_1$                                 & $0.056 \pm 1$       & $0.021 \pm 1$       & $0.0044 \pm 1$      & $0.042 \pm 0.97$    \\
\bottomrule
\end{tabular}

    \vspace{2mm}
    \caption{Simulation for Huber Elastic-Net regression under different choices of $(\lambda, \tau)$. 
    $(n,p) = (1001,1000)$.
    For each choice of $(\lambda, \tau)$, 
    600 data points are simulated with anisotropic design matrix
    and i.i.d. $t$-distributed noises with 2 degrees of freedom.
A detailed setup is provided in \Cref{sec:simulations}.
    }
    \label{tab:table1}
\end{table}

The identities \eqref{eq:huber-AVD} are proved in \cite[\S2.6]{bellec2020out_of_sample}.
Simulations in \Cref{fig:qq,fig:boxplots-trA,fig:out-of-sample,tab:table1} 
illustrate typical values
for $\df,\trace[\bV],\trace[\bSigma\hbA]$,
the out-of-sample error and the criterion \eqref{crit},
$\hat n = \sum_{i=1}^n \psi'(r_i)$
and $\hat p =|\hat S|$
under anisotropic Gaussian design and heavy-tailed $\eps_i$.
The simulation setup is as follows.

\textbf{Data Generation Process.}
Simulation data are generated from a linear model $\by = \bX \bbeta^* + \bep $ 
with anisotropic Gaussian design $\bSigma$ and heavy-tail noise vector $\bep$.
The design matrix $\bX$ has $n = 1001$ rows 
and $p = 1000$ columns.
Each row of $\bX$ is i.i.d. $N(\bzero, \bSigma)$,
with the same $\bSigma$ across all repetitions, generated once
by $\bSigma = \bR^{\top}\bR / (2p)$
with $\bR \in \R^{2p \times p}$ being a Rademacher matrix with i.i.d. entries $\P(\bR_{ij} = \pm 1)=\frac12$.
The true signal vector $\bbeta^* \in \R^{p}$ has its first 100 coordinates set to $ p^{1/2} / 100 = \sqrt{10}/10$ and the rest 900 coordinates set to $0$.
The noise vector $\bep \in \R^{n}$ has i.i.d. entries from the t-distribution with 2 degrees of freedom (so that $\Var[\eps_i]=\infty$, i.e., $\eps_i$ is heavy-tailed).

\textbf{Estimation Process.}
Each dataset $(\by, \bX)$
is fitted by a Huber Elastic-Net estimator
with loss-penalty pair in \eqref{eq:choice-loss-penalty}.
We focus on 2d heatmaps with respect to the two penalty parameters
$(\lambda,\tau)$ of the penalty; to this end
the Huber loss parameter $\Lambda$ is set to $\Lambda = 0.054 n^{1/2}$
and a grid for $(\lambda,\tau)$ in then set so that
$\df/n$ varies on the grid from 0 to 1 (cf. the middle heatmap in
\Cref{fig:boxplots-trA}).
The Elastic-Net penalty $g(\bb;\lambda, \tau) = \lambda \| \bb \|_{1} + (\tau / 2) \| \bb \|_{2}^{2}$ 
is used with 
$(\lambda, \tau) \in \{ (0.036, 10^{-10}), (0.054, 0.01), (0.036, 0.01), (0.024, 0.1)\}$ in \Cref{fig:qq,tab:table1},
$(\lambda, \tau) \in [0.0032, 0.41] \times \{10 ^{-10}\}$ in \Cref{fig:boxplots-trA},
and $(\lambda, \tau) \in [0.0032, 0.041] \times [10^{-10}, 0.1]$ in \Cref{fig:out-of-sample}.
More simulation results are provided in the supplementary materials. 
In these simulations, the criterion \eqref{ALO} from \cite{rad2020scalable}
was also computed and was not noticeably different from \eqref{crit},
cf. the lower half of \Cref{fig6,fig7}.

\section{Relaxing the strong convexity assumption}

While previous results rely heavily on the $\mu$-strong convexity assumption
(with respcet to $\bSigma$, as stated in the last part of \Cref{assumMain}),
the proof of the following proposition presents a device that lets us generalize the results
under the following condition: For any $\bep$, there exists an open set
$U_{\bep}\subset\R^{n\times p}$ such that the mapping
\begin{equation}
    \label{Phi}
    \Phi_{\bep}:
    \left\{
\begin{aligned}
        U_{\bep} &\to \R^{n+p}, \qquad
        \\ \bX &\mapsto 
        \frac{
            \bigl(
                n^{-1/2}\psi(\by-\bX\hbbeta(\by,\bX)), ~
            \bSigma^{1/2}(\hbbeta(\by,\bX) -\bbeta )
            \bigr)
        }{
            (\frac1n \|\psi(\by-\bX\hbbeta(\by,\bX))\|^2
            + \|\bSigma^{1/2}(\hbbeta(\by,\bX) -\bbeta )\|^2
            )^{1/2}
        }
\end{aligned}
\right.
\text{is $\frac{L}{\sqrt n}$-Lipschitz.}
\end{equation}
In this definition, $\bep$ is held fixed
and $\hbbeta(\by,\bX)$ is the composition of $\hbbeta$
with the function $\bX\mapsto (\bep+\bX\bbeta, \bX)$
so that $\Phi_{\bep}$ is a function of $\bX$ only.
The following proposition shows that strong convexity on the penalty
function can be relaxed, provided that the above Lipschitz condition
holds and the expectations are restricted to the event $\{\bX \in U_{\bep}\}$.
\begin{proposition}
    \label{prop12}
    Let $L,\gamma>0$ be constants and assume $p/n\le\gamma$.
    Consider a convex differentiable loss $\rho:\R\to\R$
    such that $\psi=\rho'$ is 1-Lipschitz and a convex penalty $g$,
    assume $\bX$ has iid $N(\bzero,\bSigma)$ rows with 
    invertible $\bSigma$ and the noise $\bep$ is independent
    of $\bX$ with continuous distribution.
    Assume that for some open $U_{\bep}\subset\R^{n\times p}$,
    the Lipschitz condition \eqref{Phi} holds and that
    almost everywhere in $U_{\bep}$, the derivative formulae \eqref{eq:differentiability-formulae} hold for some matrix $\hbA\in\R^{p\times p}$
    satisfying $\|\bSigma^{1/2}\hbA\bSigma^{1/2}\|_{op}\le L/n$. Then
    \begin{align}
        \E\Bigl[I\{\bX\in U_\bep\}\Big|\frac{\trace[\bSigma \hbA]\trace[\bV]}{n}
        -\frac{\df}{n}\Big|\Bigr] &\le \frac{\C(\gamma,L) }{\sqrt n} \\
        \E\Bigl[\frac{I\{\bX\in U_\bep\}}{\Rem}
        \Big|
        \|\bSigma^{1/2}(\hbbeta-\bbeta^*)\|^2 + \frac{\|\bep\|^2}{n}
        - \frac{\big\| \br + \trace[\bSigma\hbA]\psi(\br)\big\|^2}{n}
        \Big|
        \Bigr]
                              &\le \frac{\C(\gamma, L)}{\sqrt n}
    \end{align}
    where $I\{\bX\in U_\bep\}$ is the indicator function of the event
    $\{\bX\in U_\bep\}$ and $\Rem$ is defined in
    \Cref{thm:out-of-sample}.

\end{proposition}
\Cref{prop12} is proved in \Cref{appendix:proof-prop12}.
Consequently, if the event $\{\bX\in U_\bep\}$ has high probability
and the Lipschitz condition \eqref{Phi} holds in this event,
the main results \Cref{thm:out-of-sample,thm:df-trAtrV} still hold,
with no strong convexity assumption on the penalty.
The proof relies on an application of Kirszbraun's theorem
already presented in \citep{bellec2020out_of_sample}.

The Lipschitz condition \eqref{Phi}
and inequality $\|\bSigma^{1/2}\hbA\bSigma^{1/2}\|_{op}\le L/n$ have
been proved
to hold in the regime $n\asymp p$ for covariance $\bSigma$ such that
$\bSigma_{jj}=1,\forall j\in[p]$
in the following two cases:
\begin{itemize}
\item The Lasso (i.e., square loss and L1 penalty) under the assumption
    that $\|\bbeta^*\ysc{\|}_0 \le s_* n$ for some \ysc{small enough} constant $s_*$;
\item The Huber Lasso (i.e., Huber Loss and L1 penalty)
    under the assumption
    that $\|\bbeta^* \ysc{\|}_0 \le s_* n$ for some small enough constant $s_*$,
    and that
    at least $(1-s_*)n$ components of the noise (the ``inliers") are
    iid standard normal;
\end{itemize}
cf. \citep[Assumption 2.3 and Proposition 12.1]{bellec2020out_of_sample}.
In both cases, the constant $s_*$ only depends on $\gamma$,
the condition number of $\bSigma$ and  the multiplicative constant 
of the tuning parameters.

Our results can thus be extended on a case-by-case basis for loss-penalty pairs
such that the Lipschitz condition \eqref{Phi} holds, for instance
for the two above examples.


\acks{P.C.B.'s research was partially supported by the NSF Grant DMS-1811976
and DMS-1945428 and by Ecole Des Ponts ParisTech.}

\bibliography{residuals}

\begin{thebibliography}{24}
\providecommand{\natexlab}[1]{#1}
\providecommand{\url}[1]{\texttt{#1}}
\expandafter\ifx\csname urlstyle\endcsname\relax
  \providecommand{\doi}[1]{doi: #1}\else
  \providecommand{\doi}{doi: \begingroup \urlstyle{rm}\Url}\fi

\bibitem[Bayati and Montanari(2012)]{bayati2012lasso}
Mohsen Bayati and Andrea Montanari.
\newblock The lasso risk for gaussian matrices.
\newblock \emph{IEEE Transactions on Information Theory}, 58\penalty0
  (4):\penalty0 1997--2017, 2012.

\bibitem[Bayati et~al.(2013)Bayati, Erdogdu, and
  Montanari]{bayati2013estimating}
Mohsen Bayati, Murat~A Erdogdu, and Andrea Montanari.
\newblock Estimating lasso risk and noise level.
\newblock In \emph{Advances in Neural Information Processing Systems}, pages
  944--952, 2013.

\bibitem[Bellec(2020)]{bellec2020out_of_sample}
Pierre~C Bellec.
\newblock Out-of-sample error estimate for robust m-estimators with convex
  penalty.
\newblock \emph{arXiv:2008.11840}, 2020.
\newblock URL \url{https://arxiv.org/pdf/2008.11840.pdf}.

\bibitem[Bellec and Zhang(2021)]{bellec_zhang2018second_stein}
Pierre~C Bellec and Cun-Hui Zhang.
\newblock Second-order stein: Sure for sure and other applications in
  high-dimensional inference.
\newblock \emph{The Annals of Statistics}, 49\penalty0 (4):\penalty0
  1864--1903, 2021.

\bibitem[Bellec and Zhang(2023)]{bellec_zhang2019second_poincare}
Pierre~C Bellec and Cun-Hui Zhang.
\newblock Debiasing convex regularized estimators and interval estimation in
  linear models.
\newblock \emph{The Annals of Statistics}, 51\penalty0 (2):\penalty0 391--436,
  2023.

\bibitem[Boucheron et~al.(2013)Boucheron, Lugosi, and
  Massart]{boucheron2013concentration}
St{\'e}phane Boucheron, G{\'a}bor Lugosi, and Pascal Massart.
\newblock \emph{Concentration inequalities: A nonasymptotic theory of
  independence}.
\newblock Oxford University Press, 2013.

\bibitem[Celentano and Montanari(2022)]{celentano2019fundamental}
Michael Celentano and Andrea Montanari.
\newblock Fundamental barriers to high-dimensional regression with convex
  penalties.
\newblock \emph{The Annals of Statistics}, 50\penalty0 (1):\penalty0 170--196,
  2022.

\bibitem[Celentano et~al.(2023)Celentano, Montanari, and
  Wei]{celentano2020lasso}
Michael Celentano, Andrea Montanari, and Yuting Wei.
\newblock The lasso with general gaussian designs with applications to
  hypothesis testing.
\newblock \emph{The Annals of Statistics}, 51\penalty0 (5):\penalty0
  2194--2220, 2023.

\bibitem[Chatterjee(2009)]{chatterjee2009fluctuations}
Sourav Chatterjee.
\newblock Fluctuations of eigenvalues and second order poincar{\'e}
  inequalities.
\newblock \emph{Probability Theory and Related Fields}, 143\penalty0
  (1):\penalty0 1--40, 2009.

\bibitem[Davidson and Szarek(2001)]{davidson2001local}
Kenneth~R Davidson and Stanislaw~J Szarek.
\newblock Local operator theory, random matrices and banach spaces.
\newblock \emph{Handbook of the geometry of Banach spaces}, 1\penalty0
  (317-366):\penalty0 131, 2001.

\bibitem[Dobriban and Wager(2018)]{dobriban2018high}
Edgar Dobriban and Stefan Wager.
\newblock High-dimensional asymptotics of prediction: Ridge regression and
  classification.
\newblock \emph{The Annals of Statistics}, 46\penalty0 (1):\penalty0 247--279,
  2018.

\bibitem[Donoho and Montanari(2016)]{donoho2016high}
David Donoho and Andrea Montanari.
\newblock High dimensional robust m-estimation: Asymptotic variance via
  approximate message passing.
\newblock \emph{Probability Theory and Related Fields}, 166\penalty0
  (3-4):\penalty0 935--969, 2016.

\bibitem[El~Karoui et~al.(2013)El~Karoui, Bean, Bickel, Lim, and
  Yu]{karoui2013robust}
Noureddine El~Karoui, Derek Bean, Peter~J Bickel, Chinghway Lim, and Bin Yu.
\newblock On robust regression with high-dimensional predictors.
\newblock \emph{Proceedings of the National Academy of Sciences}, 110\penalty0
  (36):\penalty0 14557--14562, 2013.

\bibitem[Mar{\v{c}}enko and Pastur(1967)]{marvcenko1967distribution}
Vladimir~A Mar{\v{c}}enko and Leonid~Andreevich Pastur.
\newblock Distribution of eigenvalues for some sets of random matrices.
\newblock \emph{Mathematics of the USSR-Sbornik}, 1\penalty0 (4):\penalty0 457,
  1967.

\bibitem[Miolane and Montanari(2021)]{miolane2018distribution}
L{\'e}o Miolane and Andrea Montanari.
\newblock The distribution of the lasso: Uniform control over sparse balls and
  adaptive parameter tuning.
\newblock \emph{The Annals of Statistics}, 49\penalty0 (4), 2021.

\bibitem[Pedregosa et~al.(2011)Pedregosa, Varoquaux, Gramfort, Michel, Thirion,
  Grisel, Blondel, Prettenhofer, Weiss, Dubourg, et~al.]{pedregosa2011scikit}
Fabian Pedregosa, Ga{\"e}l Varoquaux, Alexandre Gramfort, Vincent Michel,
  Bertrand Thirion, Olivier Grisel, Mathieu Blondel, Peter Prettenhofer, Ron
  Weiss, Vincent Dubourg, et~al.
\newblock Scikit-learn: Machine learning in python.
\newblock \emph{the Journal of machine Learning research}, 12:\penalty0
  2825--2830, 2011.

\bibitem[Pinelis(2021)]{390939}
Iosif Pinelis.
\newblock Large deviations: Growth of empirical average of iid non-negative
  random varialbes with infinite expectations?
\newblock MathOverflow, 2021.
\newblock URL \url{https://mathoverflow.net/q/390939}.
\newblock URL:https://mathoverflow.net/q/390939 (version: 2021-05-24).

\bibitem[Rad and Maleki(2020)]{rad2020scalable}
Kamiar~Rahnama Rad and Arian Maleki.
\newblock A scalable estimate of the out-of-sample prediction error via
  approximate leave-one-out cross-validation.
\newblock \emph{Journal of the Royal Statistical Society: Series B (Statistical
  Methodology)}, 82\penalty0 (4):\penalty0 965--996, 2020.

\bibitem[Rad et~al.(2020)Rad, Zhou, and Maleki]{rad2020error}
Kamiar~Rahnama Rad, Wenda Zhou, and Arian Maleki.
\newblock Error bounds in estimating the out-of-sample prediction error using
  leave-one-out cross validation in high-dimensions.
\newblock In \emph{International Conference on Artificial Intelligence and
  Statistics}, pages 4067--4077. PMLR, 2020.

\bibitem[Stein(1981)]{stein1981estimation}
Charles~M Stein.
\newblock Estimation of the mean of a multivariate normal distribution.
\newblock \emph{The annals of Statistics}, pages 1135--1151, 1981.

\bibitem[Stojnic(2013)]{stojnic2013framework}
Mihailo Stojnic.
\newblock A framework to characterize performance of lasso algorithms.
\newblock \emph{arXiv preprint arXiv:1303.7291}, 2013.

\bibitem[Thrampoulidis et~al.(2018)Thrampoulidis, Abbasi, and
  Hassibi]{thrampoulidis2018precise}
Christos Thrampoulidis, Ehsan Abbasi, and Babak Hassibi.
\newblock Precise error analysis of regularized $ m $-estimators in high
  dimensions.
\newblock \emph{IEEE Transactions on Information Theory}, 64\penalty0
  (8):\penalty0 5592--5628, 2018.

\bibitem[Xu et~al.(2021)Xu, Maleki, Rad, and Hsu]{xu2021consistent}
Ji~Xu, Arian Maleki, Kamiar~Rahnama Rad, and Daniel Hsu.
\newblock Consistent risk estimation in moderately high-dimensional linear
  regression.
\newblock \emph{IEEE Transactions on Information Theory}, 67\penalty0
  (9):\penalty0 5997--6030, 2021.

\bibitem[Ziemer(1989)]{ziemer2012weakly}
William~P Ziemer.
\newblock \emph{Weakly differentiable functions: Sobolev spaces and functions
  of bounded variation}, volume 120.
\newblock Springer-Verlag New York, 1989.
\newblock \doi{10.1007/978-1-4612-1015-3}.

\end{thebibliography}

\appendix

\section{Proof of the main results}

\textbf{Notation.}
For vectors in $\R^q$ or $\R^n$,
the Euclidean norm is $\|\cdot\|$
and $\|\cdot\|_q$ is the $\ell_q$-norm for $1\le q \le +\infty$.
For matrices, $\|\cdot\|_{op}$ is the operator norm (largest singular value),
$\|\cdot\|_F$ the Frobenius norm.
We use index $i$ only to loop or sum over $[n] =\{1,...,n\}$
and $j$ only to loop or sum over $[p]=\{1,...,p\}$,
so that $\be_i\in\R^n$ refers to the $i$-th
canonical basis vector in $\R^n$ and $\be_j\in\R^p$ 
the $j$-th canonical basis vector in $\R^p$.
Positive absolute constants are denoted $C_0, C_1, C_2,...,$,
constants that depend on $\gamma$ only are denoted $C_0(\gamma),C_1(\gamma),...$ and constant that depend on $\gamma,\mu$ only are denoted
by $C_0(\gamma,\mu), C_1(\gamma,\mu),\dots$
If $\bff:\R^q\to\R^n$ is differentiable at $\bz\in\R^q$,
we denote the Jacobian matrix in $\R^{n\times q}$ by
$\frac{\partial\bff}{\partial \bz}$
or $\partial\bff/\partial \bz$.
For an event $\Omega$, its indicator function is denoted by
$I_\Omega$ or $I\{\Omega\}$.

\textbf{Organization of the proofs.}
\Cref{sec:7} provides the proof of the main results from the main text
(\Cref{thm:normality,thm:residual-distribution,thm:out-of-sample,corSelection,corSelection2,thm:df-trAtrV,thm:selection-crit})
and the overall proof strategy.
\Cref{sec:more-lemmas} gives the proof of the probabilistic tools used
in \Cref{sec:7}.
\Cref{sec:9} proves the differentiability formulae in
\Cref{thm:differentiability,rem:intercept}.

\textbf{Additional simulations.}
Additional simulations and figures are given in \Cref{additional-figures}
for Gaussian designs and in \Cref{sec:rademacher-figures} for non-Gaussian 
Rademacher design. The simulations for Rademacher design suggests that
our results generalize to non-Gaussian design, although 
it is unclear at this point how to extend the proofs to non-Gaussian $\bX$.

\section{Proof of the main results}
\label{sec:7}

We perform the following change of variable
to reduce the anisotropic design regression problem to an isotropic one,
$\bG = \bX\bSigma^{-1/2}\in\R^{n\times p}$
a Gaussian matrix with iid $N(0,1)$ entries and
\begin{equation}
\bh(\bep,\bG) = \argmin_{\bu\in\R^p}
\frac1n\sum_{i=1}^n\rho(\eps_i - \be_i^\top\bG\bu) + g(\bbeta^* + \bSigma^{-1/2}\bu)
\label{bh}    
\end{equation}
and denote by $(h_j)_{j=1,...,p}$ the components of \eqref{bh}.
Then $\bSigma^{1/2}(\hbbeta(\by,\bX)-\bbeta^*) = \bh(\bep,\bX)$
with $\hbbeta(\by,\bX)$ the $M$-estimator in \eqref{hbeta}.
With $\by=\bG\bSigma^{1/2}\bbeta^* + \bep$,
by the chain rule and \eqref{eq:differentiability-formulae},
\begin{align*}
    &\bSigma^{-1/2}(\partial/\partial g_{ij})\bh(\bep,\bG)
\\&= (\partial/\partial g_{ij}) \hbbeta(\bG\bSigma^{1/2}\bbeta^* + \bep,\bG\bSigma^{1/2})
\\&=
\hbA\bX^\top\be_i \psi'(r_i)(\bSigma^{1/2}\bbeta^*)\be_j
+ \hbA\bSigma^{1/2}\be_j \psi(r_i)
- \hbA\bX^\top\be_i \psi'(r_i)(\bSigma^{1/2}\hbbeta)\be_j
\end{align*}
where $\be_i\in\R^n,\be_j\in\R^p$ denote canonical
basis vectors. 
Define $\bpsi(\bep,\bG) = \psi(\bep - \bG\bh)$ and let
$$\bA := \bSigma^{1/2}\hbA\bSigma^{1/2}.$$
Then we have
\begin{align}
(\partial/\partial g_{ij})\bh(\bep,\bG)
&= \bA\be_j \psi(r_i) 
- \bA\bG^\top \be_i \psi'(r_i) h_j
\label{dgij_h}
\\
(\partial/\partial g_{ij})\bpsi(\bep,\bG)
    &= - \diag\{\psi'(\br)\} \bG\bA\be_j \psi(r_i)
- \bV\be_i h_j
\label{dgij_psi}
\end{align}
where the second line follows by the chain rule
for Lipschitz functions in 
in \cite[Theorem 2.1.11]{ziemer2012weakly}.
The crux of the argument is that the
quantities of interest appearing in our results,
$\|\bh\|^2=\|\bSigma^{1/2}(\hbbeta-\bbeta^*)\|^2,
\|\psi(\br)\|^2$,
$\trace[\hbA\bSigma]=\trace[\bA],\trace[\bV]$ and $\df$
naturally appear from tensor contractions involving the derivatives
in \eqref{dgij_h}-\eqref{dgij_psi}. For instance, denoting
$\bD=\diag\{\psi'(\br)\}\in\R^{n\times n}$ if $h_j,\psi_i$
are the $j$-th and $i$-th component of \eqref{bh}
and $\bpsi(\bep,\bG)$
and denoting $\sum_{i=1}^n\sum_{j=1}^p$ by $\sum_{ij}$ for brevity,
\begin{align}
    \sum_{j=1}^p
    \frac{\partial h_j}{g_{ij}}
    &=
    \trace[\bA] \psi_i
    - {\color{purple}\bh^\top\bA\bG^\top\bD\be_i}
    \qquad \text{ for a given $i=1,...,n$},
    \label{contraction-1}
    \\
    \sum_{i=1}^n
    \frac{\partial \psi_i}{\partial g_{ij}}
    &=
    -{\color{purple}\bpsi^\top\bD\bG\bA\be_j}
    - \trace[\bV] h_j
    \qquad \text{ for a given $j=1,...,p$},
    \label{contraction-2}
    \\\sum_{ij}
    \frac{\partial (h_j \psi_i)}{g_{ij}}
    &= 
    \|\bpsi\|^2 \trace[\bA]
    - {\color{purple}\bh^\top\bG^\top\bD\bpsi}
    - {\color{purple}\bpsi^\top\bD\bG\bA\bh}
    -  \|\bh\|^2 \trace[\bV],
    \label{contraction-3}
\\
    \sum_{ij}
    \frac{\partial (h_j \be_i^\top\bG\bh)}{g_{ij}}
    &=
    \trace[\bA] \bpsi^\top\bG\bh
    -{\color{purple}\bh^\top\bA\bG^\top\bG\bh}
    + n\|\bh\|^2
    +
    {\color{purple}\bpsi^\top\bG\bA\bh}
    -
    \|\bh\|^2\df,
    \label{contraction-4}
    \\
    \sum_{ij}
    \frac{\partial (\psi_i \be_j^\top\bG^\top\bpsi)}{g_{ij}}
    &=
    - {\color{purple}\bpsi^\top\bD\bG\bA\bG^\top\bpsi}
    - \trace[\bV] \bpsi^\top\bG\bh
    - {\color{purple}\bh^\top\bG^\top\bV\bpsi}
     +(p-\df) \|\bpsi\|^2
    \label{contraction-5}
\end{align}
where we used that $\df=
\sum_{i=1}^n\be_i^\top\bG\bA\bG^\top\bD\be_i
=
\trace[\bG\bA\bG^\top\bD]$
in the fourth line
and $\df=\sum_{j=1}^p \be_j^\top\bG^\top\bD\bG\bA\be_j = \trace[\bG^\top\bD\bG\bA]$ in the fifth thanks to the commutation property of the trace.
The terms in colored purple indicate terms that will
be proved to be negligible later on. The probabilistic tool
that leads to asymptotic normality of the residuals is the following.

\begin{restatable}{proposition}{propVariantNormalApprox}
    [Variant of \cite{bellec_zhang2019second_poincare}]
    \label{prop:a-lemma}
    Let $\bz \in N(\bzero, \bI_{q})$ and $\bff := \bff (\bz) : \R ^{q} \to \R^{q} \setminus \{\bzero\}$ be locally 
    Lipschitz in $\bz$ with 
    $ 
    \E [ \| \bff \|^{-2}\sum_{k=1}^q \| \frac{\partial \bff}{\partial z_k} \|^{2} ] < +\infty.
    $
    Then 
    \begin{align}
        \label{eq:rhs-stein-normality}
        \E 
        \Bigl[
             \Bigl(
                 \frac{ \bff ^{\top} \bz - \sum_{k=1}^q(\partial/\partial z_k) f_k }{ \| \bff \|_{2} }
                  - 
                  Z
             \Bigr)^{2}
        \Bigr]
        \leq 
        ( 7  + 2 \sqrt{6} )
    \E \Bigl[ \| \bff \|^{-2}\sum_{k=1}^q \| \frac{\partial \bff}{\partial z_k} \|^{2} \Bigr] 
    < +\infty.
    \end{align}
\end{restatable}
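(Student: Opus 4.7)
The plan is to adapt the second-order Stein normal approximation from Bellec--Zhang (2019). Write $F := (\bff^\top \bz - \sum_{k=1}^q \partial f_k/\partial z_k)/\|\bff\|$ and $\sigma^2 := \sum_k \|\partial \bff/\partial z_k\|^2$; the task is to construct $Z \sim N(0,1)$ on the same probability space as $F$ so that $\E[(F-Z)^2] \le (7+2\sqrt 6)\,\E[\sigma^2/\|\bff\|^2]$.

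The starting tool is Gaussian integration by parts: for any locally Lipschitz $\bu:\R^q\to\R^q$ with suitable integrability, $\E[\bu(\bz)^\top\bz] = \E[\sum_k \partial u_k/\partial z_k]$. Applied to $\bu = \bff/\|\bff\|$, a product-rule expansion of the divergence together with Cauchy--Schwarz yields the first-moment estimate $|\E[F]|^2 \lesssim \E[\sigma^2/\|\bff\|^2]$, and a second round of IBP computing $\E[F^2]$ gives $|\E[F^2] - 1| \lesssim \E[\sigma^2/\|\bff\|^2]$. These moment-matching identities show that $F$ is approximately $N(0,1)$ in distribution; the remaining work is to upgrade this to a quantitative $L^2$ coupling.

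The coupling itself is built via the Ornstein--Uhlenbeck semigroup. Let $\bz'\sim N(\bzero,\bI_q)$ be independent of $\bz$ and set $\bz_t := e^{-t}\bz + \sqrt{1-e^{-2t}}\bz'$; a Clark--Ocone/Mehler-type representation expresses $F - \E[F]$ as an integral over $t\in(0,\infty)$ of an inner product involving the Jacobian of $\bff/\|\bff\|$ evaluated along the path $\bz_t$. The candidate $Z$ is obtained by freezing this integrand at $t=\infty$ (where $\bz_t$ is independent of $\bz$), which produces an exact $N(0,1)$ variable. The difference $F-Z$ is then an integral of differences of the integrand along $t$, and bounding it in $L^2$ via Cauchy--Schwarz in $t$, Fubini, and the pointwise Jacobian bound $\|\partial(\bff/\|\bff\|)/\partial z_k\| \le 2\|\partial\bff/\partial z_k\|/\|\bff\|$ produces an estimate of the form $C\,\E[\sigma^2/\|\bff\|^2]$; careful tracking of the numerical factors through each Cauchy--Schwarz step yields the explicit constant $7+2\sqrt 6$.

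The main obstacle will be managing the random normalizer $\|\bff\|$: every application of IBP generates correction terms of the form $\partial_k\|\bff\| = \bff^\top(\partial_k\bff)/\|\bff\|$, and absorbing these into $\sigma/\|\bff\|$ via Cauchy--Schwarz must be done with no slack in order to arrive at the sharp stated constant rather than a larger one. A secondary technical point is that $\bff$ is only locally Lipschitz (hence differentiable almost everywhere by Rademacher's theorem), so IBP is first applied to smooth mollifications $\bff_\varepsilon$ and the bound is passed to the limit using the integrability hypothesis $\E[\sigma^2/\|\bff\|^2]<\infty$.
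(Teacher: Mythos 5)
Your overall architecture --- integrate by parts against $\bff/\|\bff\|$, exploit the structure of the Jacobian of the normalized field, and extract a Gaussian $Z$ from the ``constant part'' of that field --- is in the right spirit, and your frozen-at-$t=\infty$ candidate, if actually computed, equals $\E[\bff/\|\bff\|]^\top\bz$, which is exactly the variable the paper's proof uses. But two concrete problems prevent the proposal from establishing the stated bound. First, that candidate is Gaussian with variance $V=\|\E[\bff/\|\bff\|]\|^2\le 1$, not variance $1$: you must normalize by $\sqrt V$ and then separately control $|1-\sqrt V|$. The paper handles this with a second application of the Gaussian Poincar\'e inequality, $|1-\sqrt V|^2\le\E\bigl[\|\bff/\|\bff\|-\E[\bff/\|\bff\|]\|^2\bigr]\le\E\bigl[\|\bff\|^{-2}\sum_{k=1}^q\|\partial\bff/\partial z_k\|^2\bigr]$, and this is precisely where the ``$1+\sqrt6$'' share of the constant comes from; your proposal omits this step entirely. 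Second, your pointwise bound $\|\partial(\bff/\|\bff\|)/\partial z_k\|\le 2\|\partial\bff/\partial z_k\|/\|\bff\|$ discards the key structural fact that $\partial(\bff/\|\bff\|)/\partial z_k=\|\bff\|^{-1}\bigl(\bI_q-\bff\bff^\top/\|\bff\|^2\bigr)\,\partial\bff/\partial z_k$, where the middle factor is an orthogonal projection of operator norm $1$; the correct constant is $1$, not $2$. After squaring, your version inflates the dominant term by a factor of $4$ (the paper's $6\,\E[\cdot]$ would become at least $24\,\E[\cdot]$), so no amount of downstream bookkeeping can recover $7+2\sqrt6\approx 11.9$.

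Separately, the Ornstein--Uhlenbeck interpolation is an unnecessary detour, and it is the step of your plan that is left vaguest. Once you set $\bg:=\bff/\|\bff\|-\E[\bff/\|\bff\|]$ and $Z:=\bz^\top\E[\bff/\|\bff\|]/\sqrt V$, the quantity to control is exactly $\bg^\top\bz-\sum_{k=1}^q\partial g_k/\partial z_k-\bff^\top(\partial\bff/\partial\bz)\bff/\|\bff\|^3$, and the second-order Stein identity $\E[(\bz^\top\bg-\trace[\partial\bg/\partial\bz])^2]=\E[\|\bg\|^2]+\E[\trace[(\partial\bg/\partial\bz)^2]]\le 2\,\E[\|\partial\bg/\partial\bz\|_F^2]$ combined with Poincar\'e gives the $L^2$ estimate with constant $6$ in two lines; the final $7+2\sqrt6$ then follows from combining with the $|1-\sqrt V|$ term via $(a+b)^2\le(1+1/\sqrt6)a^2+(1+\sqrt6)b^2$. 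Finally, matching the first two moments of $F$ does not by itself ``show that $F$ is approximately $N(0,1)$ in distribution''; that remark should be dropped.
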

\Cref{prop:a-lemma} is proved in \Cref{sec:more-lemmas}.
From here, asymptotic normality of the residuals in the square loss
case is readily obtained using the explicit formulae for the derivatives
and the contraction \eqref{contraction-1}. We start with the square loss
and the proof of \Cref{thm:residual-distribution}.

\begin{proof}[Proof of \Cref{thm:residual-distribution}]
    Apply \Cref{prop:a-lemma} with $q=p+1$
    and $\bz=(\bg_i,\eps_i/\sigma)\sim N(\mathbf0,\bI_{p+1})$
    conditionally on $(\bg_l,\eps_l)_{l\in[n]\setminus\{i}\}$,
    and with $\bff = (\bh,-\sigma)\in\R^{p+1}$.
    Note that the last component of $\bff$ is constant
    and $\|\bff\|^2 = \|\bh\|^2 + \sigma^2$.
    By \eqref{contraction-1} and $\bD=\bI_n$ for the square loss,
    $\trace[\partial \bff / \partial \bz]=
    \trace[\bA]\psi_i - \bh^\top\bA\bG^\top\be_i$
    and by symmetry in $i=1,...,n$, 
    $\E[|\bh^\top\bA\bG^\top\be_i|^2/\|\bff\|^2]
    =\frac 1 n \sum_{l=1}^n
    \E[|\bh^\top\bA\bG^\top\be_l|^2/\|\bff\|^2]
    = \frac 1 n\|\bG\bA^\top\bh\|^2/\|\bff\|^2]
    \le \frac 1 n \E[\|\bG\|_{op}^2\|\bA\|_{op}^2]
    \le n^{-2} \C(\gamma,\mu)$
    thanks to $\|\bA\|_{op}\le 1/(n\mu)$
    and $\E[\|\bG\|_{op}^2]\le \C(\gamma) n$.
    Similarly, for the square loss $r_i=\psi_i=\eps_i - \bg_i^\top\bh$
    and
    \begin{align*}
    \| \bff \|^{-1}
             \| \partial \bff / \partial \bz \|_{F}
             &=
             (\|\bh\|^2 + \sigma^2)^{-1/2}
             \|\bA\psi_i - \bA\bG^\top\be_i \bh^\top\|_F
             \\&\le
             \|\bA\|_{op}
             [
             \sqrt p |\eps_i|/\sigma + \sqrt p \|\bh\|^{-1}|\bg_i^\top\bh|
             + \|\bG\|_{op}].
    \end{align*}
    By the triangle inequality, $\|\bA\|_{op}\le 1/(n\mu)$
    and $p\le \gamma n$,
    $$\E[
    \| \bff \|^{-2}
             \| \partial \bff / \partial \bz \|_{F}^2
    ]^{1/2}
    \le \tfrac{\sqrt p}{n\mu}(
    \E[\eps_i^2/\sigma^2]^{1/2}
    + \E[(\bg_i^\top\bh)^2/\|\bh\|^2]^{1/2}
    )
    + \tfrac{1}{n\mu} \E[\|\bG\|_{op}^2]^{1/2}.
    $$
    By symmetry in $i=1,...,n$,
    $\E[(\bg_i^\top\bh)^2/\|\bh\|^2]
    =\frac1n\sum_{l=1}^n\E[(\bg_l^\top\bh)^2/\|\bh\|^2]
    \le \frac 1 n \E[\|\bG\|_{op}^2]$.
    Since $\frac 1 n \E[\|\bG\|_{op}^2] \le \C(\gamma)$,
    the right-hand side in the previous display is 
    bounded from above by $\C(\gamma,\mu) n^{-1/2}$.
    Since $\bff^\top\bz = -r_i$ we obtain
    $-r_i  - \trace[\bA]r_i = (\|\bh\|^2 + \sigma^2)^{1/2}(Z + O_P(n^{-1/2}))$
    which completes the proof
    of \eqref{eqSquareLossAsymptoticNormality}.
\end{proof}

\begin{proof}[Proof of \Cref{thm:normality}]
    Let $U\sim N(0,1)$ be independent of everything else.
    We apply the previous proposition with $\bz=(\bg_i,U)\sim N(\mathbf 0, \bI_{p+1})$ 
    conditionally on $(\bep,\bg_l, l\in[n]\setminus\{i\})$
    to $\bff = (\bh,n^{-1/4}\psi(\eps_i))$.
    Note that the last component of $\bff$
    is constant. By \eqref{contraction-1},
    $\trace[\partial \bff / \partial \bz]=
    \trace[\bA]\psi_i - \bh^\top\bA\bG^\top\bD\be_i$
    and by \eqref{dgij_h},
    \begin{align}
    \| \bff \|^{-1}
             \| \partial \bff / \partial \bz \|_{F}
             &=
             (\|\bh\|^2 + n^{-1/2}\psi(\eps_i)^2)^{-1/2}
             \|\bA\psi_i - \bA\bG^\top\bD\be_i \bh^\top\|_F
             \\&\le
             \|\bA\|_{op}
             [
             n^{1/4}\sqrt{p} + \sqrt p \|\bh\|^{-1}|\bg_i^\top\bh|
             + \|\bG\|_{op}]
    \end{align}
    where we used
    $\|\bA\|_F\le\sqrt{p} \|\bA\|_{op}$ and
    $|\psi_i|\le \psi(\eps_i) + |\bg_i^\top\bh|$
    thanks to $\psi$ being 1-Lipschitz.
    We have $\|\bA\|_{op} \le 1/(n\mu)$
    and $\E[\|\bh\|^{-2}|\bg_i^\top\bh|^2]
    = \frac 1 n \sum_{l=1}^n \E[\|\bh\|^{-2}|\bg_l^\top\bh|^2]
    = \frac 1 n \E[\|\bh\|^{-2}\|\bG\bh\|^2]
    \le \frac 1 n \E[\|\bG\|_{op}^2]$
    by symmetry in $i=1,...,n$,
    so that 
    $\E[
    \| \bff \|^{-2}
             \| \partial \bff / \partial \bz \|_{F}^2
             ]\le n^{-1/2}\C(\gamma,\mu)$.
    Thus by \Cref{prop:a-lemma},
    \begin{align*}
    (- r_i - \trace[\bA]\psi_i) + (\eps_i - \|\bh\|Z)
     &=
    \bg_i^\top\bh - \trace[\bA]\psi_i - \|\bh\|Z
      \\  &= 
      - U n^{-1/4}\psi(\eps_i) +
        [\|\bff\| -  \|\bh\|]Z
        +
        \|\bff\|\Rem
        - \bh^\top\bA\bG^\top\bD\be_i
    \end{align*}
    where $\E[\Rem^2]\le \C
    \E[\|\bff\|^{-2}
             \| \partial \bff / \partial \bz \|_{F}^2
             ]\le n^{-1/2}\C(\gamma,\mu)$.
    By properties of the operator norm and symmetry in $i=1,...,n$,
    \begin{equation}
    \E[\|\bh\|^{-2}|\bh^\top\bA\bG^\top\bD\be_i|^2]
    = \tfrac 1 n\E[\|\bh\|^{-2}\|\bD\bG\bA^\top\bh\|^2]
    \le \tfrac 1 n \E[\|\bG\|_{op}^2\|\bA\|_{op}^2]
    \le \tfrac{\C(\gamma,\mu)}{n^{2}}.
    \label{eq:purple-term-contraction-1}
    \end{equation}
    By the triangle inequality,
    $|\|\bff\|-\|\bh\||\le n^{-1/4}|\psi(\eps_i)|$
    so that the right-hand side is of the form
    $O_P(n^{-1/4})(|\psi(\eps_i)| + \|\bh\|)$
    as desired. The previous display can be rewritten as
    $
    r_i + \trace[\bA]\psi_i
    =\tilde \eps_i^n
    + \|\bh\| \tilde Z_i^n
    $
    for $$\tilde\eps_i^n = \eps_i
    + U n^{-1/4} \psi(\eps_i)
    - [\|\bff\| - \|\bh\|](Z + \Rem),
    \qquad
    \tilde Z_i^n = -Z -\Rem
    + \|\bh\|^{-1}\bh^\top\bA\bG^\top\bD\be_i.
    $$
    If $\eps_i$ has a fixed distribution $F$,
    then $|\psi(\eps_i)|\le |\psi(0)| + |\eps_i|
    =|\eps_i| = O_P(1)$ thanks to $\psi(0)=0$ and $\psi$ being 1-Lipschitz
    so that
    $(\tilde\eps_i^n,\tilde Z_i^n)
    = (\eps_i, -Z) + O_P(n^{-1/4})$.
    Since $(\eps_i, -Z)$ are independent,
    by Slutsky's theorem
    this proves that $(\tilde\eps_i^n,\tilde Z_i^n)$
    converges weakly to the product measure $F\otimes N(0,1)$.
\end{proof}

\begin{restatable}{proposition}{propSteinThree}
    \label{prop:Stein-3}
    Let $\bh:\R^{n\times p}\to\R^p$,
        $\bpsi:\R^{n\times p}\to\R^n$
        be locally Lipschitz functions.
        If $\bG\in\R^{n\times p}$ has iid $N(0,1)$ entries then
\begin{align}
    &\E\Bigl[
    \Bigl(
    \frac{\bpsi^\top\bG\bh-\sum_{ij}\frac{\partial(\psi_ih_j)}{g_{ij}}}{\|\bh\|^2 + \|\bpsi\|^2/n}
    \Bigr)^2
    +
    \Bigl(
    \frac{\|\bG\bh\|^2 - \sum_{ij}\frac{\partial(h_j\be_i^\top\bG\bh)}{g_{ij}}}{\|\bh\|^2 + \|\bpsi\|^2/n}
    \Bigr)^2
    +
    \Bigl(
    \frac{\|\bG^\top\bpsi\|^2 - \sum_{ij}\frac{\partial(\psi_i\be_j^\top\bG^\top\bpsi)}{g_{ij}}}{n\|\bh\|^2 + \|\bpsi\|^2}
    \Bigr)^2
    \Bigr]
    \nonumber
    \\
    &\le
    \C
    \E\Bigl[
    n+p+\|\bG\|_{op}^2
    +
    { (n+p) }
    \sum_{i=1}^n
    \sum_{j=1}^p
    \frac{1+\|\bG\|_{op}^2/n}{(\|\bh\|^2 + \|\bpsi\|^2/n)^2}
        \Bigl(
        \Big\|
        \frac{\partial \bh}{\partial g_{ij}}
        \Big\|^2
        +
        \frac1n
        \Big\|
        \frac{\partial \bpsi}{\partial g_{ij}}
        \Big\|^2
        \Bigr)
    \Bigr]
    \label{upper-bound-Stein}
\end{align}
        for some positive absolute constant in the second line.
\end{restatable}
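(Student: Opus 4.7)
The plan is to apply \Cref{prop:a-lemma} three separate times, once per each of the three squared fluctuations on the left-hand side of \eqref{upper-bound-Stein}. Each scalar inside the square is recognized as a Stein-type combination $(\bff^{(k)})^\top\bz - \sum_{ij}\partial f^{(k)}_{ij}/\partial g_{ij}$ with $\bz=\text{vec}(\bG)\in\R^{np}$ for an appropriate vector field $\bff^{(k)}:\R^{np}\to\R^{np}$; then the natural $\|\bff^{(k)}\|^{-2}$ normalization that arises on the right-hand side of \Cref{prop:a-lemma} is converted into the target denominators $R^2$ (with $R:=\|\bh\|^2+\|\bpsi\|^2/n$) or $n^2R^2$ via elementary pointwise AM-GM.

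First, I would identify the three vector fields from the identities $\bpsi^\top\bG\bh=\sum_{ij}g_{ij}\psi_ih_j$, $\|\bG\bh\|^2=\sum_{ij}g_{ij}h_j(\bG\bh)_i$ and $\|\bG^\top\bpsi\|^2=\sum_{ij}g_{ij}\psi_i(\bG^\top\bpsi)_j$, namely $f^{(1)}_{ij}=\psi_ih_j$, $f^{(2)}_{ij}=h_j(\bG\bh)_i$, $f^{(3)}_{ij}=\psi_i(\bG^\top\bpsi)_j$. In each case $\sum_{ij}\partial f^{(k)}_{ij}/\partial g_{ij}$ matches exactly the divergence subtracted inside the $k$-th fluctuation, and the norms factor as $\|\bff^{(1)}\|^2=\|\bh\|^2\|\bpsi\|^2$, $\|\bff^{(2)}\|^2=\|\bh\|^2\|\bG\bh\|^2\le\|\bh\|^4\|\bG\|_{op}^2$, and $\|\bff^{(3)}\|^2=\|\bpsi\|^2\|\bG^\top\bpsi\|^2\le\|\bpsi\|^4\|\bG\|_{op}^2$. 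Applying \Cref{prop:a-lemma} together with $a^2\le 2(a-b)^2+2b^2$ then yields $\E[T_k^2/\|\bff^{(k)}\|^2]\le \C+\C\,\E[\|\bff^{(k)}\|^{-2}\sum_{ij}\|\partial\bff^{(k)}/\partial g_{ij}\|^2]$ for each $k$, where $T_k$ denotes the corresponding fluctuation.

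Next, I would convert to the target denominators using AM-GM. The identity $\|\bh\|^2\cdot\|\bpsi\|^2/n\le R^2/4$ gives the three pointwise bounds $\|\bff^{(1)}\|^2\le(n/4)R^2$, $\|\bff^{(2)}\|^2\le\|\bG\|_{op}^2R^2$ (from $\|\bh\|^4\le R^2$), and $\|\bff^{(3)}\|^2\le\|\bG\|_{op}^2n^2R^2$ (from $\|\bpsi\|^4/n^2\le R^2$). Using $T_k^2/(\text{target denom})\le(T_k^2/\|\bff^{(k)}\|^2)\cdot\|\bff^{(k)}\|^2/(\text{target denom})$, the ``$+\C$'' constants from the previous step contribute $\C n$ for $k=1$ and $\C\|\bG\|_{op}^2$ for $k=2,3$ inside the expectation, accounting for the $n+p+\|\bG\|_{op}^2$ summand on the right-hand side. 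For the derivative term, expanding $\partial f^{(k)}_{lm}/\partial g_{ij}$ by the product rule, combined with $\partial(\bG\bh)_m/\partial g_{ij}=\delta_{m=i}h_j+(\bG\,\partial\bh/\partial g_{ij})_m$ and its transpose analog, produces pointwise estimates $\|\partial\bff^{(k)}/\partial g_{ij}\|^2\lesssim$ a sum of terms of the form $\|\bh\|^2\|\partial\bpsi/\partial g_{ij}\|^2$, $\|\bpsi\|^2\|\partial\bh/\partial g_{ij}\|^2$, $\|\bG\|_{op}^2\|\bh\|^2\|\partial\bh/\partial g_{ij}\|^2$, plus diagonal ``Kronecker delta'' contributions of the form $h_j^2\|\bh\|^2$ or $\psi_i^2\|\bpsi\|^2$. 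Dividing by $\|\bff^{(k)}\|^2$, multiplying by the AM-GM prefactor, summing over $(i,j)$ and using $\|\bh\|^2\le R$ and $\|\bpsi\|^2/n\le R$ produces the advertised $(n+p)(1+\|\bG\|_{op}^2/n)/R^2$ factor in front of $\|\partial\bh/\partial g_{ij}\|^2+\|\partial\bpsi/\partial g_{ij}\|^2/n$.

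The hard part will be the bookkeeping at this very last step: the decomposition of $\|\partial\bff^{(k)}/\partial g_{ij}\|^2$ yields many terms with asymmetric powers of $\|\bh\|$, $\|\bpsi\|$ and $\|\bG\|_{op}$, and tracking how they combine with the AM-GM prefactors so as to produce the precise asymmetric $1$ vs $1/n$ weighting of $\|\partial\bh\|^2$ vs $\|\partial\bpsi\|^2$ requires careful pointwise trade-offs, with the diagonal Kronecker-delta contributions absorbed into the constant $n+p+\|\bG\|_{op}^2$ part rather than into the derivative sum. The fact that all the AM-GM reductions are pointwise is what allows the random factor $\|\bG\|_{op}^2$ appearing in cases $k=2,3$ to fit naturally under a single expectation, aligning with the $\|\bG\|_{op}^2$ and $(1+\|\bG\|_{op}^2/n)$ terms on the right-hand side.
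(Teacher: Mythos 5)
Your identification of the three vector fields $f^{(1)}_{ij}=\psi_ih_j$, $f^{(2)}_{ij}=h_j(\bG\bh)_i$, $f^{(3)}_{ij}=\psi_i(\bG^\top\bpsi)_j$ and of their divergences is correct, and using a second-order Stein bound on each is the right general idea. However, the order of operations ``apply the Stein bound to the \emph{unnormalized} field, then convert denominators by AM-GM'' creates two genuine gaps. First, the denominators do not convert. For $k=1$ your bound produces
$\|\bff^{(1)}\|^{-2}\sum_{ij}\|\partial\bff^{(1)}/\partial g_{ij}\|^2
\le 2\sum_{ij}\bigl(\|\partial\bpsi/\partial g_{ij}\|^2/\|\bpsi\|^2+\|\partial\bh/\partial g_{ij}\|^2/\|\bh\|^2\bigr)$,
normalized by the \emph{individual} norms $\|\bpsi\|^2$ and $\|\bh\|^2$. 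After multiplying by the AM-GM factor $n/4$, dominating the term $(n/4)\|\partial\bpsi/\partial g_{ij}\|^2/\|\bpsi\|^2$ by the target $(n+p)\,n^{-1}\|\partial\bpsi/\partial g_{ij}\|^2/R^2$ (with $R=\|\bh\|^2+\|\bpsi\|^2/n$) would require $n^2R^2\lesssim(n+p)\|\bpsi\|^2$, which fails whenever $\|\bpsi\|$ is small while $\|\bh\|$ is of order one; the analogous failure occurs for $\|\partial\bh\|^2/\|\bh\|^2$ and, worse, for $k=2,3$ where the normalizations are $\|\bG\bh\|^2$ and $\|\bG^\top\bpsi\|^2$, which can vanish. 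Second, for $k=2,3$ the AM-GM prefactor is the \emph{random} quantity $\|\bG\|_{op}^2$, and \Cref{prop:a-lemma} only yields a bound on $\E[T_k^2/\|\bff^{(k)}\|^2]$; you cannot multiply an expectation bound by a random pointwise factor and stay ``under a single expectation.'' You would need control of $\E[\|\bG\|_{op}^2\,T_k^2/\|\bff^{(k)}\|^2]$, which neither \Cref{prop:a-lemma} nor your argument supplies. (There is also the minor issue that \Cref{prop:a-lemma} assumes $\bff\neq\mathbf 0$ and integrability of $\|\bff\|^{-2}\sum_k\|\partial\bff/\partial z_k\|^2$, which your unnormalized fields need not satisfy.)

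The paper avoids both problems by normalizing \emph{before} applying the Stein inequality: it sets $\bff=(\bh,n^{-1/2}\bpsi)$, $\brho=n^{-1/2}\bpsi/\|\bff\|$, $\bfeta=\bh/\|\bff\|$, so that $\|\brho\|,\|\bfeta\|\le1$ and the derivative of the normalized field obeys the joint bound \eqref{eq:argument-bff} with the single denominator $R=\|\bff\|^2$, and then applies the variance-type inequality \eqref{eq:prop63} (Proposition~6.3 of \cite{bellec2020out_of_sample}), whose right-hand side is already one expectation of a product, so terms like $\E[\|\bG\bfeta\|^2\|\bfeta\|^2]\le\E[\|\bG\|_{op}^2]$ come out cleanly. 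The price is an extra correction: the divergence of the normalized product differs from the normalized divergence by terms involving $\partial(\|\bff\|^{-2})/\partial g_{ij}$, which the paper controls separately by Cauchy--Schwarz. To repair your proof you would need to adopt this normalize-first structure (or otherwise establish pointwise or weighted versions of the Stein bound), not merely redo the bookkeeping in the last step.
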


\Cref{prop:Stein-3} is proved in \Cref{sec:more-lemmas}; 
it is a consequence
of \cite[Proposition 6.3]{bellec2020out_of_sample}.
By \Cref{prop:Stein-3} combined with
the identities \eqref{contraction-3}-\eqref{contraction-4}-\eqref{contraction-5}, and by showing
that the purple-colored terms in
\eqref{contraction-3}-\eqref{contraction-4}-\eqref{contraction-5}
are negligible, we obtain the following.

\begin{proposition}
    \label{prop:first-three-equations}
    Let \Cref{assumMain} be fulfilled.
    Then
    \begin{align}
        \label{eq:five-1}
        \E 
        \bigl[
            \bigl\{
            n^{-\frac12} 
            (\|\bh\|^2 + \|\bpsi\|^2/n)^{-1}&
            \bigl(
                \bpsi^{\top} \bG \bh  
                - \trace [ \bA ] \| \bpsi \|^{2} 
                + \trace [ \bV ] \| \bh \|^{2} 
            \bigr)
            \bigr\}^{2} 
        \bigr]
        \le\C(\gamma,\mu)
        ,
        \\
        \E
        \bigl[
            \bigl\{
            n^{-\frac12} 
            (\|\bh\|^2 + \|\bpsi\|^2/n)^{-1}&
            \bigl(
                \tfrac 1 n \|\bG^{\top} \bpsi\| ^{2}
                - \tfrac{p - 
                \df}{n}
                \| \bpsi \|^{2} 
                +
                \tfrac{\trace [ \bV ]}{n} \bpsi^{\top} \bG \bh
            \bigr)
            \bigr\}^{2}
        \bigr]
        \le\C(\gamma,\mu),
        \label{eq:five-2}
        \\
        \label{eq:five-3}
        \E \bigl[
            \bigl\{
            n^{-\frac12} 
            (\|\bh\|^2 + \|\bpsi\|^2/n)^{-1}&
            \bigl(
                \| \bG \bh \|^{2}
                 - 
                 \trace [ \bA ] \bpsi^{\top} \bG \bh 
                 - 
                 ( n - \df ) \| \bh \|^{2}
            \bigr)
            \bigr\}^{2}
        \bigr]
        \le\C(\gamma,\mu).
    \end{align}
\end{proposition}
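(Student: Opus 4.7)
The plan is to combine the Stein-type variance bound in Proposition \ref{prop:Stein-3} with the exact tensor identities \eqref{contraction-3}-\eqref{contraction-5}, viewing $\bh=\bh(\bep,\bG)$ and $\bpsi=\bpsi(\bep,\bG)$ as locally Lipschitz functions of the Gaussian matrix $\bG$. Proposition \ref{prop:Stein-3} yields that the deviations of $\bpsi^\top\bG\bh$, $\|\bG\bh\|^2$, and $\|\bG^\top\bpsi\|^2$ from the Jacobian sums $\sum_{ij}\partial(\psi_i h_j)/\partial g_{ij}$, $\sum_{ij}\partial(h_j\be_i^\top\bG\bh)/\partial g_{ij}$, $\sum_{ij}\partial(\psi_i\be_j^\top\bG^\top\bpsi)/\partial g_{ij}$ are controlled in $L^2$ after normalization by $(\|\bh\|^2+\|\bpsi\|^2/n)$. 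Substituting the contractions \eqref{contraction-3}-\eqref{contraction-5} identifies each Jacobian sum with the deterministic main term appearing in \eqref{eq:five-1}-\eqref{eq:five-3} plus the ``purple'' error terms, which must then be shown small after normalization.

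The first substep is to bound the right-hand side of \eqref{upper-bound-Stein} by $\C(\gamma,\mu)n$. Expanding $\|\partial\bh/\partial g_{ij}\|^2$ and $\|\partial\bpsi/\partial g_{ij}\|^2$ via \eqref{dgij_h}-\eqref{dgij_psi} and summing, one obtains Frobenius-type estimates of the form $\sum_{ij}\|\partial\bh/\partial g_{ij}\|^2\le 2\|\bA\|_F^2\|\bpsi\|^2 + 2\|\bA\bG^\top\|_F^2\|\bh\|^2$ and likewise for $\bpsi$ with $\bD\bG\bA$ and $\bV$. Combining with $\|\bA\|_{op}\le (n\mu)^{-1}$, $\|\bD\|_{op}\le 1$, $\|\bV\|_{op}\le 1+\|\bG\|_{op}^2/(n\mu)$, the Frobenius–operator inequality $\|\cdot\|_F^2\le \min(n,p)\|\cdot\|_{op}^2$, and standard moments $\E[\|\bG\|_{op}^{2k}]\le \C(\gamma,k)n^k$, and using the AM-GM lower bound $(\|\bh\|^2+\|\bpsi\|^2/n)^2\ge 4\|\bh\|^2\|\bpsi\|^2/n$, the required $O(n)$ bound closes.

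The second substep is to bound each purple term in $L^2$ by $\sqrt n (\|\bh\|^2+\|\bpsi\|^2/n)\cdot O_{L^2}(1)$. Every purple term carries at least one factor of $\bA$ (or of $\bV$, which absorbs into $\bD$ plus $\bD\bG\bA\bG^\top\bD$), and thereby inherits the $1/(n\mu)$ gain from $\|\bA\|_{op}\le (n\mu)^{-1}$. For single-$\bG$ terms such as $\bpsi^\top\bD\bG\bA\bh$, the estimate $|\bpsi^\top\bD\bG\bA\bh|\le\|\bh\|\|\bpsi\|\|\bG\|_{op}/(n\mu)$ combined with $\|\bh\|\|\bpsi\|\le \tfrac12\sqrt n(\|\bh\|^2+\|\bpsi\|^2/n)$ yields control by $\|\bG\|_{op}/(2\mu\sqrt n)$ times $(\|\bh\|^2+\|\bpsi\|^2/n)$ pointwise, whose $L^2$ norm is $\C(\gamma,\mu)$. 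For double-$\bG$ terms such as $\bh^\top\bA\bG^\top\bG\bh$ in \eqref{contraction-4} and $\bpsi^\top\bD\bG\bA\bG^\top\bpsi$ in \eqref{contraction-5}, the operator-norm bound $\|\bG\|_{op}^2/(n\mu)$ on top of one factor $\|\bh\|^2$ (resp.\ $\|\bpsi\|^2$) suffices because $\|\bpsi\|^2/(\|\bh\|^2+\|\bpsi\|^2/n)\le n$, leaving a residual of order $\|\bG\|_{op}^2/n=O_{L^2}(1)$ after dividing by the $\sqrt n$ prefactor. Assembling via the triangle inequality gives \eqref{eq:five-1}-\eqref{eq:five-3}.

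The main obstacle is precisely this double-$\bG$ bookkeeping in the last step: the crude $\|\bG\|_{op}$-based estimates must be arranged so that each purple term consumes exactly one $\sqrt n$ factor from the $n^{-1/2}$ rescaling and one $(n\mu)^{-1}$ factor from $\bA$, with the spectrum moments of $\bG$ supplying only the $O(n)$ slack that the normalization $(\|\bh\|^2+\|\bpsi\|^2/n)^{-2}$ can absorb. All other steps are routine manipulations of the explicit Jacobian formulas \eqref{dgij_h}-\eqref{dgij_psi} and the operator-norm bound on $\bA$ from Theorem \ref{thm:differentiability}.
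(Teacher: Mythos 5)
Your proposal follows essentially the same route as the paper's proof: invoke \Cref{prop:Stein-3}, identify the Jacobian sums with the main terms via the contractions \eqref{contraction-3}--\eqref{contraction-5}, bound the normalized derivative sum by $\C(\gamma,\mu)(1+\|\bG\|_{op}^2/n)$ using \eqref{dgij_h}--\eqref{dgij_psi} and $\|\bA\|_{op}\le(n\mu)^{-1}$, and kill the purple remainders with operator-norm bounds and $\E[\|n^{-1/2}\bG\|_{op}^4]\le\C(\gamma)$. The only difference is that you spell out the purple-term bookkeeping (the $\|\bh\|\|\bpsi\|\le\tfrac12\sqrt n(\|\bh\|^2+\|\bpsi\|^2/n)$ step and the extra $1/n$ accompanying $\|\bG^\top\bpsi\|^2$ in \eqref{eq:five-2}) that the paper leaves implicit; this is correct and consistent with the intended argument.
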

\begin{proof}
    We bound from above the derivatives in \eqref{upper-bound-Stein}.
    For the norm of $(\partial/\partial g_{ij})\bh$ and
    $(\partial/\partial g_{ij})\bpsi$,
    by \eqref{dgij_psi}-\eqref{dgij_h} and $\frac 1 2(a+b)^2\le a^2+b^2$,
    \begin{align*}
        \sum_{ij}
        \frac12
        \Big\|
        \frac{\partial \bh}{\partial g_{ij}}
        \Big\|^2
        \le
        \|\bA\|_F^2 \|\bpsi\|^2
        + \|\bA\bG^\top\bD\|_F^2 \|\bh\|^2,
        ~
        \sum_{ij}
        \frac1{2n}
        \Big\|
        \frac{\partial \bpsi}{\partial g_{ij}}
        \Big\|^2
        \le
        \frac{\|\bD\bG\bA\|_F^2\|\bpsi\|^2 + \|\bV\|_F^2 \|\bh\|^2}{n}
        .
    \end{align*}
    Using $\|\bA\|_{op}\le 1/(n\mu)$, $\|\bD\|_{op}\le 1$, $p/n\le \gamma$
    and $\bV$ in \eqref{V},
    it follows that in \eqref{upper-bound-Stein} we have
    \begin{equation}
    \frac{1}{\|\bh\|^2 + \|\bpsi\|^2/n}
    \sum_{i=1}^n
    \sum_{j=1}^p
        \Bigl(
        \Big\|
        \frac{\partial \bh}{\partial g_{ij}}
        \Big\|^2
        +
        \frac1n
        \Big\|
        \frac{\partial \bpsi}{\partial g_{ij}}
        \Big\|^2
        \Bigr)
    \le
    \C(\gamma,\mu)
    \Bigl(1+  \frac{\|\bG\|_{op}^2}{n} \Bigr).
    \label{Xi-upper-bound}
    \end{equation}
    Since $\E[\|n^{-1/2}\bG\|_{op}^4]\le \C(\gamma)$
    \cite[Theorem II.13]{davidson2001local},
    this shows that \eqref{upper-bound-Stein} is bounded from above
    by $\C(\gamma,\mu) n$. The contractions appearing in
    the left-hand side of \eqref{upper-bound-Stein} are given in
    \eqref{contraction-3}-\eqref{contraction-4}-\eqref{contraction-5},
    so that it remains to bound from above the purple colored terms
    in these three equations. This is done by using the
    upper bounds on the operator norms
    $\|\bA\|_{op}\le 1/(n\mu)$, $\|\bD\|_{op}\le 1$
    and again that $\E[\|n^{-1/2}\bG\|_{op}^4]\le \C(\gamma)$,
    so that \eqref{upper-bound-Stein}
    yields the three inequalities in \Cref{prop:first-three-equations}.
\end{proof}

The next result is another probabilistic result
where the contractions in \eqref{contraction-1}-\eqref{contraction-2}
appear.

\begin{restatable}{proposition}{thmChi}
    \label{prop:chi2-custom}
    Let $\bh:\R^{n\times p}\to\R^p$,
    $\bpsi:\R^{n\times p}\to\R^n$
    be locally Lipschitz functions.
    If $\bG\in\R^{n\times p}$ has iid $N(0,1)$ entries then
    \begin{align*}
    &\E\Bigl[
    \frac{
    \big|
    \frac p n\|\bpsi\|^2 -
    \frac 1 n
    \sum_{j=1}^p
    \bigl(
        \bpsi^\top\bG\be_j - \sum_{i=1}^n\frac{\partial \psi_i}{\partial g_{ij}}
    \bigr)^2
    \big|
    }{\|\bh\|^2 + \|\bpsi\|^2/n}
    \Bigr]
    +
    \E\Bigl[
    \frac{
    \big|
    n \|\bh\|^2 - \sum_{i=1}^n
    \bigl(
        \bg_i^\top\bh - \sum_{j=1}^p\frac{\partial h_j}{\partial g_{ij}}
    \bigr)^2
    \big|
    }{\|\bh\|^2 + \|\bpsi\|^2/n}
    \Bigr]
    \\&\le
    \C \Bigl(
        \sqrt{n+p}
        (1+\Xi^{1/2}) + \Xi
    \Bigr)
    \text{ where }\Xi = 
    \E\Bigl[
    \frac{1}{\|\bh\|^2 + \|\bpsi\|^2/n}
    \sum_{i=1}^n
    \sum_{j=1}^p
        \Bigl(
        \Big\|
        \frac{\partial \bh}{\partial g_{ij}}
        \Big\|^2
        +
        \frac1n
        \Big\|
        \frac{\partial \bpsi}{\partial g_{ij}}
        \Big\|^2
    \Big)
        \Bigr].
    \end{align*}
\end{restatable}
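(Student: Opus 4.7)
The plan is to derive each of the two bounds by applying Proposition \ref{prop:a-lemma} conditionally, index by index. For the first sum, indexed by $j\in[p]$, we condition on the submatrix $\bG^{(-j)}$ consisting of all columns of $\bG$ except the $j$-th; for the second sum, indexed by $i\in[n]$, we condition on the submatrix consisting of all rows except the $i$-th. In each case the Stein-type residual produced by Proposition \ref{prop:a-lemma}, once normalized by $\|\bpsi\|$ (resp.\ $\|\bh\|$), is approximately a conditional standard Gaussian, so that squaring and summing yields concentration via a $\chi^{2}$ argument.

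\paragraph{First bound.}
Fix $j\in[p]$ and condition on $\bG^{(-j)}$. Under this conditioning $\bG\be_j\sim N(\bzero,\bI_n)$ is independent of $\bG^{(-j)}$. Apply Proposition \ref{prop:a-lemma} with $q=n$, $\bz=\bG\be_j$ and $\bff=\bpsi$ (viewed as a function of $\bG\be_j$ with $\bG^{(-j)}$ fixed) to produce a conditional standard normal $Z_j$ with
\begin{equation*}
    \E\Bigl[\bigl(W_j/\|\bpsi\|-Z_j\bigr)^{2}\,\Big|\,\bG^{(-j)}\Bigr]
    \le \C\,\|\bpsi\|^{-2}\sum_{i=1}^{n}\bigl\|\partial\bpsi/\partial g_{ij}\bigr\|^{2},
    \quad
    W_j:=\bpsi^{\top}\bG\be_j-\sum_{i=1}^{n}\partial\psi_i/\partial g_{ij}.
\end{equation*}
Expanding $W_j^{2}/\|\bpsi\|^{2}=Z_j^{2}+(W_j/\|\bpsi\|-Z_j)(W_j/\|\bpsi\|+Z_j)$ and summing yields
\begin{equation*}
    \sum_{j=1}^{p} W_j^{2}/\|\bpsi\|^{2}-p
    =\Bigl(\sum_{j=1}^{p} Z_j^{2}-p\Bigr)+\sum_{j=1}^{p}(W_j/\|\bpsi\|-Z_j)(W_j/\|\bpsi\|+Z_j).
\end{equation*}
The first difference contributes $O(\sqrt{p})$ in $L^{1}$ by $\chi^{2}$ concentration, after reconciling the distinct conditionings defining the $Z_j$ via iterated expectations. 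The second is controlled by Cauchy-Schwarz: the factor $(\cdot-Z_j)$ aggregates to the Stein error $\sum_j\|\bpsi\|^{-2}\sum_i\|\partial\bpsi/\partial g_{ij}\|^{2}$, and the factor $(\cdot+Z_j)$ is of size $\sqrt{p}$ in $L^{2}$. Multiplying through by $\|\bpsi\|^{2}/n$, using $\|\bpsi\|^{2}/n\le\|\bh\|^{2}+\|\bpsi\|^{2}/n$ to absorb the outside weight into the denominator of $\Xi$, and invoking $\sqrt{ab}/(c+a)\le\sqrt{b/(c+a)}$ whenever $a\le c+a$ to convert $\|\bpsi\|^{-1}\sqrt{\sum\|\partial\bpsi/\partial g_{ij}\|^{2}}$ into the $n^{-1}\|\partial\bpsi/\partial g_{ij}\|^{2}$ contribution to $\Xi$, produces the bound $\C(\sqrt{p}(1+\Xi^{1/2})+\Xi)$ for the first term.

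\paragraph{Second bound and main obstacle.}
The second bound is handled symmetrically by conditioning on rows: fix $i\in[n]$, condition on $(\bg_l)_{l\ne i}$, and apply Proposition \ref{prop:a-lemma} with $q=p$, $\bz=\bg_i$ and $\bff=\bh$. The same expansion and aggregation, now producing a Stein error that recovers the $\|\partial\bh/\partial g_{ij}\|^{2}$ piece of $\Xi$, contributes $\C(\sqrt{n}(1+\Xi^{1/2})+\Xi)$. Summing the two pieces delivers the stated bound $\C(\sqrt{n+p}(1+\Xi^{1/2})+\Xi)$. The chief obstacles are book-keeping: (i) converting the $L^{2}$ Stein bound of Proposition \ref{prop:a-lemma} into an $L^{1}$ bound on the squared residuals while preserving the normalization $\|\bh\|^{2}+\|\bpsi\|^{2}/n$, which requires the Cauchy-Schwarz manipulations above to match exactly the form of $\Xi$; and (ii) performing the $\chi^{2}$ concentration of $\sum_j Z_j^{2}$ and $\sum_i\tilde Z_i^{2}$, where each $Z_j$ (resp.\ $\tilde Z_i$) is defined on a different conditional probability space, so that tower computations of means and variances across conditionings are needed to justify the $\sqrt{n+p}$ term.
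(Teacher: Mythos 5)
Your overall architecture (Stein-type residual for each index, expand the square, Cauchy--Schwarz on the cross term, absorb the derivative sums into $\Xi$) is reasonable, and the bookkeeping you describe for the cross term does work out: the weight $(\|\bpsi\|^2/n)/(\|\bh\|^2+\|\bpsi\|^2/n)\le 1$ combined with $\|\bpsi\|^{-2}\sum_{i}\|\partial\bpsi/\partial g_{ij}\|^2$ indeed reproduces exactly the $\frac1n\|\partial\bpsi/\partial g_{ij}\|^2$ contribution to $\Xi$. However, there is a genuine gap at the step you label ``$\chi^2$ concentration.'' The variables $Z_j$ produced by applying \Cref{prop:a-lemma} conditionally on $\bG^{(-j)}$ are each \emph{marginally} $N(0,1)$, but they are not jointly independent: each $Z_j$ is a linear functional of the $j$-th column with coefficient vector $\E[\bpsi/\|\bpsi\|\mid\bG^{(-j)}]$, and these coefficient vectors are correlated across $j$ through the shared randomness. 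For merely marginally standard normal variables, $\E\bigl|\sum_{j=1}^p Z_j^2-p\bigr|$ can be of order $p$ (take all $Z_j$ equal), not $\sqrt p$. Getting $O(\sqrt p)$ requires bounding the off-diagonal covariances $\mathrm{Cov}(Z_j^2,Z_{j'}^2)$, and doing so in terms of the derivative sums is precisely the hard content of the statement; your remark that ``tower computations of means and variances across conditionings are needed'' names the obstacle without resolving it.

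By contrast, the paper does not attempt a column-by-column normal approximation at all. It invokes a joint concentration inequality (Theorem 7.1 of \cite{bellec2020out_of_sample}), which directly bounds $\E\bigl|p\|\brho\|^2-\sum_j(\brho^\top\bG\be_j-\sum_i\partial\rho_i/\partial g_{ij})^2\bigr|$ for a normalized $\brho$ with $\|\brho\|\le1$ by $\C\sqrt p(1+\sum_{ij}\|\partial\brho/\partial g_{ij}\|^2)^{1/2}+\C\sum_{ij}\|\partial\brho/\partial g_{ij}\|^2$; that inequality already encodes the joint (second-order) control you are missing. The remaining work in the paper is then only the reconciliation between the derivatives of $\bpsi_i/D$ and $\bpsi_i$ alone (the $a_j$ versus $b_j$ comparison via $\partial(D^{-1})/\partial g_{ij}$), which is the analogue of your cross-term step. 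To repair your argument you would either need to prove the covariance control for the $Z_j$'s from scratch --- essentially re-deriving the cited Theorem 7.1 --- or replace the conditional, per-column application of \Cref{prop:a-lemma} with a result that treats the whole sum over $j$ at once.
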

\Cref{prop:chi2-custom} is proved in \Cref{sec:more-lemmas}; 
it is a consequence
of \cite[Theorem 7.1]{bellec2020out_of_sample}.
Using the contractions \eqref{contraction-1}-\eqref{contraction-2}
in the left-hand side of \Cref{prop:chi2-custom},
and by showing that the purple colored terms are negligible,
we obtain the following two inequalities.

\begin{proposition}
    \label{prop:next-two-equations}
    Let \Cref{assumMain} be fulfilled.
    Then
    \begin{align}
        \label{eq:five-4}    
        \E 
        \bigl|
        n^{-\frac12} (\|\bh\|^2 + \|\bpsi\|^2/n)^{-1}&
        \bigl(
            \tfrac p n \| \bpsi \|^{2}
            -
            \tfrac 1 n \| \bG^{\top} \bpsi + \trace[\bV] \bh \|^{2}
        \bigr)
        \bigr|
        \le \C(\gamma,\mu),
        \\
        \label{eq:five-5}
        \E 
        | 
        n^{-\frac12}
        (\|\bh\|^2 + \|\bpsi\|^2/n)^{-1}&
        \bigl(
            n \| \bh \|^{2} 
            - \| \bG \bh - \trace[\bA]\bpsi \|^{2} 
        \bigr)
        |
        \le \C(\gamma,\mu)
        .
    \end{align}
\end{proposition}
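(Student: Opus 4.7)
The plan is to apply \Cref{prop:chi2-custom} to the specific $\bh,\bpsi$ defined in \eqref{bh} and substitute the explicit contractions \eqref{contraction-1}--\eqref{contraction-2}. Writing $\bD=\diag\{\psi'(\br)\}$, those contractions give
$$\bg_i^{\top}\bh - \sum_{j=1}^p \frac{\partial h_j}{\partial g_{ij}} = \bg_i^{\top}\bh - \trace[\bA]\psi_i + \bh^{\top}\bA\bG^{\top}\bD\be_i,\quad \bpsi^{\top}\bG\be_j - \sum_{i=1}^n \frac{\partial \psi_i}{\partial g_{ij}} = \bpsi^{\top}\bG\be_j + \bpsi^{\top}\bD\bG\bA\be_j + \trace[\bV] h_j,$$
so that $\sum_i(\bg_i^{\top}\bh - \sum_j \partial h_j/\partial g_{ij})^2 = \|\bG\bh - \trace[\bA]\bpsi + \bD\bG\bA^{\top}\bh\|^2$ and $\sum_j(\bpsi^{\top}\bG\be_j - \sum_i\partial\psi_i/\partial g_{ij})^2 = \|\bG^{\top}\bpsi + \bA^{\top}\bG^{\top}\bD\bpsi + \trace[\bV]\bh\|^2$. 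These match the targets $\|\bG\bh-\trace[\bA]\bpsi\|^2$ and $\|\bG^{\top}\bpsi+\trace[\bV]\bh\|^2$ in \eqref{eq:five-5} and \eqref{eq:five-4} \emph{up to} the purple cross terms $\bD\bG\bA^{\top}\bh$ and $\bA^{\top}\bG^{\top}\bD\bpsi$.

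Next, exactly as in the proof of \Cref{prop:first-three-equations}, using \eqref{dgij_h}--\eqref{dgij_psi} together with $\|\bA\|_{op}\le 1/(n\mu)$, $\|\bD\|_{op}\le 1$ and the definition of $\bV$, the quantity $\Xi$ of \Cref{prop:chi2-custom} enjoys the pointwise bound \eqref{Xi-upper-bound} and hence $\Xi \le \C(\gamma,\mu)\,\E[1+\|\bG\|_{op}^2/n]\le \C(\gamma,\mu)$ using $\E[\|\bG\|_{op}^2]\le \C(\gamma)n$. The conclusion of \Cref{prop:chi2-custom} then gives, for some constant $\C(\gamma,\mu)$,
$$\E\Bigl[(\|\bh\|^2+\|\bpsi\|^2/n)^{-1}\bigl|\tfrac{p}{n}\|\bpsi\|^2 - \tfrac{1}{n}\|\bG^{\top}\bpsi+\bA^{\top}\bG^{\top}\bD\bpsi+\trace[\bV]\bh\|^2\bigr|\Bigr]\le \C(\gamma,\mu)\sqrt{n},$$
and analogously with $\bD\bG\bA^{\top}\bh$ and $n\|\bh\|^2$.

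The last step is to remove the purple cross terms. Set $\bu=\bG^{\top}\bpsi+\trace[\bV]\bh$, $\bv=\bA^{\top}\bG^{\top}\bD\bpsi$ for the first inequality (and $\bu=\bG\bh-\trace[\bA]\bpsi$, $\bv=\bD\bG\bA^{\top}\bh$ for the second). The identity $\|\bu+\bv\|^2-\|\bu\|^2=2\bu^{\top}\bv+\|\bv\|^2$ together with $\|\bv\|\le \|\bG\|_{op}\|\bpsi\|/(n\mu)$ (resp.\ $\|\bG\|_{op}\|\bh\|/(n\mu)$), $\trace[\bV]\le n$, and $\trace[\bA]\le p\|\bA\|_{op}\le \gamma/\mu$ gives an upper bound on the cross terms of the form $\C(\gamma,\mu)\{\|\bG\|_{op}^2\|\bpsi\|^2/n^2 + \|\bG\|_{op}\|\bh\|\|\bpsi\|/n\}$ (and symmetrically the other way). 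Dividing by $\sqrt{n}(\|\bh\|^2+\|\bpsi\|^2/n)$ and invoking the elementary inequalities $\|\bpsi\|^2\le n(\|\bh\|^2+\|\bpsi\|^2/n)$ and $\|\bh\|\|\bpsi\|\le (\sqrt{n}/2)(\|\bh\|^2+\|\bpsi\|^2/n)$, each piece is bounded by $\C(\gamma,\mu)\|\bG\|_{op}^2/(n\sqrt{n})$ or $\C(\gamma,\mu)\|\bG\|_{op}/n$, whose expectations are $\le \C(\gamma,\mu)/\sqrt{n}$. Combined with the bound from the previous paragraph, the triangle inequality yields \eqref{eq:five-4} and \eqref{eq:five-5}.

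There is no structural obstacle; the main routine difficulty is the bookkeeping that ensures every cross-term piece is correctly normalized by the factor $\sqrt{n}(\|\bh\|^2+\|\bpsi\|^2/n)$ so that the $\|\bG\|_{op}$ factors and the small prefactor $1/(n\mu)$ from $\|\bA\|_{op}$ absorb the residual powers of $n$.
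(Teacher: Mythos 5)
Your proposal is correct and follows essentially the same route as the paper: apply \Cref{prop:chi2-custom} with the bound $\Xi\le\C(\gamma,\mu)$ from \eqref{Xi-upper-bound}, substitute the contractions \eqref{contraction-1}--\eqref{contraction-2}, and strip the purple cross terms via the expansion $\|\bu+\bv\|^2-\|\bu\|^2=2\bu^\top\bv+\|\bv\|^2$ combined with $\|\bA\|_{op}\le(n\mu)^{-1}$, $|\trace\bV|\le n$ and $\E[\|\bG\|_{op}^2]\le\C(\gamma)n$. Your bookkeeping of the normalization by $\sqrt{n}(\|\bh\|^2+\|\bpsi\|^2/n)$ is in fact slightly more explicit than the paper's.
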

\begin{proof}
    For $\Xi$ in \Cref{prop:chi2-custom},
    the fact that $\Xi\le \C(\gamma,\mu)$ is already proved in
    \eqref{Xi-upper-bound}.
    For the first inequality we use
    \Cref{prop:chi2-custom} and
    the contraction \eqref{contraction-2}.
    To control the purple terms in \eqref{contraction-2}
    inside the left-hand side of \Cref{prop:next-two-equations},
    \begin{align*}
        &
    \Big|
     \sum_{j=1}^p(\bpsi^\top\bG\be_j - \sum_{i=1}^n\frac{\partial \psi_i}{\partial g_{ij}})^2
    -
    \|\bG^\top\bpsi + \trace[\bV]\bh\|^2
    \Big|
    =
    \Big|\bpsi^\top\bD\bG\bA
    \Bigl(
    2\bG^\top \bpsi + 2\trace[\bV] \bh
    + \bA^\top\bG^\top\bD\bpsi
    \Bigr)
    \Big|
  \\&\le
  (\|\bpsi\|^2/n + \|\bh\|^2)
  (
  2n\|\bG\|_{op}^2\|\bA\|_{op}
  +
  2\sqrt n \|\bG\|_{op}\|\bA\|_{op}
  +
  n \|\bA\|_{op}^2\|\bG\|_{op}^2
  )
    \end{align*}
    thanks to $|\trace\bV| \le n$ in \Cref{thm:differentiability}.
    With the bound obtained by multiplying
    the previous display by
    $n^{-3/2}(\|\bh\|^2 + \|\bpsi\|^2/n)^{-1}$,
    and using the previous bounds on $\|\bA\|_{op}$
    and $\E[\|n^{-1/2}\bG\|_{op}^2]$,
    we obtain \eqref{eq:five-4}
    from \Cref{prop:chi2-custom}
    and \eqref{contraction-2}.
    The second claim is obtained
    by \Cref{prop:chi2-custom},
    the contraction \eqref{contraction-1}
    and an argument similar to the previous display
    bound the purple term in \eqref{contraction-1}.
\end{proof}

We are now ready to prove \Cref{thm:df-trAtrV}.

\begin{proof}[Proof of \Cref{thm:df-trAtrV}]
    Define
    \begin{align*}
        \xi_I&=
                \bpsi^{\top} \bG \bh  
                - \trace [ \bA ] \| \bpsi \|^{2} 
                + \trace [ \bV ] \| \bh \|^{2} 
             &\text{(bounded in \eqref{eq:five-1})},
        \\\xi_{II}&=
                \tfrac 1 n \|\bG^{\top} \bpsi\| ^{2}
                - \tfrac{p - 
                \df}{n}
                \| \bpsi \|^{2} 
                +
                \tfrac{\trace [ \bV ]}{n} \bpsi^{\top} \bG \bh
                  &\text{(bounded in \eqref{eq:five-2})},
        \\\xi_{III}&=
                \| \bG \bh \|^{2}
                 - 
                 \trace [ \bA ] \bpsi^{\top} \bG \bh 
                 - 
                 ( n - \df ) \| \bh \|^{2}
                  &\text{(bounded in \eqref{eq:five-3})},
        \\\xi_{IV}&=
            \tfrac p n \| \bpsi \|^{2}
            -
            \tfrac 1 n \| \bG^{\top} \bpsi + \trace[\bV] \bh \|^{2}
                  &\text{(bounded in \eqref{eq:five-4})},
        \\\xi_{V}&=
            n \| \bh \|^{2} 
            - \| \bG \bh - \trace[\bA]\bpsi \|^{2} 
                  &\text{(bounded in \eqref{eq:five-5})}.
    \end{align*}
    Then by expanding the square in $\xi_{IV}$ and $\xi_V$
    and simple algebra
    (for instance by computing first $\xi_{II}+\xi_{IV}$
    and $\xi_{III}+\xi_{V}$ separately),
    $$
    (\trace[\bV]/n - \trace\bA)
    \xi_I
    +\xi_{II}
    +\xi_{III}
    +\xi_{VI}
    +\xi_{V}
    =
    (\|\bpsi\|^2/n + \|\bh\|^2)
    (\df - \trace[\bA]\trace[\bV]).
    $$
    Since $|\trace[\bV]/n\le 1$, $\trace[\bA]\le\gamma/\mu$
    by \Cref{thm:differentiability},
    the previous display divided by $n^{1/2}
    (\|\bpsi\|^2/n + \|\bh\|^2)$ and
    the bounds
    \eqref{eq:five-1},
    \eqref{eq:five-2}, 
    \eqref{eq:five-3}, 
    \eqref{eq:five-4} and
    \eqref{eq:five-5}
    complete the proof.
\end{proof}

To prove \Cref{thm:out-of-sample}, we need this extra proposition
whose proof is closely related to \Cref{prop:first-three-equations}.
\begin{restatable}{proposition}{propXiVI}
    \label{prop:xi_VI}
    Let \Cref{assumMain} be fulfilled.
    Then
    \begin{equation}
        \E\bigl[\bigl\{(\|\bh\|^2 + \|\bpsi\|^2/n)^{-\frac12} \|\bep\|^{-1} \xi_{VI} \bigr\}^2\bigr]\le
    \C(\gamma,\mu)
    \quad
    \text{ for }
    \quad
    \xi_{VI} = \bep^\top(\bG\bh - \trace[\bA]\bpsi)
    .
    \label{eq:xi_VI}
    \end{equation}
\end{restatable}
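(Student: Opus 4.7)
The proof would follow the template of \Cref{prop:first-three-equations,prop:next-two-equations}: apply a carefully tailored instance of \Cref{prop:a-lemma}, identify the main divergence contraction via \eqref{contraction-1}, and control the resulting ``purple'' nuisance terms. The novelty is that $\bep$ need not be Gaussian, so Stein's normal approximation is applied to $\bG$ alone with $\bep$ held fixed.

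The plan is to condition on $\bep$ and enlarge the Gaussian vector by appending an independent coordinate $U\sim N(0,1)$, so that $\bz=(\mathrm{vec}(\bG),U)\sim N(\bzero,\bI_{np+1})$. Then define $\bff:\R^{np+1}\to\R^{np+1}$ with first $np$ entries $f_{ij}=\eps_i h_j(\bG)$ for $(i,j)\in[n]\times[p]$, and one extra entry
\[
f_{\mathrm{extra}}(\bG)=\|\bep\|\|\bpsi(\bep,\bG)\|/\sqrt n .
\]
This padding is the heart of the argument: it makes $\|\bff\|^2=\|\bep\|^2(\|\bh\|^2+\|\bpsi\|^2/n)$, matching exactly the denominator appearing in \eqref{eq:xi_VI}. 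Without this extra coordinate, \Cref{prop:a-lemma} would produce the denominator $\|\bep\|^2\|\bh\|^2$, and the ratio $\|\bpsi\|^2/\|\bh\|^2$ inherited from \eqref{Xi-upper-bound} would not be controllable.

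The divergence $\sum_{k=1}^{np+1}(\partial/\partial z_k)f_k$ receives no contribution from the $U$-coordinate (nothing depends on $U$), and by \eqref{contraction-1},
\[
\sum_{(ij)}(\partial/\partial g_{ij})(\eps_i h_j) = \trace[\bA]\bep^\top\bpsi - \bh^\top\bA\bG^\top\bD\bep ,
\]
so that $\bff^\top\bz-\sum_k(\partial/\partial z_k)f_k = \xi_{VI}+\bh^\top\bA\bG^\top\bD\bep+\|\bep\|\|\bpsi\|U/\sqrt n$. Applying \Cref{prop:a-lemma} conditionally on $\bep$ and using $((\partial/\partial g_{kl})\|\bpsi\|)^2\le\|(\partial/\partial g_{kl})\bpsi\|^2$ (by Cauchy--Schwarz) reduces its right-hand side to
\[
\frac{\sum_{(kl)}\|(\partial/\partial g_{kl})\bh\|^2+\tfrac 1 n\sum_{(kl)}\|(\partial/\partial g_{kl})\bpsi\|^2}{\|\bh\|^2+\|\bpsi\|^2/n},
\]
which is $\le\C(\gamma,\mu)(1+\|\bG\|_{op}^2/n)$ by \eqref{Xi-upper-bound} and hence has expectation $\le\C(\gamma,\mu)$ since $\E[\|\bG\|_{op}^2]\le\C(\gamma)n$.

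Finally I would peel off the two nuisance terms: using $\|\bA\|_{op}\le(n\mu)^{-1}$ and $\|\bD\|_{op}\le 1$, the purple term obeys $|\bh^\top\bA\bG^\top\bD\bep|\le\|\bh\|\|\bep\|\|\bG\|_{op}/(n\mu)$, so after division by $\|\bep\|\sqrt{\|\bh\|^2+\|\bpsi\|^2/n}$ its $L^2$-norm is $O(n^{-1/2})$; the term $\|\bep\|\|\bpsi\|U/\sqrt n$ divided by the same denominator has absolute value at most $|U|$ since $\|\bpsi\|^2/n\le\|\bh\|^2+\|\bpsi\|^2/n$, giving $L^2$-norm $\le 1$. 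Combining the inequality $\E[(a+b+c)^2]\le 3(\E a^2+\E b^2+\E c^2)$ with the \Cref{prop:a-lemma} bound on $\E[(X-Z)^2]$ (and the elementary $\E[X^2]\le 2\E[(X-Z)^2]+2$) isolates $\E[\xi_{VI}^2/(\|\bep\|^2(\|\bh\|^2+\|\bpsi\|^2/n))\mid\bep]\le\C(\gamma,\mu)$ uniformly in $\bep$; integrating over $\bep$ completes the proof. The main obstacle is choosing the padded $\bff$: without $f_{\mathrm{extra}}$ the natural Stein denominator is $\|\bep\|^2\|\bh\|^2$ and the derivative quotient blows up; the remaining steps are bookkeeping entirely analogous to \Cref{prop:first-three-equations,prop:next-two-equations}.
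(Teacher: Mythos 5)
Your argument is correct, but it is not the route the paper takes, so a comparison is worthwhile. The paper proves \Cref{prop:xi_VI} by applying the second-moment inequality \eqref{eq:prop63} (Proposition 6.3 of \cite{bellec2020out_of_sample}) to the \emph{pre-normalized} pair $\brho=\bep/\|\bep\|$ and $\bfeta=\bh/D$ with $D=(\|\bh\|^2+\|\bpsi\|^2/n)^{1/2}$; since $\|\brho\|,\|\bfeta\|\le1$ the right-hand side of \eqref{eq:prop63} is immediately $\le \C(\gamma,\mu)$ via \eqref{eq:argument-bff} and \eqref{Xi-upper-bound}, and the price paid is that the divergence involves $\partial(h_jD^{-1})/\partial g_{ij}$, producing an extra nuisance term $\sum_{ij}\eps_ih_j\,\partial(D^{-1})/\partial g_{ij}$ that must be killed with \eqref{eq:derivative-D-1}. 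You instead transplant the padding device from the paper's proof of \Cref{thm:normality}: you keep $\bff$ unnormalized, append the coordinate $\|\bep\|\|\bpsi\|/\sqrt n$ paired with an independent $U\sim N(0,1)$ so that $\|\bff\|^2$ equals the target denominator $\|\bep\|^2(\|\bh\|^2+\|\bpsi\|^2/n)$, and invoke the normal-approximation lemma \Cref{prop:a-lemma} conditionally on $\bep$. Your identification of the divergence via \eqref{contraction-1}, the bound $((\partial/\partial g_{kl})\|\bpsi\|)^2\le\|(\partial/\partial g_{kl})\bpsi\|^2$, the $O(n^{-1/2})$ control of $\bh^\top\bA\bG^\top\bD\bep$, and the $L^2$-norm-at-most-one control of the $U$-term are all sound, and the final assembly via $\E[X^2]\le 2\E[(X-Z)^2]+2$ and $\E[(a+b+c)^2]\le3(\E a^2+\E b^2+\E c^2)$ is valid; the conditional bound is uniform in $\bep$ because $\bG$ is independent of $\bep$. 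The trade-off between the two proofs is essentially which nuisance one prefers to handle: the paper's normalization-inside-the-function forces the $\partial(D^{-1})$ bookkeeping, while your padding avoids that entirely but introduces the $U$-term and an additive constant from the Gaussian $Z$ (harmless here, since only an $O(1)$ second-moment bound is claimed rather than a vanishing remainder). Your observation that omitting the padding coordinate would leave the uncontrollable denominator $\|\bep\|^2\|\bh\|^2$ is exactly right and is the same reason the paper divides by $D$ rather than by $\|\bh\|$.
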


\Cref{prop:xi_VI} is proved in \Cref{sec:more-lemmas}. 
We are now ready to prove \Cref{thm:out-of-sample}.

\begin{proof}[Proof of \Cref{thm:out-of-sample}]
    We have
    $
    n \|\bh\|^2 + \|\bep\|^2
    -
    \|\br +\trace[\bA]\bpsi\|^2
    =
    \xi_V
    + 2\xi_{VI} 
    $
    by simple algebra and the definitions of $\xi_V$ and $\xi_{VI}$.
    Hence
    \begin{equation}
    \E\Bigl[
        \frac{
    \big|
    \|\bh\|^2 + \|\bep\|^2/n
    -
    \|\br +\trace[\bA]\bpsi\|^2/n
\big|}{
    \max\{\|\bh\|^2 + \|\bpsi\|^2/n, (\|\bh\|^2 + \|\bpsi\|^2/n)^{1/2}(\|\bep\|^2/n)^{1/2} \}
}
    \Bigr]
    \le  n^{-1/2}\C(\gamma,\mu)
    \label{eq:bound-xi_VI}
    \end{equation}
    thanks to \eqref{eq:xi_VI} and \eqref{eq:five-5}.
\end{proof}

\begin{proof}[Proof of \Cref{corSelection}]
    We perform the change of variable \eqref{bh}
    to $\tbbeta$ as well, giving $\tbh$ (the counterpart of $\bh$),
    $\tbpsi$ (counterpart of $\bpsi$) and
    $\tbA$ (counterpart of $\bA$).
    Let $\Omega$
    be the event defined
    in the theorem, i.e,
    \begin{equation}
        \Omega = \{
            \|\bG\|_{op}\le 2\sqrt n + \sqrt p
        \} \cap
        \{
            \|\bep\|^2 \le n^{2/(1+q)}
        \}.
    \end{equation}
    Then $\mathbb P (\Omega^c)\to 0$
    by \cite[Theorem 2.13]{davidson2001local}
    for the first event and \cite{390939} to show that 
    $\|\bep\|^2/ n^{2/(1+q)}\to^\P 0$
    under the assumption that $\E[|\eps_i|^{1+q}]$ is bounded.

    Under \Cref{assumAdditional}, $I_{\Omega}(\|\bpsi\|^2/n + \|\bh\|^2)$
    is bounded by a constant. Indeed,
    since the penalty $g$ is minimized at $\mathbf 0$,
    $(\hbbeta- \mathbf0)^\top\bX^\top\bpsi \in n (\hbbeta - \mathbf{0})^\top(\partial g (\hbbeta) - \partial g(\mathbf 0))$ since $\mathbf0\in\partial g(\mathbf 0)$. By strong convexity of $g$ in \Cref{assumMain},
    $(\hbbeta- \mathbf0)^\top\bX^\top\bpsi
    \ge \mu \|\bSigma^{1/2}\hbbeta\|^2$.
    In $\Omega$, this implies
    $\|\bSigma^{1/2}\hbbeta\| \le \frac{1}{\mu n} \|\bG\|_{op} \|\bpsi\|
    \le \C(\gamma,\mu) \|\bpsi\|/\sqrt n
    $
    and $\|\bpsi\|/\sqrt n \le M$ in \Cref{assumAdditional}.
    Since $\|\bSigma^{1/2}\bbeta^*\|^2\le M$ in \Cref{assumAdditional},
    this yields $I_\Omega(\|\bh\|^2+\|\bpsi\|^2/n) \le \C(\gamma,\mu,M)$ 
    and the same holds for $\tbh,\tbpsi$:
    $I_\Omega(\|\tbh\|^2+\|\tbpsi\|^2/n) \le \C(\gamma,\mu,M)$.

    Inequality \eqref{eq:bound-xi_VI} thus implies
    \begin{multline*}
    \E[I_{\Omega}
    (
    \big|
    \|\bh\|^2 + \|\bep\|^2/n
    -
    \|\br +\trace[\bA]\bpsi\|^2/n
\big|
+
    \big|
    \|\tbh\|^2 + \|\bep\|^2/n
    -
    \|\tbr +\trace[\tbA]\tbpsi\|^2/n
\big|
)
]
\\
\le \C(\gamma,\mu,M)(n^{-1/2} \vee n^{-q/(1+q)}).
    \end{multline*}
Since $q\in (0,1)$ we have $n^{-1/2} \vee n^{-q/(1+q)} = n^{-q/(1+q)}$
in the right-hand side.
    Let $\hat\Omega = \{\|\bh\|^2 - \|\tbh\|^2>\eta,
        \|\br+\trace[\bA]\bpsi\|^2\le
        \|\tbr+\trace[\tbA]\tbpsi\|^2
    \}$ be the event for which we are trying to control the probability.
    By the triangle inequality,
    $$
    \E[I_{\Omega}
    \big|
    \|\bh\|^2 - \|\tbh\|^2 
    -
    \|\br +\trace[\bA]\bpsi\|^2/n
    +
    \|\tbr +\trace[\tbA]\tbpsi\|^2/n
\big|
]
\\
\le \C(\gamma,\mu,M) n^{-q/(1+q)}.
$$
In $\hat\Omega$, the random variable in the expectation sign is
larger than
$\eta I_{\Omega}$. Thus
$\eta \E[I_{\Omega}I_{\hat\Omega}] \le \C(\gamma,\mu,M) n^{-q/(1+q)}$
and $\P(\hat\Omega) \le \eta^{-1}\C(\gamma,\mu,M) n^{-q/(1+q)} +  \P(\Omega^c)$.
\end{proof}

\begin{proof}[Proof of \Cref{corSelection2}]
    We follow the same strategy. Let $\Omega$ be the same event
    as in the previous proof, so that $\P(\Omega^c)\to 0$
    as before.
    We perform the change of variable \eqref{bh}
    for each $k=1,...,K$  giving $\bh_k$,
    $\bpsi_k$ and $\bA_k$.
    We have $I_\Omega\max_{k=1,...,K}(\|\bh_k\|^2 + \|\bpsi_k\|^2/n)
    \le \C(\gamma,\mu,M)$ as explained in the previous proof.

    Summing over $k$ the inequality \eqref{eq:bound-xi_VI} gives
    $\E[I_\Omega \sum_{k=1}^K |\|\bh_k\|^2 + \|\bep\|^2 - \|\br_k +\trace[\bA_k]\bpsi_k\|^2| ] \le K \C(\gamma,\mu, M) n^{-q/(1+q)}$.
    Let $\hat k$ be the minimizer
    of $\|\br_k + \trace[\bA_k] \bpsi_k\|^2$
    as defined in the statement
    of \Cref{corSelection2} and let $\tilde k\in \{1,...,K\}$
    be such that
    $\|\bh_{\hat k}\|^2 \ge \|\bh_{\tilde k}\|^2 + \eta$
    in the event $\tilde \Omega$ where such $\tilde k$ exists,
    so that
    $$
    \eta I_{\tilde \Omega}
    \le
    I_{\tilde \Omega}
    \bigl(\|\bh_{\hat k}\|^2-\|\bh_{\tilde k}\|^2\bigr)
    \le
    I_{\tilde \Omega}
    \Bigl(\|\bh_{\hat k}\|^2 + 
        \tfrac1n
    \Bigl[\|\bep\|^2-\|\br_{\hat k}+\trace[\bA_{\hat k}]\bpsi_{\hat k}\|^2
    + \|\br_{\tilde k}+\trace[\bA_{\tilde k}]\bpsi_{\tilde k}\|^2 - \|\bep\|^2
    \Bigr]
    -\|\bh_{\tilde k}\|^2
    \Bigr)
    .
    $$
    Then by the triangle inequality,
    $\eta\E[I_\Omega I_{\tilde \Omega} ]
    \le \pb{K} \C(\gamma,\mu,M) n^{-q/(1+q)}$.
    It follows that
    $\P(\tilde \Omega) \le \pb{K} \eta^{-1}\C(\gamma,\mu,M)n^{-q/(1+q)} +
    \P(\Omega^c) \to 0$ as desired.
\end{proof}

\begin{proof}[Proof of \Cref{thm:selection-crit}]
    Using $\|\ba\|^2 - \|\bb\|^2 = (\ba-\bb)^\top(\ba+\bb)$ we have
    \begin{align*}
    \| \bG \bh - \trace[\bA]\bpsi \|^{2}
    - \|\bG\bh - (\df/\trace[\bV])\|^2
    &= (\df/\trace[\bV] - \trace[\bA]) \bpsi^\top(2\bG\bh - (\trace[\bA]+\df/\trace[\bV])\bpsi).
    \end{align*}
    Hence using $|\trace[\bA]|\le \gamma/\mu$,
    $|\df|\le n$ and the Cauchy-Schwarz inequality
    \begin{align*}
    &|\| \bG \bh - \trace[\bA]\bpsi \|^{2}
    - \|\bG\bh - (\df/\trace[\bV])\|^2|
    \\&\le
    \C(\gamma,\mu)(\tfrac{n}{\trace\bV}\vee 1)
    | \df/n - \trace[\bV]\trace[\bA]/n| (\|\bpsi\|^2 + \|\bG\|_{op}\|\bh\|^2).
    \end{align*}
    Let $\Omega$ be the event in \Cref{corSelection}.
    Using the bound on the operator norm of $\bG$ in $\Omega$,
    for any deterministic $\eta>0$ we have proved
    $$\E\Bigl[
    I\{\Omega\}
    I\{{\trace[\bV]}{n} \ge \eta\}
    \frac{|\| \bG \bh - \trace[\bA]\bpsi \|^{2}
    - \|\bG\bh - (\df/\trace[\bV])\|^2|}{
    \|\bh\|^2 + \|\bpsi\|^2/n}
    \Bigr]
    \le
    \frac{\C(\gamma,\mu)}{\eta\wedge 1} n^{1/2}
    $$
    thanks to \Cref{thm:df-trAtrV}.
    By \eqref{eq:lower-bond-trace-V}, in the event $\Omega$
    where the operator norm of $\|n^{-1/2}\bG\|_{op}$ is bounded
    by a constant,
    $\trace[\bV] \ge \trace[\diag\{\psi'(\br)\}]/\C(\gamma,\mu)$.
    Hence combining the previous display with \eqref{eq:bound-xi_VI},
    we have proved
    \begin{equation*}
    \E\Bigl[
        \frac{
        I\{\Omega\}
        I\{\sum_{i=1}^n\psi'(r_i)\ge n\eta\}
    \big|
    \|\bh\|^2 + \|\bep\|^2/n
    -
    \|\br +\frac{\df}{\trace\bV}\bpsi\|^2/n
\big|}{
    \max\{\|\bh\|^2 + \|\bpsi\|^2/n, (\|\bh\|^2 + \|\bpsi\|^2/n)^{1/2}(\|\bep\|^2/n)^{1/2} \}
}
    \Bigr]
    \le  \frac{\C(\gamma,\mu,\eta)}{\sqrt n}.
    \end{equation*}
    At this point the proof is similar to that of \Cref{corSelection2}:
    We perform the change of variable \eqref{bh}
    for each $k=1,...,K$  giving $\bh_k$,
    $\bpsi_k$, $\df_k$ and $\bV_k$.
    We have $I_\Omega\max_{k=1,...,K}(\|\bh_k\|^2 + \|\bpsi_k\|^2/n)
    \le \C(\gamma,\mu,M)$ as explained in the previous proofs.
    Summing over $k=1,...,K$ the previous display,
    using $I_\Omega\max_{k=1,...,K}(\|\bh_k\|^2 + \|\bpsi_k\|^2/n)
    \le \C(\gamma,\mu,M)$
    and $I_{\Omega} \|\bep\|^2\le  n^{2/(1+q)}$ we find
    \begin{equation*}
    \E\Bigl[
        \sum_{k=1}^K
        I\{\Omega\}
        I\{\sum_{i=1}^n\psi_k'(r_{ki})\ge n\eta\}
    \Big|
    \|\bh_k\|^2 + \|\bep\|^2/n
    -
    \|\br_k +\frac{\df_k}{\trace\bV_k}\bpsi_k\|^2/n
\Big|
    \Bigr]
    \le  \frac{K \C(\gamma,\mu,\eta)}{n^{q/(1+q)}}.
    \end{equation*}
    Let $\tilde \Omega$ be the event that there exists
    $\tilde k$ with $\frac1n\sum_{i=1}^n\psi_{\tilde k}'(r_{\tilde ki})\ge\eta$
    satisfying $\|\bh_{\tilde k}\|^2+\tilde\eta \le \|\bh_{\hat k}\|^2$,
    then by the previous display and the triangle inequality,
    using
    $\|\br_{\hat k} +\frac{\df_{\hat k}}{\trace\bV_{\hat k}}\bpsi_{\hat k}\|^2
    \le
    \|\br_{\tilde k} +\frac{\df_{\tilde k}}{\trace\bV_{\tilde k}}\bpsi_{\tilde k}\|^2$ by definition of $\hat k$,
    we obtain
    $\tilde\eta \P(I_\Omega I_{\tilde\Omega}) = O(K/n^{q/(1+q)})$.
    Since $\tilde\eta$ is a constant independent of $n,p$
    and $\P(\Omega)\to 1$, the probability $\P(\tilde \Omega)$
    converge to 0 if $K=o(n^{q/(1+q)})$.
\end{proof}

\section{Probabilistic results and their proofs}
\label{sec:more-lemmas}

\propVariantNormalApprox*

\begin{proof}
    Let $ \bg := \bg(\bz) = \frac{\bff(\bz)}{\|\bff(\bz)\|} - \E[
    \frac{\bff(\bz)}{\|\bff(\bz)\|}
    ]$
    and set
    $$Z= \bz^\top \E[
        \frac{\bff(\bz)}{\|\bff(\bz)\|}
        ] \Big/ \sqrt V,
    \qquad
    V=\bigl\|\E\Bigl[
        \frac{\bff(\bz)}{\|\bff(\bz)\|}
    \Bigr]\bigr\|^2
    $$
    so that $Z\sim N(0,1)$ and $V$ is deterministic
    with $V\le 1$ by Jensen's inequality.
    As a first step, we proceed to prove inequality
    \begin{equation}
        \label{eq:6470}
    \E\Bigl[
    \Bigl(
        \frac{\bff^\top \bz -
        \sum_{k=1}^q(\partial/\partial z_k) f_k
        }{\|\bff\|_{2}}
        - \sqrt{V} Z
    \Bigr)^2
\Bigr]
    \le
    6 ~
    \E \Bigl[ \| \bff \|^{-2}\sum_{k=1}^q \| \frac{\partial \bff}{\partial z_k} \|^{2} \Bigr].
    \end{equation}
    Then at any point $\bz$ where $\bff$ is differentiable we have
    $$
        \frac{ \partial \bg }{ \partial z_k }
        = \|\bff(\bz)\|^{-1} \hbP 
        \frac{ \partial \bff }{ \partial z_k },
        \qquad
        \text{ where }
        \qquad
        \hbP = \bI_q - \frac{\bff \bff ^\top}{\| \bff \|^2}.
    $$
    This implies that almost surely,
    $$
        \frac{
            \bff^T \bz 
        -
        \sum_{k=1}^q(\partial/\partial z_k) f_k
    }{\| \bff \|_{2}}
    - \sqrt{V} Z
    =
    \bg ^T \bz -
    \sum_{k=1}^q(\partial/\partial z_k) g_k
    -
    \frac{\bff^T (\partial \bff / \partial \bz) \bff}{\|\bff\|^{3}}
     $$
    where $\partial \bff / \partial \bz$ is the matrix with entries
    $(l,k)$ entry 
    $(\partial/ \partial z_k  )  f_l$
    for all, $k,l=1,...,q$.

    By the triangle inequality 
    and $(a+b)^2\le 2a^2 + 2b^2$,
    this implies that the left-hand side of \eqref{eq:6470}
    is bounded from above by
    $
    2\E[(\bz^T \bg - \trace[ \partial \bg / \partial \bz ])^2]
    +
    2\E[\|\bff\|^{-2}\|\partial \bff / \partial \bz \|_{F}^{2}]$.
    The first term can be bounded using the main result
    of \cite{bellec_zhang2018second_stein}
    and the Gaussian Poincar\'e inequality
    \cite[Theorem 3.20]{boucheron2013concentration}
    \begin{align}
        \E[(\bz^T \bg - \trace[ \partial \bg / \partial \bz ])^2]
         &=  
         \E[\|\bg\|^2]  + \E\trace[(\partial \bg / \partial \bz)^2]
         \le 
         2 \E[\| \partial \bg / \partial \bz \|_F^2].
         \nonumber
    \end{align}
    This proves
    \eqref{eq:6470}.
    To bound $|\sqrt{V} - 1|$, we have by the triangle inequality
    $$
    |\sqrt{V} - 1|
    =
    |\sqrt{V} - \bigl\| \tfrac{\bff}{ \| \bff \|} \bigr\| |
    \le
    \big\|\E[\tfrac{ \bff}{ \| \bff \|}] - \tfrac{\bff}{\|\bff\|} \big\|
    =
    \| \bg \|.
    $$
    By another application of the Gaussian Poincar\'e inequality,
    \begin{equation}
        \label{eq:8507}
        |\sqrt{V} - 1|^2
        \leq
        \E[\| \bg \|_{2}^{2}]
        \leq
        \E[\| \partial \bg / \partial \bz \|_F^2]
        \leq
        \E[\| \bff \|^{-2}\| \partial \bff / \partial \bz \|_F^2].
    \end{equation}
    Combining \Cref{eq:8507,eq:6470} using 
    $ 
        (a + b)^{2} = a^{2} + 2 ab  + b^{2} \leq a^{2} + 1 / \sqrt{6} a^{2} + \sqrt{6} b^{2} + b^{2},
    $
    we obtain the constant $ 7 + 2 \sqrt{6}$.

\end{proof}

\propSteinThree*
\begin{proof} [Proof of \Cref{prop:Stein-3}]
    We prove the claim separately for the three terms in the left-hand side
    of \Cref{prop:Stein-3}; we start with the first of the three terms.
    We will apply the probabilistic result given in
    Proposition 6.3 in \cite{bellec2020out_of_sample}:
    if $\bfeta:\R^{n\times p}\to\R^p$ and $\brho:\R^{n\times p}\to\R^n$
    are locally Lipschitz and $\bG\in\R^{n\times p}$ has iid $N(0,1)$
    entries,
    \begin{equation}
    \E\Bigl[\Bigl(\brho^\top\bG\bfeta - \sum_{ij} \frac{\partial (\rho_i\eta_j)}{g_{ij}}\Bigr)^2\Bigr]
    \le
    \E\Bigl[\|\brho\|^2 \|\bfeta\|^2\Bigr]
    +
    2\E\Bigl[\sum_{ij}
    \|\bfeta\|^2 \| \frac{\partial \brho}{\partial g_{ij}}\|^2
    +
    \|\brho\|^2 \| \frac{\partial \bfeta}{\partial g_{ij}}\|^2
    \Bigr].
    \label{eq:prop63}
    \end{equation}
    The proof only relies on Gaussian integration by parts
    to transform the left-hand side.
    Let $\bff:\R^{n\times p}\to \R^{n+p}$ be locally Lipschitz. For any $i,j$
    and at a point where both $\bh$ and $\bpsi$ are differentiable
    and $\bff\ne \mathbf0$,
    \begin{equation*}
    \frac{\partial}{\partial g_{ij}}
    \Bigl(\frac{\bff}{\|\bff\|}\Bigr)
    =
    \frac{1}{\|\bff\|}
    \Bigl(\bI_{n+p} - \frac{\bff\bff^\top}{\|\bff\|^2}\Bigr)
    \frac{\partial \bff}{\partial g_{ij}}
    \quad
    \text{ so that }
    \Big\|
    \frac{\partial}{\partial g_{ij}}
    \Bigl(\frac{\bff}{\|\bff\|}\Bigr)
    \Big\|^2
    \le \frac{1}{\|\bff\|^2}
    \Big\|
    \frac{\partial \bff}{\partial g_{ij}}
    \Big\|^2.
    \end{equation*}
    We use this inequality applied with
    \begin{equation}
        \label{bff-brho-bh}
    \bff=(\bh,\tfrac{1}{ \sqrt{n} } \bpsi),\qquad
    \brho = \tfrac{1}{\sqrt n} \tfrac{\bpsi}{\|\bff\|},\qquad
    \bfeta = \tfrac{\bh }{\|\bff\|}.
    \end{equation}
    To bound from above 
    the right-hand side of \eqref{eq:prop63},
    the inequality
    in the previous display can be rewritten
    \begin{equation}
    \|
    \frac{\partial \bfeta}{\partial g_{ij}}
    \|^2
    +
    \|
    \frac{\partial \brho}{\partial g_{ij}}
    \|^2
    \le
    \frac{1}{\|\bh\|^2 + \|\bpsi\|^2/n}
    \Bigl(
    \|\frac{\partial \bh}{\partial g_{ij}}\|^2
    +
    \frac 1 n \|\frac{\partial \bpsi}{\partial g_{ij}}\|^2
    \Bigr).
    \label{eq:argument-bff}
    \end{equation}
    Since $\|\brho\|\le 1$ and $\|\bfeta\|\le 1$ by definition,
    the right-hand side of \eqref{eq:prop63}
    is bounded from above by
    $1+2 \E[
    \frac{1}{\|\bh\|^2 + \|\bpsi\|^2/n}
    (
    \|\frac{\partial \bh}{\partial g_{ij}}\|^2
    +
    \frac 1 n \|\frac{\partial \bpsi}{\partial g_{ij}}\|^2
    )
    ]$.
    Thus the proof of \Cref{prop:Stein-3} for the first term in the left-hand
    side is almost complete; it remains to control
    inside the parenthesis of the left-hand side,
    $$
    \sum_{ij}
    \frac{1}{\|\bh\|^2+\|\bpsi\|^2/n}\frac{\partial(\psi_i n^{-1/2} h_j)}{\partial g_{ij}}
    -
    \frac{\partial}{\partial g_{ij}}
    \Bigl(
\frac{
\psi_i n^{-1/2} h_j
}{\|\bh\|^2+\|\bpsi\|^2/n}
    \Bigr)
    =
    2
    \sum_{ij}
    \psi_i n^{-1/2}h_j
    \frac{  
    \bh^\top\frac{\partial\bh}{\partial g_{ij}}
    +
    \frac 1 n\bpsi^\top\frac{\partial\bpsi}{\partial g_{ij}}
    } {(\|\bh\|^2+\|\bpsi\|^2/n)^2}.
    $$
    By multiple applications of the Cauchy-Schwartz inequality, the absolute
    value of the previous display is bounded from above by
    $2(\|\bh\|^2+\|\bpsi\|^2/n)^{-1/2}
    (
    \sum_{ij}
    \|\frac{\partial\bh}{\partial g_{ij}}\|
    +\frac1n\|\frac{\partial\bpsi}{\partial g_{ij}}\|
    )^{1/2}
    $.
    This completes the proof of \Cref{prop:Stein-3}
    for the first term in the left-hand side.

    For the second and third term in the left-hand side of \Cref{prop:Stein-3},
    apply instead \eqref{eq:prop63} to
    $\brho=\bG\bfeta$ and  $\bfeta = \bG^\top\brho$ to obtain
    \begin{equation*}
    \E\Bigl[\Bigl(\|\bG\bfeta\|^2 - \sum_{ij} \frac{\partial ( \eta_j\be_i^\top\bG\bfeta)}{g_{ij}}\Bigr)^2\Bigr]
    \le
    \E\Bigl[\|\bG\bfeta\|^2 \|\bfeta\|^2\Bigr]
    +
    2\E\Bigl[\sum_{ij}
    \|\bfeta\|^2
    \|
    \be_i\eta_j
    + \bG\frac{\partial \bfeta}{\partial g_{ij}}
    \|^2
    +
    \|\bG\bfeta\|^2 \| \frac{\partial \bfeta}{\partial g_{ij}}\|^2
    \Bigr],
    \end{equation*}
    \begin{equation*}
    \E\Bigl[\Bigl(\|\bG^\top\brho\|^2 - \sum_{ij} \frac{\partial ( \rho_i \brho^\top\bG\be_j)}{g_{ij}}\Bigr)^2\Bigr]
    \le
    \E\Bigl[\|\bG^\top\brho\|^2 \|\brho\|^2\Bigr]
    +
    2\E\Bigl[\sum_{ij}
    \|\bG^\top\brho\|^2
    \|
    \frac{\partial \brho}{\partial g_{ij}}
    \|^2
    +
    \|\brho\|^2 \|\be_j\rho_i+ \bG^\top\frac{\partial \brho}{\partial g_{ij}}\|^2
    \Bigr].
    \end{equation*}
    Setting $\brho = \frac{1}{\sqrt n} \bpsi / \|\bff\|$,
    $\bfeta = \bh /\|\bff\|$ we obtain
    the claim in \Cref{eq:prop63} by bounding the right-hand side
    of the previous displays using the operator norm of $\bG$
    and arguments similar to \eqref{eq:argument-bff}.
    The term involving
    $\frac{\partial}{\partial g_{ij}}
\bigl( 
\frac{ 1 }{\|\bh\|^2+\|\bpsi\|^2/n}
\bigr)$ in the left-hand side is controlled similarly to
the previous paragraph.

\end{proof}

\thmChi*

\begin{proof} [Proof of \Cref{prop:chi2-custom}]
    We first focus on the first term in the left-hand side.
    Theorem 7.1 in \cite{bellec2020out_of_sample}
    provides that if $\brho:\R^{n\times p}\to\R^n$ is locally Lipschitz
    with $\|\brho\|\le 1$ then
    \begin{equation}
        \E\Big| p \|\brho\|^2 - \sum_{j=1}^p
        \Bigl(\brho^\top\bG\be_j -\sum_{i=1}^n \frac{\partial \rho_i }{\partial g_{ij}}\Bigr)^2\Big|
        \le \C \sqrt p \Bigl( 1+ \E\sum_{ij} \Big\|\frac{\partial \brho}{\partial g_{ij}}\Big\|^2\Bigr)^{1/2}
        +\C \E\sum_{ij} \Big\|\frac{\partial \brho}{\partial g_{ij}}\Big\|^2.
        \label{eq:cor71}
    \end{equation}
    Let $\brho = n^{-1/2}\bpsi / \|\bff\|$ as in \eqref{bff-brho-bh}.
    Inequality \eqref{eq:argument-bff}
    lets us bound from above the right-hand side of the previous display
    by the right-hand side of
    \Cref{prop:chi2-custom}.
    In the left-hand side, $p\|\brho\|^2 = \frac pn \|\bpsi\|^2 / (\|\bh\|^2 + \|\bpsi\|^2/n)$ as desired.
    For the left-hand side, using some algebra
    in \cite[Section 7]{bellec2020out_of_sample},
    for any random vectors $\ba,\bb\in\R^p$
    by the triangle and Cauchy-Schwarz inequalities we have
    \begin{align*}
    |p\|\brho\|^2 - \|\ba\|^2|
    -|p\|\brho\|^2 - \|\bb\|^2|
    &\le 
    \|\ba-\bb\| \|\ba + \bb\|
  \\&\le \|\ba-\bb\|^2 + 2\|\ba-\bb\|\|\bb\|
  \\&\le \|\ba-\bb\|^2 + 2\|\ba-\bb\|(\sqrt{|\|\bb\|^2-p\|\brho\|^2|}+\sqrt{p\|\brho\|^2})
  \\&\le 3\|\ba-\bb\|^2 + \tfrac 1 2|\|\bb\|^2 - p\|\brho\|^2|
  + 2 \|\ba-\bb\| \sqrt{p\|\brho\|^2}
    \end{align*}
    so that
    $|p\|\brho\|^2 - \|\ba\|^2|
    \le\frac32 |p\|\brho\|^2 - \|\bb\|^2|
    + 3 \|\ba-\bb\|^2
    + 2 \|\ba-\bb\| \sqrt{p\|\brho\|^2}.
    $
    Applying this to $b_j = \brho^\top\bG\be_j - \sum_{i=1}^n\frac{\partial \rho_i}{\partial g_{ij}}$
    we use \eqref{eq:cor71}
    to bound $|p\|\brho\|^2 - \|\bb\|^2|$
    and $\|\brho\|\le 1$ to bound
    $\sqrt{p\|\brho\|^2}\le \sqrt p$. It remains to specify
    $\ba$ so that $|p\|\brho\|^2 - \|\ba\|^2|$ coincides
    with the first term in the left-hand side of \Cref{prop:chi2-custom}
    and bound $\|\ba-\bb\|$. Consequently, we set
    $$a_j =\frac{  
    \bpsi^\top\bG\be_j - \sum_{i=1}^n\frac{\partial \psi_i}{\partial g_{ij}}}{\sqrt n (\|\bh\|^2 + \|\bpsi\|^2/n)^{1/2}}
    =
    \brho^\top\bG\be_j - 
\frac{  
    \sum_{i=1}^n\frac{\partial \psi_i}{\partial g_{ij}}}{\sqrt n (\|\bh\|^2 + \|\bpsi\|^2/n)^{1/2}}
    = b_j
    - \sum_{i=1}^n \frac{\psi_i}{\sqrt n}
    \frac{\partial (D^{-1})}{\partial g_{ij} }
    $$
    where $D=(\|\bh\|^2 + \|\bpsi\|^2/n)^{1/2}$ so that
    by the Cauchy-Schwarz inequality
    $\|\ba-\bb\|^2 \le \frac 1 n \|\bpsi\|^2
    \sum_{ij}(\frac{\partial(D^{-1})}{\partial g_{ij}})^2$
    and
    \begin{equation}
    \sum_{ij}\Bigl(\frac{\partial(D^{-1})}{\partial g_{ij}}\Bigr)^2
    =
    \frac{1}{D^6} \sum_{ij} \Bigl(
      \bh^\top\frac{\partial \bh}{\partial g_{ij}}
      +\frac{\bpsi}{\sqrt n}^\top\frac{\partial \bpsi}{\partial g_{ij}}
    \Bigr)^2
    \le \frac{2}{D^4}
    \sum_{ij} \|\frac{\partial \bh}{\partial g_{ij}}\|^2
    + \frac 1 n \|\frac{\partial \bpsi}{\partial g_{ij}}\|^2
    .
    \label{eq:derivative-D-1}
    \end{equation}
    using again the Cauchy-Schwarz inequality
    and $\max\{\|\bh\|^2,\|\bpsi\|^2/n\}\le D^2$. We obtain
    $\|\ba-\bb\|^2\le D^{-2}
    \sum_{ij} \|\frac{\partial \bh}{\partial g_{ij}}\|^2
    + \frac 1 n \|\frac{\partial \bpsi}{\partial g_{ij}}\|^2$
    which completes the proof for the first term
    in the left-hand side of \Cref{prop:chi2-custom}.
    For the second term in the left-hand side, the proof
    is similar with by exchanging the role of $n$ and $p$
    in \eqref{eq:cor71} and applying
    \eqref{eq:cor71} to $\bh/D$ instead of $\bpsi/(\sqrt n D)$.
\end{proof}

\propXiVI*
\begin{proof} [Proof of \Cref{prop:xi_VI}]
    Apply \eqref{eq:prop63}
    with $\brho=\bep/\|\bep\|$
    and $\bfeta = \bh/D$ where $D=(\|\bh\|^2 + \|\bpsi\|^2/n)^{1/2}$
    as in the previous proof
    (this scalar $D$ is not related to the diagonal matrix $\bD=\diag\{\psi'(\br)\}$).
    Since $\bep$ has 0 derivative with respect to $\bG$ we find
    \begin{align*}
        \E 
        \Bigl[ \Bigl( \frac{\bep ^{\top} \bG \bh}{\|\bep\|D} - \sum_{ij} \frac{\eps_i}{\|\bep\|} \frac{ \partial (h_{j} D^{-1}) }{ \partial g_{ij} } \Bigr)^{2} \Bigr] 
        &\leq
        1 + 2\sum_{ij}\E[
        \|\frac{\partial \bfeta}{\partial g_{ij}}\|^2
        ].
    \end{align*}
    The right-hand side
    is bounded from above by $\C(\gamma,\mu)$ thanks to \eqref{eq:argument-bff} and \eqref{Xi-upper-bound}.
    For the second term above we use product rule and \eqref{contraction-1},
    \begin{align*}
        \sum_{ij}
        \frac{\eps_i}{\|\bep\|}
        \frac{ \partial ( h_{j} D^{-1}) }{ \partial g_{ij} }
        =
        \frac{\trace [\bA ] \bpsi ^{\top} \bep }{D\|\bep\|}
        -{\color{purple}\frac{\bh^{\top} \bA \bG^{\top} \diag(\psi'(\br)) \bep}{D\|\bep\|} }
        +
        {\color{purple}
            \sum_{ij} \frac{\eps_{i} h_{j}}{\|\bep\|}
        \frac{ \partial (D^{-1}) }{ \partial g_{ij} }
        }
        .
    \end{align*}
    To complete the proof we need to prove that 
    the expectation of the square of the second and third terms
    colored in purple are bounded by $\C(\gamma,\mu)$.
    Since $\|\bh\|\le D$,
    the second term is bounded from above by
    $\|\bA\|_{op} \|\bG\|_{op}$ since $|\psi'|\le 1$
    and $\E[\|\bA\|_{op}^2 \|\bG\|_{op}^2]\le \C(\gamma,\mu)$
    thanks to $\|\bA\|_{op}\le 1/(n\mu)$ and
    \cite[Theorem II.13]{davidson2001local}.
    For the third term, we use
    the Cauchy-Schwarz inequality
    $(\sum_{ij} \frac{\eps_ih_j}{\|\bep\|}\frac{\partial(D^{-1})}{\partial g_{ij}})^2
    \le \|\bh\|^2 \sum_{ij} (\frac{\partial(D^{-1})}{\partial g_{ij}})^2$,
    \eqref{eq:derivative-D-1}
    and \eqref{Xi-upper-bound}.
\end{proof}

\section{Proof of differentiability results}
\label{sec:9}

\theoremDifferentiability*

The first part of the following proof is similar to the argument
using the KKT conditions in \cite{bellec2020out_of_sample}.
After \eqref{eq:9938}, the argument is novel and lets us
derive the convenient formula \eqref{eq:differentiability-formulae}
and the existence of matrix $\hbA$ which plays a central
role in the contractions \eqref{contraction-1}-\eqref{contraction-5}.

\begin{proof}
    [Proof of \Cref{thm:differentiability}]
    $\bX_t = \bX + t \bU$ and $\by_{t} = \by + t \bv$ with $t\in\R$
    where $\bU \in \R^{n\times p}$ and $\bv \in \R^{n}$ are fixed.
    Let
    $ \hbbeta_{t} = \hbbeta(\by_{t}, \bX_{t}) $
    and 
    $ 
        \hbr_{t} = \by_{t} - \bX_{t} \hbbeta (\by_{t}, \bX_{t})
    $
    and 
    $\hbpsi (\by_{t}, \bX_{t})  = \psi (\hbr_{t})$.
    By convention, without arguments $\hbbeta,\bpsi$
    refer to $(\by,\bX)$ which is $(\by_t,\bX_t)$ at $t=0$.
    By the KKT conditions, 
    $ 
        \bX^{\top} \hbpsi \in n \partial g (\hbbeta)
    $ 
        and 
    $
        \bX_{t}^{\top} \hbpsi_{t} \in n \partial g (\hbbeta_{t}), 
    $
    by strong convexity of $g$,
    we have 
    \begin{align}
        \label{eq:0063}
        n \mu \| \bSigma^{1/2} (\hbbeta_{t} - \hbbeta) \|^{2}
        \leq 
        ( \hbbeta_{t} - \hbbeta ) ^{\top} 
        ( \bX_{t}^{\top} \hbpsi_{t} - \bX ^{\top} \hbpsi)
        .
    \end{align}

    By the fact that $\psi$ is non-decreasing and $1$-Lipschitz,
    for any two real numbers $a < b$,
    $ 
        0 \leq \psi(b) - \psi(a) \leq b - a.
    $
    Multiplying $\psi(b) - \psi(a)$, 
    we have
    $ 
        (\psi(b) - \psi(a))^{2} \leq (\psi (b) - \psi(a)) (b - a) 
        .
    $
    Thus
    \begin{align*}
        \| \hbpsi_{t} - \hbpsi \|^{2}
        \leq
       (\hbpsi_{t} - \hbpsi)^{\top} (\hbr_{t} - \hbr).
    \end{align*}
    Adding up the above two displays we have
    \begin{align}
        \label{eq:3363}
        &n\mu \|\bSigma^{1/2}(\hbbeta_{t} - \hbbeta)\|^{2}
        +
        \| \hbpsi _{t} - \hbpsi \|^{2}
        \leq
        (\hbbeta_{t} - \hbbeta)^{\top}
        (\bX_{t}^{\top} \hbpsi_{t} - \bX^{\top} \hbpsi)
        +
        (\hbpsi_{t} - \hbpsi)^{\top} (\hbr_{t} - \hbr)
        .
    \end{align}
    By
    $
        \bX_{t}^{\top} \hbpsi_{t}
        -
        \bX^{\top} \hbpsi
        =
        (\bX_{t}- \bX)^{\top} \hbpsi
        +
        \bX^{\top}_{t} (\hbpsi _{t} - \hbpsi)
    $ and
    $\bX_{t} (\hbbeta_{t} - \hbbeta) + \hbr_{t} - \hbr 
        = 
        \by_{t} - \by - (\bX_{t} - \bX)^{\top} \hbbeta$,
    we have
    \begin{align*}
        n\mu \|\bSigma^{1/2}( \hbbeta _{t} - \hbbeta )\|^{2}
        + \| \hbpsi_{t} - \hbpsi \|^{2}
        \leq 
        (\hbbeta_{t} - \hbbeta)^{\top} 
        (\bX_{t} - \bX)^{\top} \hbpsi
        + 
        (\by_{t} - \by - (\bX_{t} - \bX )^{\top} \hbbeta ) ^{\top} (\hbpsi _{t} - \hbpsi).
    \end{align*}
    By the Cauchy-Schwartz inequality, the above implies 
    \begin{align*}
        \bigl(
            n \mu \|\bSigma^{1/2}( \hbbeta _{t} - \hbbeta )\|^{2}
             + 
             \| \hbpsi _{t} - \hbpsi \|^{2}
        \bigr)^{1/2}
        \leq
        (n\mu)^{-1/2} \| \bSigma^{-1/2}(\bX_{t} - \bX)^{\top} \hbpsi \|_{2}
        +
        \|\by_{t} - \by - (\bX_{t} - \bX )^{\top} \hbbeta \|_{2},
    \end{align*}
    Since $t,\bU,\bv$ are arbitrary,
    for $(\by_{t}, \bX_{t})$ and $(\by,\bX)$ both in a compact subset $K$ of $\R^{p} \times \R^{n \times p}$,
    the above display also implies
    \begin{align*}
        \bigl(
            n \mu \| \bSigma^{1/2} (\hbbeta _{t} - \hbbeta) \|^{2}
             + 
             \| \hbpsi _{t} - \hbpsi \|^{2}
        \bigr)^{1/2}
        \leq
        \text{const}(K)
        \bigl(
            \| \bSigma^{-1/2} (\bX_{t} - \bX) \|_{op}
            + 
            \| \by _{t} - \by \|_{2}
        \bigr)
        ,
    \end{align*}
    where
    $ 
        \text{const}(K)
        :=
        \sup_{(\by,\bX) \in K}
        \{ 
            ( n \mu )^{-1/2} \| \hbpsi \|_{2}
            +
            1 
            + 
            \| \bSigma^{1/2} \hbbeta \|_{2}
        \}
        .
    $
    This says that $\hbbeta(\by, \bX), \hbpsi (\by, \bX)$ are locally Lipschitz in $(\by,\bX)$.
    By Rademacher's Theorem, $\partial \hbbeta / \partial y_{i}$ and $\partial \hbbeta / \partial x_{ij}$ exist almost everywhere.

    Taking the limit $t \to 0^+$ in \eqref{eq:0063} and using the chain rule,
    where the derivatives exist we have
    \begin{equation}
        \label{eq:9938}
    \begin{aligned}
        &n \mu 
        \bigl\|\bSigma^{1/2} \bigl(
            \frac{ \partial \hbbeta }{ \partial \by } \bv
            + \frac{ \partial \hbbeta }{ \partial \bX } (\bU)
            \bigr)
        \bigr\|_{2}^{2} 
        \\
        &\leq 
        \Bigl(
            \frac{ \partial \hbbeta }{ \partial \by } \bv
            + \frac{ \partial \hbbeta }{ \partial \bX } (\bU)
        \Bigr)^{\top}
        \Bigl(
            \bU ^{\top} \hbpsi  
            +
            \bX ^{\top} \diag (\hbpsi')
            \bigl(
                 - \bU \hbbeta
                 - \bX \frac{ \partial \hbbeta }{ \partial \bX } (\bU)
                 +
                 \Bigl(
                      I_{n}
                      - \bX \frac{ \partial \hbbeta }{ \partial \by }
                 \Bigr)
                 \bv
            \bigr)
        \Bigr)
        \\
        &=
        \Bigl(
            \frac{ \partial \hbbeta }{ \partial \by } \bv
            + \frac{ \partial \hbbeta }{ \partial \bX } (\bU)
        \Bigr)^{\top}
        B (\bU,\bv)
        - 
        \Bigl\|
            \diag(\hbpsi')^{\frac12}
            \bX
            \Bigl(
                \frac{ \partial \hbbeta }{ \partial \by } \bv
                +
                \frac{ \partial \hbbeta }{ \partial \bX } (\bU)
            \Bigr)
        \Bigr\|_{2}^{2}
    \end{aligned}
    \end{equation}
    where  
    $ 
       (\partial \hbbeta / \partial \by) \bv := \sum_{i \in [n]} (\partial \hbbeta / \partial y_{i}) v_{i}
    $, the Jacobian with respect to $\bX$ 
    and the linear map $B:\R^{n\times p} \times \R^n\to\R^p$ are defined as 
    \begin{align*}
        \frac{ \partial \hbbeta }{ \partial \bX } (\bU)
        := 
        \sum _{i,j \in [n]\times[p]} 
        \frac{
            \partial \hbbeta
        }{
            \partial x_{ij} 
        } u_{ij} \in \R^{p},
        \quad
        B (\bU,\bv)
        := \bU^{\top} \hbpsi + \bX^{\top} \diag (\hbpsi') ( - \bU \hbbeta + \bv)
        \in \R^{p}
    \end{align*}
    where $(u_{ij})_{i=1,...,n,j=1,...,p}$ are the entries of $\bU$.
    By the Cauchy-Schwartz inequality,
    \eqref{eq:9938} provides us the following two main ingredients:
    \begin{equation}
        \label{eq:4416}
        \frac{ \partial \hbbeta }{ \partial \by } \bv 
        + 
        \frac{ \partial \hbbeta }{ \partial \bX } (\bU)
        = 0
        \text{ for all }
        (\bU,\bv)
        \text{ such that }
        B (\bU,\bv) = 0,
    \end{equation}
    \begin{equation}
        \label{eq:0607}
        \Bigl\|
        \bSigma^{1/2}\Bigl(
             \frac{
                 \partial \hbbeta
             }{
                 \partial \by
             } \bv 
             + 
             \frac{
                 \partial \hbbeta
             }{
                 \partial \bX
             } (\bU)
             \Bigr)
        \Bigr\|_{2}
        \leq 
        \mu^{-1} n^{-1} 
        \| \bSigma^{-1/2} B (\bU,\bv) \|_{2}.
    \end{equation}
    Since both $ \frac{ \partial \hbbeta }{ \partial \by } \bv + \frac{ \partial \hbbeta }{ \partial \bX } (\bU) $ and $ B( \bU,\bv )$ are linear 
    in $ (\bU,\bv) \in \R^{n \times p} \times \R ^{n} $ into $\R ^{p}$, 
    \Cref{prop:6890} implies 
    that there exists 
    a matrix $\hbA \in \R^{p \times p}$ such that
    $ 
        \frac{ \partial \hbbeta }{ \partial \by } \bv + \frac{ \partial \hbbeta }{ \partial \bX } (\bU) = \hbA B( \bU,\bv )
    $
    for all $(\bU, \bv)$,
    and by \eqref{eq:0607},  $\hbA$ can be chosen such that
    $ 
    \|\bSigma^{1/2} \hbA \bSigma^{1/2} \|_{op} \leq (n\mu)^{-1} 
    $
    thanks to the operator norm identity in \Cref{prop:6890}.
    With $(\bU,\bv) = (\be_{i} \be_{j}^{\top}, \bzero)$ for $(i,j) \in [n] \times [p]$
    and $(\bU,\bv) = (\bzero, \be_k)$ for $k \in [n]$, we obtain
    the stated formulae for $(\partial / \partial x_{ij}  )\hbbeta$ 
    and $(\partial / \partial  y_{k}) \hbbeta$ in \eqref{eq:differentiability-formulae}.

    Now we show that both $\trace[\bV] := \trace[\bD - \bD \bX \hbA \bX^{\top} \bD ]$ and $\df := \trace [ \bX \hbA \bX^{\top} \bD ]$ are in $[0,n]$ where $\bD := \diag \{ \psi'  ( \br ) \}$.
    Using the symmetric part of $\bA$ defined as
    $\tbA := ( \hbA + \hbA{}^{\top}) / 2$ we have
    $ 
        \trace [ \bV ] = \trace [ \bD - \bD \bX \tbA \bX^{\top} \bD ]
    $
    and 
    $ 
        \df = \trace [ \bD^{1/2} \bX \tbA \bX^{\top} \bD^{1/2}]
    $ by property of the trace.
    In \eqref{eq:9938}, take
    $\bU = \bzero $ so that
    $ 
        \frac{ \partial \hbbeta }{ \partial \by } \bv + \frac{ \partial \hbbeta }{ \partial \bX } (\bU) = \hbA B( \bU,\bv )
        =
        \hbA \bX^{\top} \bD \bv
    $
    and we have with $\bG=\bX\bSigma^{-1/2}$
    \begin{align} 
        (1 + \tfrac{n \mu}{\|\bD^{1/2}\bG\|_{op}^2})
        \|\bD^{1/2}\bX \hbA \bX^{\top} \bD \bv \|^{2}
        &\le
        n \mu \| \hbA \bX^{\top} \bD \bv \|^{2}
        +
        \| \bD^{1/2} \bX \hbA \bX^{\top} \bD \bv \|^{2}
        \\
        &\leq 
        \bv ^{\top} \bD \bX \hbA \bX^{\top} \bD \bv
        =
        \bv ^{\top} \bD \bX \tbA \bX^{\top} \bD \bv
        \label{eq:forallv}
    \end{align}
    for all $\bv$.
    This implies the positive semi-definite property of the symmetric matrix $\bD \bX \tbA \bX^{\top} \bD$, 
    and thus $\df \geq 0$ and $\trace [ \bV ] \leq \trace [ \bD ] \leq n$.
    With $\tbv = \bD^{1/2} \bv$, it also implies
    $$ 
        (1 + n \mu / \|\bD^{1/2}\bG\|_{op}^2)
        \| \bD ^{1/2} \bX \hbA \bX^{\top} \bD^{1/2} \tbv \|^{2}
        \leq 
        \tbv ^{\top} \bD^{1/2} \bX  \hbA \bX^{\top} \bD^{1/2} \tbv,
    $$
    which, by the Cauchy-Schwartz inequality, yields
    $
    (1 + n \mu / \|\bD^{1/2}\bG\|_{op}^2)
    \| \bD^{1/2} \bX \hbA \bX^{\top} \bD^{1/2} \|_{op} \leq 1$.
    The same operator norm inequality with $\hbA$ replaced by
    its symmetric part $\tbA=(\hbA+\hbA^\top)/2$
    thanks to the triangle inequality.
    Thus
    $\df \le
        \pb{n}
        (1 + n \mu / \|\bD^{1/2}\bG\|_{op}^2)^{-1}
    \le n$ as well as
    \begin{align}
    \trace[\bV]
            =\trace[\bD^{1/2}(\bI_n - \bD^{1/2}\bX\tbA\bX^\top\bD^{1/2})\bD^{1/2}]
        \nonumber
          &\ge \trace[\bD](1-
          (1 + n \mu / \|\bD^{1/2}\bG\|_{op}^2)^{-1}
            )
        \nonumber
          \\&= \trace[\bD]/(\|\bD^{1/2}\bG\|_{op}^2/(n\mu) + 1)
        \nonumber
        \\&\ge \trace[\bD]/(\|\bG\|_{op}^2/(n\mu) + 1)
            \label{eq:lower-bond-trace-V}
        \\&\ge 0
        \nonumber
    \end{align}
    thanks to $\psi'\in[0,1]$.
    Inequality \eqref{eq:forallv} with $\tbv = \bD^{1/2}\bv$
    and $\bM = \bI_n - \bD^{1/2}\bX\hbA\bX^\top\bD^{1/2}$ 
    implies
    $
    \|(\bM-\bI_n) \tbv\|^2 \le \tbv^\top(\bI_n - \bM)\tbv$.
    As the left-hand side is
    $\|\bM\tbv\|^2 - 2 \tbv^\top\bM\tbv + \|\tbv\|^2$,
    this yields
    $\|\bM\tbv\|^2
    \le \tbv^\top\bM \tbv
    \le \|\tbv\| \|\bM\tbv\|$.
    If $\tbv$ has unit norm and is such that $\|\bM\tbv\|=\|\bM\|_{op}$
    this gives $\|\bM\|_{op}\le 1$ so that
    $\|\bV\|_{op}=\|\bD^{1/2}\bM\bD^{1/2}\|_{op}\le \|\bD\|_{op}\le 1$.
    This gives another proof of $\trace[\bV]\le n$.
\end{proof}

\begin{proof} [Proof of \Cref{rem:intercept}] 

    The proof for the intercept term included is the same to that of 
    \Cref{thm:differentiability}.
    The only difference is that when computing the derivatives, 
    \begin{align*}
        &\frac{ d \hbpsi_{t} }{ d t }|_{t=0}
        =
        \bU^{\top} \hbpsi
        +
        \bX^{\top}
        \Bigl(
            \frac{ \partial \hbpsi }{ \partial \by }
            \bv
            +
            \frac{ \partial \hbpsi }{ \partial \bX } (\bU)
        \Bigr), \quad
        \frac{ \partial \hbpsi }{ \partial \by }
        \bv
        =
        \diag(\hbpsi') (\bI_{n} - \bone 
        \frac{ \partial \hbeta_{0} }{ \partial \by }
        - \bX \frac{ \partial \hbbeta }{ \partial \by }
        )
        \bv
        ,
    \\
        &\frac{
            \partial \hbpsi 
        }{
            \partial \bX
        } (\bU)
        =
        \diag(\hbpsi')
        ( - \bone 
        \frac{
            \partial \hbeta_{0}
        }{
            \partial \bX
        } (\bU)
        - \bU \hbbeta 
        - \bX \frac{ \partial \hbbeta }{ \partial \bX } (\bU)
        )
    \end{align*}
    \begin{align*}
        \implies 
        \frac{ d \hbpsi_{t} }{ d t }|_{t = 0}
        =
        - \hbpsi' 
        \frac{ d \hbeta_{0,t} }{ d t  }|_{t = 0}
        - \diag(\hbpsi') \bX 
        \frac{ d \hbbeta_{t} }{ d t  }|_{t=0}
        + \diag(\hbpsi') \bv 
        - \diag(\hbpsi') \bU \hbbeta.
    \end{align*}
    We have an additional KKT conditions providing us
    $ 
        0
        =
        \bone^{\top} 
        (d \hbpsi_{t} / d t) |_{t=0}
        .
    $
    Multiplying $\bone^{\top}$
    on both sides of the above display, 
    we have 
    \begin{align*}
        \frac{ d \hbeta_{0,t} }{ d t }|_{t=0}
        &=
        - \frac{ \hbpsi'^{\top} \bX }{ \bone ^{\top} \hbpsi' } \frac{ d \hbbeta_{t} }{ d t }|_{t=0}
        + \frac{ \hbpsi'^{\top} \bv }{ \bone^{\top} \hbpsi' } 
        - \frac{ \hbpsi'^{\top} \bU \hbbeta }{ \bone^{\top} \hbpsi' }
        ,
    \\
        \implies 
        \frac{ d \hbpsi_{t} }{ d t }|_{t = 0}
        &=  
        - \bPsi' \bX \frac{ d \hbbeta_{t} }{ d t }|_{t = 0}
        + \bPsi' \bv
        - \bPsi' \bU \hbbeta 
        ,
    \end{align*}
    where 
    $ 
        \bPsi' := \diag(\hbpsi') - \hbpsi' \hbpsi'^{\top} / \bone^{\top} \hbpsi'.
    $
    By taking limit of $t \to 0$ in \Cref{eq:3363},
    \begin{align*}
        n \mu 
        \Bigl\| 
        \frac{
            d \hbbeta_{t}
        }{
            d t
        }|_{t=0}
        \Bigr\|_{2}^{2}
    &\leq 
        \frac{ d \hbbeta_{t} }{ d t }|_{t=0} ^{\top}
        \frac{ d (\bX^{\top} \hbpsi) }{ d t  }|_{t = 0}
        =
        \frac{ d \hbbeta_{t} }{ d t }|_{t=0} ^{\top}
        \Bigl( \bU^{\top} \hbpsi + \bX^{\top} \frac{ d \hbpsi_{t} }{ d t }|_{t = 0} \Bigr)
    \\
        &=
        \frac{ d \hbbeta_{t} }{ d t }|_{t=0} ^{\top}
        \Bigl( \bU^{\top} \hbpsi + 
            \bX^{\top} 
            \bigl(
                - \bPsi' \bX \frac{ d \hbbeta_{t} }{ d t }|_{t = 0}
                + \bPsi' \bv
                - \bPsi' \bU \hbbeta
            \bigr)
        \Bigr)
    \\  
        &=
        \frac{ d \hbbeta_{t} }{ d t }|_{t=0} ^{\top}
        \Bigl( 
            \bU^{\top} \hbpsi 
            + \bX^{\top} \bPsi' \bv
            - \bX^{\top} \bPsi' \bU \hbbeta
        \Bigr)
        - 
        \Bigl\| \bPsi'^{1/2} \bX \frac{ d \hbbeta_{t} }{ d t }|_{t=0}\Bigr\|^{2}
    .
    \end{align*}

\end{proof}

\begin{proposition} 
    \label{prop:6890}
    Let $\bA$ and $\bB$ be two real matrices with shape $n$ by $p$.
    Assume that $\bB \bv = \bzero$ for all $\bv\in\R^p$ such that $\bA \bv= \bzero$.
    Then the matrix $\bC := \bB \bA^{+}$ 
    where $\bA^{+}$ is the Moore-Penrose pseudoinverse of $\bA$
    satisfies 
    $ 
        \bB = \bC \bA
    $
    and 
    $ 
        \| \bC \|_{op} = \max_{\bu \in \R^{n}:\bA\bu\ne \mathbf0} \{ \| \bB \bu \|_{2} / \| \bA \bu \|_{2} \}.
    $
\end{proposition}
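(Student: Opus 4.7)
This is a purely linear-algebraic claim, and the plan is to exploit two standard facts about the Moore--Penrose pseudoinverse: (i) $\bA^+ \bA$ is the orthogonal projector of $\R^p$ onto $(\ker \bA)^\perp$, and (ii) $\bA \bA^+$ is the orthogonal projector of $\R^n$ onto $\mathrm{range}(\bA)$. Everything will follow by combining these with the hypothesis $\ker \bA \subseteq \ker \bB$.

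First I would establish the identity $\bB = \bC \bA$ by taking an arbitrary $\bv \in \R^p$ and decomposing it orthogonally as $\bv = \bv_0 + \bv_\perp$ with $\bv_0 \in \ker \bA$ and $\bv_\perp \in (\ker \bA)^\perp$. Fact (i) yields $\bA^+ \bA \bv = \bv_\perp$, hence $\bC \bA \bv = \bB \bA^+ \bA \bv = \bB \bv_\perp$. On the other hand, the hypothesis gives $\bB \bv_0 = \bzero$, so that $\bB \bv = \bB \bv_\perp$. Combining the two shows $\bC \bA \bv = \bB \bv$ for every $\bv \in \R^p$.

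For the operator-norm equality I would prove both inequalities separately. The lower bound is immediate: since $\bB = \bC \bA$, for any $\bu \in \R^p$ with $\bA \bu \neq \bzero$ we have $\|\bB \bu\| = \|\bC (\bA \bu)\| \le \|\bC\|_{op} \|\bA \bu\|$, so $\|\bB \bu\| / \|\bA \bu\| \le \|\bC\|_{op}$. For the reverse inequality, given $\bw \in \R^n$ with $\bw \neq \bzero$, set $\bu := \bA^+ \bw \in \R^p$; then $\bC \bw = \bB \bA^+ \bw = \bB \bu$. If $\bA \bu = \bzero$ then the hypothesis forces $\bB \bu = \bzero$ and hence $\bC \bw = \bzero$. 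Otherwise, fact (ii) shows that $\bA \bu = \bA \bA^+ \bw$ is the orthogonal projection of $\bw$ onto $\mathrm{range}(\bA)$, so $\|\bA \bu\| \le \|\bw\|$, whence $\|\bC \bw\| / \|\bw\| \le \|\bB \bu\| / \|\bA \bu\|$. Taking the supremum over $\bw \neq \bzero$ bounds $\|\bC\|_{op}$ by the claimed maximum.

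There is no substantive obstacle: the whole argument is elementary and reduces to the defining projection properties of the pseudoinverse. The only slightly delicate point is handling $\bw \in \mathrm{range}(\bA)^\perp$, where $\bA^+ \bw = \bzero$ and $\bC \bw = \bzero$, so such directions trivially do not contribute. That the supremum is actually attained (justifying $\max$ in the statement) follows by restricting the competition to the unit sphere in $(\ker \bA)^\perp$, which is compact and on which both $\|\bB \bu\|$ and $\|\bA \bu\|$ are continuous with $\|\bA \bu\|$ bounded away from zero.
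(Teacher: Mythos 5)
Your proof is correct and follows essentially the same route as the paper's: both arguments rest on the projector identities $\bA^+\bA = P_{(\ker\bA)^\perp}$ and $\bA\bA^+ = P_{\mathrm{range}(\bA)}$, prove $\bB=\bC\bA$ by killing the kernel component, and bound $\|\bC\|_{op}$ by replacing $\bw$ with its projection onto $\mathrm{range}(\bA)$. Your write-up is slightly more careful than the paper's (explicitly splitting the norm identity into two inequalities, handling $\bw\perp\mathrm{range}(\bA)$, and justifying attainment of the maximum), but the underlying argument is the same.
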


\begin{proof}
    Let $r$ be the rank of $\bA$.
    We let 
    $
        \bA = \bU \bD \bV^{\top}
    $
    be the SVD of $\bA$
    with $\bU\in\R^{n\times r}$,
    $\bD\in\R^{r\times r}$ diagonal with positive entries, 
    $\bV \in\R^{p\times r}$ and
    $\bU,\bV$ both with orthonormal columns.
    Then $\bA^+ = \bV\bD^{-1}\bU^\top$ and $\bC \bA = \bB\bV\bV^\top=\bB - \bB(\bI_p-\bV\bV^\top)$. Since $\ker \bA \subset \ker \bB$
    and $(\bI_p-\bV\bV^\top)$ is the orthogonal projection onto $\ker \bA$
    we have
    $\bB(\bI_p-\bV\bV^\top)=\mathbold{0}$. This proves $\bB=\bC\bA$.

    For $\|\bB\bA^+\|_{op}$,
    for any vector $\bu$,
    $\|\bB \bu\| = \|\bC\bA \bu\|
    \le \|\bC\|_{op} \|\bA\bu\|$ by definition of $\|\bC\|_{op}$.
    This proves
    $\| \bC \|_{op} \ge M$
    for $M=\max_{\bu \in \R^{n}:\bA\bu\ne \mathbf0} \{ \| \bB \bu \|_{2} / \| \bA \bu \|_{2} \}$.
    For the inequality $M\ge \|\bC\|_{op}$, if $\|\bC\|_{op} = \|\bC\bv\|$
    for some unit vector $\bv$ then $\bA\bA^+\bv = \bU\bU^\top\bv \ne \mathbf0$
    since $\bv$ is a right singular vector of $\bC=\bB\bV\bD\bU^\top$ and
    cannot belong to $\ker(\bU^\top)$. Next,
    $\|\bC\bv\| = \|\bB\bA^+\bv\|
    \le 
    M
    \|\bA\bA^+\bv\|
    $
    and the conclusion follows since $\|\bA\bA^+\|_{op}\le 1$
    and $\|\bv\|=1$.
\end{proof}

\section{Relaxing strong convexity: Proof of \Cref{prop12}}
\label{appendix:proof-prop12}
Consider the notation for $\bG, \bpsi(\bep,\bG),\bh(\bep,\bG),D$ defined around \eqref{bh}.
Let $\Omega=\{\bX\in U_{\bep}\}$.
Let us first rewrite the Lipschitz condition \eqref{Phi}
using the change of variable $\bG=\bX\bSigma^{-1/2}$ explained around
equation \eqref{bh}:
the mapping
$$
\tilde\Phi_{\bep}
    \left\{
\begin{aligned}
    \{\bM\bSigma^{-1/2} \ysc{\mid} \bM\in U_{\bep} \} &\to \R^{n+p}, \qquad
        \\ \bG &\mapsto 
            D^{-1}
            \bigl(
                n^{-1/2}\bpsi(\bep,\bG), ~
                \bh(\bep,\bG)
            \bigr)
\end{aligned}
\right.
\text{is $\frac{L}{\sqrt n}$-Lipschitz}
$$
where we recall the notation
$D=(\frac1n \|\bpsi(\bep,\bG)\|^2
+ \|\bh(\bep,\bG)\|^2
)^{1/2}$.
After the change of variable, the identities
\eqref{dgij_h}-\eqref{contraction-5} all hold in the event $\Omega$.

All previous calculations made in the strongly convex case hold
here is well, but only in the event $\Omega$. Outside of event $\Omega$,
the derivatives may not exist at all.
As in \Cref{thm:out-of-sample}, the device that lets us work around
this is Kirszbraun's theorem: there exists
an $L/\sqrt n$ Lipschitz function
$\bar\Phi_{\bep}$  (the \ysc{``}extension") such that
$\bar\Phi_{\bep}(\bG)=\tilde\Phi_{\bep}(\bG)$
for all $\bG$ in the domain 
$\{\bM\bSigma^{-1/2} \ysc{\mid} \bM\in U_{\bep} \}$ of $\tilde\Phi_{\bep}$.

Now define $\brho:\R^{n\times p}\to \R^n$ and $\bfeta:\R^{n\times p}\to\R^p$
by $\bar\Phi\ysc{_{\bep}}(\bG)=(\brho(\bG), \bfeta(\bG))$.
Using the Lipschitz condition and the fact that the Frobenius norm
of a Jacobian is bounded from above by \ysc{its} rank times the square of
its operator norm,
$$
\sum_{ij}
\|
\frac{\partial \brho}{\partial g_{ij}}
\|^2
\le L^2,
\qquad
\sum_{ij}
\|
\frac{\partial \bfeta}{\partial g_{ij}}
\|^2
\le \frac{p}{n} L^2
$$
so that the right-hand side of \eqref{eq:cor71} is bounded above
by $\C(L,\gamma) \sqrt p$ and the right-hand side
of \eqref{eq:prop63} is bounded from above
by $\C(L,\gamma)$. After we have bounded the right-hand side,
we are allowed to add the indicator function $I\{\Omega\}$
in the left-hand sides of \eqref{eq:cor71} and
of \eqref{eq:prop63} since adding an indicator function only
makes it smaller.
This \ysc{device} lets us obtain analogs of
\Cref{prop:Stein-3}
and \Cref{prop:chi2-custom}
where the right-hand sides are of the same order as in the strongly convex
case, provided that we add the indicator function $I\{\Omega\}$
in the left-hand sides.

From here, in the event $\Omega$
we use the bound $\|\bSigma^{1/2}\hbA\bSigma^{1/2}\|_{op}\le L/n$
assumed in \Cref{prop12} instead of 
the bound $\|\bSigma^{1/2}\hbA\bSigma^{1/2}\|_{op}\le 1/(n\mu)$
from \Cref{thm:differentiability}.
This device provides the bounds
\eqref{eq:five-1}, 
\eqref{eq:five-2}, 
\eqref{eq:five-3}, 
\eqref{eq:five-4}, 
\eqref{eq:five-5} and
\eqref{eq:xi_VI}
on $\xi_I, ... \xi_{VI}$, with the modification that the left-hand sides
present the indicator function $I\{\Omega\}$ and the right-hand sides
are $\C(\gamma, L)$ instead of $\C(\gamma,\mu)$ in the strongly convex case.
The algebra is then the same as in the strongly convex case
and \Cref{prop12} follows.

\newpage
\section{Additional Figures (anisotropic Gaussian design)}
\label{additional-figures}

\begin{figure}[ht]
    \centering

    \includegraphics[width=0.32\textwidth]{./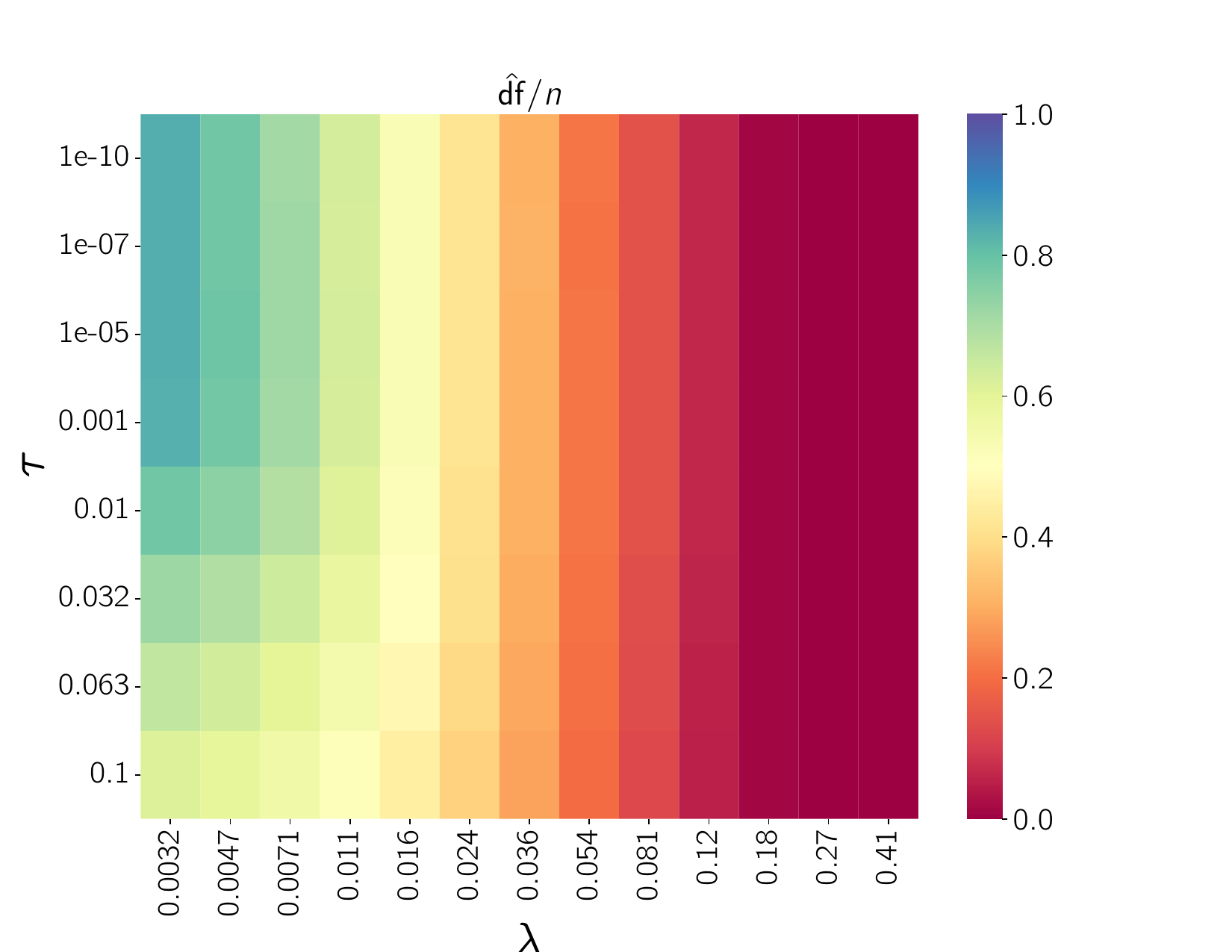}
    \includegraphics[width=0.32\textwidth]{./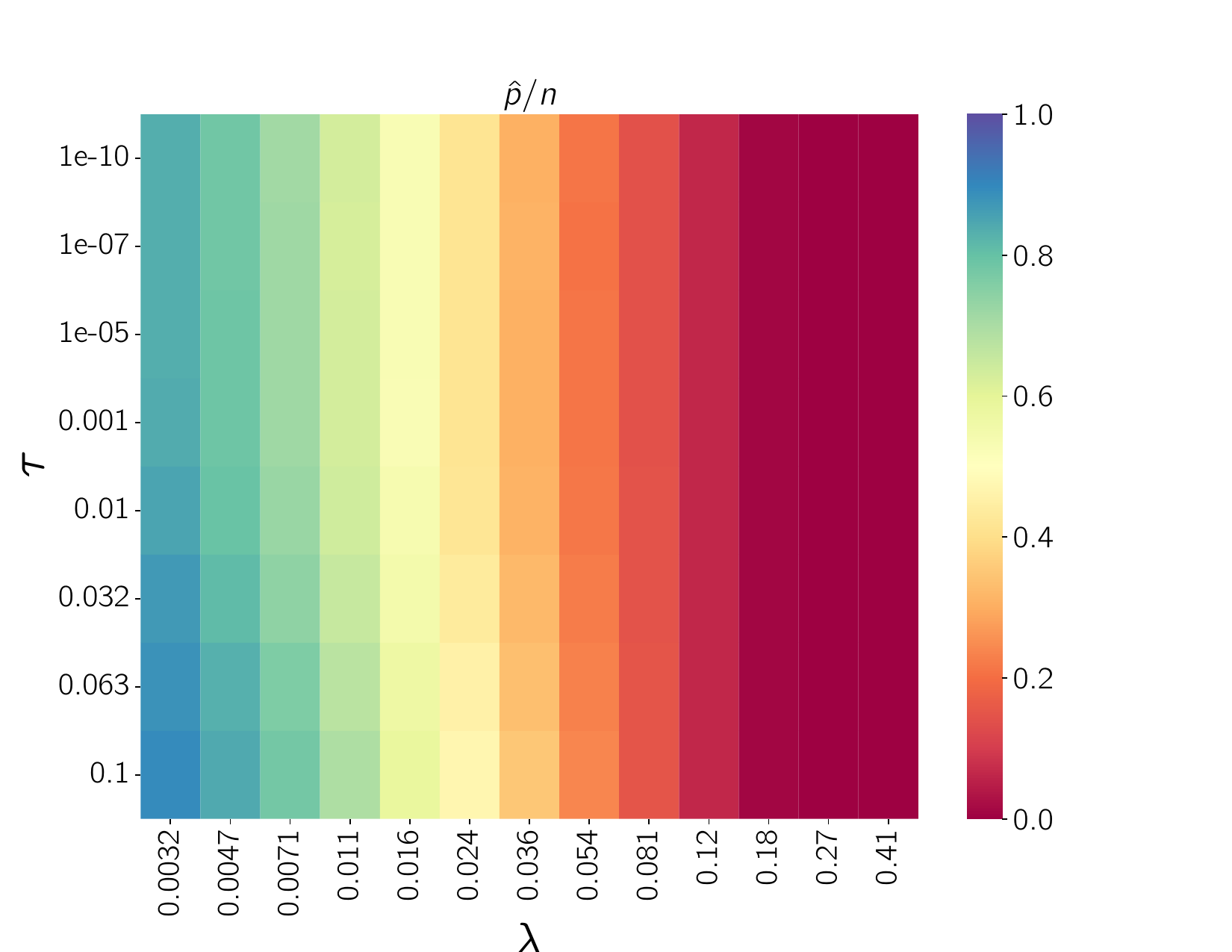}
    \includegraphics[width=0.32\textwidth]{./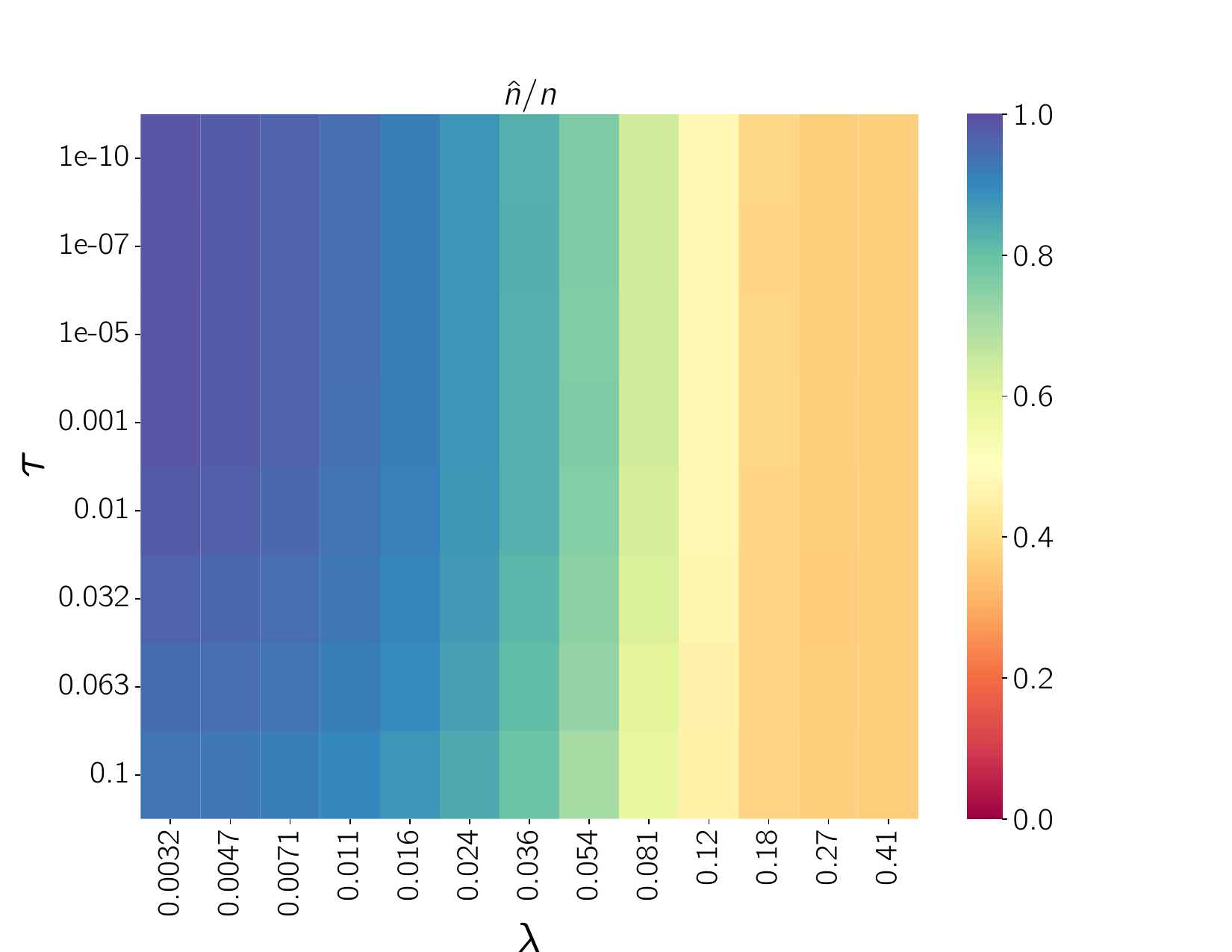}

    \includegraphics[width=0.32\textwidth]{./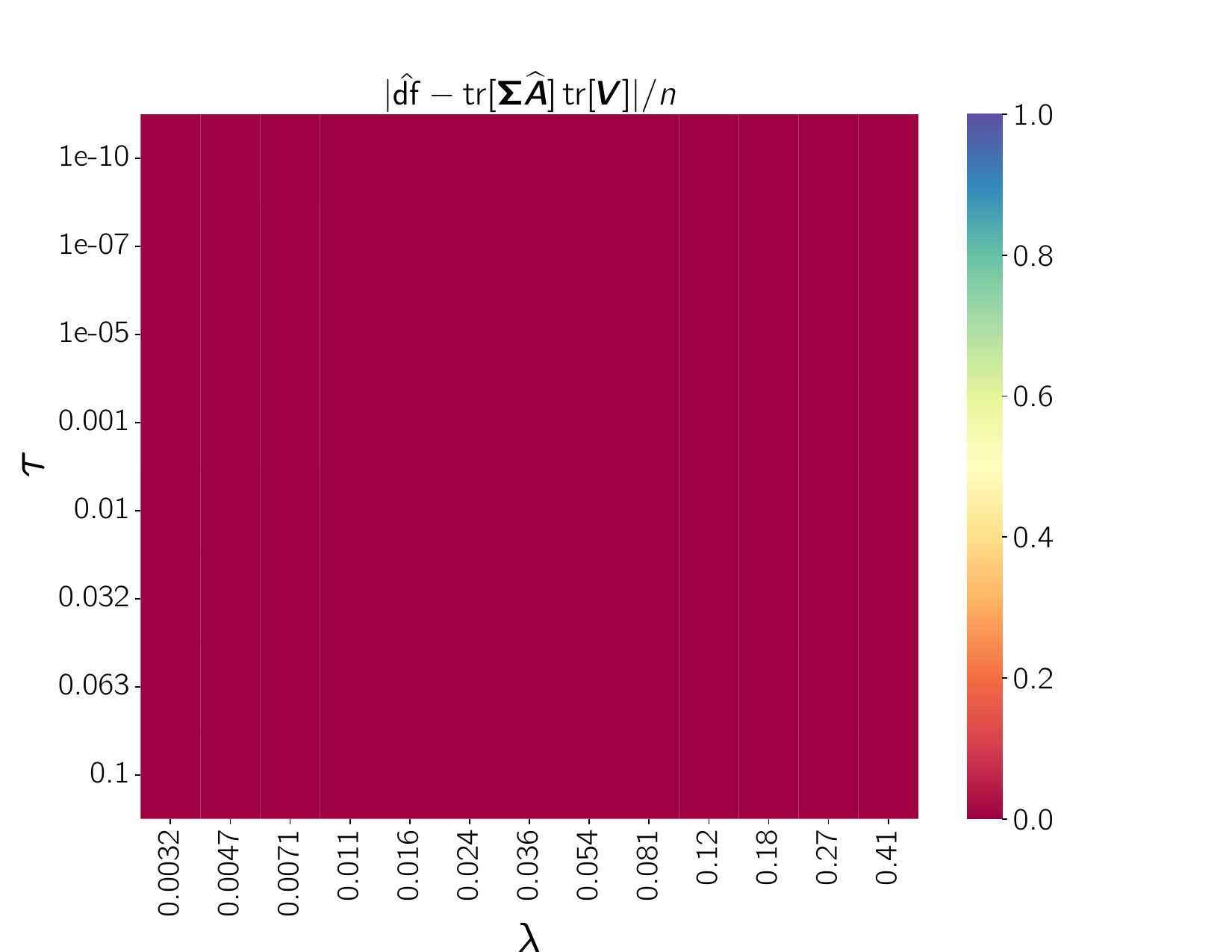}
    \includegraphics[width=0.32\textwidth]{./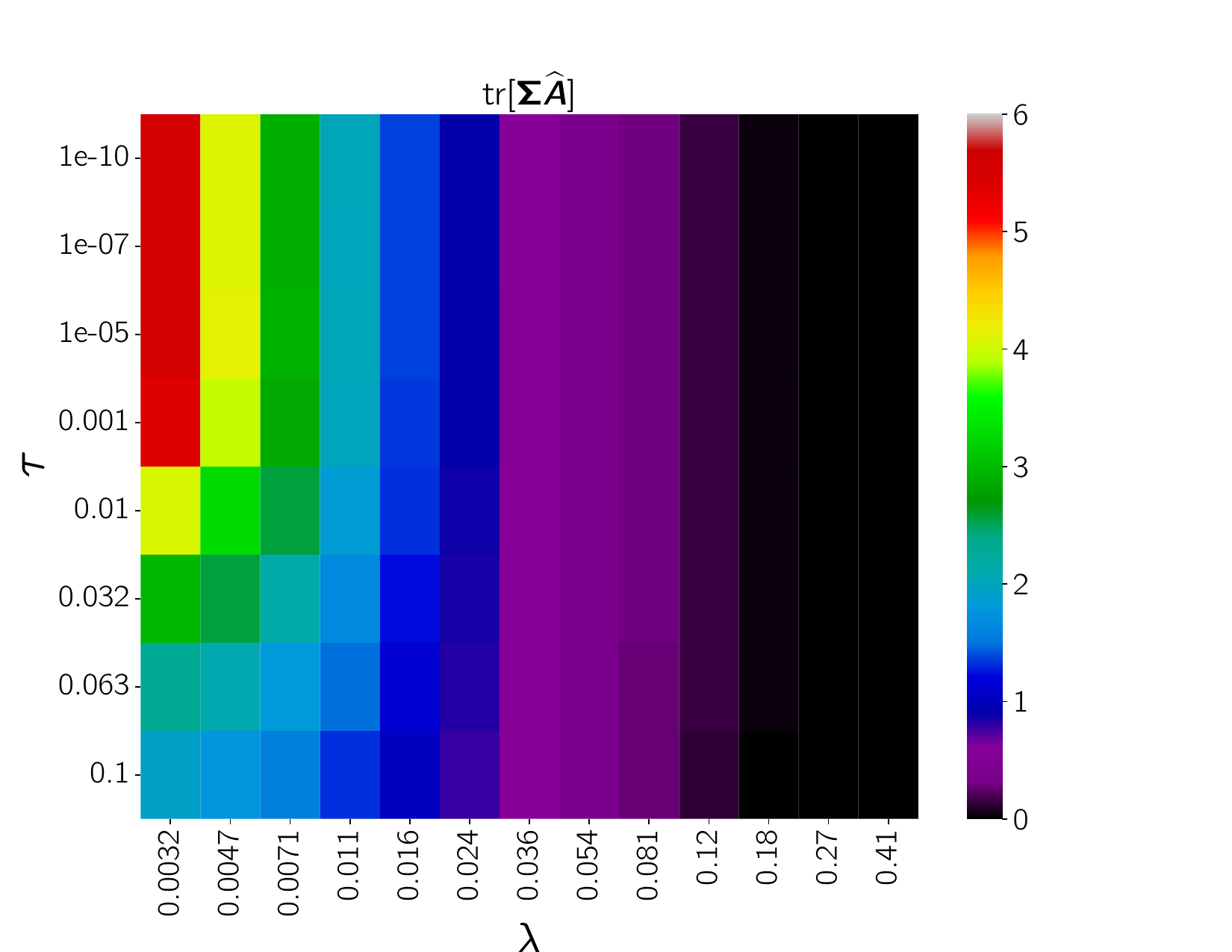}
    \includegraphics[width=0.32\textwidth]{./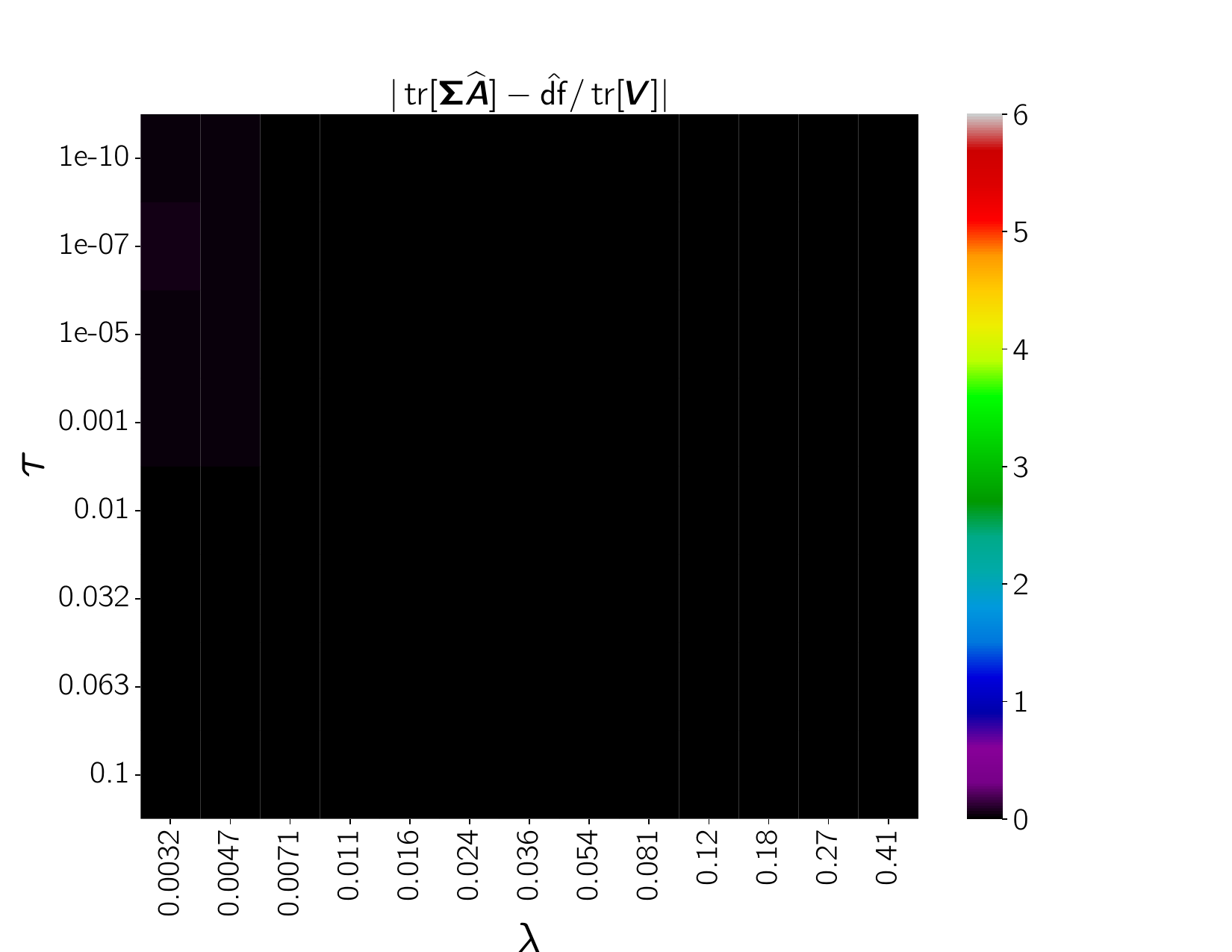}

    \includegraphics[width=0.32\textwidth]{./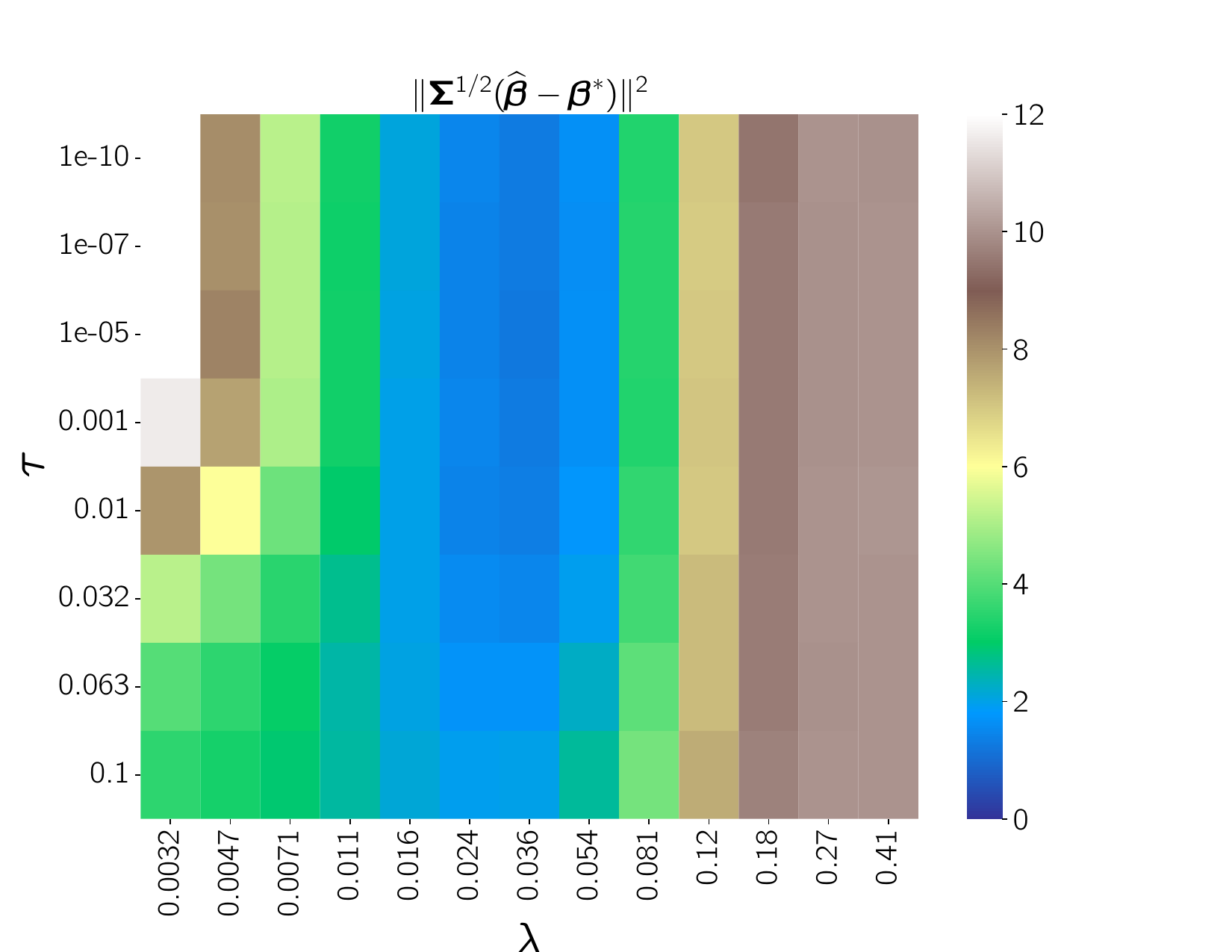}
    \includegraphics[width=0.32\textwidth]{./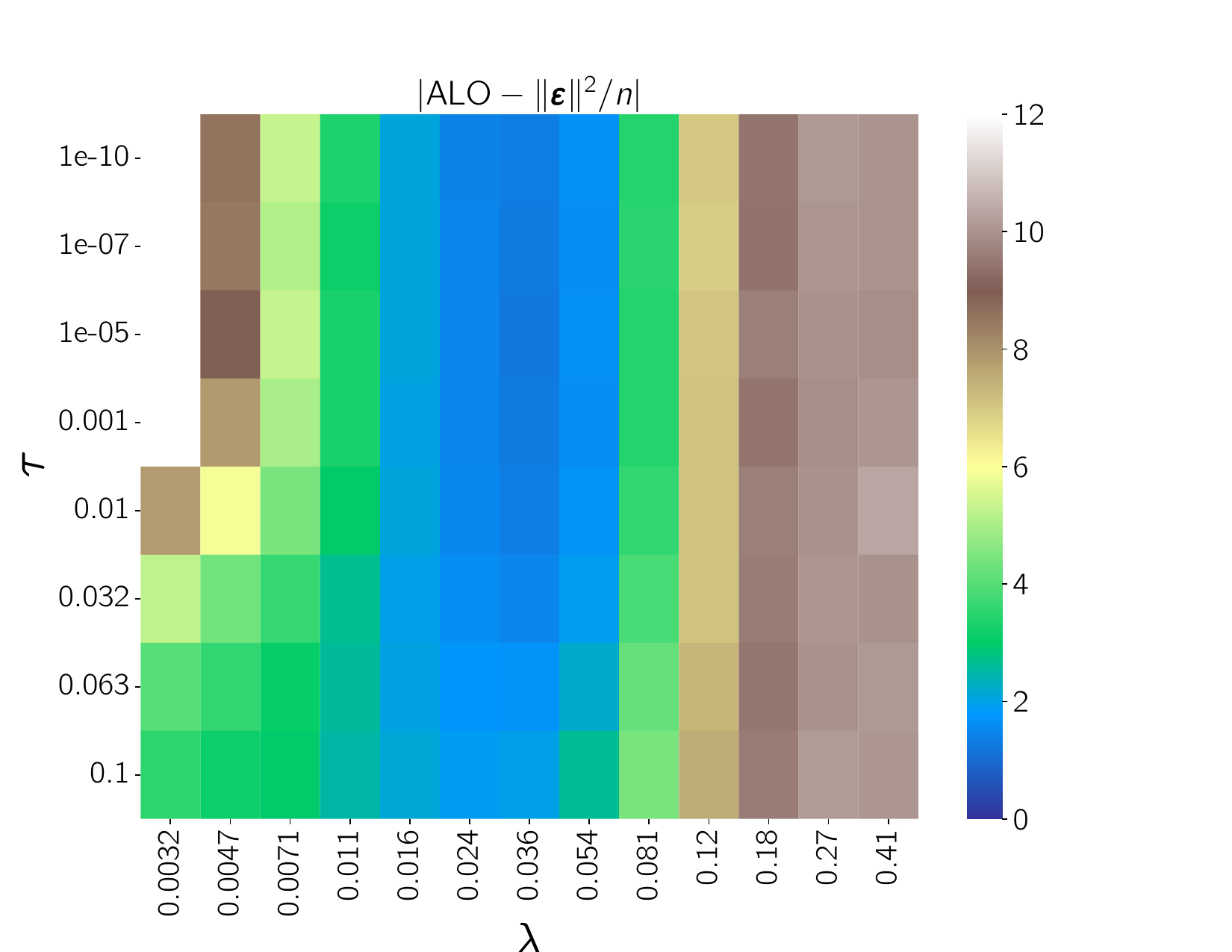}
    \includegraphics[width=0.32\textwidth]{./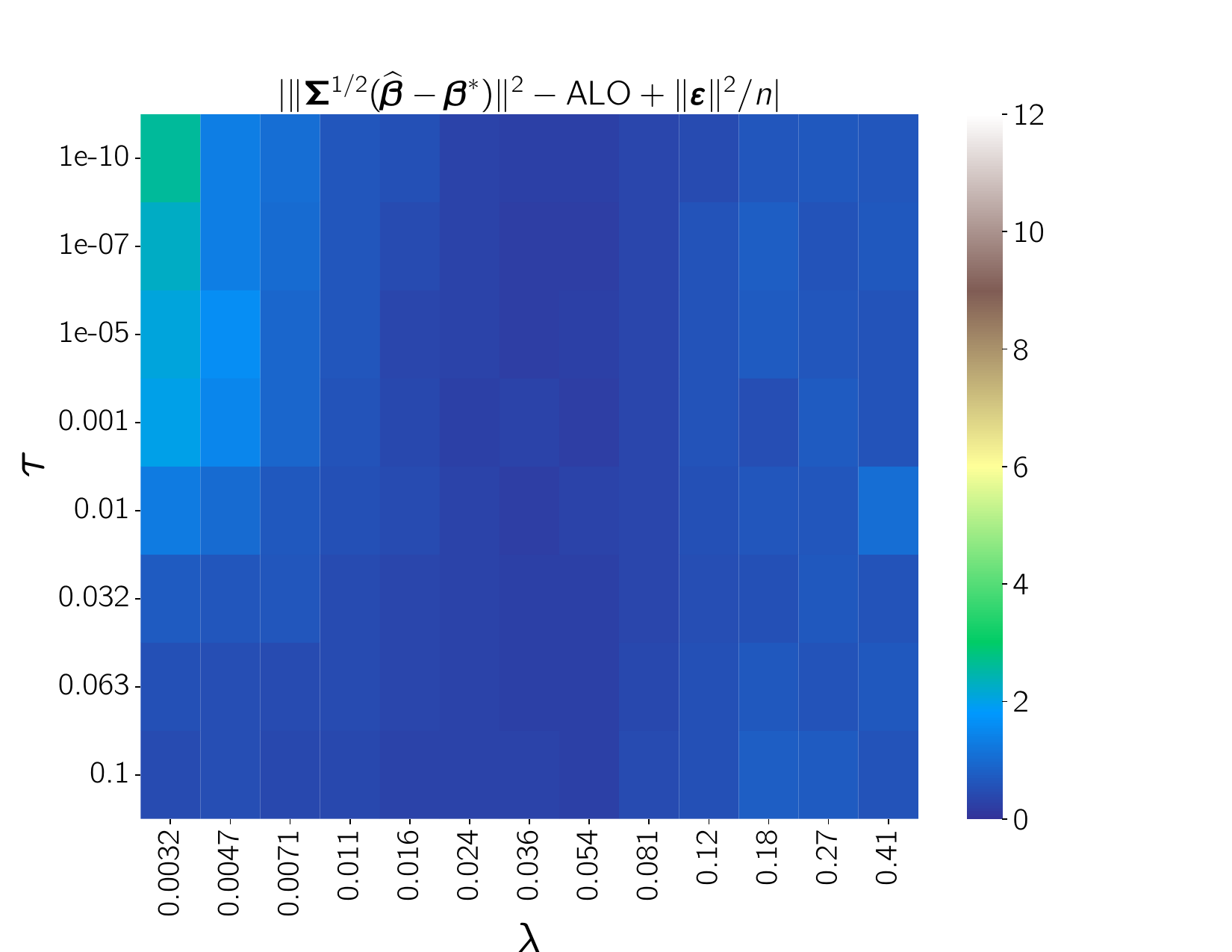}

    \includegraphics[width=0.32\textwidth]{./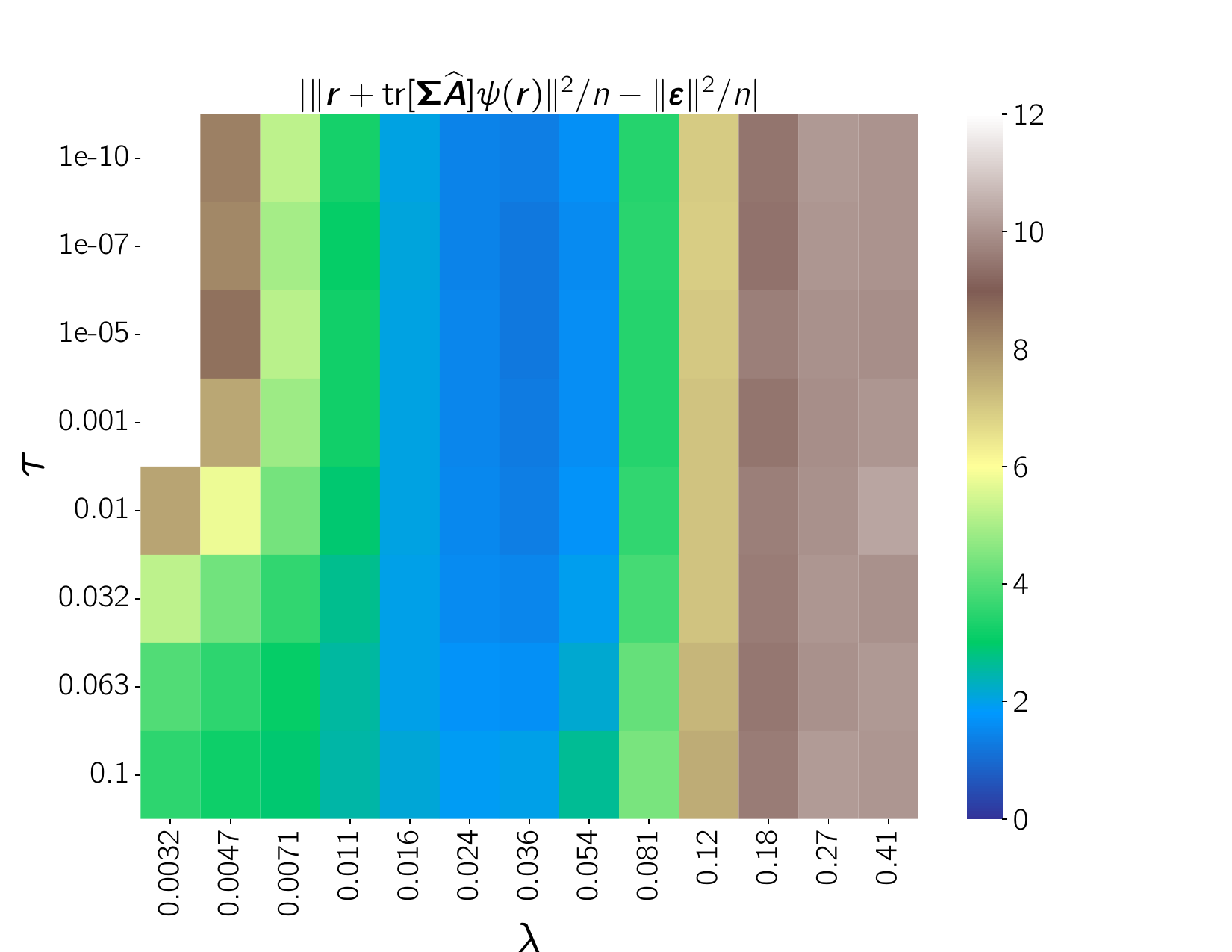}
    \includegraphics[width=0.32\textwidth]{./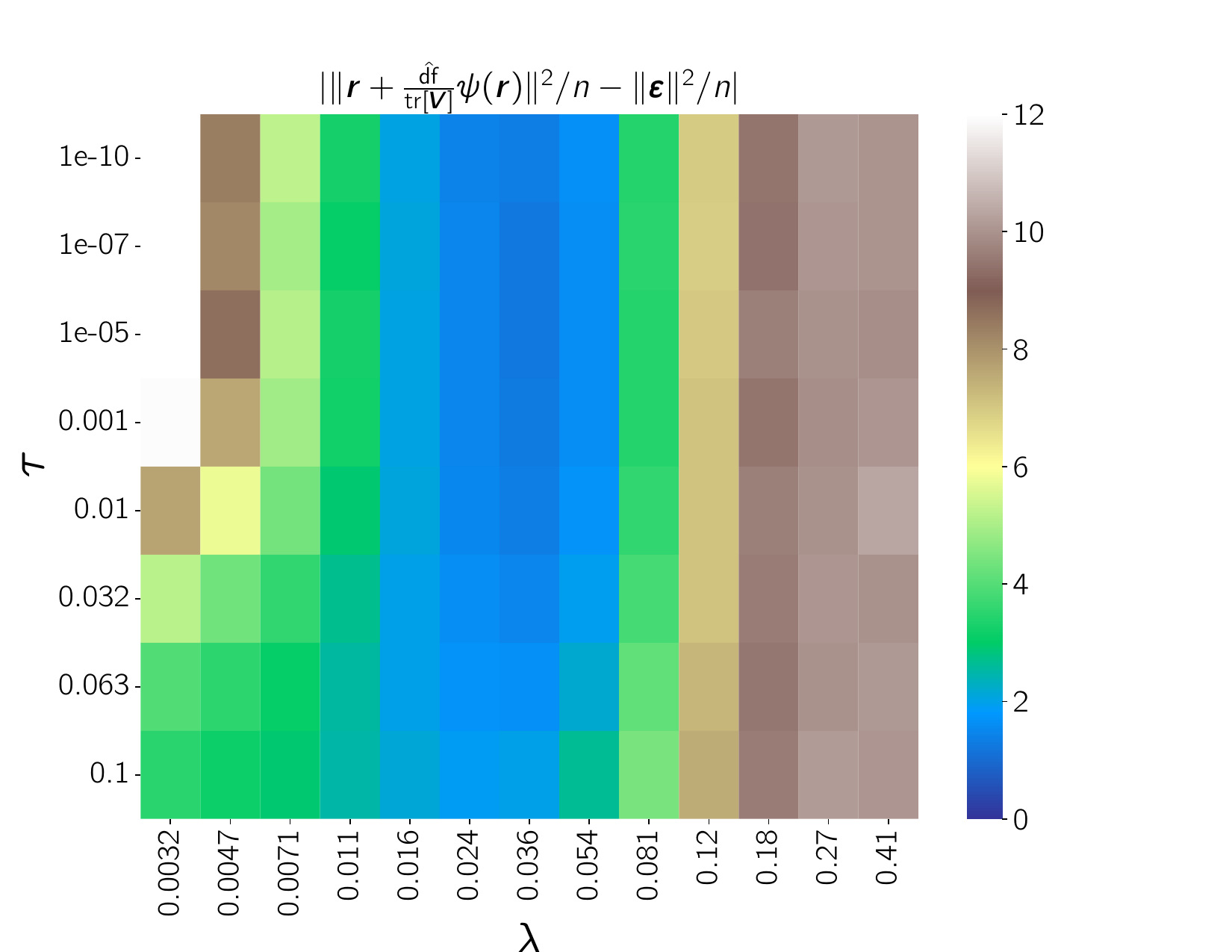}
    \includegraphics[width=0.32\textwidth]{./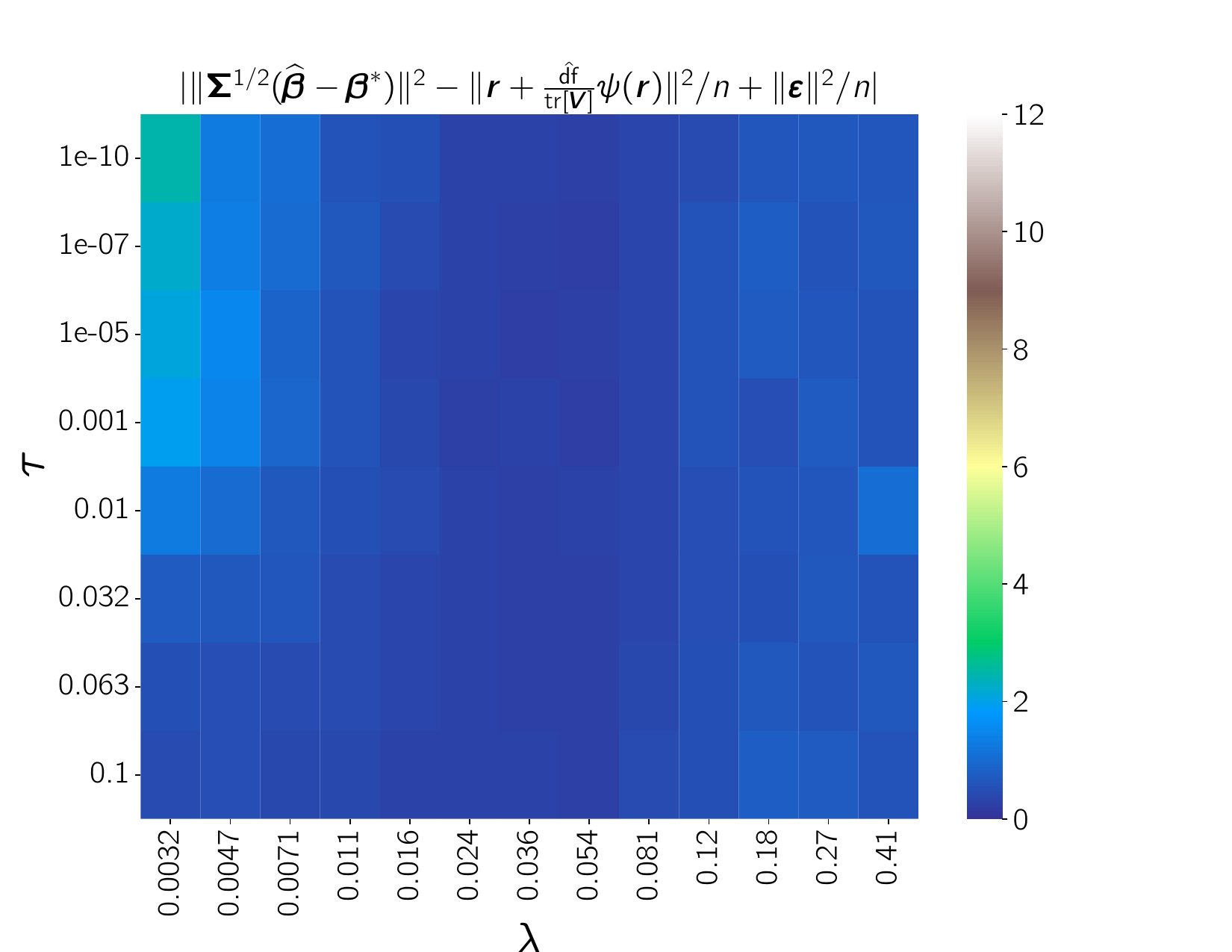}

    \caption{
    Heatmaps for the Huber loss and Elastic-Net penalty on a grid of tuning parameters  
    with $\Lambda = 0.054 n^{1/2}$ 
    and $(\lambda, \tau)$ where $\lambda \in [0.0032, 0.41]$ and $\tau \in [10^{-10}, 0.1]$.
    Each cell is the average over 100 repetitions.
    See the simulation setup in \Cref{sec:simulations} in the paper for more details.}
\end{figure}

\newpage

\begin{figure}[ht]
    \centering

    \includegraphics[width=0.32\textwidth]{./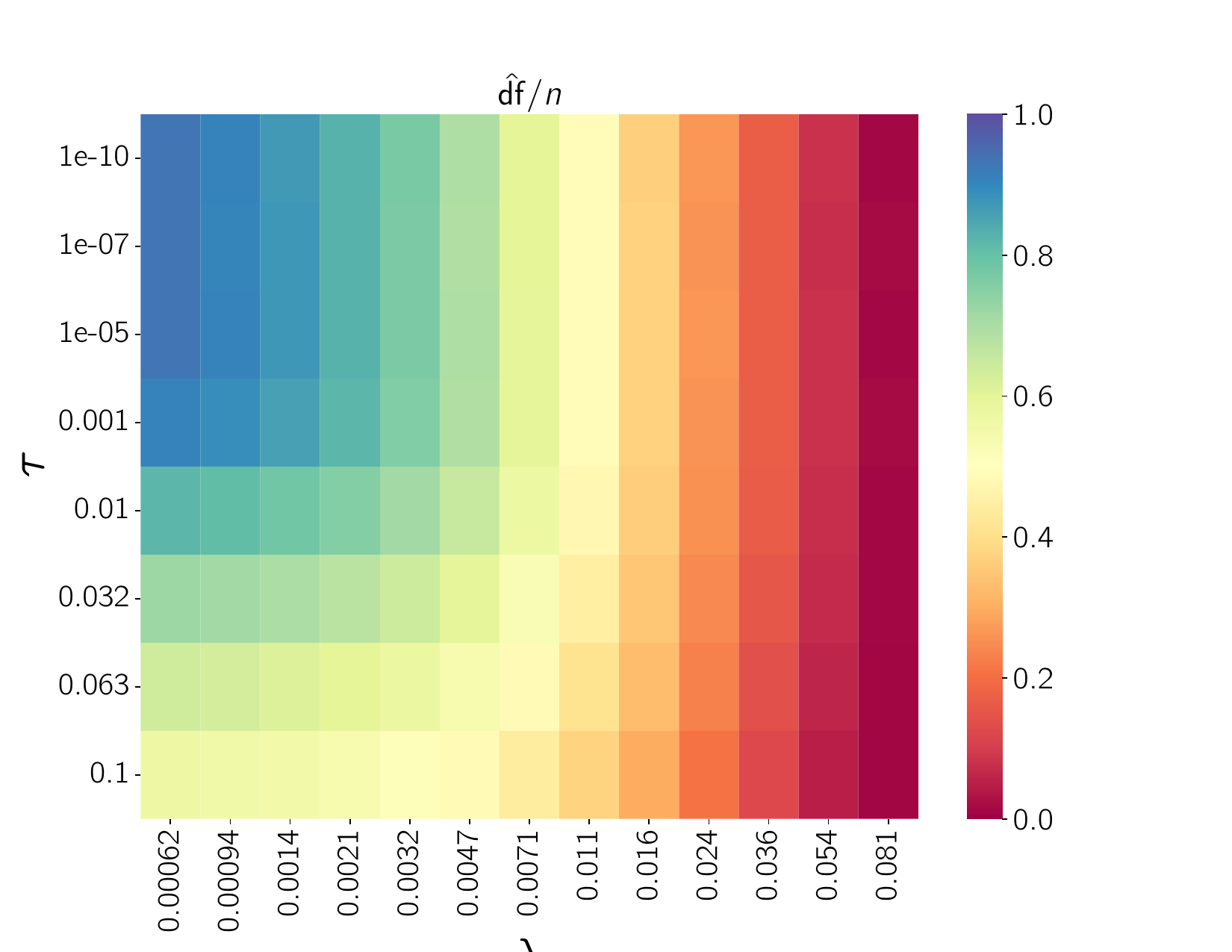}
    \includegraphics[width=0.32\textwidth]{./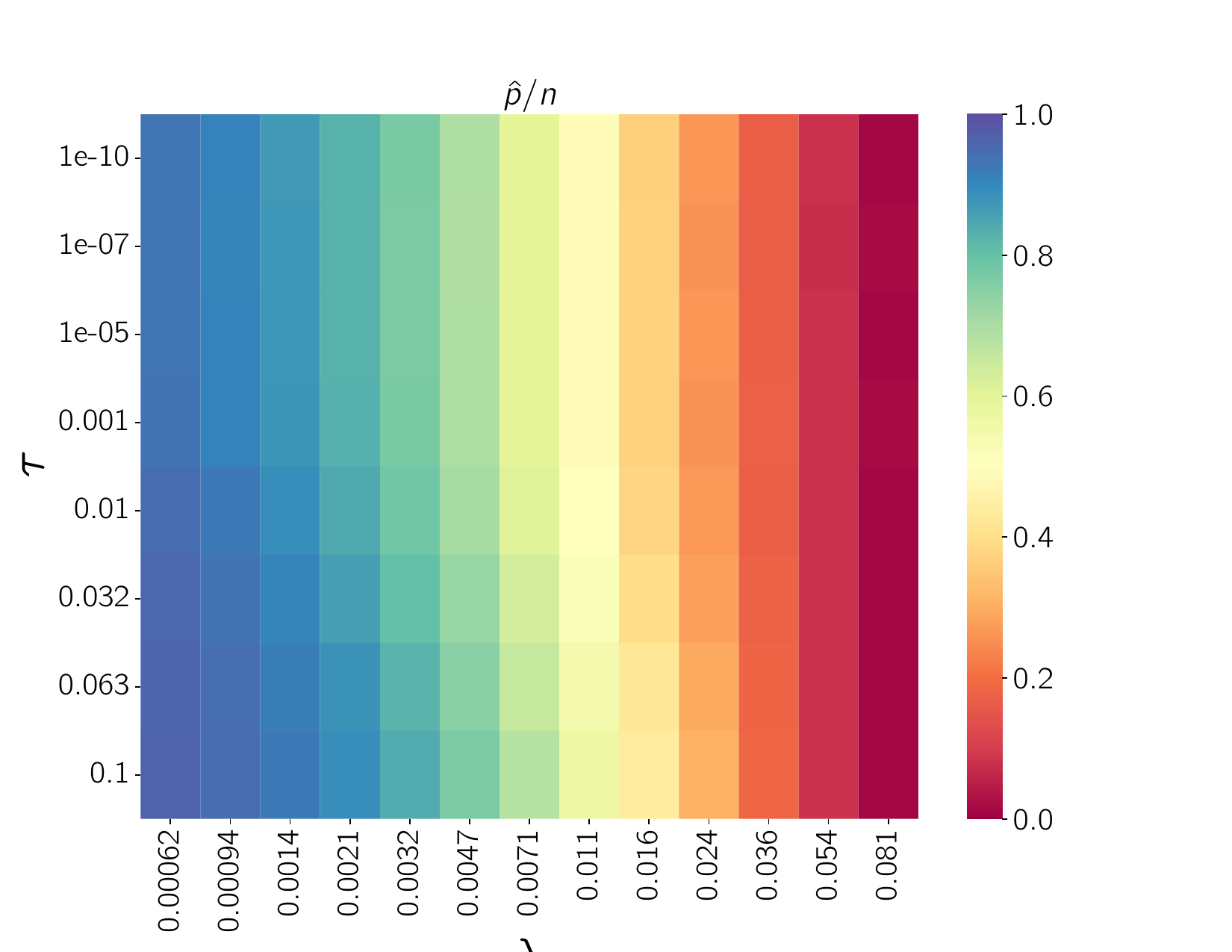}
    \includegraphics[width=0.32\textwidth]{./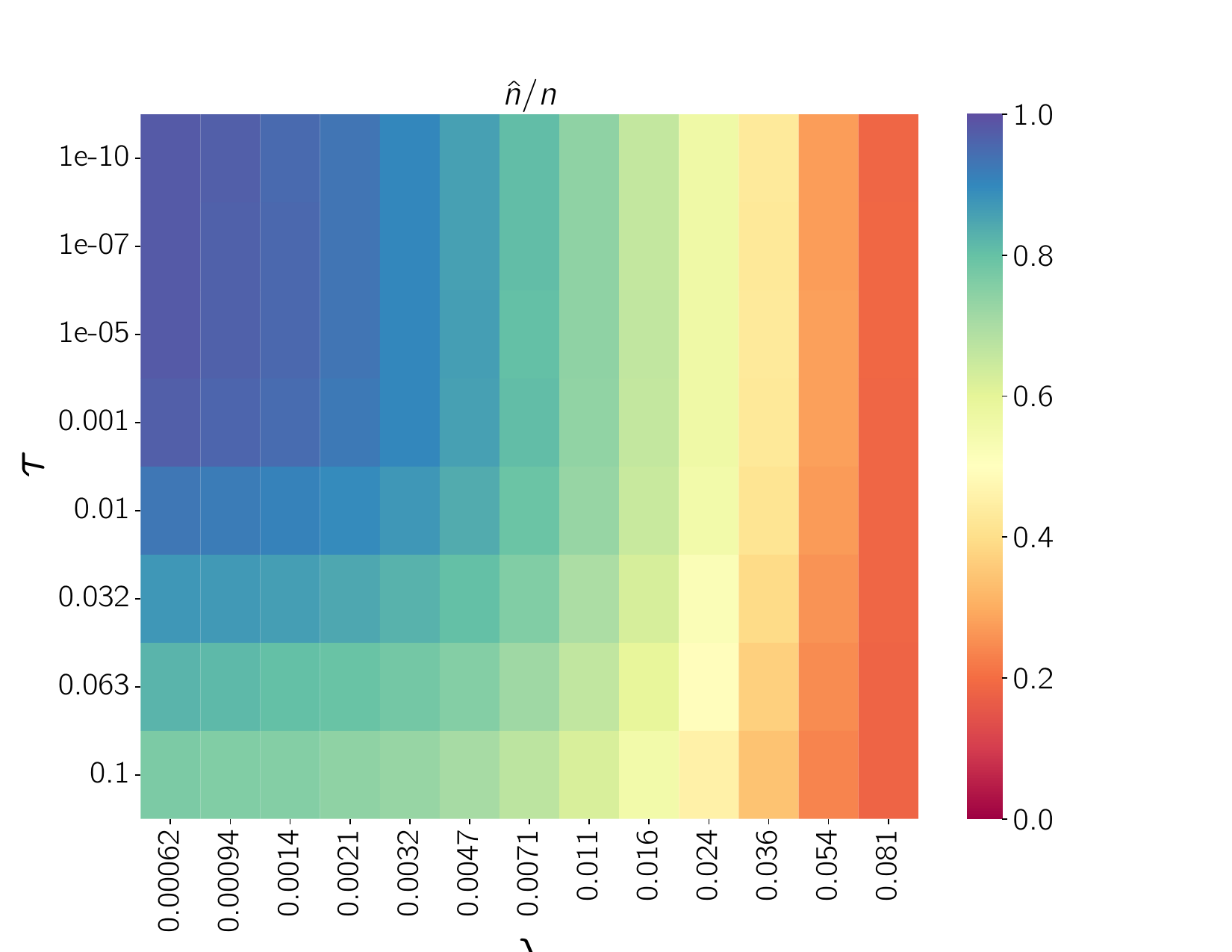}

    \includegraphics[width=0.32\textwidth]{./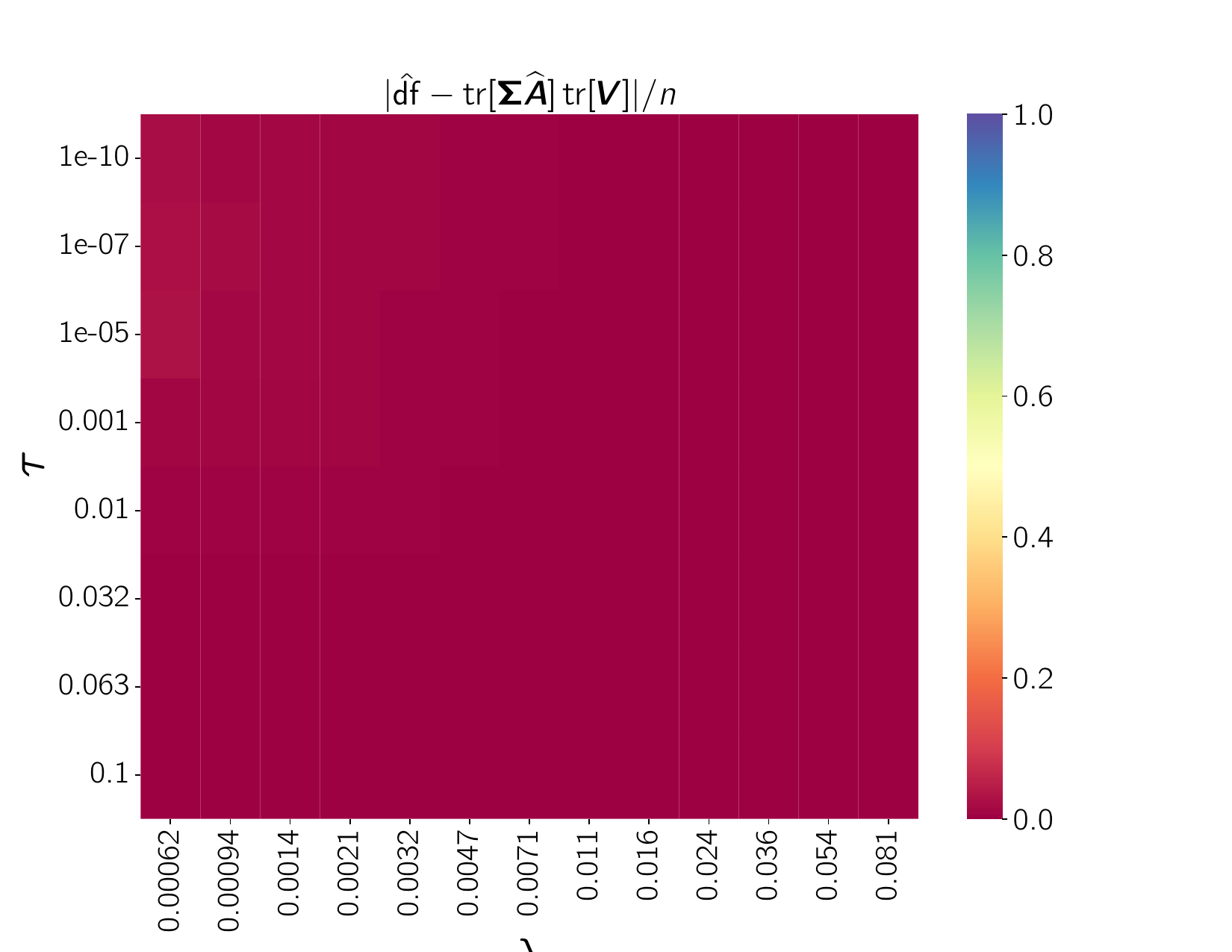}
    \includegraphics[width=0.32\textwidth]{./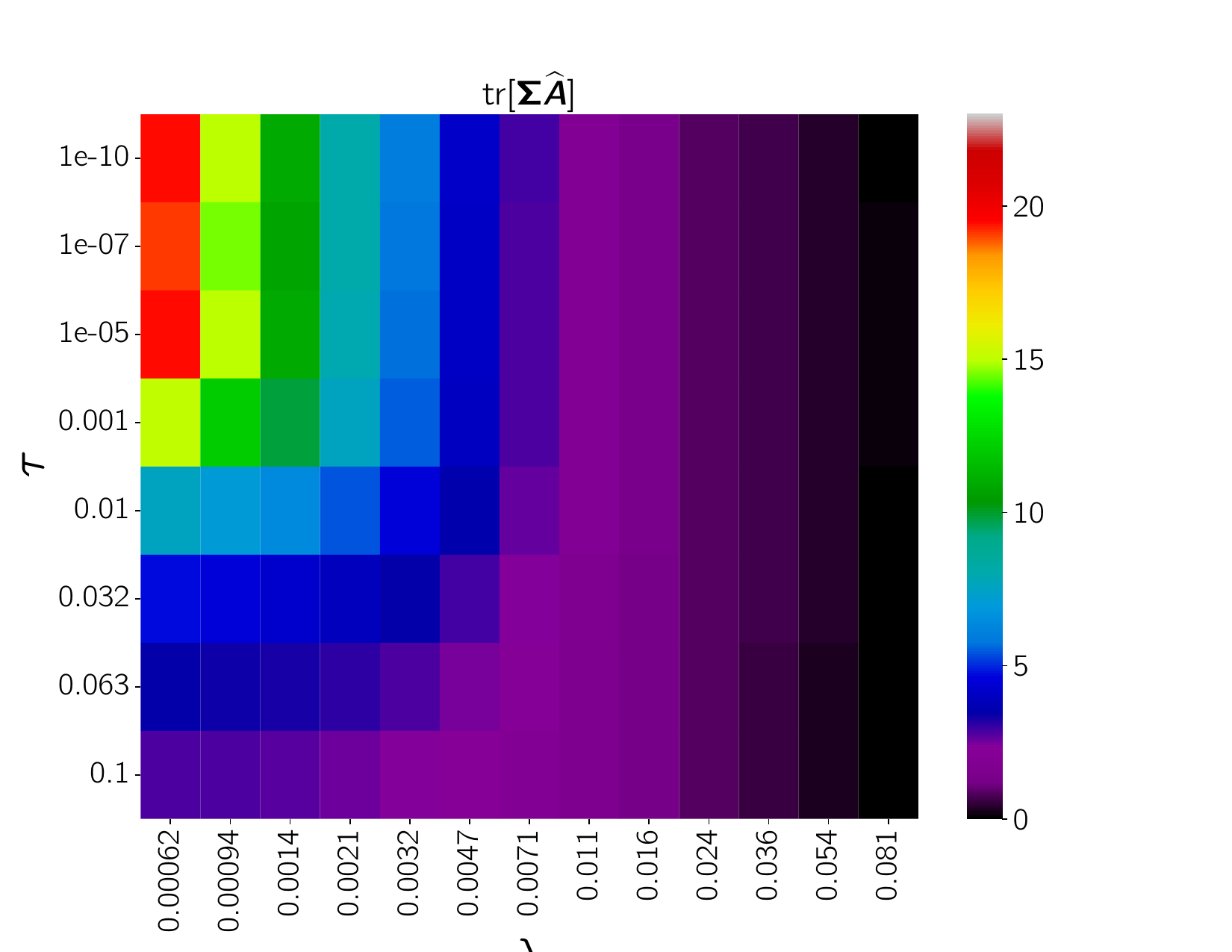}
    \includegraphics[width=0.32\textwidth]{./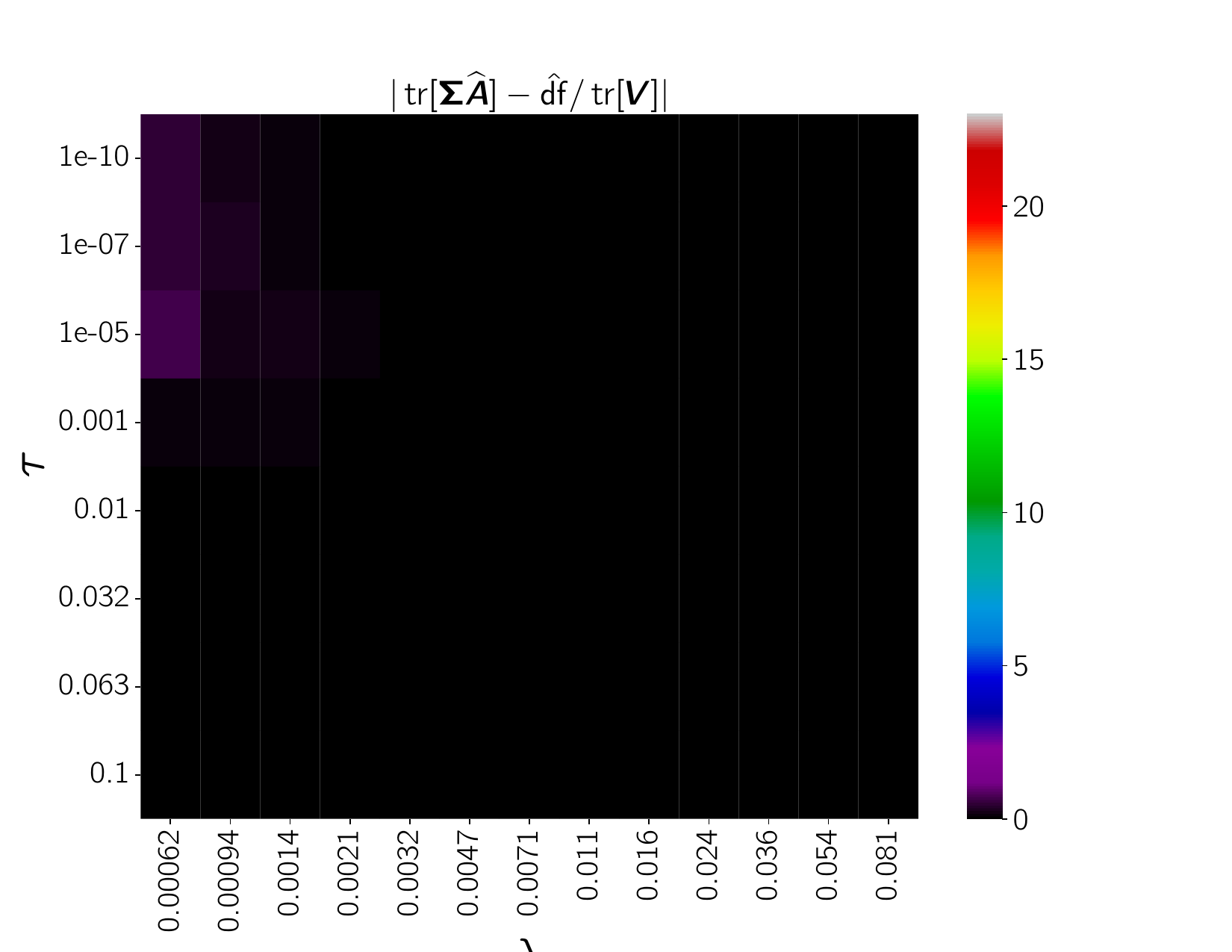}

    \includegraphics[width=0.32\textwidth]{./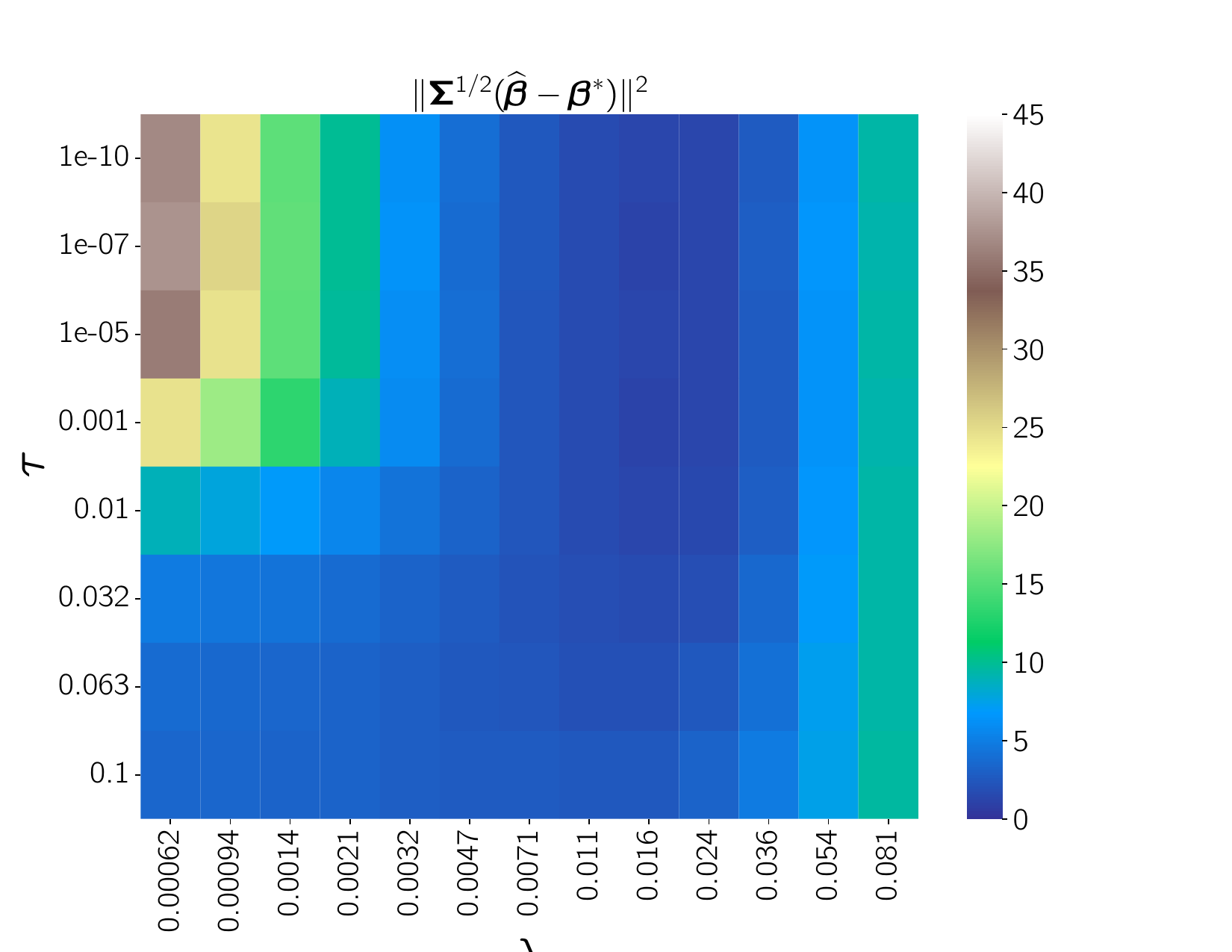}
    \includegraphics[width=0.32\textwidth]{./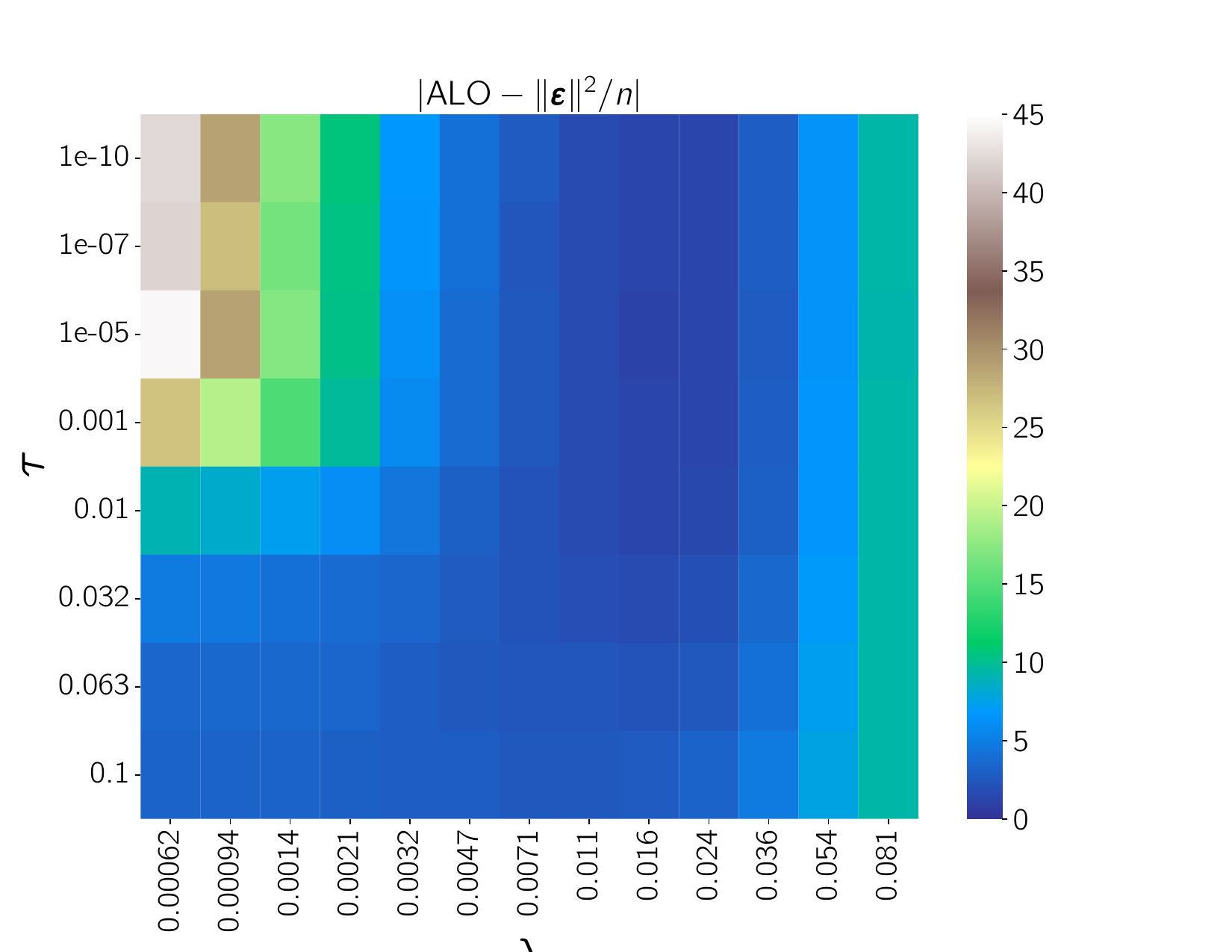}
    \includegraphics[width=0.32\textwidth]{./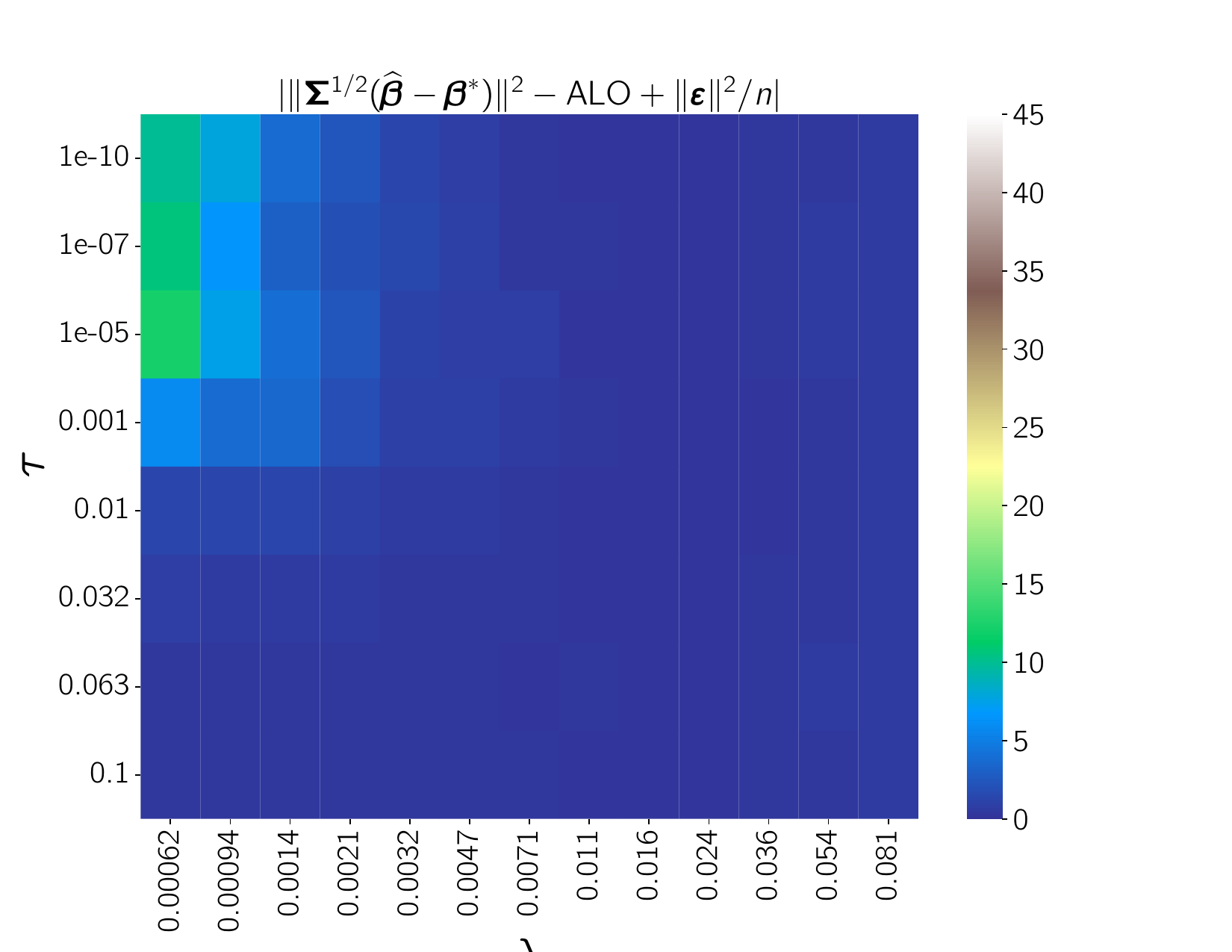}

    \includegraphics[width=0.32\textwidth]{./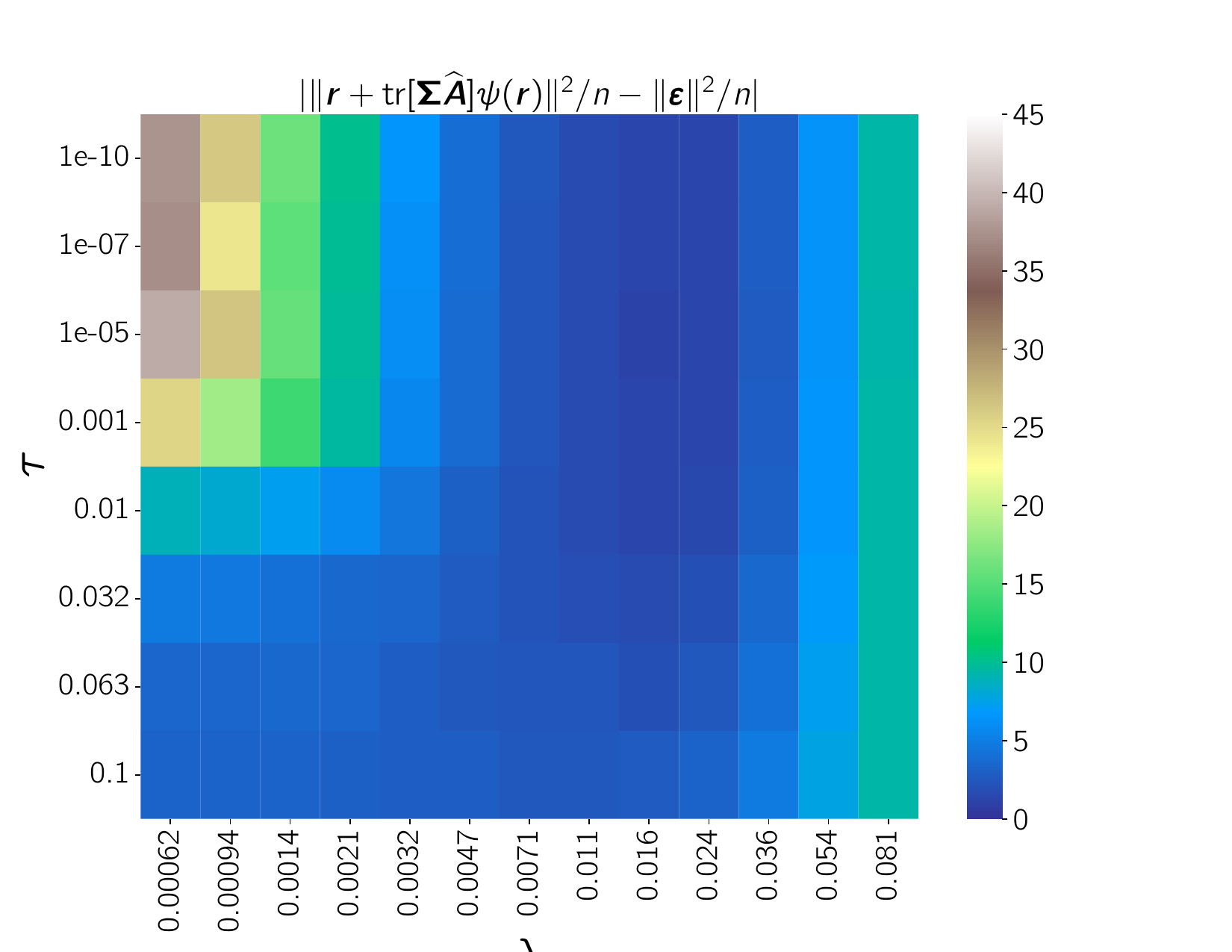}
    \includegraphics[width=0.32\textwidth]{./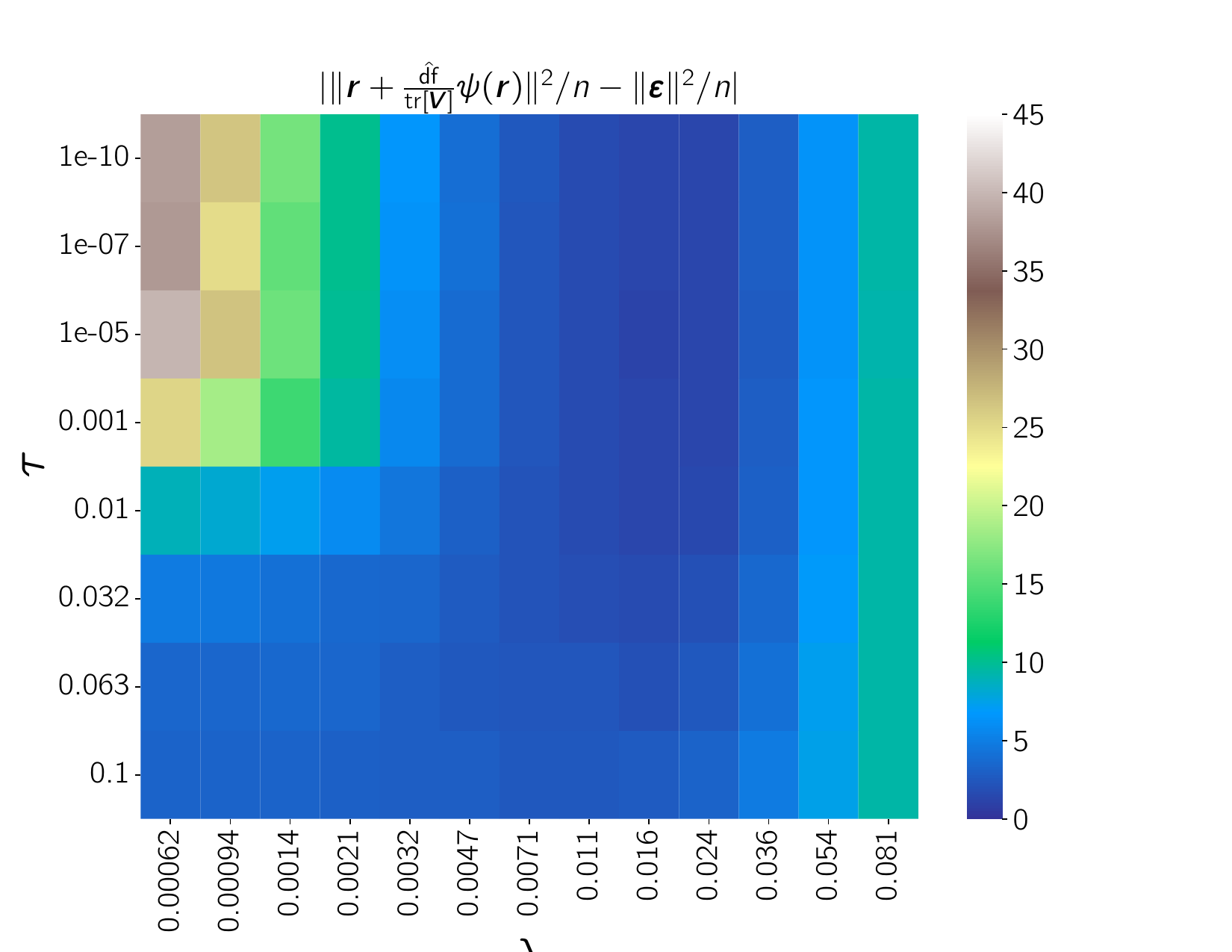}
    \includegraphics[width=0.32\textwidth]{./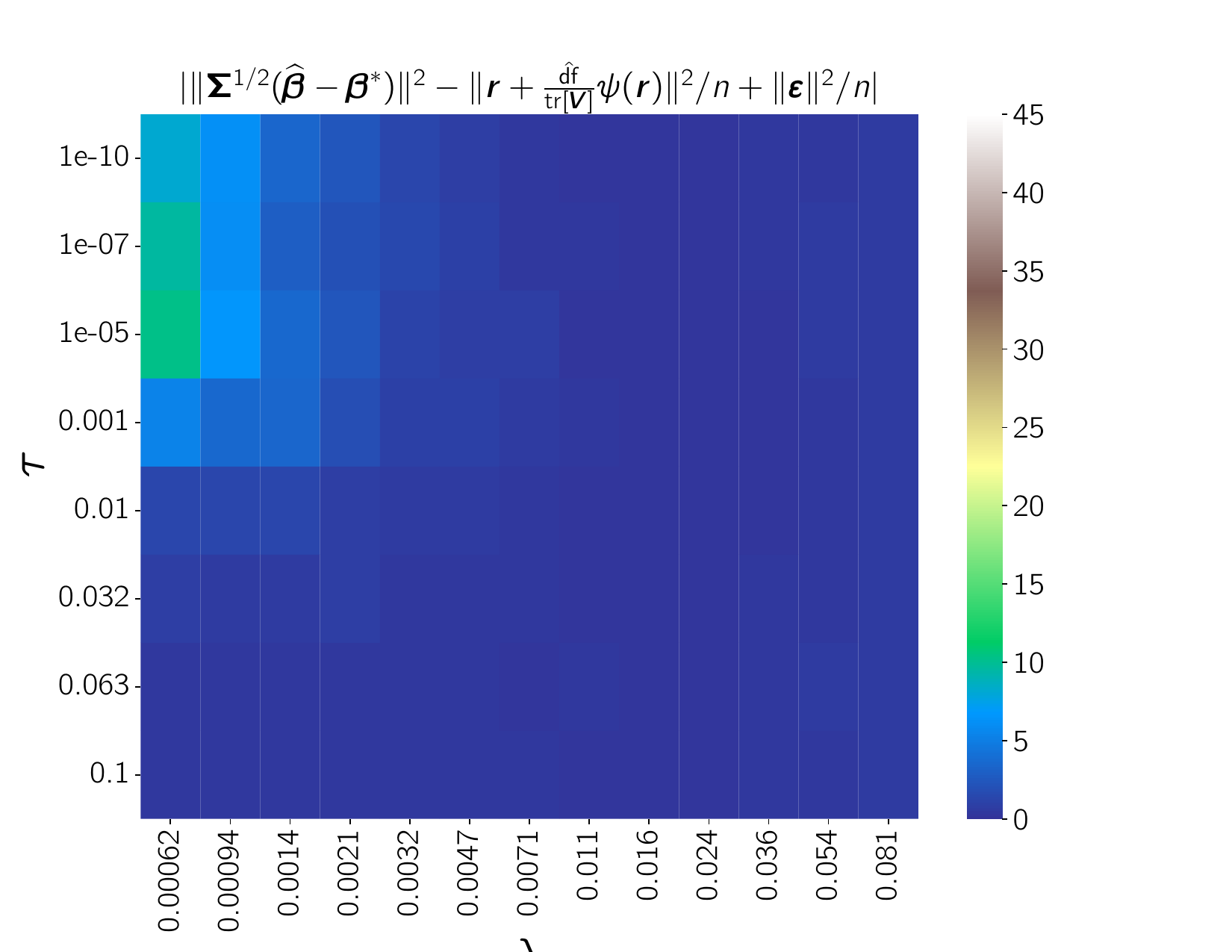}

    \caption{
    Heatmaps for the Huber loss and Elastic-Net penalty on a grid of tuning parameters 
    with $\Lambda = 0.024 n^{1/2}$ and $(\lambda, \tau)$ where 
    $\lambda \in [0.00062, 0.081]$
    and 
    $\tau \in [10^{-10}, 0.1]$.
    Each cell is the average over 50 repetitions.
    See the simulation setup in \Cref{sec:simulations} in the paper for more details.}
\end{figure}

\newpage

\begin{figure}[ht]
    \centering

    \includegraphics[width=0.32\textwidth]{./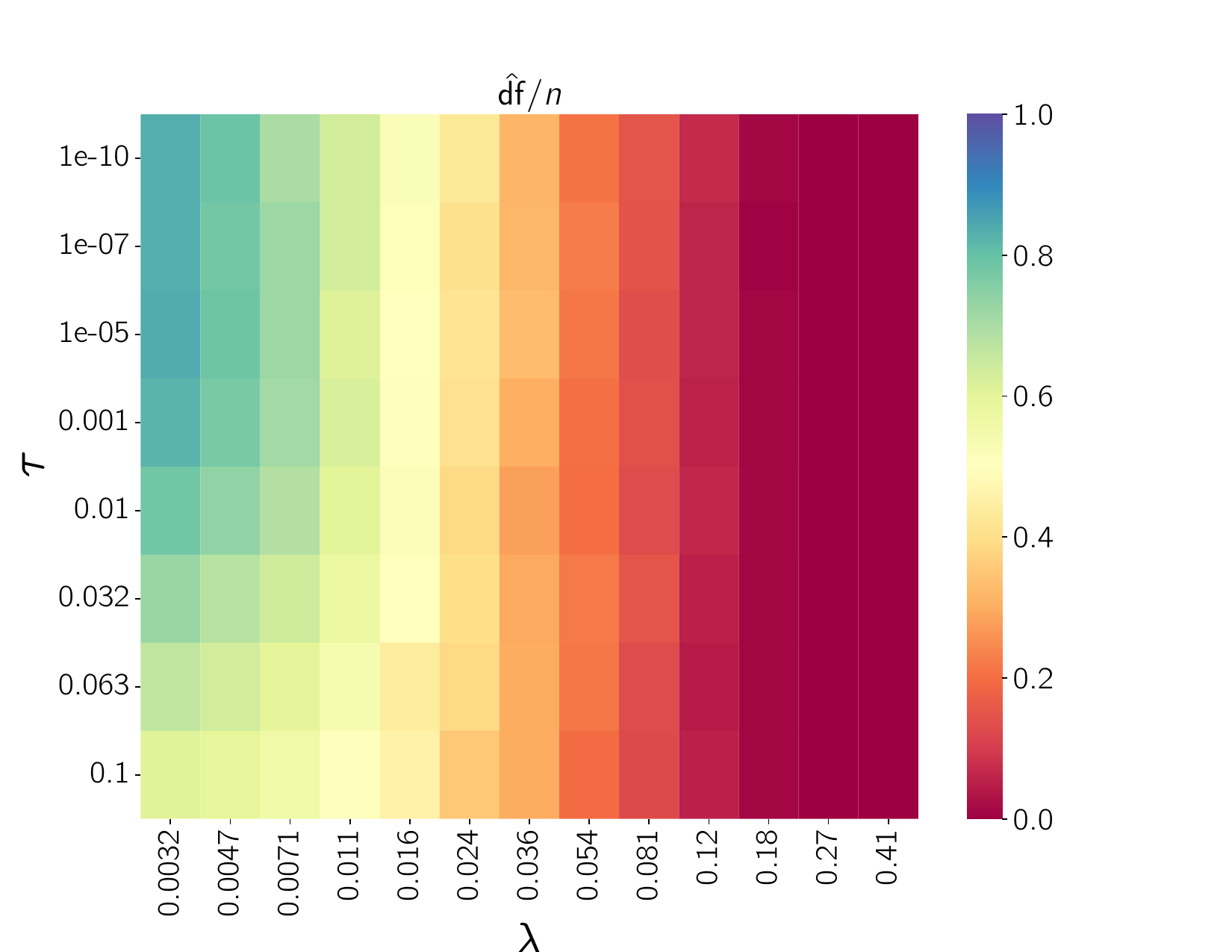}
    \includegraphics[width=0.32\textwidth]{./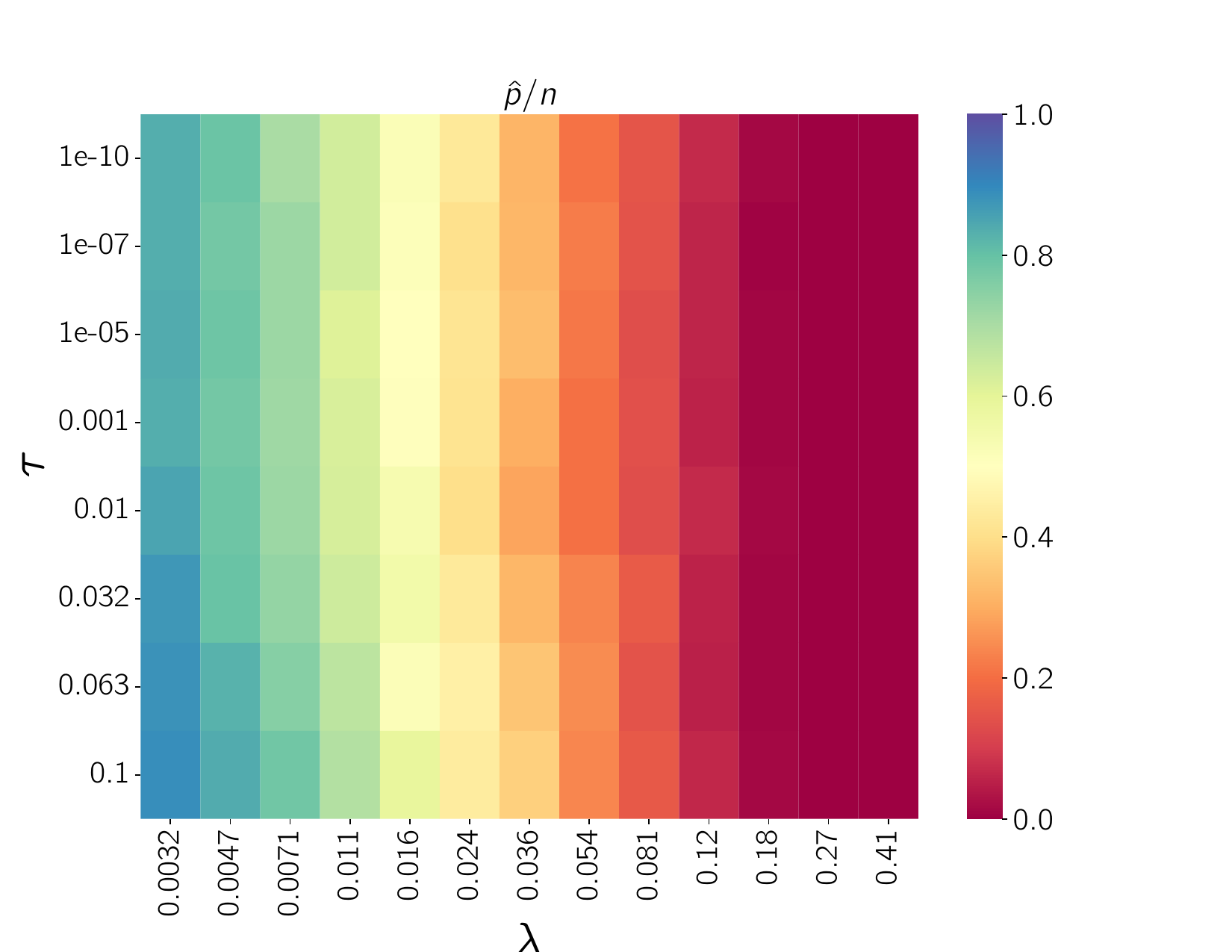}
    \includegraphics[width=0.32\textwidth]{./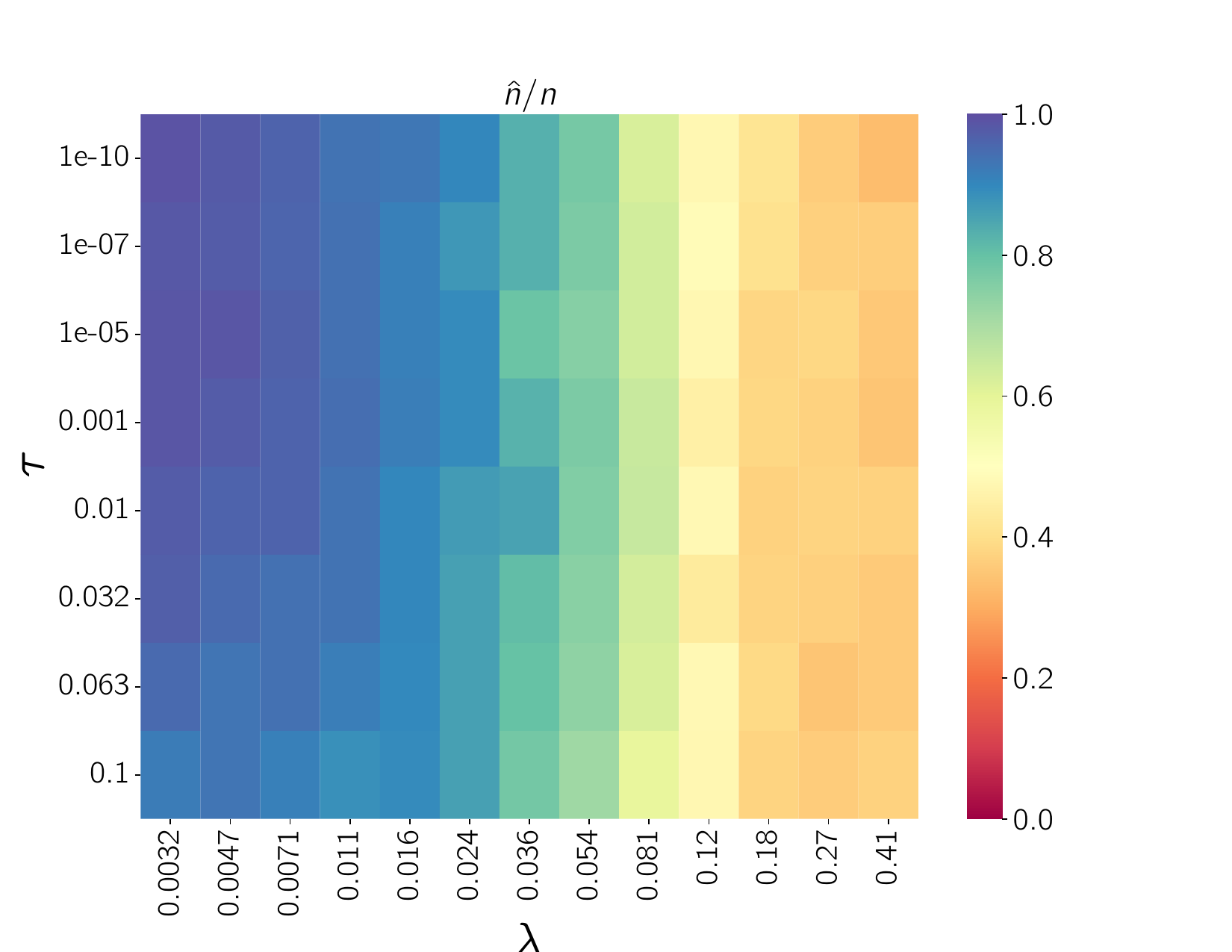}

    \includegraphics[width=0.32\textwidth]{./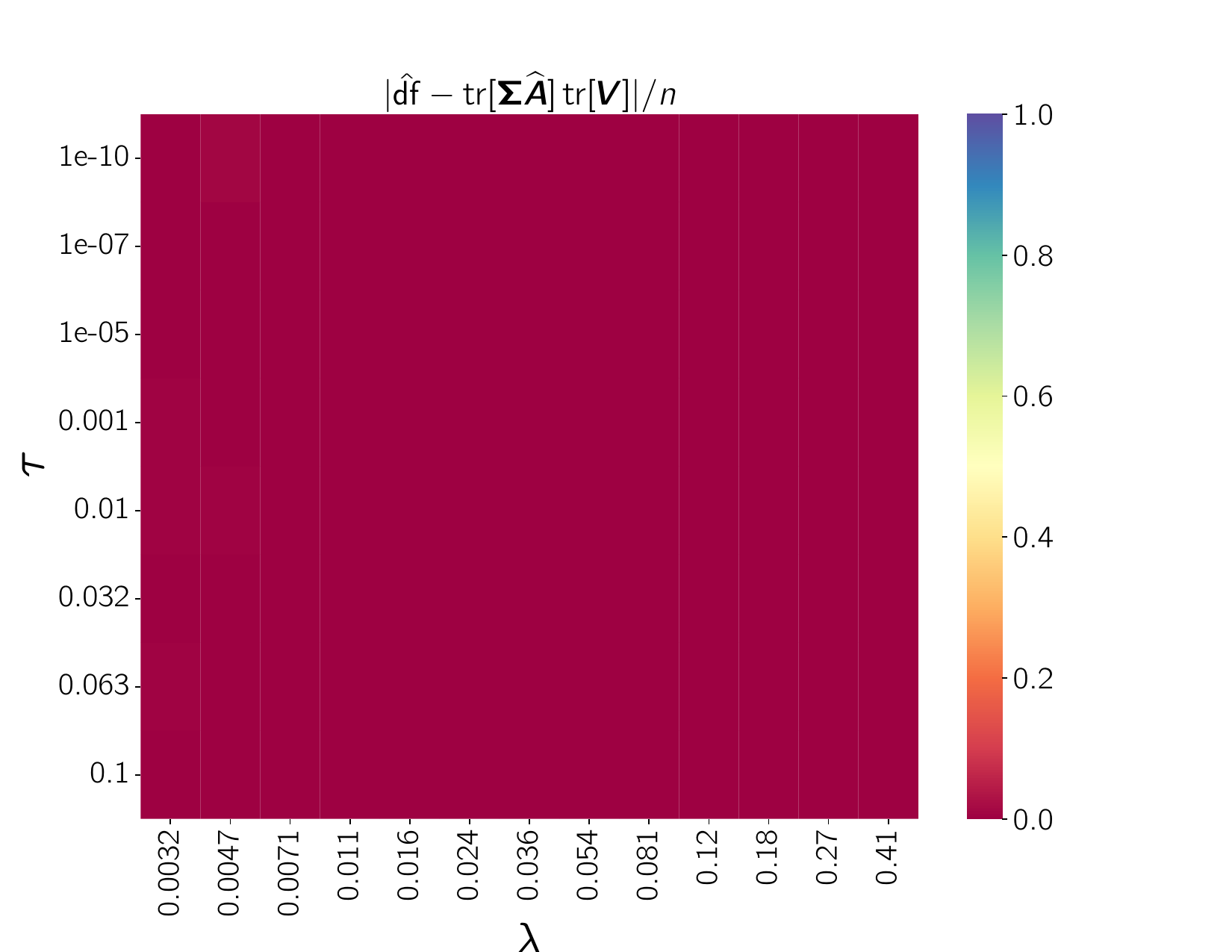}
    \includegraphics[width=0.32\textwidth]{./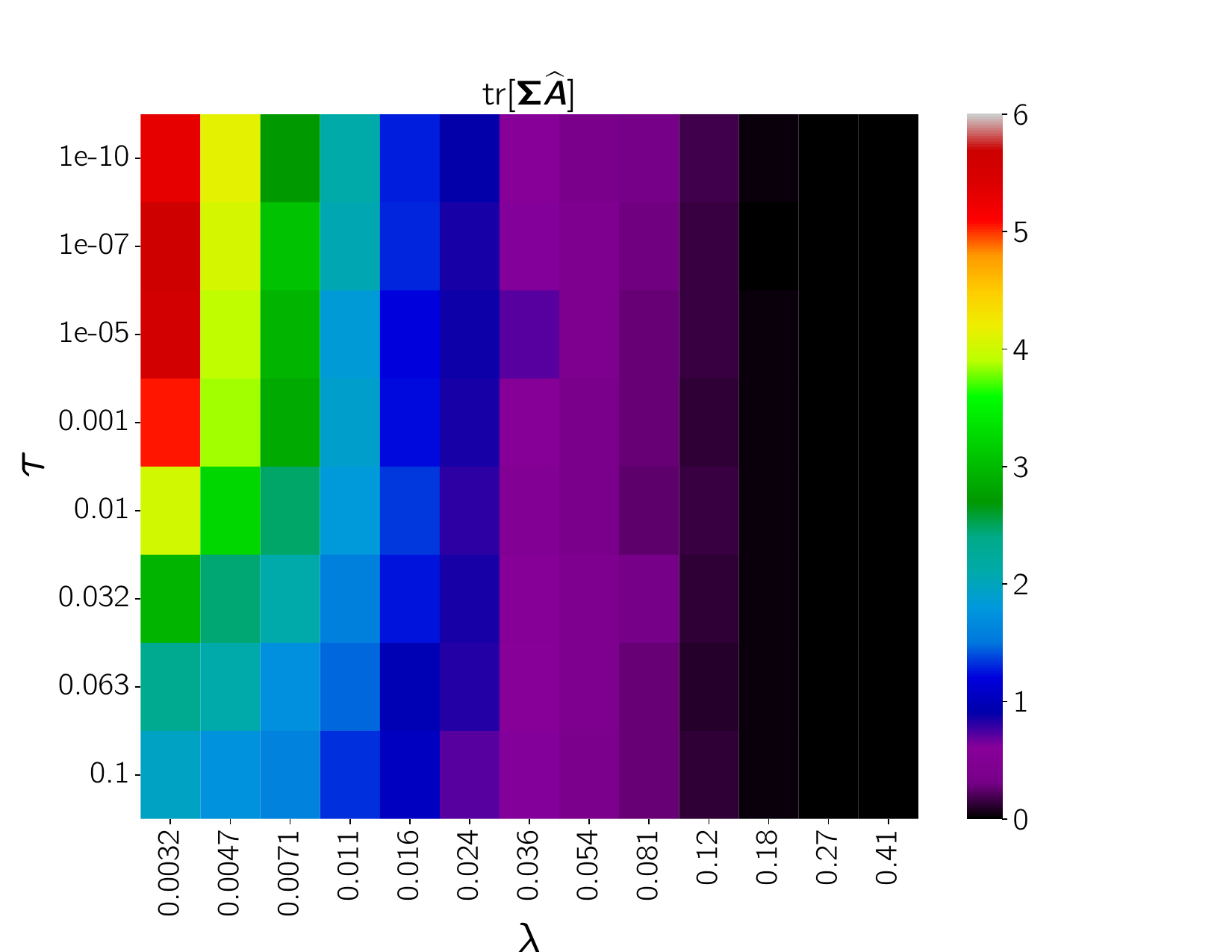}
    \includegraphics[width=0.32\textwidth]{./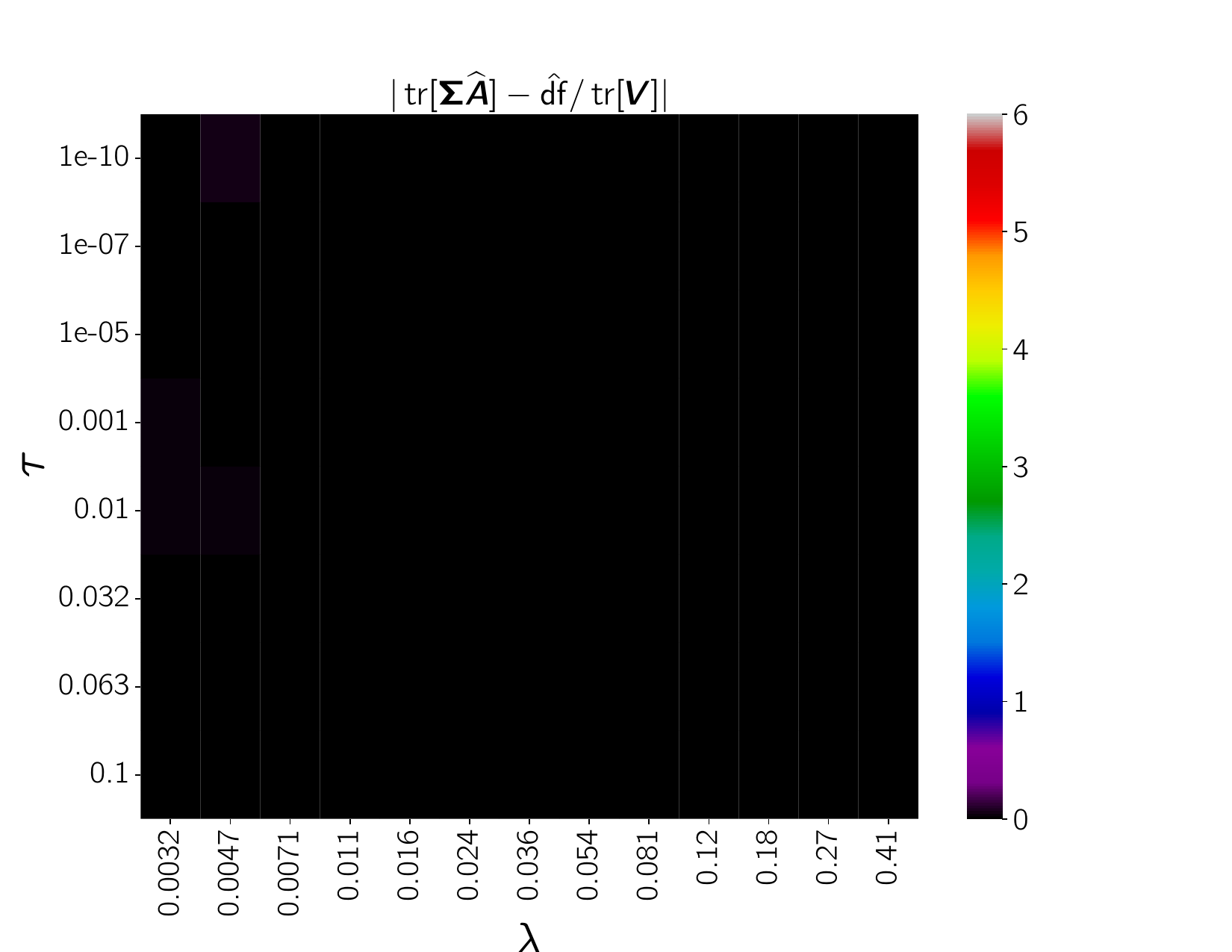}

    \includegraphics[width=0.32\textwidth]{./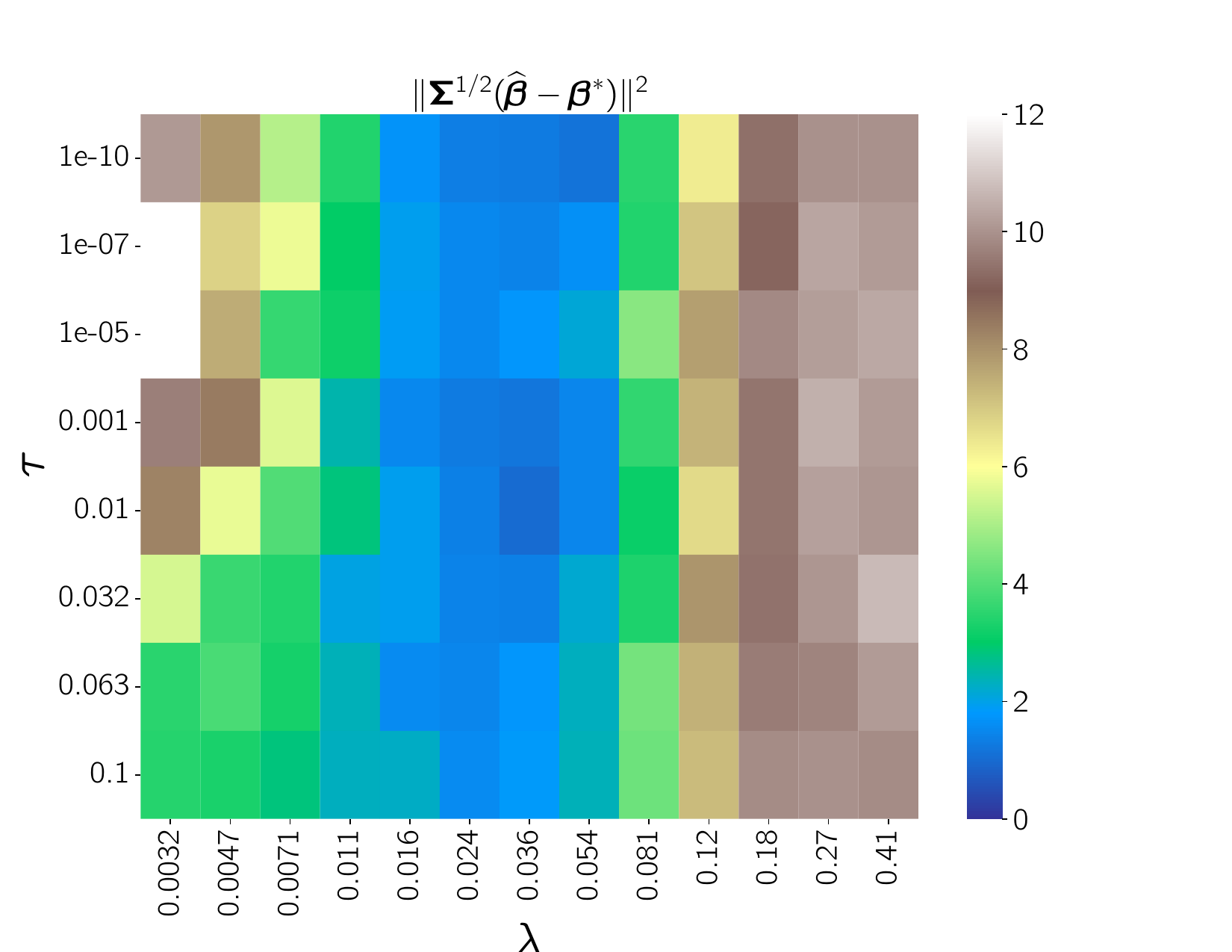}
    \includegraphics[width=0.32\textwidth]{./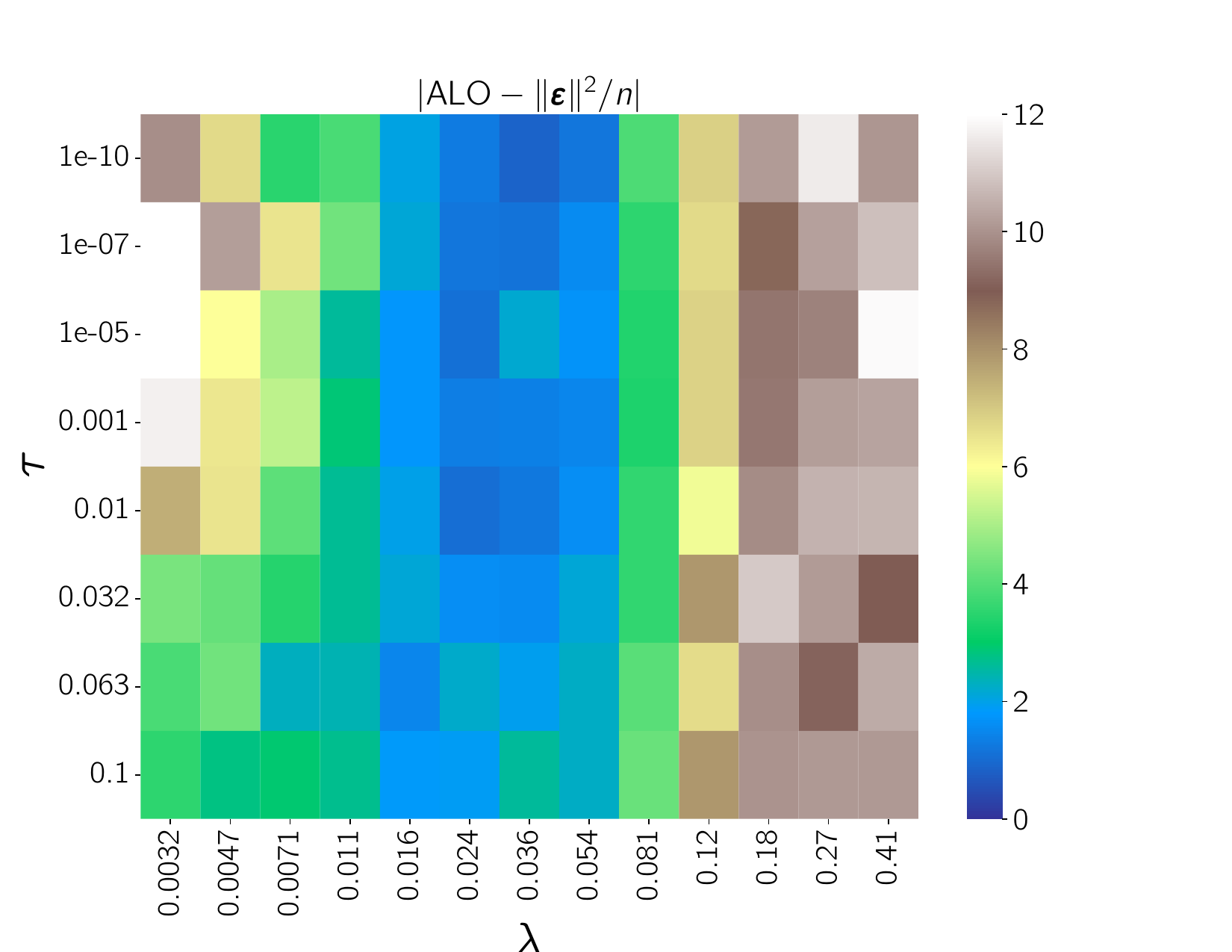}
    \includegraphics[width=0.32\textwidth]{./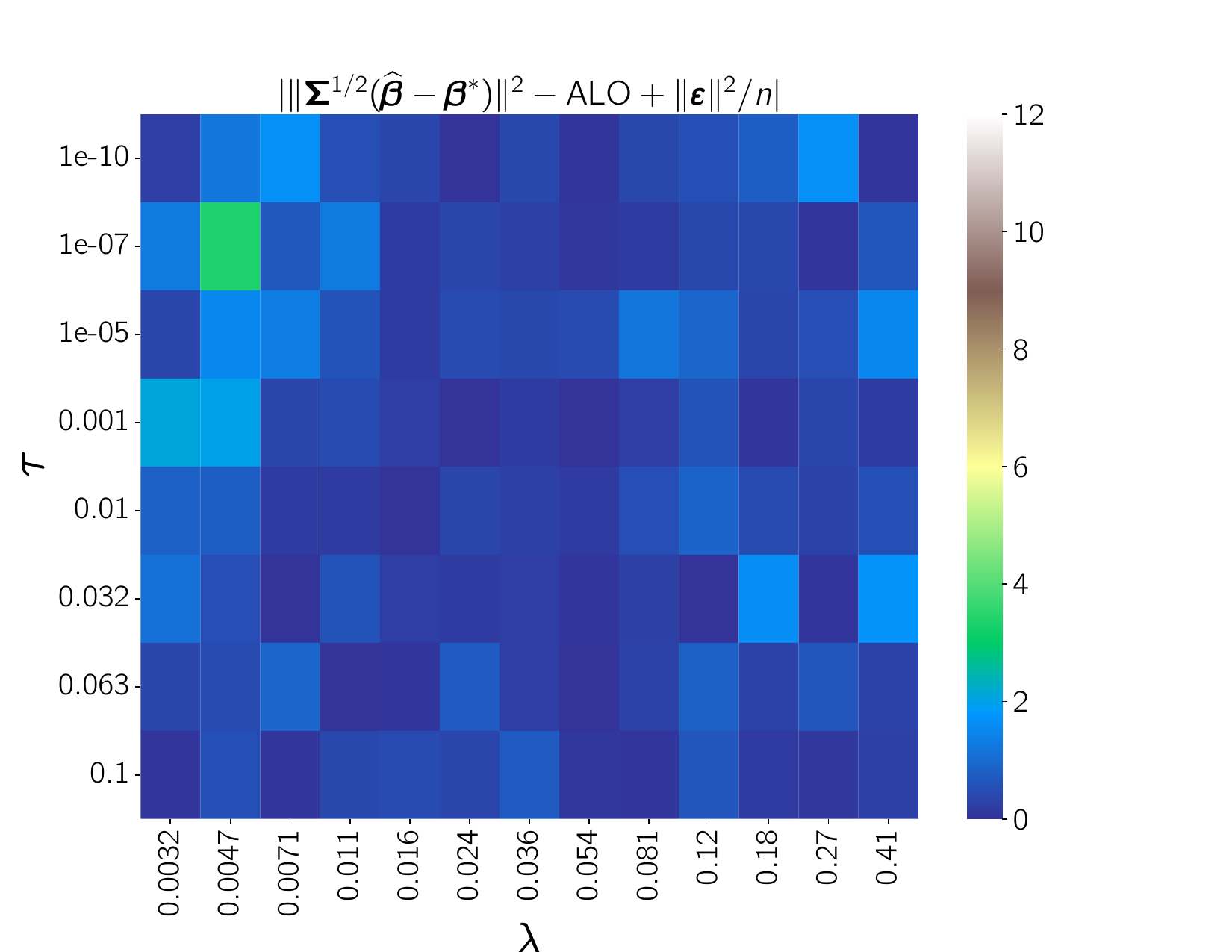}

    \includegraphics[width=0.32\textwidth]{./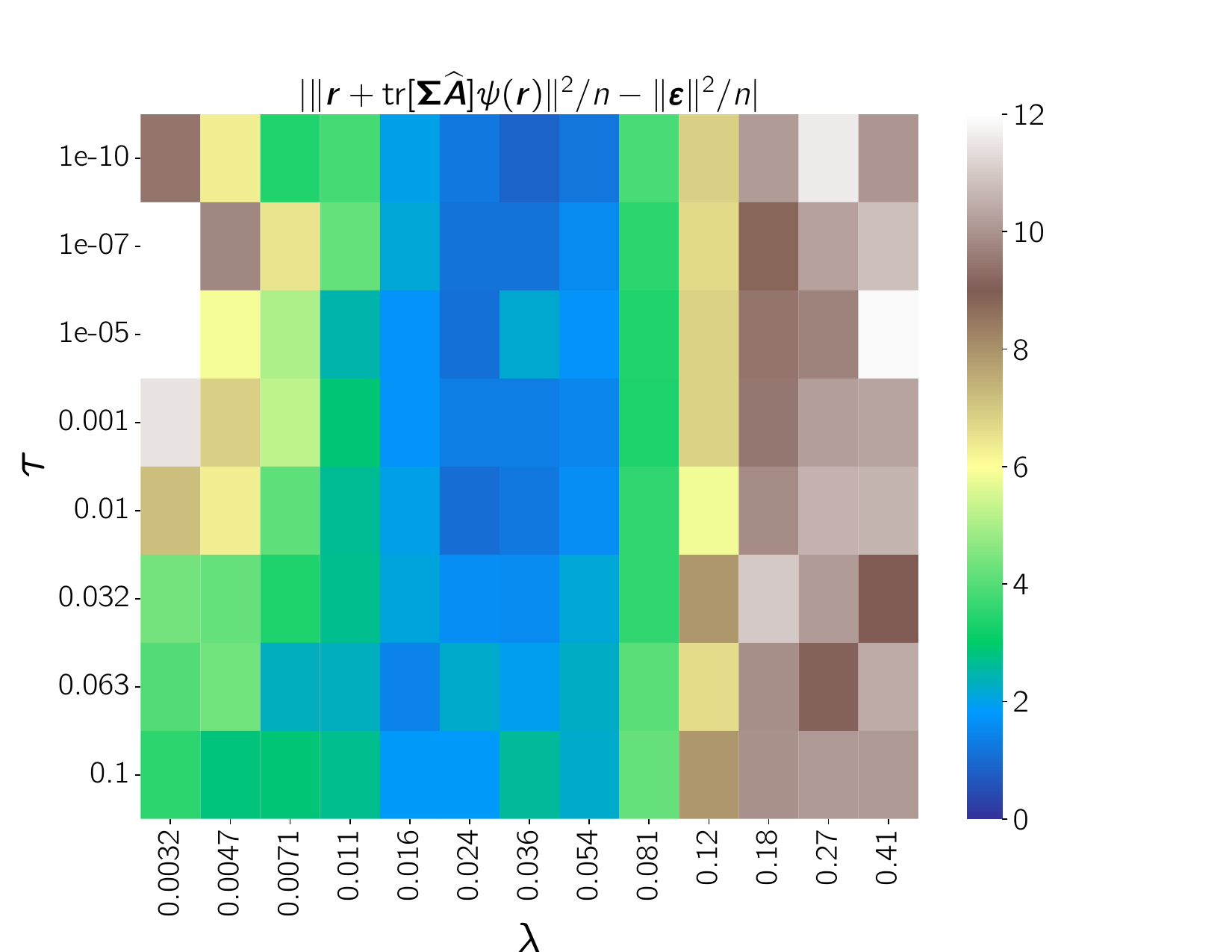}
    \includegraphics[width=0.32\textwidth]{./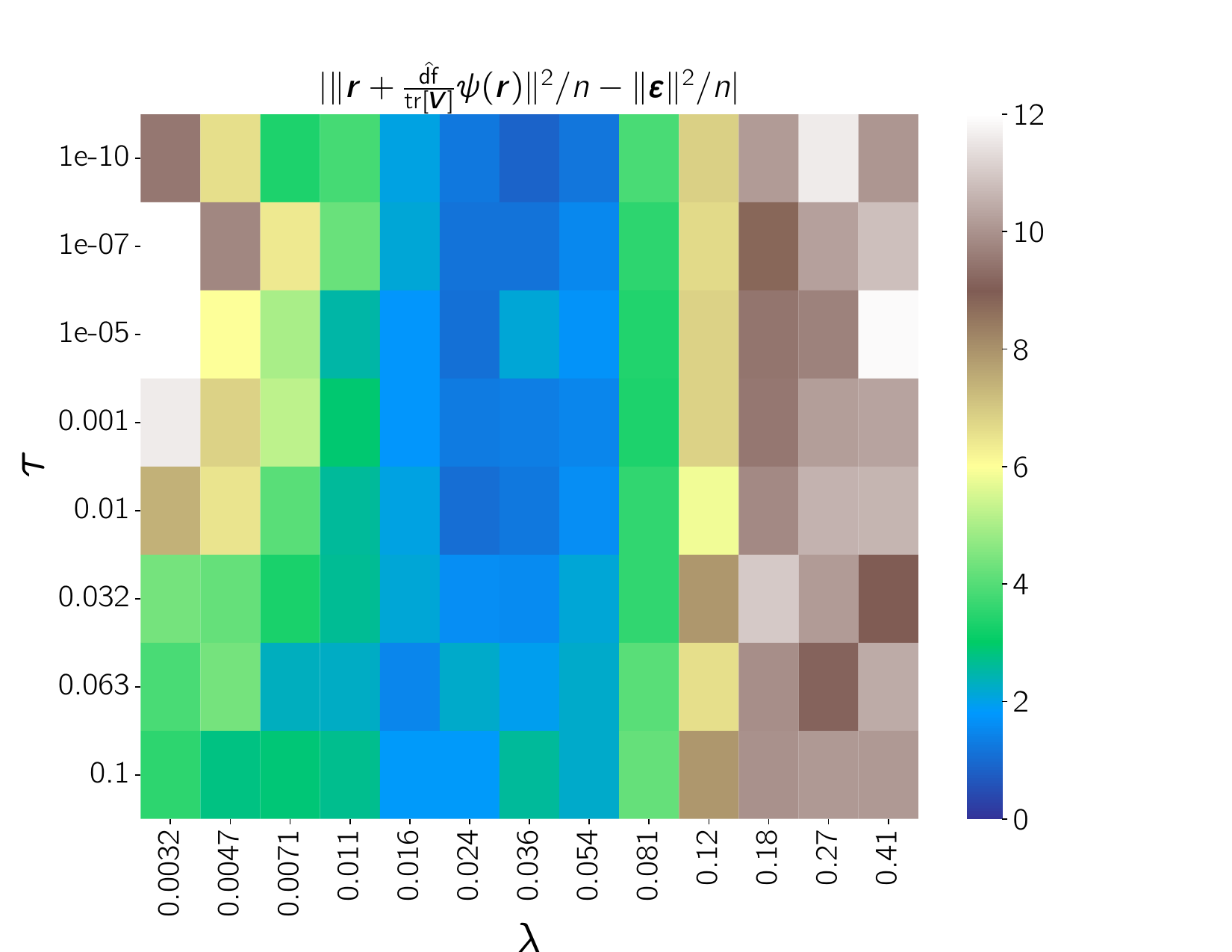}
    \includegraphics[width=0.32\textwidth]{./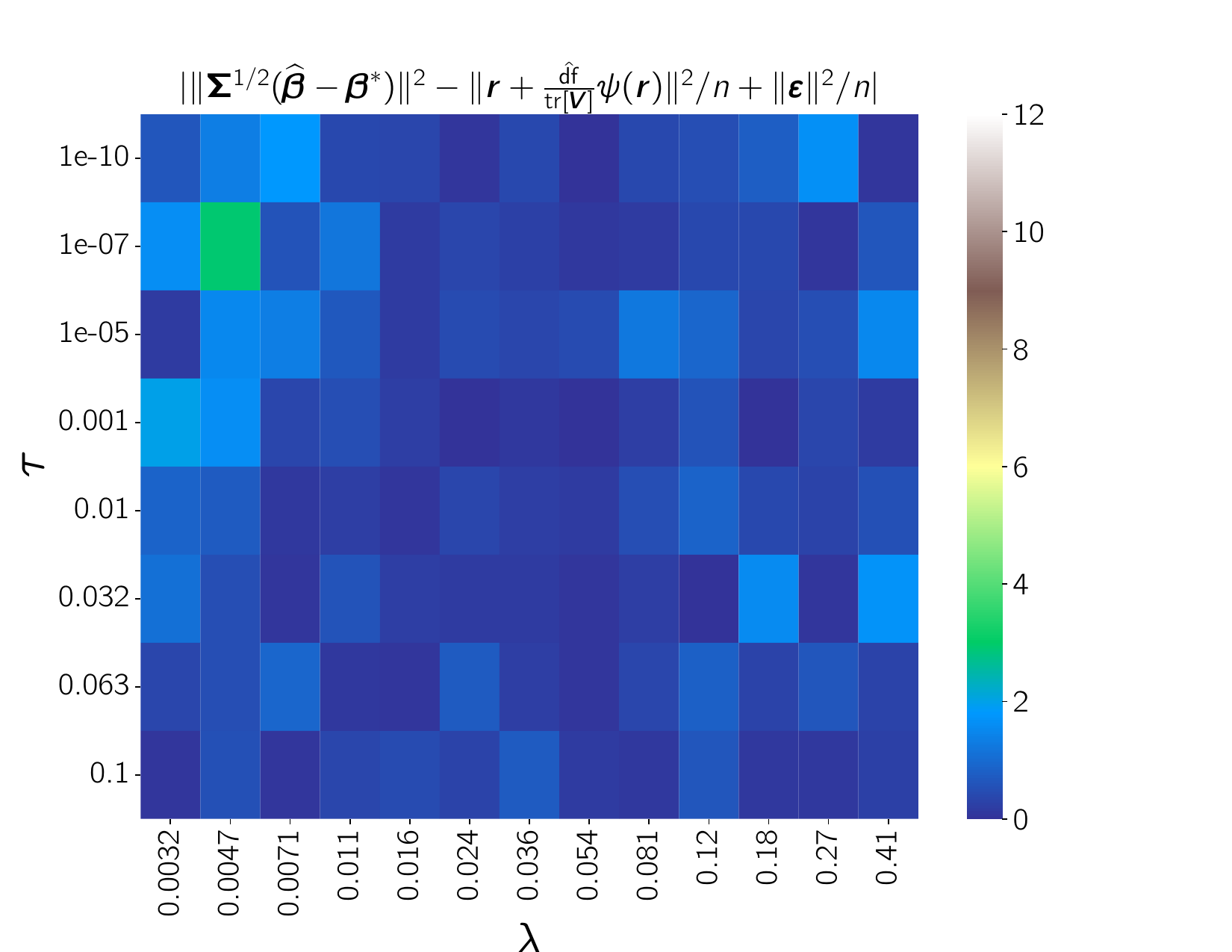}

    \caption{
        \label{fig6}
    Heatmaps for the Huber loss and Elastic-Net penalty on a grid of tuning parameters  
    with $\Lambda = 0.054 n^{1/2}$ 
    and $(\lambda, \tau)$ where $\lambda \in [0.0032, 0.41]$ and $\tau \in [10^{-10}, 0.1]$.
    Each cell is over 1 repetition.
    See the simulation setup in \Cref{sec:simulations} in the paper for more details.}
\end{figure}

\newpage

\begin{figure}[ht]
    \centering

    \includegraphics[width=0.32\textwidth]{./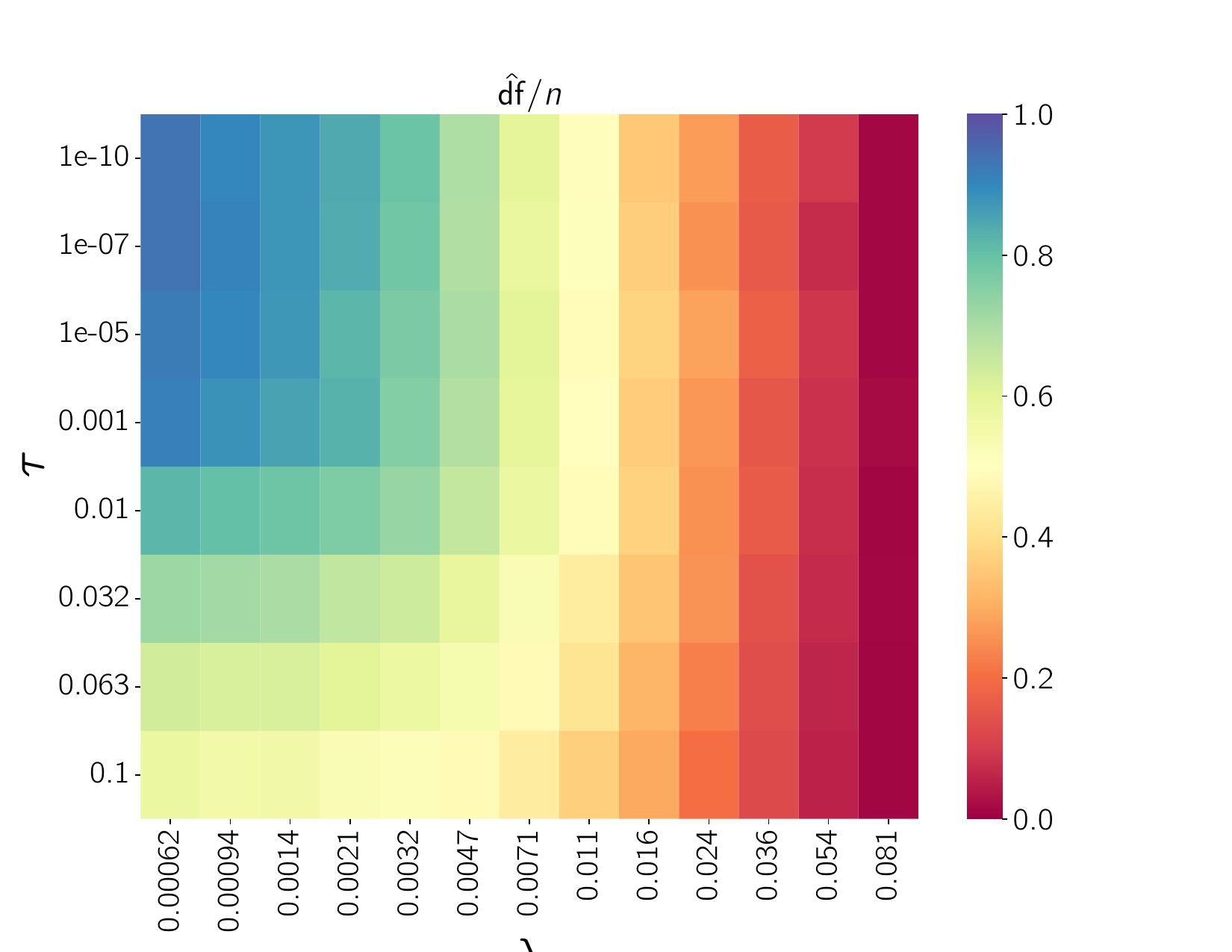}
    \includegraphics[width=0.32\textwidth]{./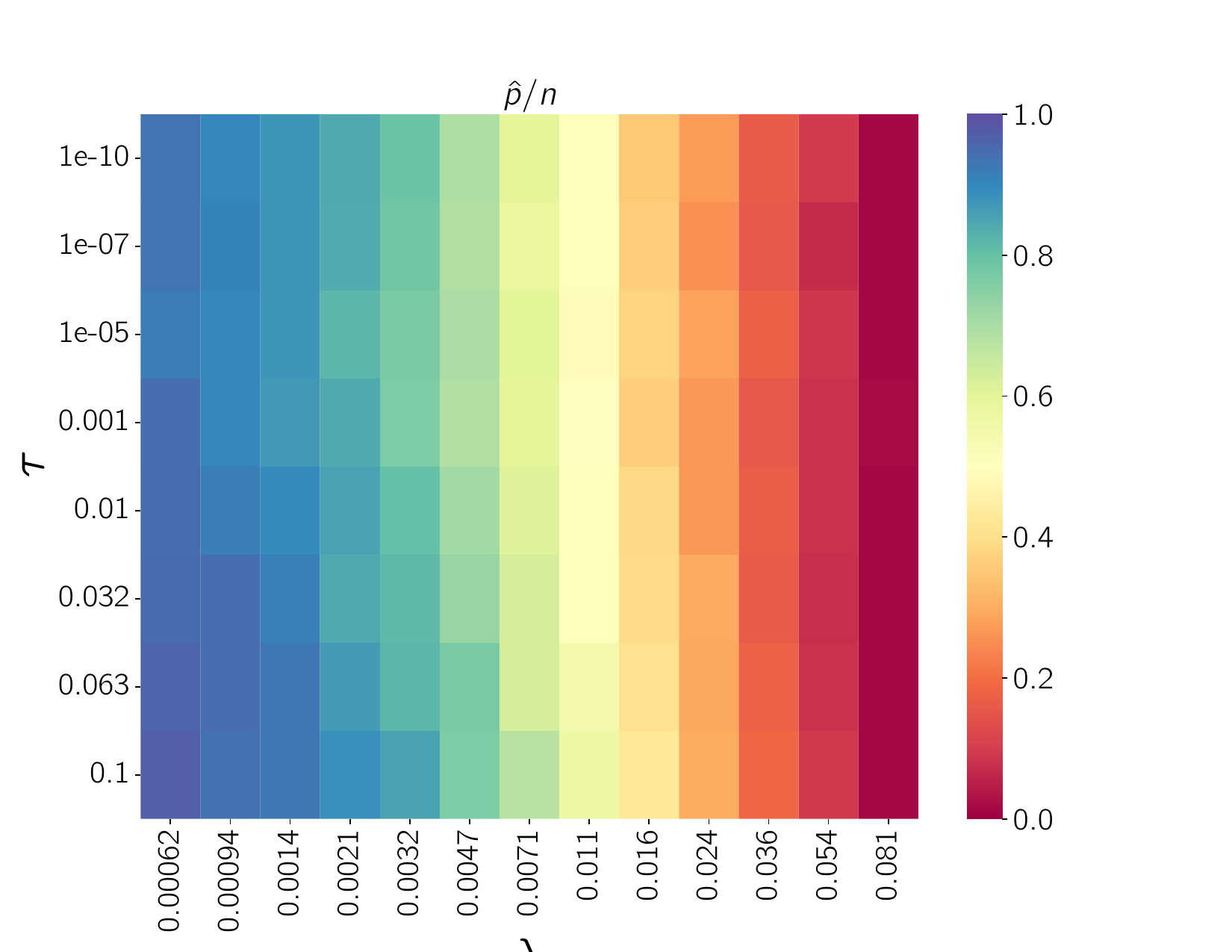}
    \includegraphics[width=0.32\textwidth]{./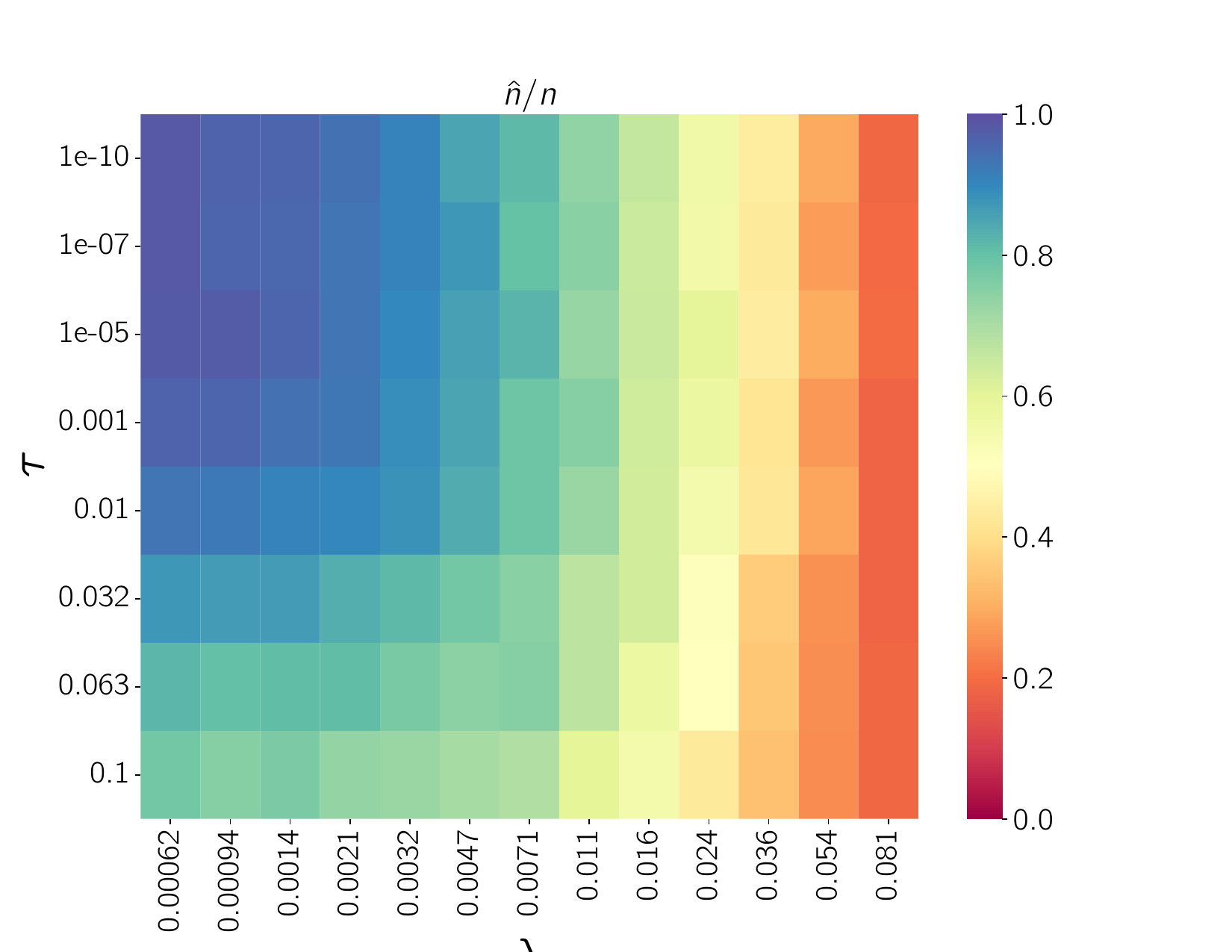}

    \includegraphics[width=0.32\textwidth]{./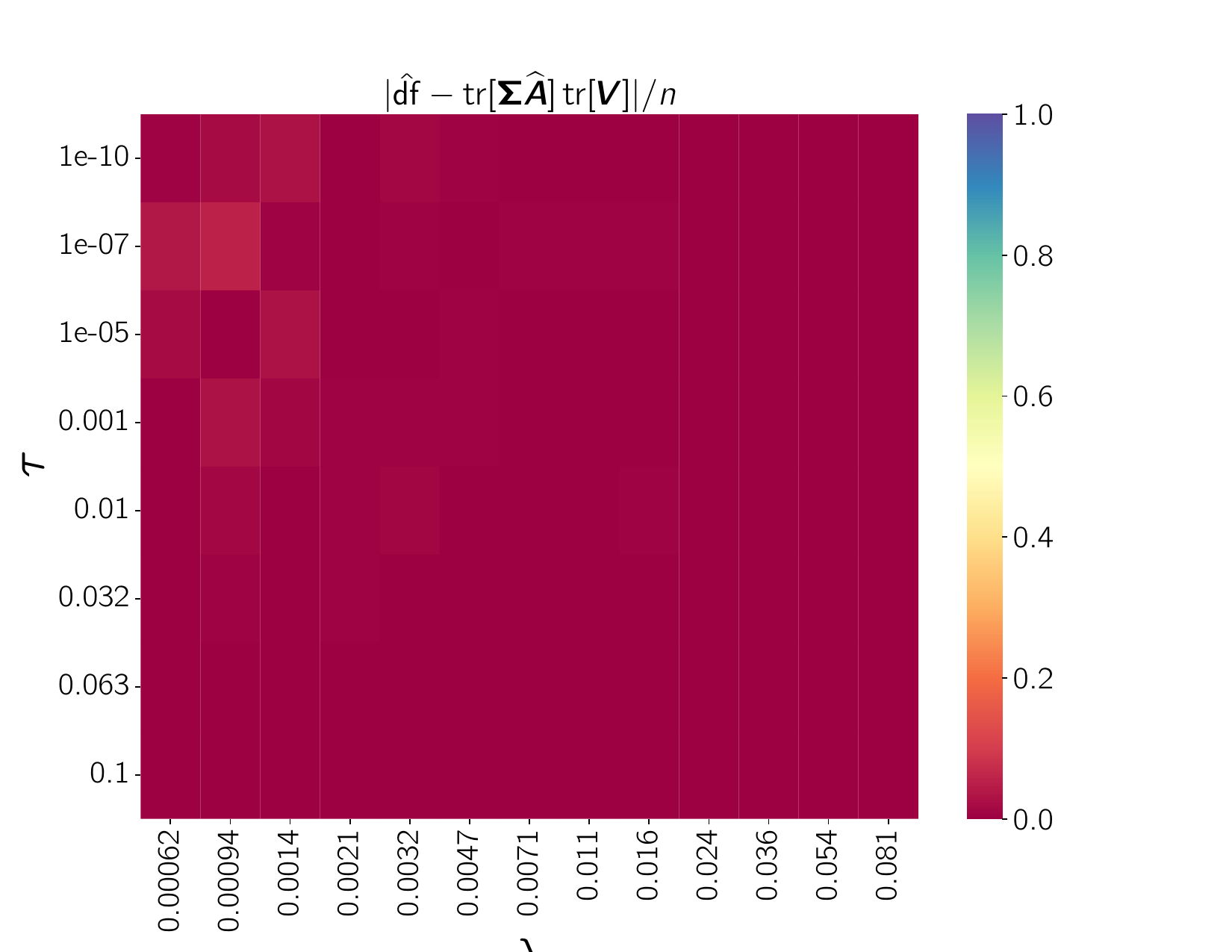}
    \includegraphics[width=0.32\textwidth]{./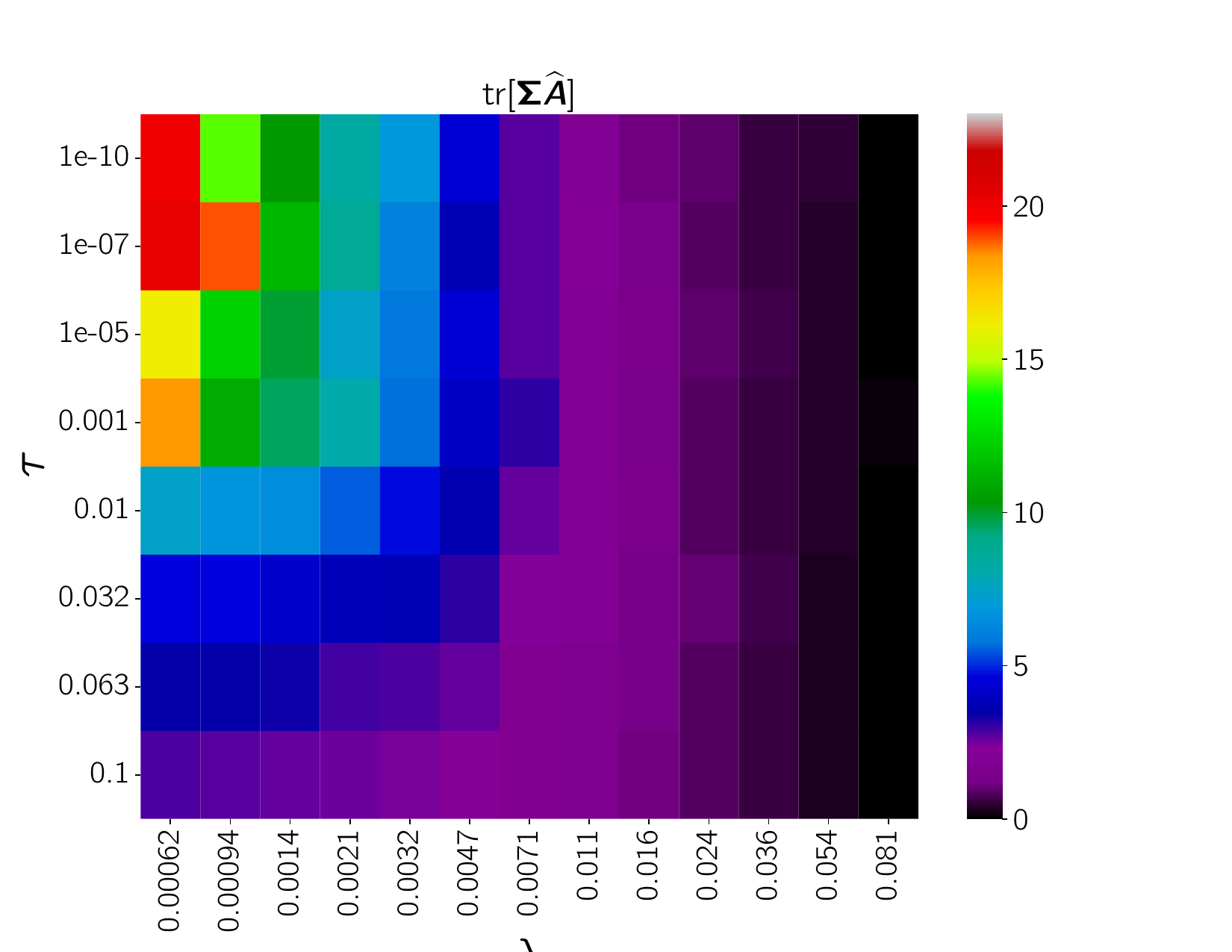}
    \includegraphics[width=0.32\textwidth]{./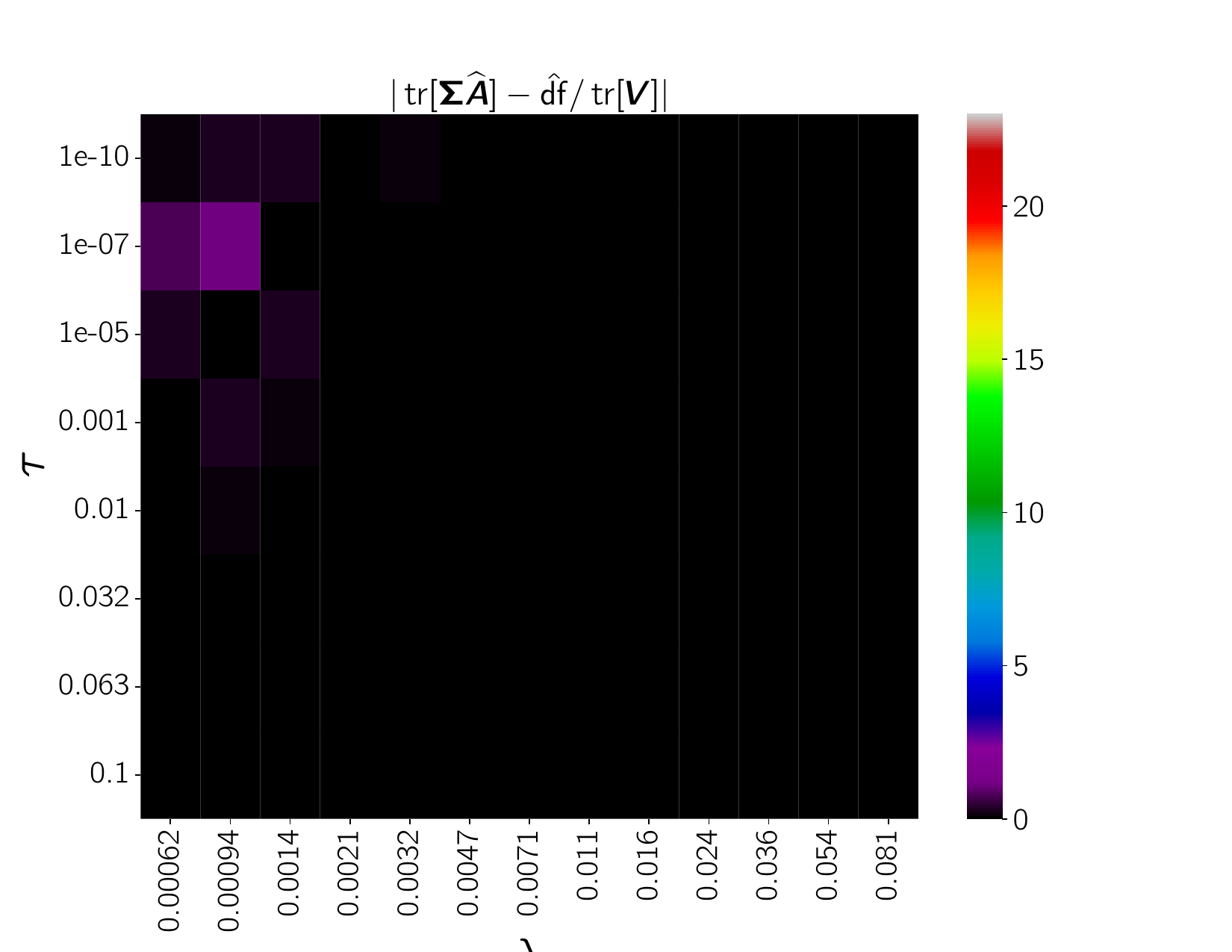}

    \includegraphics[width=0.32\textwidth]{./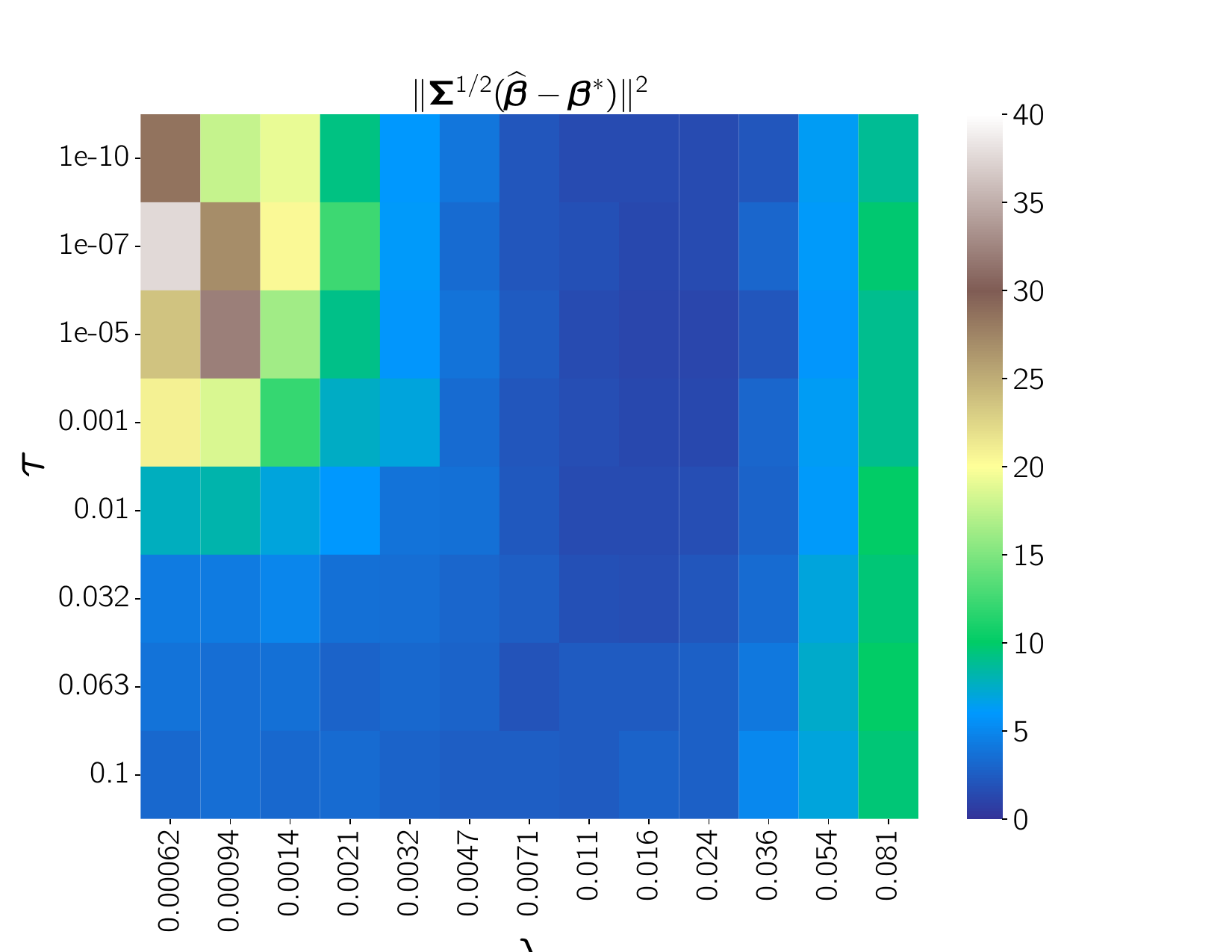}
    \includegraphics[width=0.32\textwidth]{./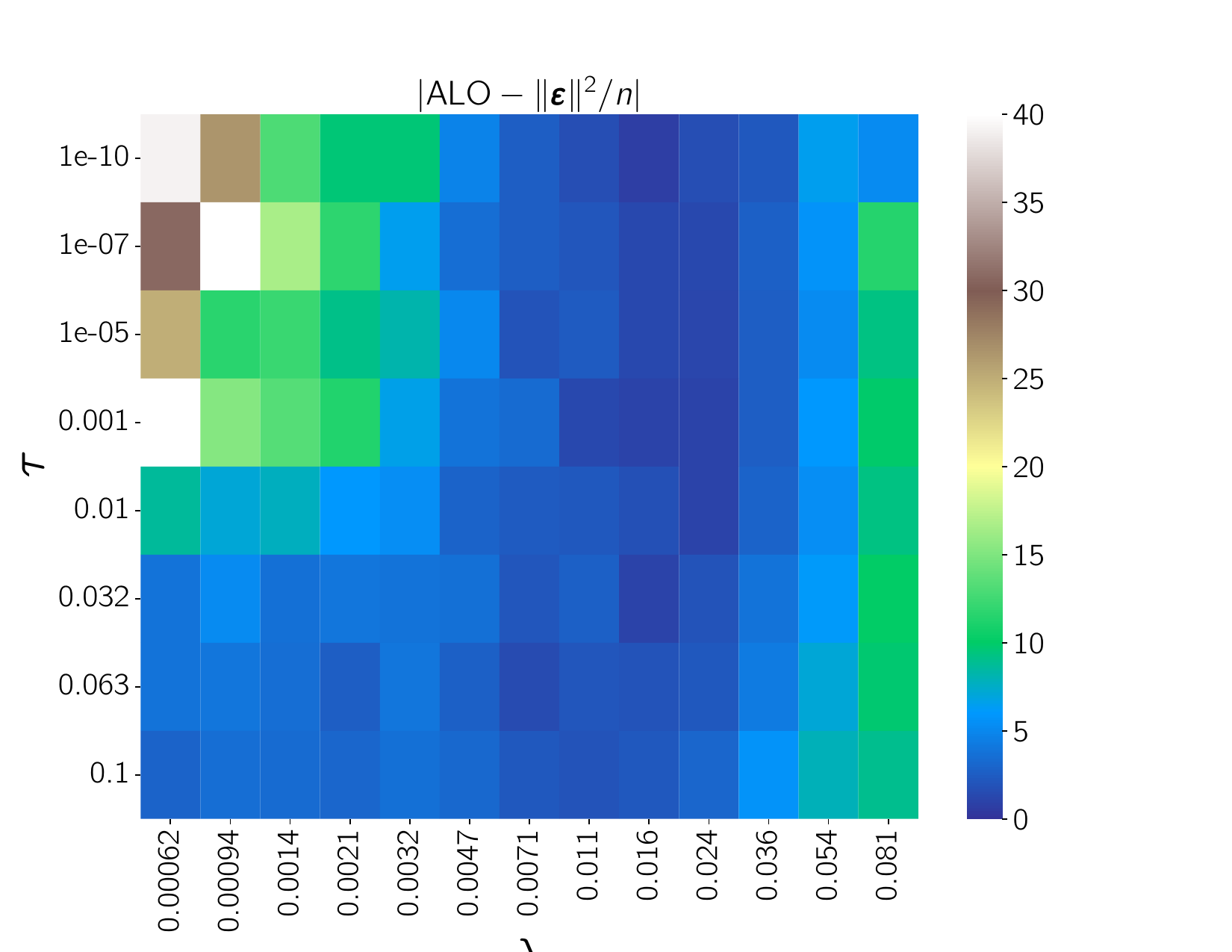}
    \includegraphics[width=0.32\textwidth]{./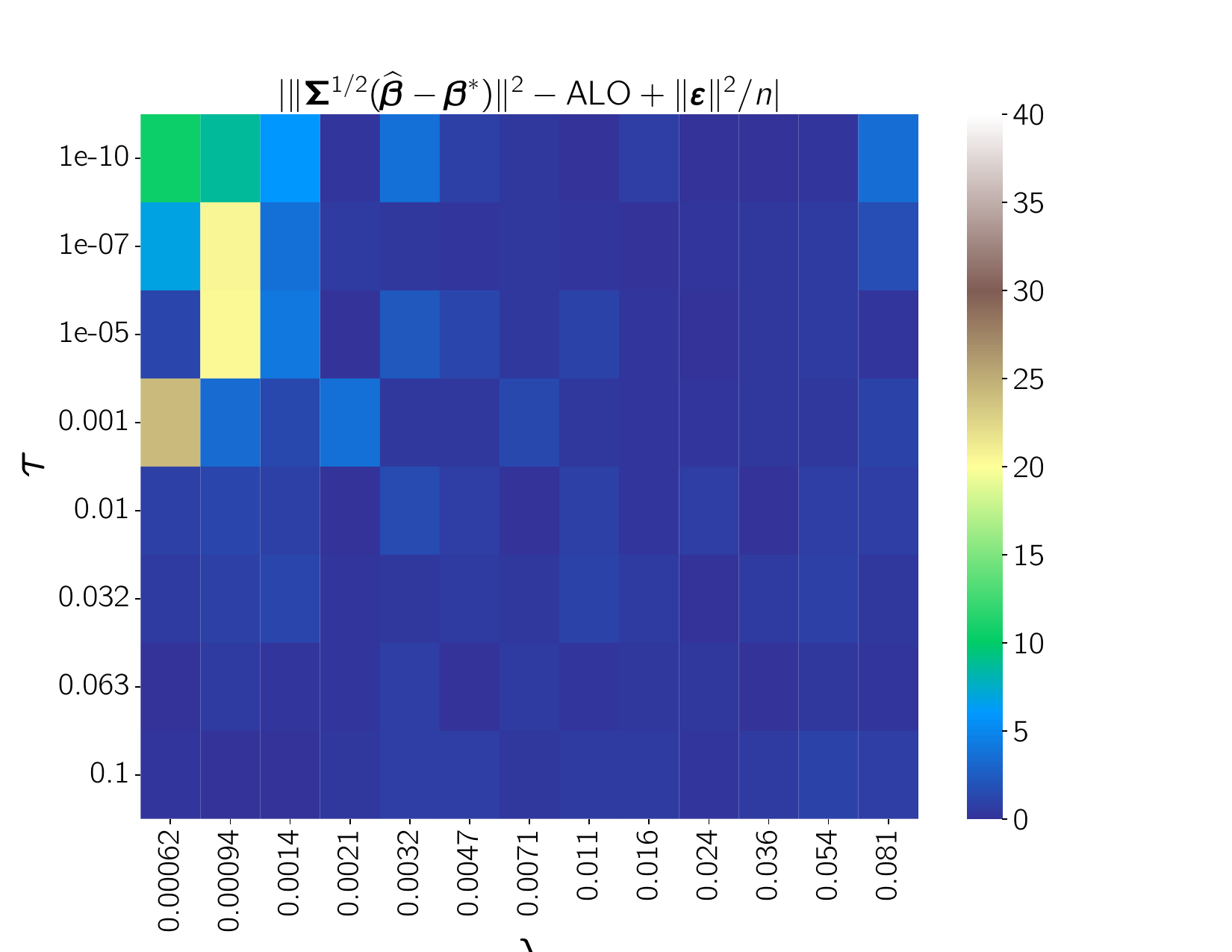}

    \includegraphics[width=0.32\textwidth]{./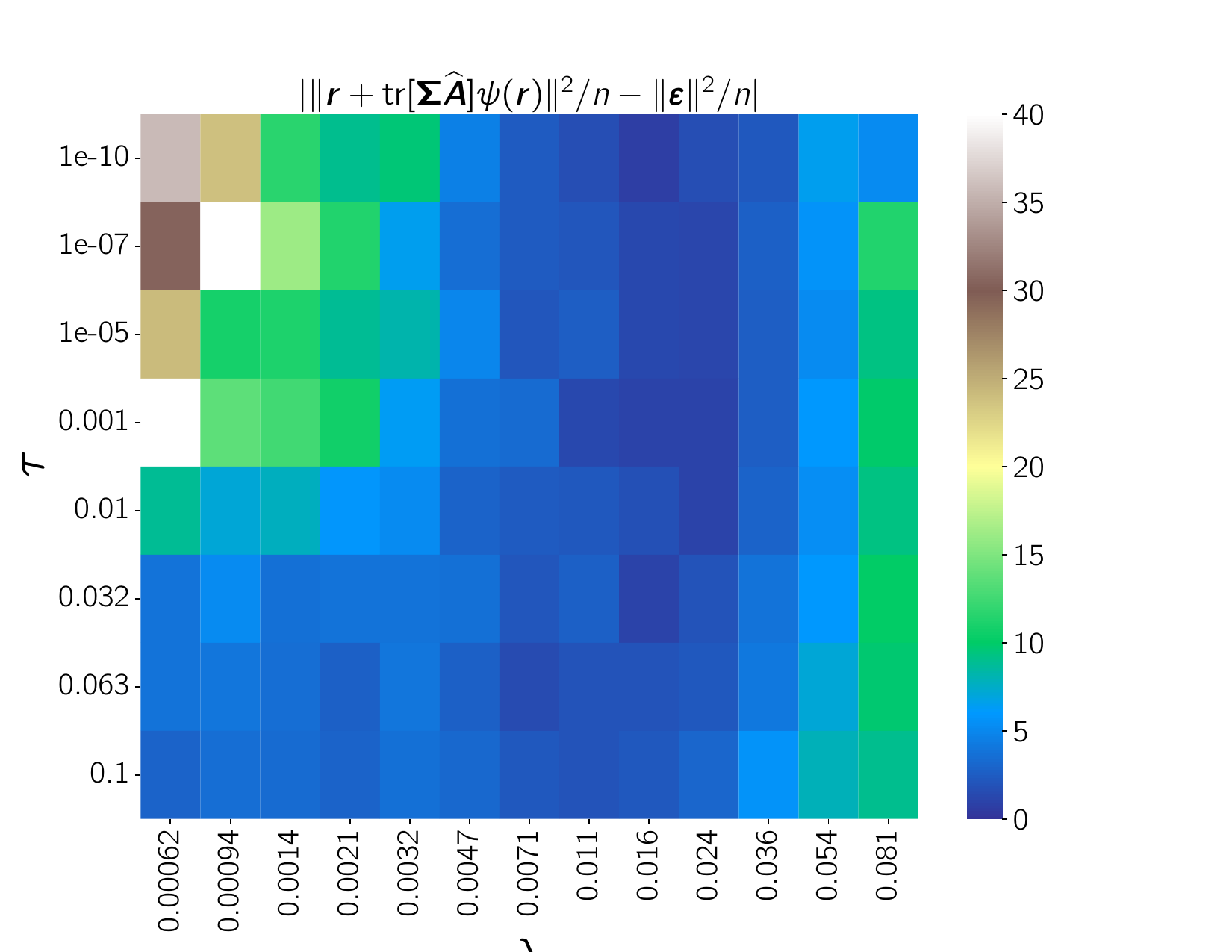}
    \includegraphics[width=0.32\textwidth]{./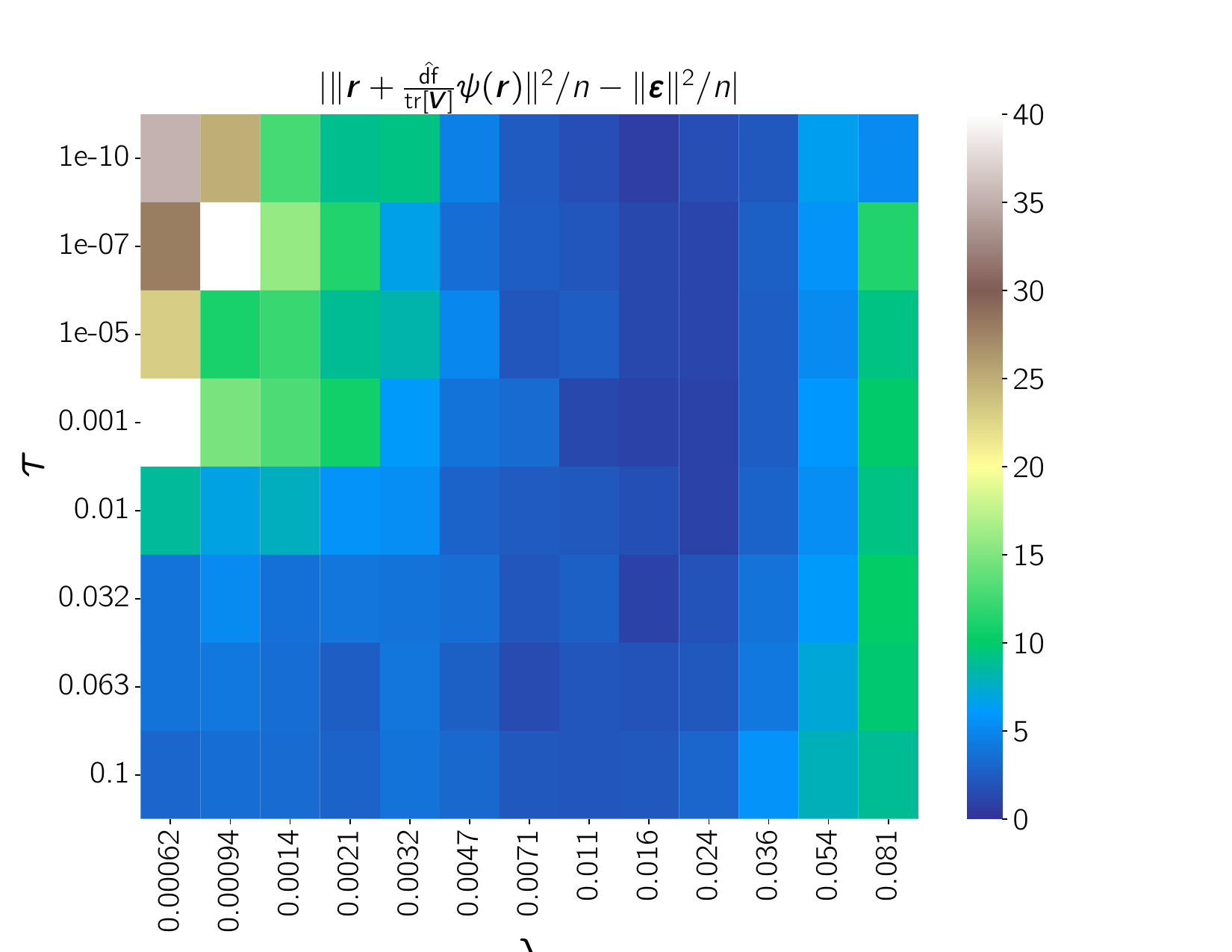}
    \includegraphics[width=0.32\textwidth]{./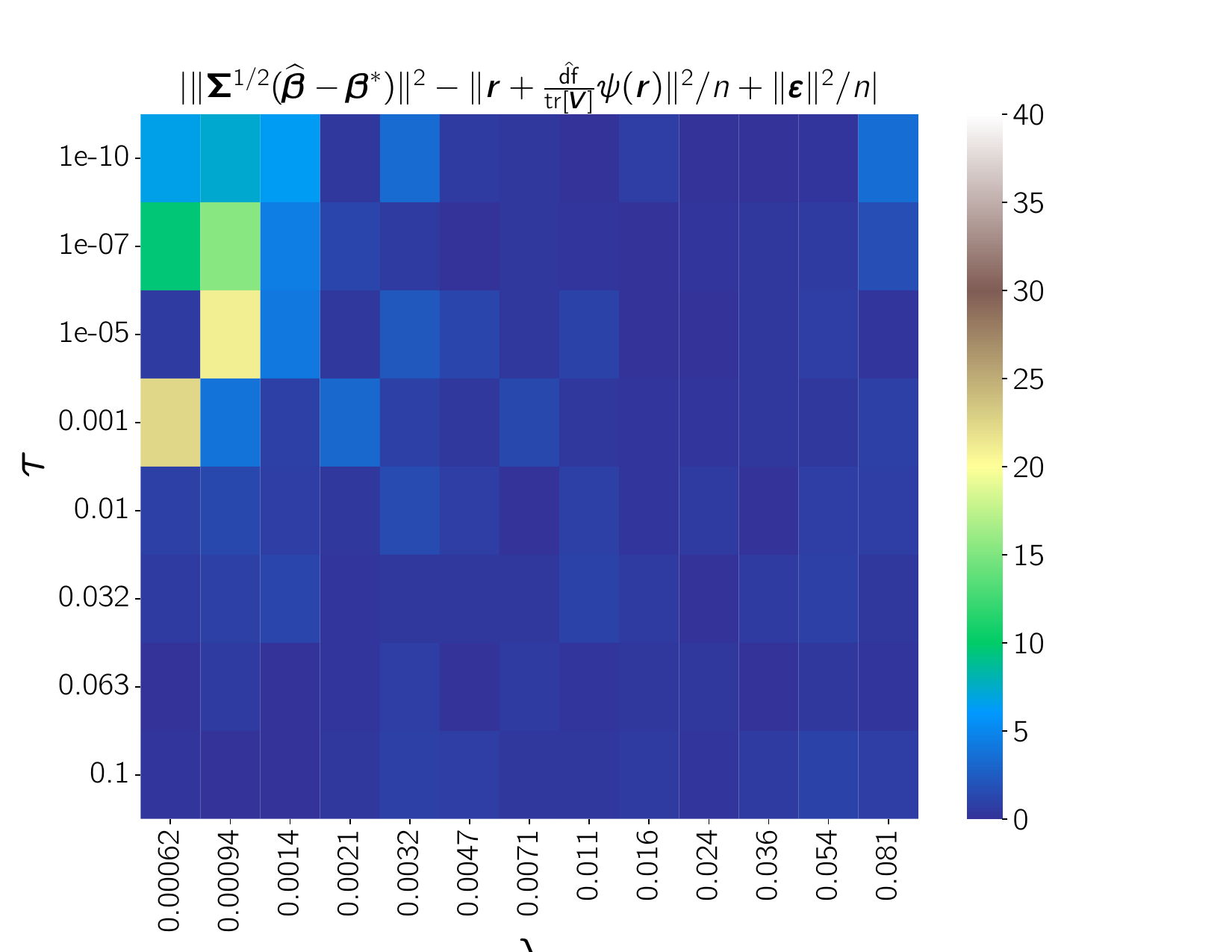}

    \caption{
        \label{fig7}
    Heatmaps for the Huber loss and Elastic-Net penalty on a grid of tuning parameters 
    with $\Lambda = 0.024 n^{1/2}$ and $(\lambda, \tau)$ where 
    $\lambda \in [0.00062, 0.081]$
    and 
    $\tau \in [10^{-10}, 0.1]$.
    Each cell over 1 repetition.
    See the simulation setup in \Cref{sec:simulations} in the paper for more details.}
\end{figure}


\newpage

\section{Additional Figures (non-Gaussian, Rademacher design)}
\label{sec:rademacher-figures}

\begin{figure}[ht]
    \centering
    \includegraphics[width=68mm]{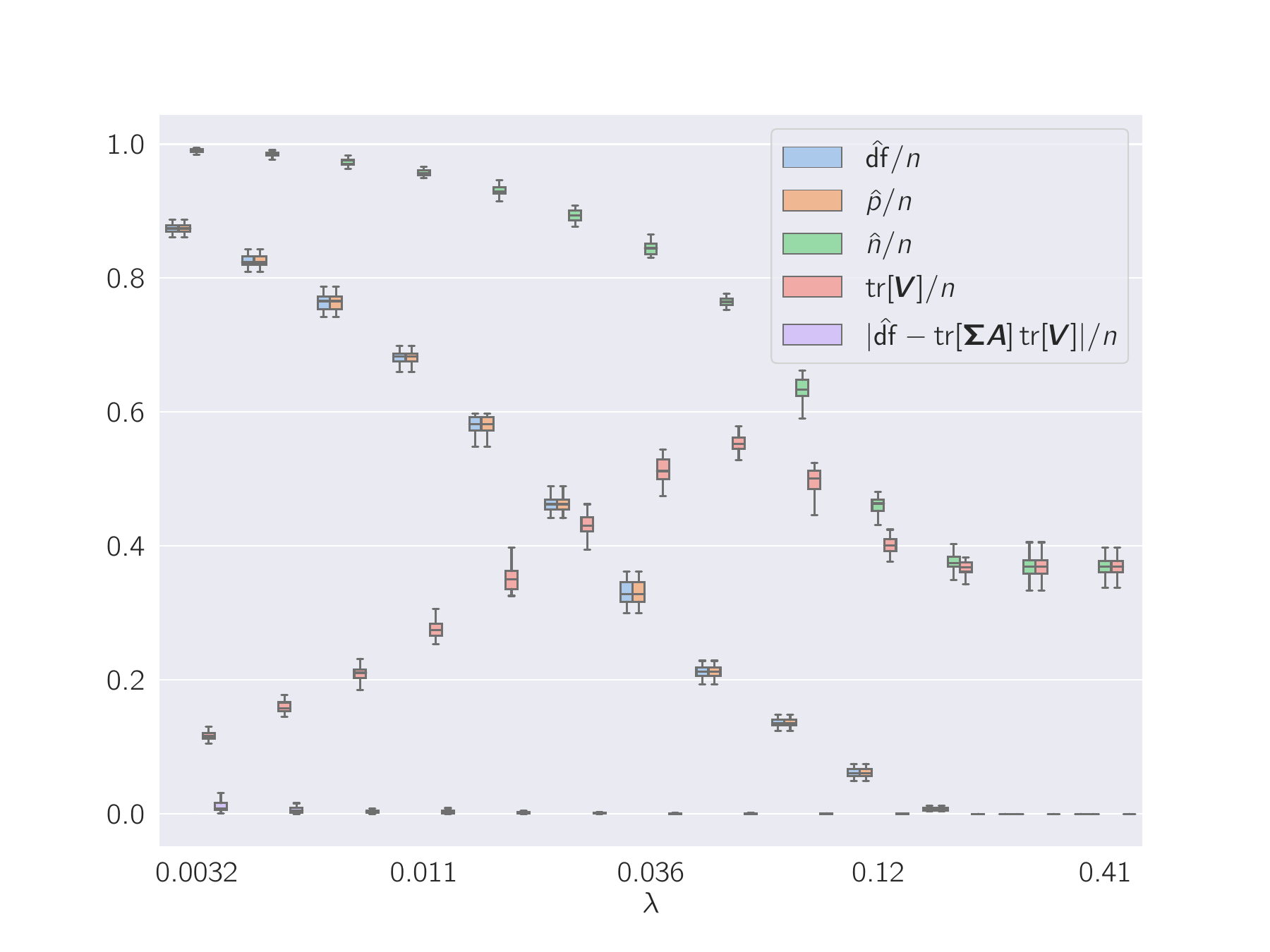}   
    \includegraphics[width=68mm]{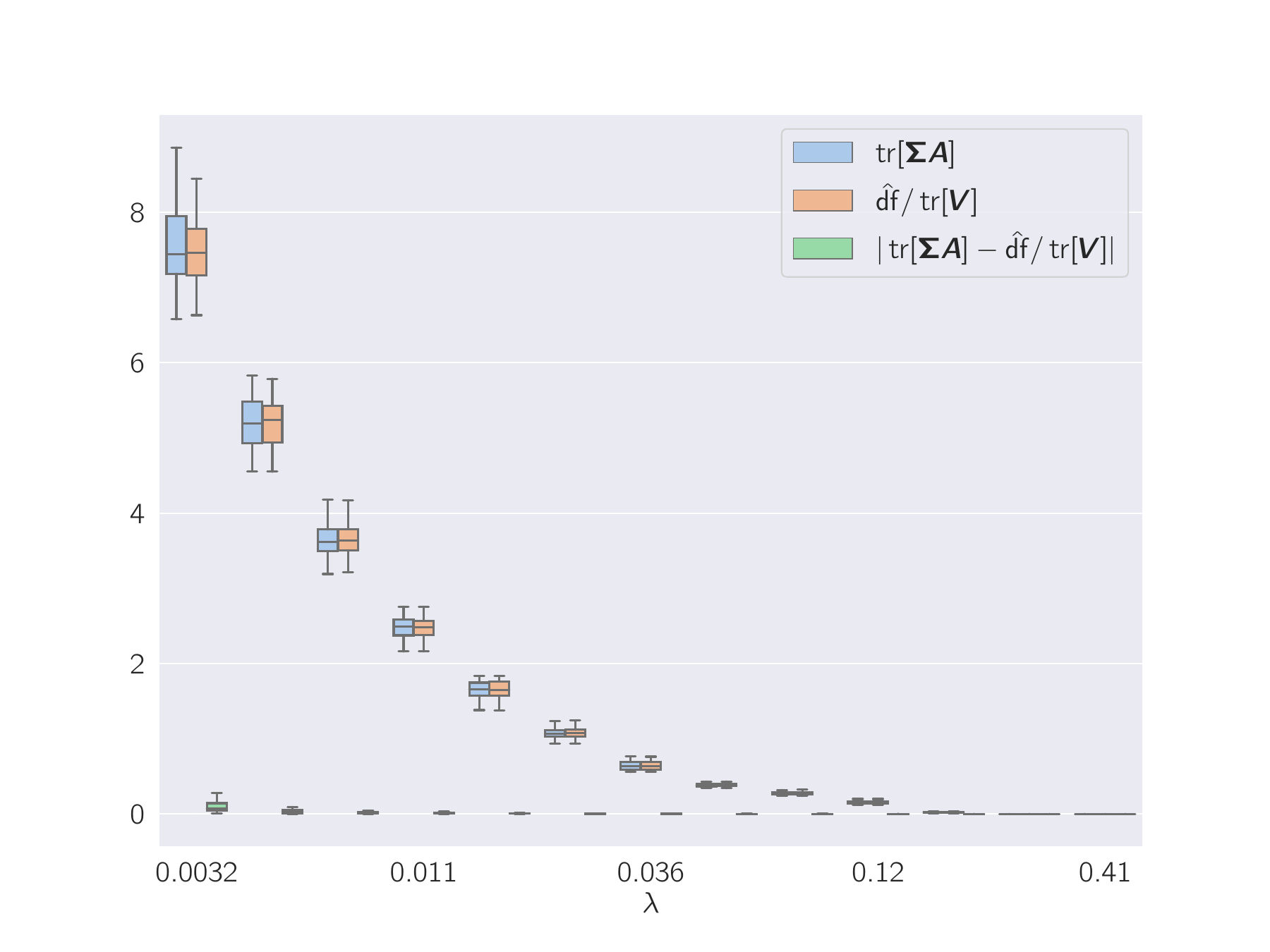}   
    \caption{
    Boxplots for $\df, \hat p, \hat n, \trace[\bV], \trace[\bSigma \hbA]$ and $|\trace [ \bSigma \hbA ] - \df / \trace [ \bV]|$ 
    in Huber Elastic-Net regression with $\tau = 10 ^{-10}$ and $\lambda \in [0.0032, 0.41].$
    The data are generated with $\bX$ having iid entries taking value $\pm 1$ each with probability 0.5 (so that $\bSigma=\bI_p$).
    Each box contains 30 data points.
    }
\end{figure}

\begin{figure}[ht]
\centering
\includegraphics[width=34mm]{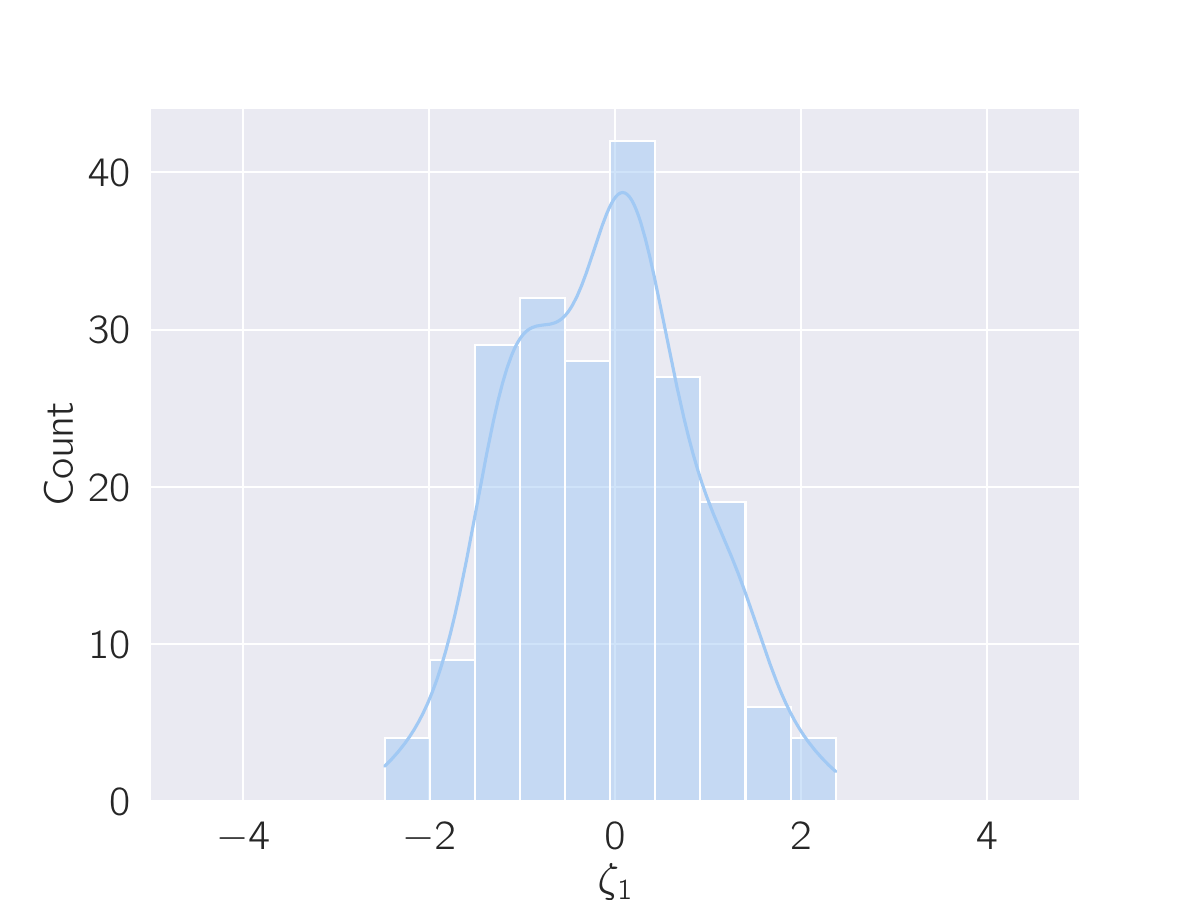}   
\includegraphics[width=34mm]{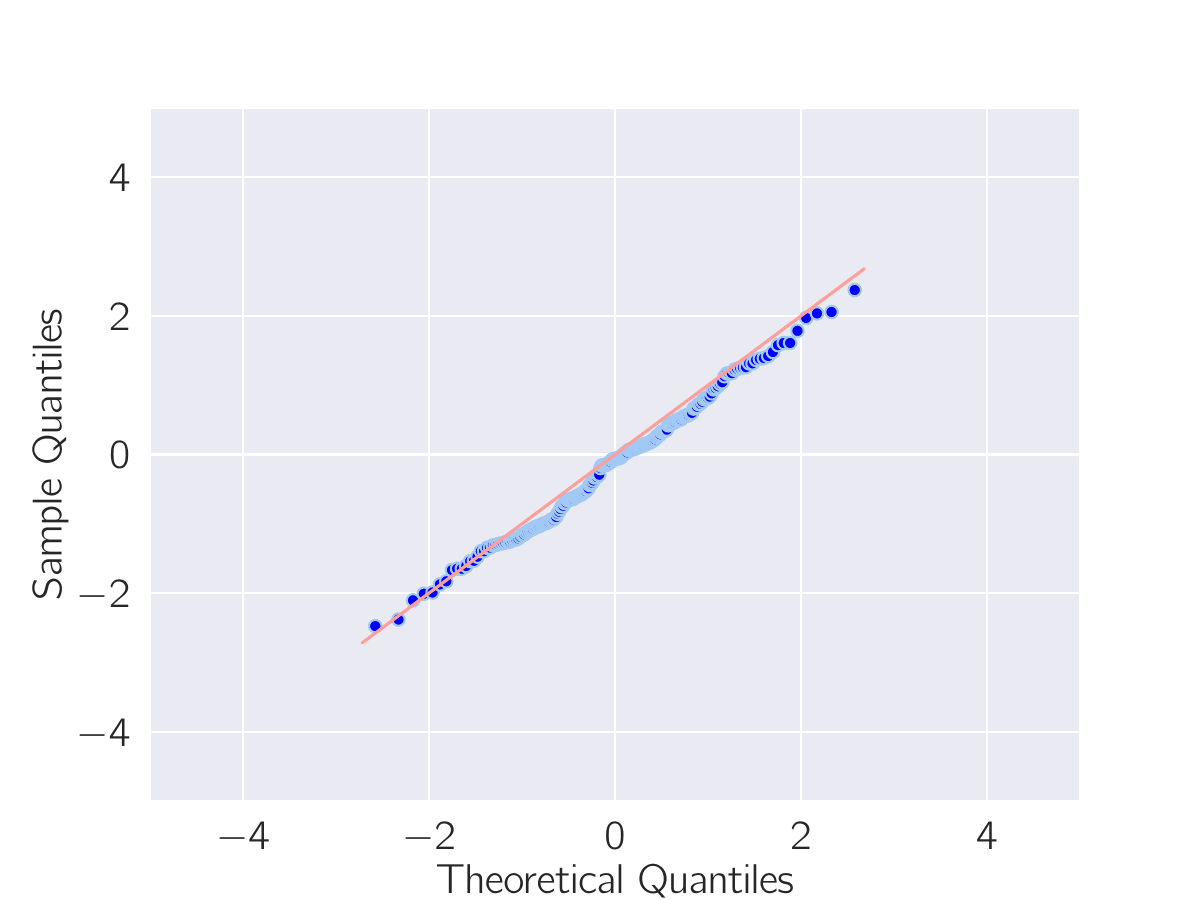}   
\includegraphics[width=34mm]{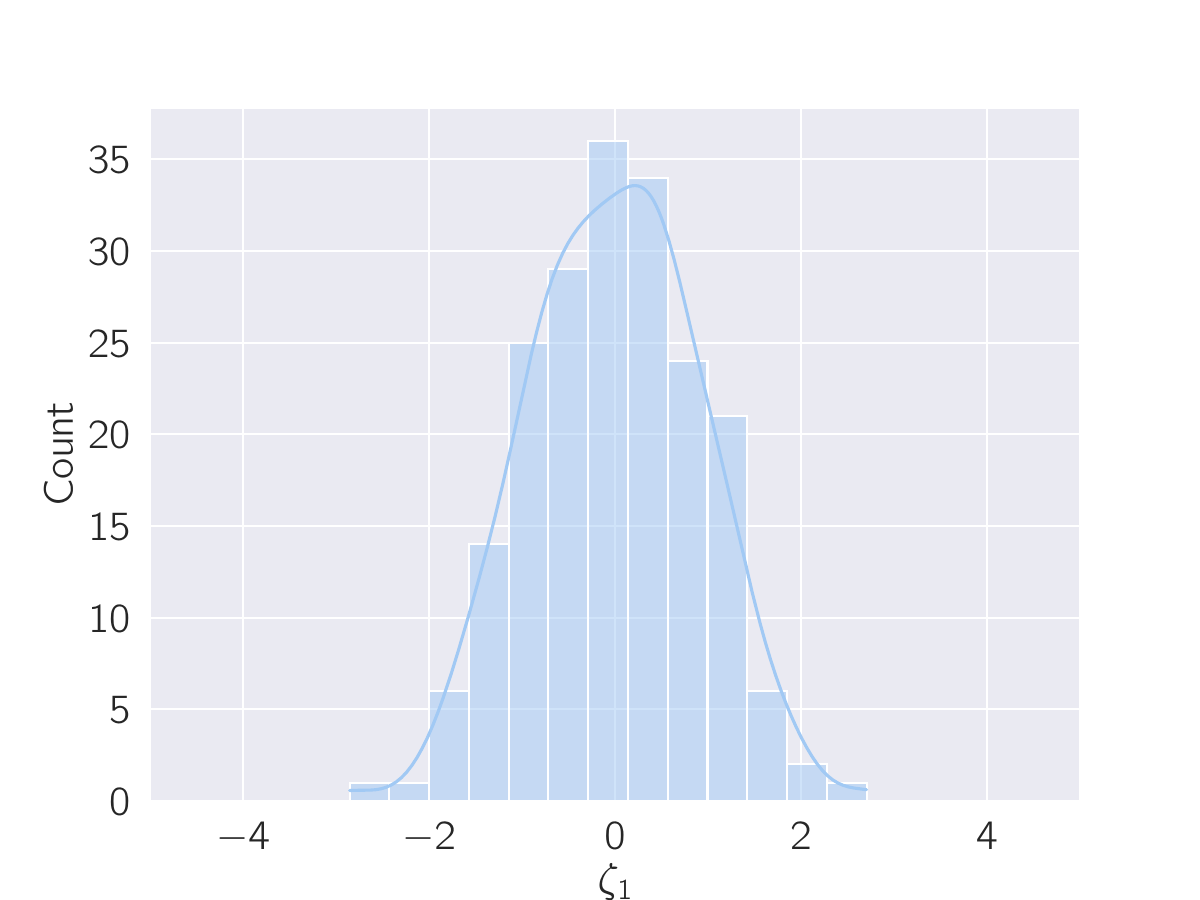}   
\includegraphics[width=34mm]{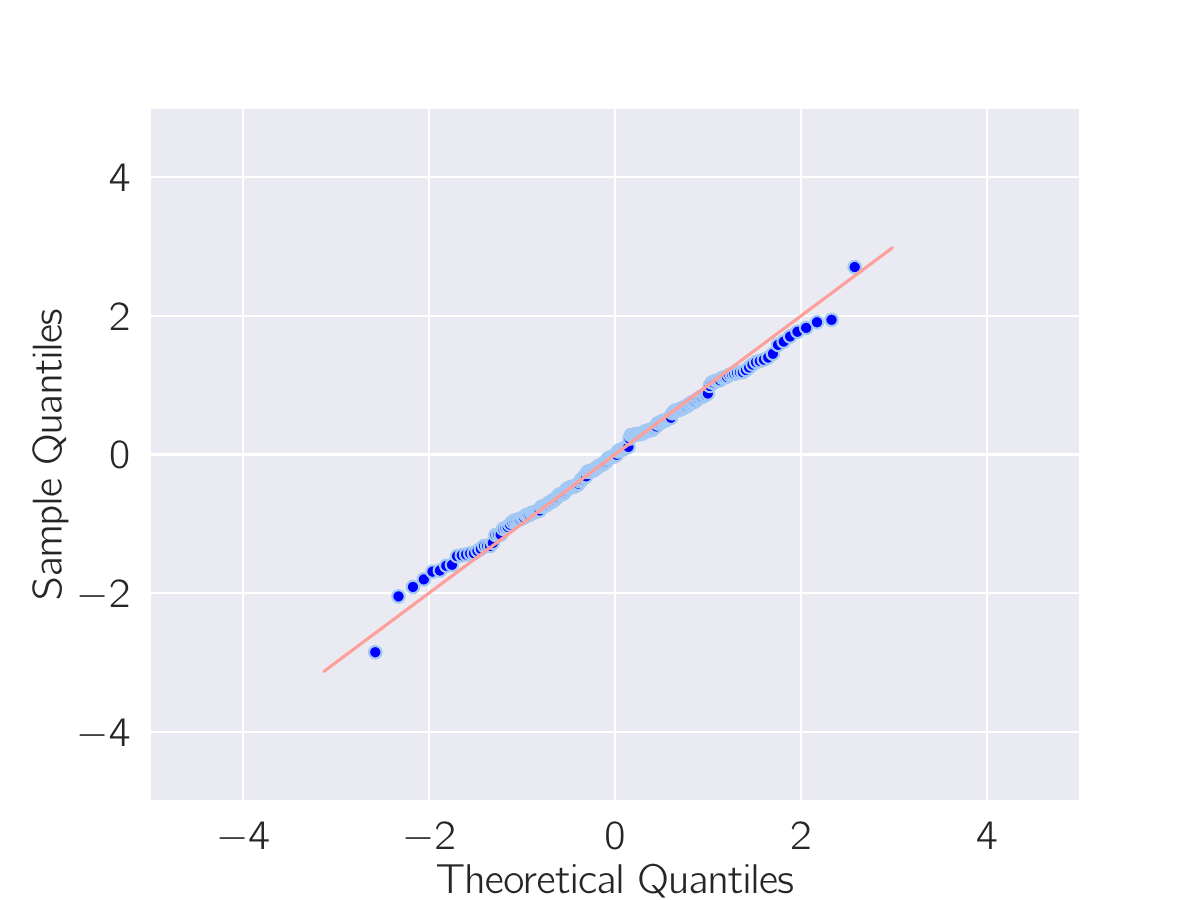}   
\includegraphics[width=34mm]{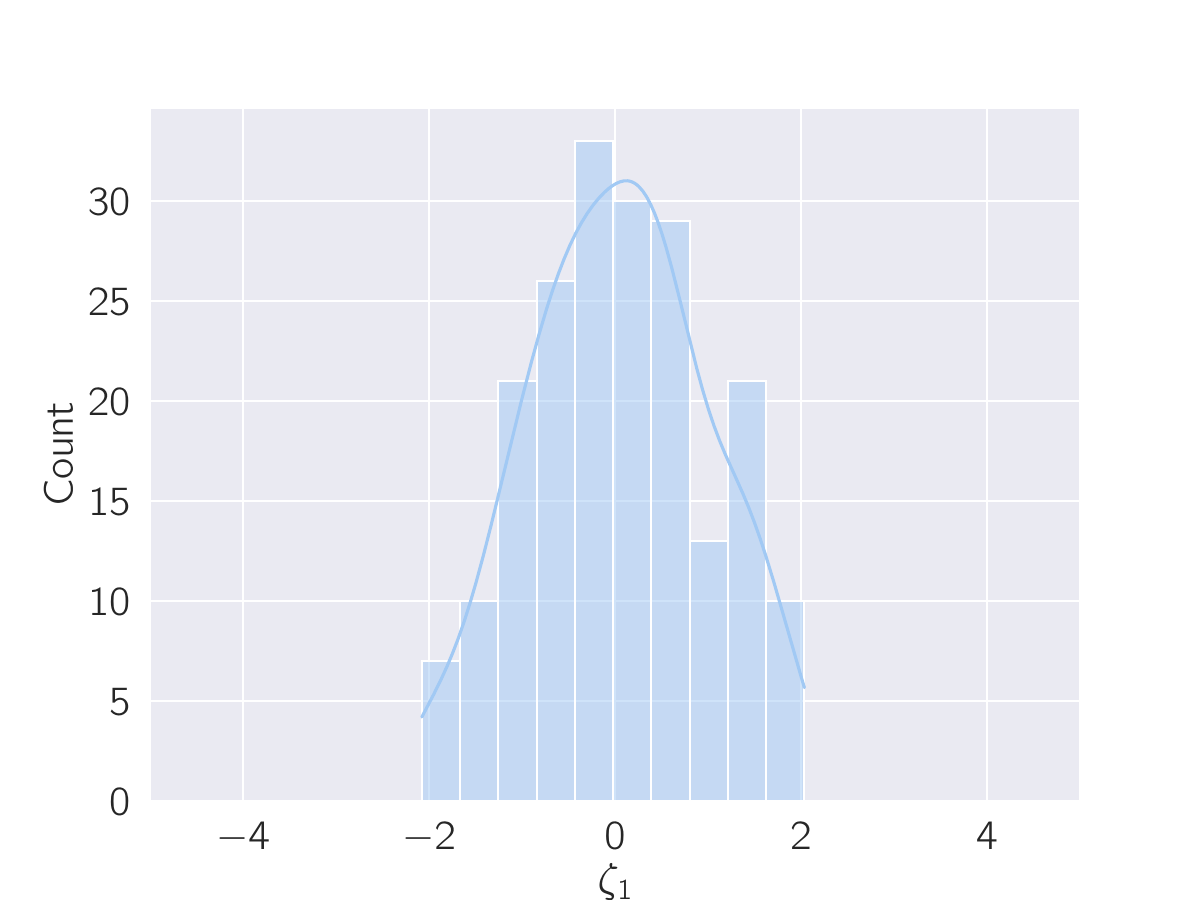}   
\includegraphics[width=34mm]{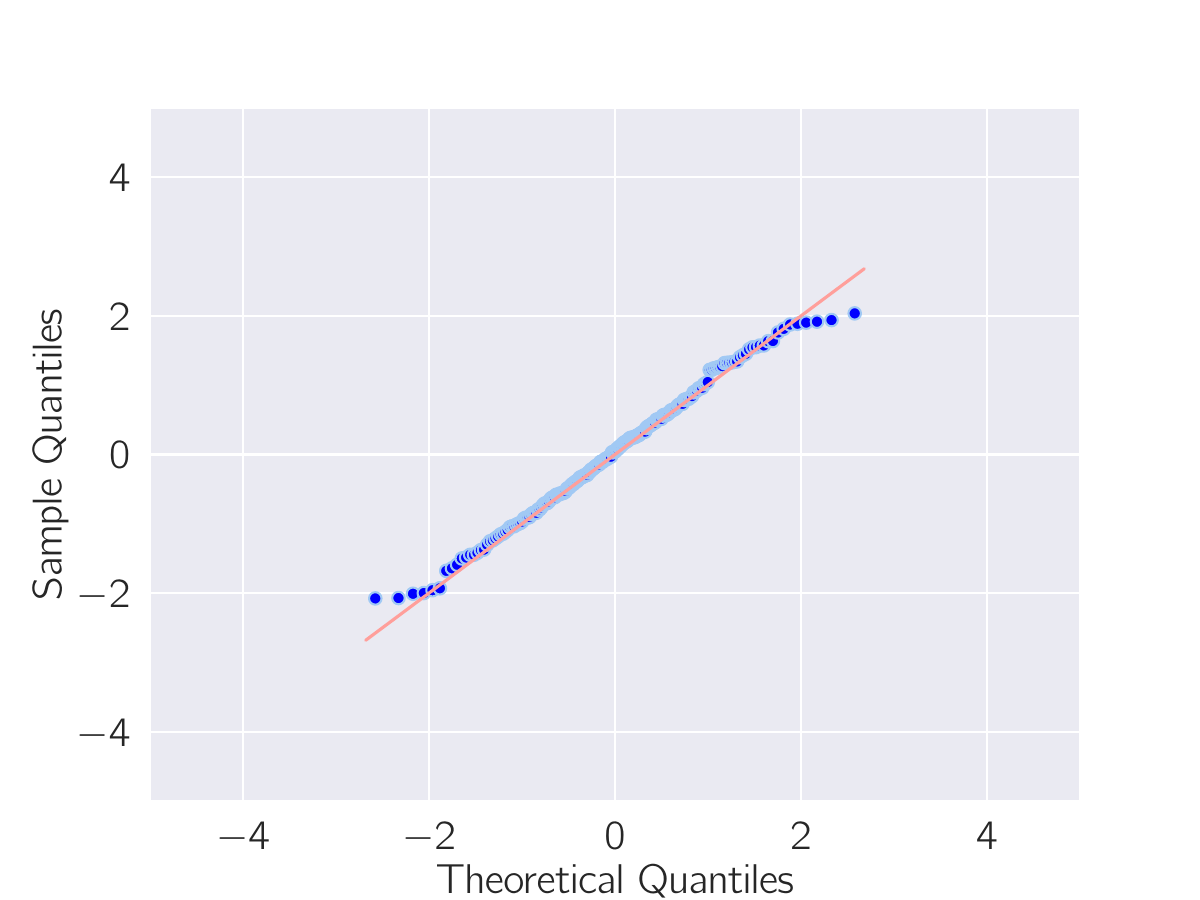}   
\includegraphics[width=34mm]{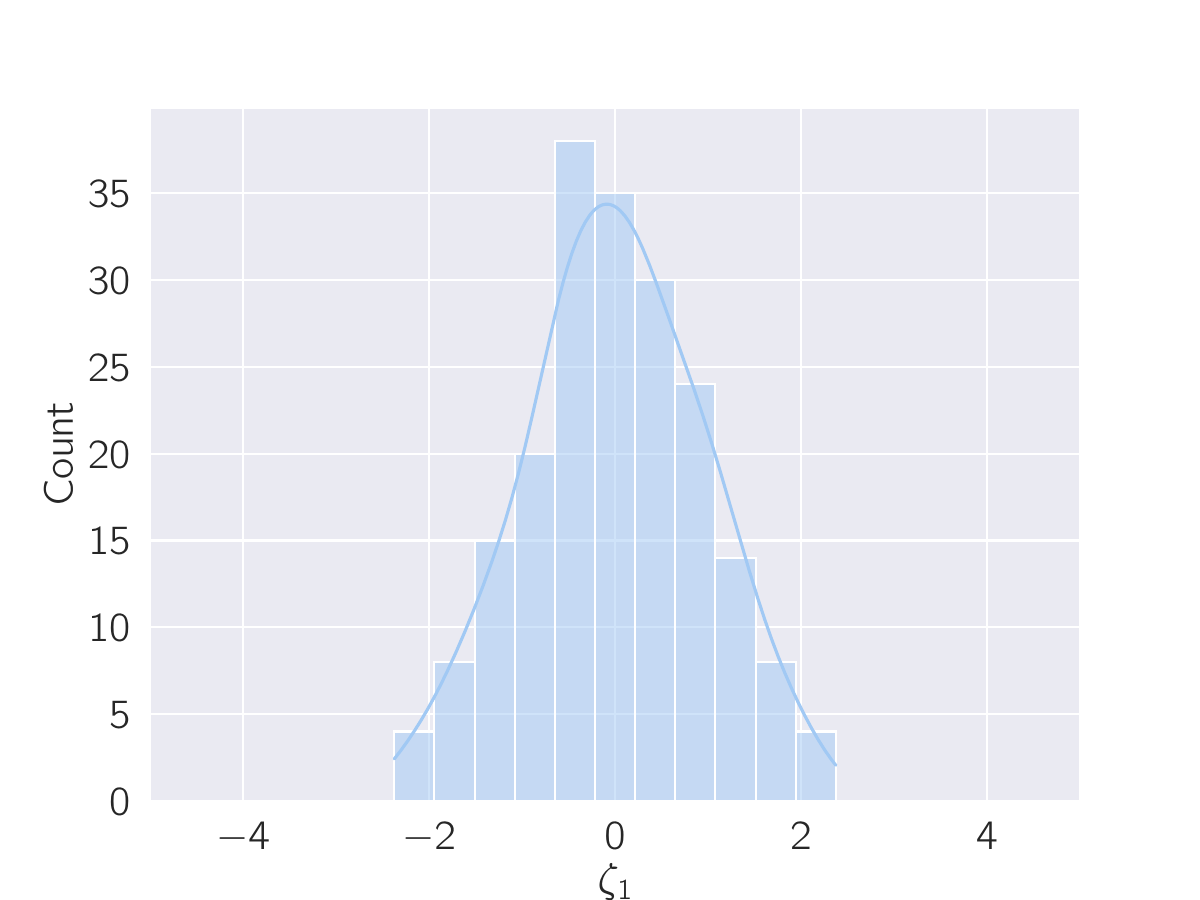}   
\includegraphics[width=34mm]{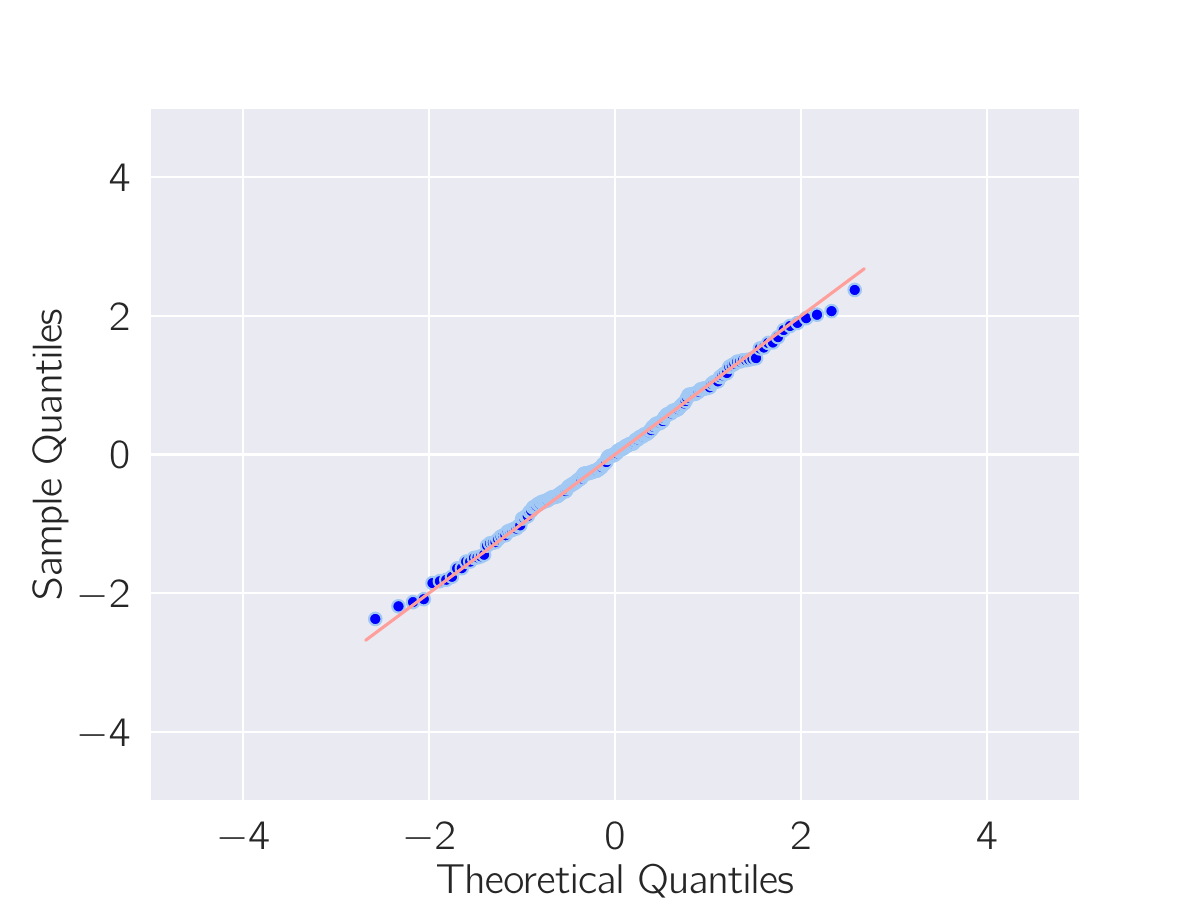}   
\caption{Histogram and QQ-plot for $\zeta_{1}$
     in \eqref{zeta_i}
under Huber Elastic-Net regression for different choices of
tuning parameters $(\lambda, \tau)$. 
Left Top: $(0.036, 10^{-10})$, 
Right Top: $(0.054,0.01)$, 
Left Bottom: $(0.036, 0.01)$, 
Right Bottom: $(0.024, 0.1)$.
Each figure contains 100 data points generated with Rademacher design matrix
(each entry has value $\pm 1$ with probability  0.5) and iid $\eps_i$ from the $t$-distribution with 2 degrees of freedom.
}
\end{figure}

\end{document}